\documentclass[a4paper]{article}
\usepackage[utf8]{inputenc}
\usepackage{amsmath}
\usepackage{amssymb}
\usepackage{amsthm}
\usepackage{dsfont}
\usepackage{xcolor}
\usepackage{soul,xcolor}
\usepackage{mathtools}
\usepackage[title]{appendix}
\usepackage{enumerate}
\usepackage{amssymb, amsmath,graphicx}
\usepackage{verbatim}
\usepackage{float}
\mathtoolsset{showonlyrefs}
\usepackage{hyperref}
\usepackage{cancel}
\usepackage{nicematrix}
\usepackage{pgfplots}
\pgfplotsset{compat=1.18}
\usepackage{esint}
\usepackage{mathrsfs}
\usepackage{soul}

\usepackage{tikz}
\usetikzlibrary{angles,quotes}
\usetikzlibrary{calc}

\theoremstyle{plain}
\newtheorem{thm}{Theorem}[section]
\newtheorem{theorem}[thm]{Theorem}
\newtheorem{proposition}[thm]{Proposition}
\newtheorem{definition}[thm]{Definition}
\newtheorem{lemma}[thm]{Lemma}
\newtheorem{corollary}[thm]{Corollary}

\newtheorem{remark}[thm]{Remark}
\newtheorem{example}[thm]{Example}
\newtheorem*{claim}{Claim}
\newtheorem*{example*}{Example}
\newtheorem*{remark*}{Remark}

\newcommand{\Ric}{\mathrm{Ric}}
\newcommand{\Sic}{\mathrm{Sic}}

\newcommand{\Hess}{\mathrm{Hess}}
\newcommand{\Div}{\mathrm{div}}

\newcommand{\diam}{\mathrm{diam}}
\newcommand{\tr}{\mathrm{tr}}
\newcommand{\mat}[1]{\left[ \begin{matrix} #1 \end{matrix} \right]}

\def\a{{\mathfrak a}}
\def\AA{{\mathfrak A}}
\def\k{{\mathfrak k}}
\def\KK{{\mathfrak K}}

\newcommand{\BE}{{\sf BE}}
\newcommand{\RCD}{{\sf RCD}}
\newcommand{\CD}{{\sf CD}}
\newcommand{\MCP}{{\sf MCP}}

\newcommand{\ind}{\mathds{1}}
\newcommand{\eps}{\varepsilon}
\newcommand{\La}{\left\langle}
\newcommand{\Ra}{\right\rangle}
\newcommand{\Rm}{\mathrm{Rm}}
\newcommand{\Id}{\mathrm{Id}}

\def\N{{\mathbb N}}
\def\Z{{\mathbb Z}}
\def\R{{\mathbb R}}

\DeclareMathOperator{\vol}{\mathsf{vol}}

\graphicspath{{images/}}

\begin{document}

\title{Gaussian Volume Functional, Integral Scalar Curvature, \\ and Minimal Super-Ricci Flows}

\date{}
\author{Marco Flaim, \quad Erik Hupp, \quad Karl-Theodor Sturm
\\[1cm]
\small Hausdorff Center for Mathematics \& Institute for Applied Mathematics\\
\small University of Bonn, Germany
}

\maketitle

\begin{abstract}
    We present a synthetic notion of scalar curvature (and its integral) for Riemannian manifolds and metric measure spaces, defined in terms of the initial slope of a Gaussian (double) integral. 

    We explicitly calculate the integral scalar curvature for Lipschitz gluings of smooth Riemannian manifolds and for cones. In dimension 2, the former coincides with the formula derived by Gauss-Bonnet, whereas the latter differs.

    The extension to the time-dependent case allows us to characterize Ricci flows as super Ricci flows with minimal integral curvature functional.
\end{abstract}

\setcounter{tocdepth}{2}
\tableofcontents

\section{Introduction}
On a smooth Riemannian manifold, lower Ricci bounds can be characterized in several ways, in particular using gamma calculus \cite{BakryEmeryDiffHypercontr}, or optimal transport \cite{CEMS01, sturmvonrenesse}. The latter made it possible to define a notion of lower Ricci bounds for metric measure spaces \cite{SturmActa1, SturmActa2, LottVillani}. Later, under suitable conditions, this notion was shown to be equivalent to the gamma calculus approach \cite{AGS1, AGS2, AGS3, EKS}. This theory turned out to be powerful also in a time-dependent setting, where these tools enable characterizations of super solution to Ricci flows \cite{mccanntopp, SturmSRF, BamlerEntropy}, and hence give a weak notion of them \cite{KopferSturm18, BamlerSRF}.

In order to characterize Einstein manifolds and Ricci flows, the challenge is to study upper Ricci bounds and sub solution of Ricci flows. In \cite{sturm2017remarks} this was done by asymptotically reversing the lower bound estimates, and in \cite{erbar2025synthetic} this was extended to the dynamic case. Another viewpoint was taken in \cite{naber2013characterizations, HN18} based on functional inequalities on the path space.

We present here an approach to synthetic characterizations of Ricci flat manifolds and of Ricci flows via the asymptotic behaviour of volumes. Let us illustrate the basic idea in the static case. In order to characterize Ricci flatness of closed Riemannian manifolds $(M,g)$, it suffices to characterize \emph{non-negative Ricci curvature} and non-positive scalar curvature or \emph{non-positive integral of scalar curvature} since
$$\Ric_x\ge0 \quad\textrm{and} \quad R(x)\le0\qquad\Longleftrightarrow\qquad \Ric_x=0$$
for every $x\in M$, and 
$$\Ric_x\ge0 \ (\forall x)\quad\textrm{and} \quad \int_M R\,d\vol\le0\qquad\Longleftrightarrow\qquad \Ric_x=0 \ (\forall x).$$
For a synthetic characterization of scalar curvature and its integral, we propose to consider the  initial negative slope 
$\k(x):=\liminf_{s\searrow0} \frac1s\big( \a_0(x)-\a_s(x)\big)$ of the Gaussian volume functional
$$\a_s(x):=(12\pi s)^{-n/2}\int_{M} e^{-\frac{d^2(x,y)}{12s}}m(dy)$$
as well as the initial negative slope ${\KK}:=\liminf_{s\searrow0} \frac1s\big( {\AA}_0-{\AA}_s\big)$
of the Gaussian double integral
${\AA}_s:= \int_{M}\a_s(x)\, m(dx)$. In the smooth setting $\k$ and $\KK$ coincide with the scalar curvature and the total scalar curvature, respectively, see Theorem \ref{thm:smoothcase}. This suggests a possible definition of total scalar curvature for metric measure spaces, Definition \ref{def:curvaturefunctional}. In \cite{KLP21} the asymptotic behaviour of volume of balls is used to define the \textit{mm boudary} and the \textit{mm curvature} of an Alexandrov space.

In Section \ref{sec:examples} we compute $\k$ and $\KK$ for several spaces beyond smooth closed Riemannian manifolds; we report here the most interesting cases. Note that the proofs of the last two examples are already rather involved and can be found in the appendix.
\begin{itemize}
    \item In the case of weighted Riemannian manifold $(M,g,e^f d\vol_g)$ we get $\k(x)=(R(x)-3|\nabla f(x)|^2-3\Delta f(x))e^{f(x)}$, which we relate to Perelman's weighted scalar curvature $R-2\Delta f-|\nabla f|^2$, introduced in \cite{perelman2002entropy}, and studied in the context of spin geometry and mathematical relativity in \cite{BaldaufOzuch22, ChuZhu24}. 
    \item Let $\hat M$ be the gluing of two smooth manifolds along a common boundary (across which the metric will a priori be only Lipschitz). In Theorem \ref{thm:C0gluing} we show that $\widehat{\KK} =\int_{\hat M} R(x)d\vol(x)+2(n-1)\int_{\partial M} (H_1(z) + H_2(z))d\sigma(z)$, where $H_i$ are the mean curvatures with respect to the two different metrics. This corresponds both to an approximating procedure \cite{Miao02} and to a notion of distributional scalar curvature \cite{LeeLeFloch15} already considered in mathematical relativity, and related to the Brown-York mass \cite{BY91}.
    \item Finally, we consider cones. If the cone is of $\dim\geq 3$, $\KK$ does not see the tip singularity. For $\dim=2$, $\KK$ is positive (resp. negative) for a cone over a sphere of radius $r<1$ (resp. $r>1)$, as expected. However, the numerical values do not coincide with the one provided by the Gauss--Bonnet formula. This is the same phenomenon as \cite[Ex.~1.14]{KLP21}.
\end{itemize}

In Section \ref{sec:timedep} we study time-dependent metric measure spaces. In order to characterize Ricci flows, exploiting
$$\partial_t g_x \ge -2\Ric_x \ (\forall x)\quad\textrm{and} \quad \int_M \left[ \frac{1}{2}\tr(\partial_t g_x) +R \right] d\vol \leq 0\qquad\Longleftrightarrow\qquad \partial_t g_x=-2\Ric_x \ (\forall x)$$
we are left to give meaning to the integral expression. This is done similarly as above, but now the distance is varying in time: 
$$\a_{s}(t,x):=(12\pi s)^{-n/2}\int e^{-\frac{d^2_{t+s}(x,y)}{12s}}m_t(dy)\,.$$
Also in this setting we study the case of weighted Riemannian manifolds, and then some examples of non smooth time-dependent spaces, Section \ref{sec:time-dep-ex}.

In both the static and the time-dependent case, we also include a discussion where we consider a directional version of the Gaussian volume. This is a stronger object, that allows us to characterize the full tensor ($\Ric$ or $\partial_tg+2\Ric$), but which requires more structure on the underlying space (in particular the existence of a tangent space). This notion of Ricci curvature was discussed in \cite[Appendix 2]{CC97}, and studied in \cite{ZZ10} on Alexandrov spaces. Along the way, we also prove a characterization of super Ricci flows via a new time-dependent Laplace comparison, see Theorem \ref{thm:charactSRF}$(ii)$.

\paragraph{Acknowledgements}
This work was partially funded by the Deutsche Forschungsgemeinschaft (DFG, German Research Foundation), projects CRC 1720 – 539309657, EXC-2047/2 - 390685813, and SPP 2026 “Geometry at Infinity”.

\section{The Gaussian volume functional and (integral) scalar curvature}
\setstcolor{blue}

\subsection{The Gaussian volume functional and the Gaussian double integral}
Let $(X,d,m)$ be a metric measure space (mm-space), i.e. $(X,d)$ is a complete, separable metric space, and $m$ is a Radon measure on $\mathcal{B}(X)$, and for $A \in \mathcal{B}(X)$ we write $|A|:= m(A)$. We always assume that for all $x\in X$ there is $C>0$ s.t.
\begin{align}\label{eq:expgrowth}
    |B_r(x)| \leq e^{Cr^2}.
\end{align}
This assumption is for example satisfied on $\CD$ spaces (introduced in \cite{SturmActa1, SturmActa2}, \cite{LottVillani}), as well as when $m$ is finite, see also \cite[pag.~321]{AGS2} for a detailed discussion on this condition. 

\paragraph{Assumption:} For $n\in\N$ given, assume that for every $x\in X$ the limit
\begin{equation}\label{density}
\rho(x):=\lim_{r\searrow0}\frac{m(B_r(x))}{\omega_nr^n}\end{equation}
exists, is finite, and the ratio $\frac{m(B(r,x))}{\omega_nr^n}$ is bounded uniformly in $(x,r)$ for $x \in X$, $r \leq 1$. Here $\omega_n:=\frac{\pi^{n/2}}{\Gamma(n/2+1)}$ denotes the volume of the unit ball in $\R^n$.

Recall that on a $\CD(-K,n)$ space, by the Bishop-Gromov theorem, the ratio $\frac{|B_r(x)|}{|\mathbb B^{K,n}_r(o)|}$ is non increasing in $r$. In particular the limit in \eqref{density} exists
\begin{align}
    \lim_{r\searrow0}\frac{|B_r(x)|}{|\mathbb B^{K,n}_r(o)|} = \lim_{r\searrow0}\frac{|B_r(x)|}{\omega_n r^n} =:\rho(x)
\end{align}
but might be infinite. If we assume it to be bounded, then the ratio is uniformly bounded in $(x,r)\in X\times [0,1]$:
\begin{align}
    \frac{|B_r(x)|}{\omega_nr^n}  \leq C(\diam M,K,n)\frac{|B_r(x)|}{|\mathbb B^{K,n}_r(o)|} \leq C\lim_{r \searrow 0} \frac{|B_r(x)|}{|\mathbb B^{K,n}_r(o)|} = C\lim_{r \searrow 0} \frac{|B_r(x)|}{\omega_nr^n} = C\rho(x) \,.
\end{align}

Moreover, on a non collapsed $\RCD(K,n)$ space (i.e. with $m=\mathcal H^n$) $\rho(x)\leq1$ for every $x\in X$, see \cite[Cor.~2.14]{DePhilippisGiglincRCD}. From \cite{BGHZ23}, if on an $\RCD(K,n)$ space the $n$-density $\rho$ is finite on a set of positive measure, then $m=c\mathcal H^n$, $c\in \R_+$.

In section \ref{sec:examples} we will also study spaces that are not $\CD$, but that still satisfy \eqref{density}.

Under these assumptions we introduce the following quantities.
\begin{definition}
Define:
\begin{itemize}
    \item The \emph{Gaussian volume functional}
$$\a_s(x):=(12\pi s)^{-n/2}\int_{X} e^{-\frac{d^2(x,y)}{12s}}m(dy).$$

\item If $m$ is finite, The \emph{Gaussian double integral} 
$${\AA}_s:=(12\pi s)^{-n/2}\int_{X} \int_{X}e^{-\frac{d^2(x,y)}{12s}}m(dy) m(dx).$$
\end{itemize}
\end{definition}

For the case $s=0$ we make use of the following statements (they also show that, for $s$ small enough, the quantities above finite).
\begin{lemma}\label{lem:limitsto0}
Assume $(X,d,m)$ satisfies \eqref{density}. Then

(i) The limit
$\a_0(x):=
\lim_{s\searrow0}\a_s(x)$ exists and coincides with $\rho(x)$. 

(ii) Similarly, when $m$ is finite, $\AA_0:=\lim_{s\searrow0}\AA_s=\int_X\rho\,dm$.

(iii) Moreover,
$$\lim_{s\searrow 0}(12\pi s)^{-n/2}\frac1{6s}\int_X  d^2(x,y) e^{-\frac{d^2(x,y)}{12s}}m(dy) = n \cdot \rho(x).$$
\end{lemma}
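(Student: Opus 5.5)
The plan is to rewrite both Gaussian integrals as one–dimensional integrals against the distribution function $V(r):=m(B_r(x))$, using the layer-cake formula, and then to pass to the limit $s\searrow0$ by dominated convergence, with domination supplied by the uniform bound on $V(r)/(\omega_n r^n)$ for $r\le1$ and by the growth condition \eqref{eq:expgrowth} for $r\ge1$.

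\textbf{Step 1: reduction to a radial integral.} For any non-increasing $C^1$ profile $\phi\ge0$ with $\phi(r)\to0$ fast enough as $r\to\infty$, one has $\int_X\phi(d(x,y))\,m(dy)=\int_0^\infty(-\phi'(r))\,V(r)\,dr$. Apply this with $\phi(r)=e^{-r^2/(12s)}$ and substitute $r=\sqrt{s}\,u$. This gives
\begin{equation}
\a_s(x)=(12\pi)^{-n/2}\int_0^\infty \frac{u}{6}e^{-u^2/12}\,\frac{V(\sqrt s u)}{s^{n/2}}\,du .
\end{equation}
Pointwise in $u$, \eqref{density} yields $V(\sqrt s u)/s^{n/2}\to\omega_n\rho(x)u^n$.

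\textbf{Step 2: domination.} For $\sqrt s u\le1$ the integrand is bounded by $C\omega_n u^{n+1}e^{-u^2/12}$, which is integrable. For $\sqrt s u>1$ we use $V(r)\le e^{Cr^2}$, so the integrand is at most $u\,s^{-n/2}e^{-u^2(1/12-Cs)}$. Since $u>s^{-1/2}$ in this range, for $s$ small this is bounded by $u\,e^{-u^2/24}\cdot s^{-n/2}e^{-1/(48 s)}$, and the last factor tends to $0$. Dominated convergence therefore gives
\begin{equation}
\a_0(x)=\rho(x)\,\omega_n(12\pi)^{-n/2}\int_0^\infty\frac{u^{n+1}}{6}e^{-u^2/12}\,du .
\end{equation}
To see that this constant equals $1$ without computing it, note that the identical computation in $\R^n$ with Lebesgue measure, where $\rho\equiv1$ and the Gaussian integral equals $1$ exactly, involves the same constant. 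This proves (i).

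\textbf{Step 3: part (iii).} Proceed in the same way with $\phi(r)=\frac{r^2}{6s}e^{-r^2/(12s)}$. This $\phi$ is not monotone, but the layer-cake identity $\int\phi(d)\,dm=\int_0^\infty(-\phi'(r))V(r)\,dr$ still holds by integration by parts, because $V(r)\phi(r)\to0$ at both ends. The same domination argument then yields $\rho(x)$ times the corresponding Euclidean constant. In $\R^n$ that constant is $\frac1{6s}E|Y|^2$ for $Y\sim N(0,6s\,\Id)$, which equals $n$.

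\textbf{Step 4: part (ii).} Write $\AA_s=\int_X\a_s\,dm$. Step 2 gives a bound on $\a_s(x)$ uniform in $x$ for $s\le s_0$, provided the growth constant in \eqref{eq:expgrowth} can be taken uniform. This holds for $m$ finite, since then $V\le m(X)$. With $m(X)<\infty$, dominated convergence in $x$ gives $\AA_0=\int\rho\,dm$.

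The main obstacle is the tail domination in Step 2. The constant $C$ in \eqref{eq:expgrowth} depends on $x$, which is harmless for (i) and (iii). For (ii) it is resolved by the finiteness of $m$.
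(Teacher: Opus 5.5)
Your proof is correct and follows essentially the same route as the paper. Like the paper, you reduce to a radial integral against $V(r)=m(B_r(x))$ by integration by parts, use the density expansion near $r=0$ together with the uniform ratio bound, control the tail with \eqref{eq:expgrowth}, and for (ii) dominate in $x$ using finiteness of $m$. Your additions are a rescaling $r=\sqrt{s}\,u$ with an explicit dominated-convergence argument, and a comparison with $\R^n$ to identify the constants in place of the identity \eqref{eq:gaussianint}; these make rigorous the step where the paper simply substitutes $v(r)=\rho(x)\omega_n r^n+o(r^n)$, and your observation that $V\le m(X)$ makes the tail bound in (ii) uniform in $x$ fills in a detail the paper leaves implicit.
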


\begin{proof}
(i) For fixed $s$ put $v(r):=m(B_r(x))$. Then, by integration by parts for the Stieltjes integrals, 
\begin{align*}
\lim_{s\searrow 0}&(12\pi s)^{-n/2}\int_X e^{-\frac{d^2(x,y)}{12s}}m(dy) \\
&=
\lim_{s\searrow 0}(12\pi s)^{-n/2}\int_0^\infty e^{-r^2/12s} dv'(r)\\
&=
\lim_{s\searrow 0} \lim_{R\to\infty}(12\pi s)^{-n/2}\int_0^R e^{-r^2/12s} dv'(r)\\
&= \lim_{s\searrow 0} \lim_{R\to\infty} (12\pi s)^{-n/2} \left[ e^{-r^2/12s} v(r) \bigg|_{r=0}^R + \int_0^R e^{-r^2/12s} \frac{2r}{12s}v(r) dr \right] \\
&= \lim_{s\searrow 0}(12\pi s)^{-n/2}\int_0^\infty e^{-r^2/12s} \frac{2r}{12s}v(r) dr
\end{align*}
since from \eqref{density} we have that $v(0)=0$, and from \eqref{eq:expgrowth} that $0\leq e^{-r^2/12s} v(r) \leq e^{-\frac{r^2}{12s} + Cr^2} \to 0$
as $R\to \infty$ for $s<\frac{1}{12C}$. Now thanks again to \eqref{eq:expgrowth}, we can just consider the expansion of $v(r)$ around 0 from \eqref{density}:
\begin{align*}
\lim_{s\searrow 0}&(12\pi s)^{-n/2}\int_0^\infty e^{-r^2/12s} \frac{2r}{12s}v(r) dr \\
&=\lim_{s\searrow 0}(12\pi s)^{-n/2}\int_0^\infty e^{-r^2/12s} \frac{2r}{12s} \left[\rho(x)\omega_nr^n+o(r^n)\right] dr \\
&= \rho(x) \,.
\end{align*}
In the last line we explicitly computed the Gaussian integral using the useful identity:
\begin{align}\label{eq:gaussianint}
    F(s):=(12\pi s)^{-n/2}\int_{0}^{\infty}\frac{r^{\alpha}}{s^{\beta}}e^{-r^{2}/(12s)}n\omega_{n}r^{n-1}\,dr=\frac{n}{2}\,12^{\alpha/2}s^{\alpha/2-\beta}\frac{\Gamma\!\left(\tfrac{n+\alpha}{2}\right)}{\Gamma\!\left(\tfrac{n}{2}+1\right)}\,.
\end{align}

(ii) Using boundedness of the ratio in \eqref{density}, with the same computations as above, $\a_s(x)\leq C$, $C$ independent of $x$. Then by finiteness of $m$ we use dominated convergence to pass the limit inside the integral:
\begin{align}
    \lim_{s\searrow0}\AA_s = \lim_{s\searrow0}\int_X\a_s(x)dm(x) = \int_X\rho(x)dm(x)\,.
\end{align}

(iii) As above,
\begin{align*}
\lim_{s\searrow 0}(12\pi s)^{-n/2}&\frac1{6s}\int  d^2(x,y) e^{-\frac{d^2(x,y)}{12s}}m(dy) =
\lim_{s\searrow 0}(12\pi s)^{-n/2}\frac1{6s}\int_0^\infty r^2 e^{-r^2/12s} dv'(r)\\
&=-\lim_{s\searrow 0}(12\pi s)^{-n/2}\frac1{6s}\int_0^\infty \Big[2r-\frac{2r^3}{12s}\Big] e^{-r^2/12s} v(r)dr\\
&=-\lim_{s\searrow 0}(12\pi s)^{-n/2}\frac1{6s}\int_0^\infty \Big[2r-\frac{2r^3}{12s}\Big] e^{-r^2/12s} [\omega_nr^n \rho(x) + o(r^n)]dr\\
&=n\ \rho(x)
\end{align*}
by using that
$$\limsup_{s\searrow 0}(12\pi s)^{-n/2}\frac1{6s}\int_0^\infty \Big|2r-\frac{2r^3}{12s}\Big| e^{-r^2/12s} r^n\,dr<\infty.$$
\end{proof}

We now define the principal geometric objects analysed in this paper.

\begin{definition}\label{def:curvaturefunctional}
Under \eqref{density}, define:
\begin{itemize}
    \item The \emph{initial negative slope} of $\a$:  
$$\k(x):=\liminf_{s\searrow0}\frac1s\Big( \a_0(x)-\a_s(x)\Big).$$

\item If $m$ is finite, the \emph{initial negative slope} of $\AA$:
\begin{align}\label{eq:defK}
    \KK :=\liminf_{s\searrow0} \frac1s\Big( {\AA}_0-{\AA}_s\Big),
\end{align}
called \emph{curvature functional}.
Note that all these quantities depend on $n$. To emphasize this dependence,
we occasionally write $\k^{(n)}$ and ${\KK}^{(n)}$.
\end{itemize}
\end{definition}

\begin{remark}\label{rmk:scalmeasure}
    Assume $X$ compact, for any $U\subseteq X$ Borel, let 
    \begin{align}
        \AA_s(U):=(12\pi s)^{-n/2}\int_{U} \int_{X}e^{-\frac{d^2(x,y)}{12s}}m(dy) m(dx)
    \end{align} and
    \begin{align}
    \KK_s(U)=\frac1s\Big( {\AA}_0(U)-{\AA}_s(U)\Big)\,.
    \end{align}
    Then $\KK_s$ are signed measures on $\mathcal B(X)$. A more refined definition than \eqref{eq:defK} requires that there exists a signed measure $\KK$ s.t. $\KK_s\rightharpoonup \KK$ weakly. In this case, by compactness, $\KK(X)$ is the number obtained in \eqref{eq:defK}.
\end{remark}

\begin{lemma}\label{upp/low}
(i) For $s>0$, the Gaussian volume functional is continuously differentiable and
$$     \partial_s \a_s(x)=
    (12\pi s)^{-n/2}\frac1{2s}\int \Big[-n+\frac1{6s}d^2(x,y) \Big]e^{-\frac{d^2(x,y)}{12s}}m(dy).$$
    
(ii) Moreover,
$$-\limsup_{s\searrow0} \partial_s\a_s(x)\le \k(x)\le -\liminf_{s\searrow0} \partial_s\a_s(x)\,.$$

(iii) Similarly if $m$ is finite,
$$-\limsup_{s\searrow0} \int_X\partial_s\a_s(x)dm(x)\le \KK\le -\liminf_{s\searrow0} \int_X \partial_s \a_s(x) dm(x)\,.$$
\end{lemma}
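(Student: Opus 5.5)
For (i) I will differentiate under the integral sign. Write $\phi_s(r):=(12\pi s)^{-n/2}e^{-r^2/12s}$. A direct computation gives
$$\partial_s\phi_s(r)=\phi_s(r)\Big[-\tfrac{n}{2s}+\tfrac{r^2}{12s^2}\Big]=\phi_s(r)\,\tfrac{1}{2s}\Big[-n+\tfrac{r^2}{6s}\Big],$$
which is exactly the integrand claimed. To justify the interchange, fix a compact interval $[s_1,s_2]\subset(0,s_*)$ with $s_*<\frac1{12C}$, where $C$ is the constant of \eqref{eq:expgrowth}. On this interval $|\partial_s\phi_s(d(x,y))|\le c\,(1+d^2(x,y))\,e^{-d^2(x,y)/12s_2}$. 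The right-hand side is $m$-integrable: write it as a Stieltjes integral against $v(r)=m(B_r(x))$, integrate by parts exactly as in the proof of Lemma \ref{lem:limitsto0}, and use $v(r)\le e^{Cr^2}$. Dominated convergence then gives differentiability, and the same domination makes $s\mapsto\partial_s\a_s(x)$ continuous. For larger $s$ the growth condition does not suffice, so I will either restrict to small $s$ (which is all that is used later) or note that, for $m$ finite, the bound is trivial for every $s$.

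For (ii), by Lemma \ref{lem:limitsto0}(i) the map $s\mapsto\a_s(x)$ extends continuously to $[0,s_*)$ with value $\a_0(x)$. By (i) it is $C^1$ on $(0,s_*)$. The mean value theorem, applied on $[\epsilon,s]$ and followed by letting $\epsilon\to0$, gives
$$\frac1s\big(\a_0(x)-\a_s(x)\big)=-\partial_s\a_\sigma(x)\quad\text{for some }\sigma\in(0,s).$$
Alternatively, the fundamental theorem of calculus gives
$$\frac1s\big(\a_0-\a_s\big)=-\frac1s\int_0^s\partial_\sigma\a_\sigma\,d\sigma,$$
where the integrand is bounded near $0$ by Lemma \ref{lem:limitsto0}(i),(iii). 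Either way, the quotient lies between $-\sup_{\sigma<s}\partial_\sigma\a_\sigma$ and $-\inf_{\sigma<s}\partial_\sigma\a_\sigma$. Taking $\liminf_{s\searrow0}$ of the quotient therefore yields both inequalities of (ii).

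For (iii) I will run the same argument for $\AA_s=\int_X\a_s\,dm$. The main obstacle is differentiating under the $dm(x)$ integral, which requires a bound on $\partial_s\a_s(x)$ uniform in $x$. Using the uniform bound on $\frac{m(B_r(x))}{\omega_nr^n}$ for $r\le1$ from \eqref{density}, and splitting off $r>1$ where the Gaussian decays, the computation in Lemma \ref{lem:limitsto0}(iii) shows $|\partial_s\a_s(x)|\le C'/s$. In fact it is bounded uniformly for $s$ bounded away from $0$, which is all that is needed. Since $m$ is finite, dominated convergence gives $\partial_s\AA_s=\int_X\partial_s\a_s\,dm$ with continuity on $(0,s_*)$. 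Lemma \ref{lem:limitsto0}(ii) gives continuity of $\AA_s$ at $s=0$. The mean value argument of (ii) then applies verbatim to $\AA$, producing the stated bounds for $\KK$.
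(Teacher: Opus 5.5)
Your argument is correct and follows the paper's route. For (i) you differentiate under the integral, justified by the growth bound \eqref{eq:expgrowth}; you rightly note this only works for $s<1/(12C)$ unless $m$ is finite. For (ii) and (iii) you compare the difference quotient with $\partial_s\a_s$ using continuity at $s=0$, and your mean value theorem on $[0,s]$ is a tidier version of the paper's $\delta$-cutoff argument with the fundamental theorem of calculus.

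One side remark is false, though unused. Lemma \ref{lem:limitsto0}(i),(iii) only give $s\,\partial_s\a_s(x)\to 0$, not boundedness of $\partial_s\a_s(x)$ near $0$; it can blow up like $s^{-1/2}$, e.g.\ at boundary points of a manifold with curved boundary. So your FTC variant must be read as the improper integral $\lim_{\delta\to0}\int_\delta^s$, exactly as in the paper.
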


\begin{remark} The previous lemma will be useful to compute $\k(x)$ and $\KK$. Indeed, it shows that if $\lim_{s\to 0} \partial_s\a_s(x)$ exists, then $\k(x)=-\lim_{s\to 0} \partial_s\a_s(x)$ and if $\lim_{s\to 0} \int_X \partial_s\a_s(x) dm(x)$ exists, then $\KK=\lim_{s\to0} \int_X \partial_s \a_s(x) dm(x)$.
\end{remark}

\begin{proof} (i) Follows by differentiating, and by \eqref{eq:expgrowth} in order to to take the difference quotient limit under the integral.

(ii) Let us prove the lower estimate. Put $\k_*(x):=-\limsup_{s\searrow0} \partial_s\a_s(x)$. Then 
for all $\epsilon>0$ there exists $s>0$ such that:
\begin{itemize}
    \item $\k(x)+\epsilon\ge \frac1s (\a_0(x)-\a_s(x))$
    \item for all $r\in (0,s)$,
$ \partial_r\a_r(x) \le -{\k}_*(x)+\epsilon$.
\end{itemize}
Moreover, by continuity (Lemma \ref{lem:limitsto0}) there exists  $\delta_0<s$ such that $\a_\delta(x)-\a_0(x)\le\epsilon s$ for all $\delta\in (0,\delta_0)$.
Thus
\begin{align*}
\k(x)+\epsilon&\ge \frac1s \Big(\a_0(x)-\a_s(x)\Big)
\ge  \frac1s \Big(\a_\delta(x)-\a_s(x)\Big)-\epsilon\\
&=-\frac1s\int_\delta^s \partial_r\a_r(x)dr-\epsilon
\ge \Big(1-\frac\delta s\Big)\k_*(x)-2\epsilon.
\end{align*}
Letting $\delta\to0$ and then $\epsilon\to0$, the claim follows. The upper bound and (iii) follow analogously.
\end{proof}

\subsection{Scalar curvature}
We can now justify Definition \ref{def:curvaturefunctional}.

\begin{theorem}\label{thm:smoothcase}
Let $(M,g)$ be a smooth complete $n$-dimensional Riemannian manifold without boundary satisfying \eqref{eq:expgrowth}. Denote its scalar curvature by 
$R=R_g$ and put $d:=d_g, m:=\vol_g$. Then \[\k(x)=R(x)\]
and if $M$ is compact
\[ {\KK}=\int_M R\,d\vol_g.\]
\end{theorem}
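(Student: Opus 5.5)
The plan is to use the classical small-ball volume expansion on a smooth manifold,
\begin{align}
v(r):=\vol_g(B_r(x))=\omega_n r^n\Big(1-\frac{R(x)}{6(n+2)}r^2+O(r^4)\Big),
\end{align}
and to insert it into the Stieltjes integration by parts already used in the proof of Lemma \ref{lem:limitsto0}. Here $\rho\equiv1$, so $\a_0(x)=1$. Exactly as in that proof,
\begin{align}
\a_s(x)=(12\pi s)^{-n/2}\int_0^\infty \frac{2r}{12s}e^{-r^2/12s}v(r)\,dr .
\end{align}
I split the integral at a fixed radius $r_0$ below the injectivity radius at $x$. The tail $r>r_0$ is $O(e^{-c/s})$ by \eqref{eq:expgrowth}, so it is negligible even after division by $s$.

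On $[0,r_0]$ the expansion holds. The leading term $\omega_n r^n$ produces exactly $1$ plus an exponentially small error, as in Lemma \ref{lem:limitsto0}. The $r^{n+2}$ term is a Gaussian moment. Applying \eqref{eq:gaussianint} after integrating by parts back (equivalently, computing $\int_0^\infty \frac{2r}{12s}r^{n+2}e^{-r^2/12s}dr$ directly), it yields $(n+2)\cdot 12s\cdot\frac{n\omega_n}{n}\cdot\frac{\Gamma(n/2+1)\cdot(n/2+1)}{\Gamma(n/2+1)}$-type constants. Concretely, the second moment gives $\mathbb E|Z|^2=6ns$ for the Gaussian with variance $6s$ per coordinate. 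The identity to aim for is
\begin{align}
\a_s(x)=1-\frac{R(x)}{6(n+2)}\cdot 6s\,(n+2)+O(s^2)=1-R(x)\,s+O(s^2).
\end{align}
To check the constant, I use the fact that $r^{n+2}$ integrated against $dv$ of the Euclidean type corresponds to $\mathbb E|Z|^2\cdot\frac{n+2}{n}$, with $\mathbb E|Z|^2=6ns$. This gives $\frac{R}{6(n+2)}\cdot 6s(n+2)=Rs$. The choice of the constant $12$ in the kernel is precisely what makes this normalization come out to $1$. The $O(r^4)$ remainder contributes $O(s^2)$. Hence $\frac1s(\a_0-\a_s)\to R(x)$, and $\k(x)=R(x)$ with a genuine limit. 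Alternatively, one can verify this via Lemma \ref{upp/low}(ii) using $\partial_s\a_s$.

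For the compact case, I need $\frac1s(\AA_0-\AA_s)=\int_M\frac1s(1-\a_s(x))\,d\vol(x)\to\int_M R\,d\vol$. By compactness, the injectivity radius is bounded below by some $r_0>0$. The volume expansion holds with a remainder $|O(r^4)|\le C r^{n+4}$ uniformly in $x$, since the coefficients are built from curvature and its derivatives, which are bounded on $M$. Therefore the error $\frac1s(1-\a_s(x))-R(x)=O(s)$ holds uniformly in $x$, and integrating over the finite measure space gives the claim. Dominated convergence would also suffice.

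The main obstacle is justifying the uniformity of the $O(r^4)$ remainder in the ball-volume expansion and of the tail estimate. Pointwise, this is standard via normal coordinates, where $\sqrt{\det g}=1-\frac16\Ric_{ij}y^iy^j+O(|y|^3)$ and the odd term integrates to zero. For the uniform version, I would use the Taylor expansion with remainder controlled by $\sup|\nabla\Rm|$ and $\sup|\Rm|^2$ on the compact manifold. The rest consists of routine Gaussian moment computations.
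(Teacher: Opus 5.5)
Your argument is correct and is essentially the paper's Proof I: you insert the small-ball volume expansion into the Gaussian integral, evaluate the $r^2$-moment via \eqref{eq:gaussianint}, and get uniformity on compact $M$ from bounded geometry. The only difference is that you expand $\a_s$ itself through $v(r)$, which gives $\a_s(x)=1-R(x)s+O(s^2)$ and hence an honest limit without Lemma \ref{upp/low}, whereas the paper expands the sphere area $v'(r)$ inside $\partial_s\a_s$; also, your intermediate ``$(n+2)\cdot 12s\cdots$-type'' constant is garbled, but the value $6(n+2)s$ you actually use for the $\omega_n r^{n+2}$ term is right.
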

\begin{proof}[Proof I (expanding the volume of spheres)]
Set $v(r)=m(B_r(x))$, recall from \cite{GrayVolume} that on a Riemannian manifold
\begin{align*}
    v'(r) = n\omega_n r^{n-1} \left[ 1 - \frac{R}{6n}r^2 + O(r^4)\right].
\end{align*}
Therefore
\begin{align}\label{eq:vol_and_scal_curv}
    \k(x) &= -\lim_{s\searrow0} \partial_s\a_s(x) \\
    &=-\lim_{s\searrow0}(12\pi s)^{-n/2}\frac1{2s}\int \Big[-n+\frac1{6s}d^2(x,y) \Big]e^{-\frac{d^2(x,y)}{12s}} d\vol(y) \\
    &= -\lim_{s\searrow0}(12\pi s)^{-n/2}\frac1{2s}\int_0^\infty \Big[-n+\frac{r^2}{6s} \Big]e^{-\frac{r^2}{12s}} n\omega_n r^{n-1} \left[ 1 - \frac{R(x)}{6n}r^2 + O(r^4)\right] dr \\
    &= \lim_{s\searrow0} \frac{n}{2s} - \frac{n}{2s} - \frac{nR(x)}{2}+\frac{(n+2)R(x)}{2} + O(s) \\
    &= R(x)
\end{align}
using the identity \eqref{eq:gaussianint}.

To prove the integral statement, we recall that the term $O(r^4)$ depends on the geometry of $M$ at $x$, which is uniformly bounded when $M$ is compact, hence the convergence above is uniform in $x$, and ${\KK}=\int_M R\,d\vol_g$.
\end{proof}

\begin{proof}[Proof II (expanding the Laplacian of the distance)]
By integration by parts once and $|\nabla d|^2=1$ a.e. we have
\begin{align}
    \int e^{-\frac{d^2(x,y)}{12s}}\frac{\Delta d^2}{2} d\vol(y) = \int e^{-\frac{d^2(x,y)}{12s}}
   \frac{d^2}{6s}d\vol(y),
\end{align}
and hence
\begin{align}\label{eq:IBP}
    \partial_s \a_s(x)&=
    (12\pi s)^{-n/2}\frac1{2s}\int_M \bigg[-n+\frac1{6s}d^2(x,y) \bigg]e^{-\frac{d^2(x,y)}{12s}}d\vol(y)\\
    &=(12\pi s)^{-n/2}\frac1{2s}\int_M \bigg[-n+\frac12\Delta d^2(x,y) \bigg]e^{-\frac{d^2(x,y)}{12s}}d\vol(y)\,.
\end{align}
Set
\begin{align}\label{eq:tau_tilde}
    \tilde \tau_{K,n}(r) =\begin{cases}
        1 + r\sqrt{K/(n-1)} \cot(r\sqrt{K/(n-1)}) & K>0 \\
        n & K=0 \\
        1 + r\sqrt{-K/(n-1)} \coth(r\sqrt{-K/(n-1)}) & K<0
    \end{cases}
\end{align}
then from the Taylor expansion of the metric \cite{GrayVolume} we get
\begin{align}\label{eq:laplcomp}
    \frac{1}{2}\Delta_y d^2(x,y) &= n\tilde\tau_{K,n}(d(x,y)) + O(d^3(x,y)) \\
    &= n - \frac{K_{x,y}}{3}d^2(x,y) + O(d^3(x,y))
\end{align}
where $K_{x,y}=\Ric_x(\dot\gamma_0,\dot\gamma_0)$, with $\gamma$ the unit speed geodesic from $x$ to $y$, where the error term is a signed measure whose total mass is $O(d^3)$. Then
\begin{align}\label{eq:GaussintwithLaplcomp}
    -\partial_s \a_s(x)
    &=(12\pi s)^{-n/2}\frac1{2s}\int_M \bigg[\frac{K_{x,y}}{3}d^2(x,y) + O(d^3(x,y)) \bigg]e^{-\frac{d^2(x,y)}{12s}}d\vol(y) \\
    &= (12\pi s)^{-n/2}\frac1{2s}\int_0^\infty \int_{S^n_xM} \bigg[\frac{\Ric_x(\theta)}{3}\rho^2 + O(\rho^3) \bigg]e^{-\frac{\rho^2}{12s}}\left(\rho^{n-1}+O(\rho^n)\right) d\sigma(\theta) d\rho \\
    &= (12\pi s)^{-n/2}\frac1{2s}\int_0^\infty \left[\frac{R_x}{3n}\rho^{n+1} n \omega_n + O(\rho^{n+2})\right] e^{-\frac{\rho^2}{12s}} d\rho \\
    &= R_x + O(\sqrt s)
\end{align}
where we used \eqref{eq:gaussianint} and $\int_{S^n_xM} \Ric_x(\theta) d\theta = \frac{R_x}{n}|S^n| = R_x\omega_n$ by properties of bilinear forms. Also in this case when $M$ is compact the error $O(\sqrt s)$ is uniform in $x$ and we can integrate.
\end{proof}

The function $\k$ provides a synthetic way to define the scalar curvature.
For singular spaces, however, the quantity $\KK$ turns out to provide more detailed information on the ``singular contributions to the curvature''. We will see in numerous non smooth examples that
${\KK}\not= \int_M \k\,dm$ (because dominated convergence is violated).

We can now characterize Einstein manifolds.
\begin{corollary}\label{cor:minimal} Assume  $(M,g)$ is a closed Riemannian manifold with  $\Ric\ge K$. Then $M$ is Einstein, i.e. $\Ric=K g$ if and only if $M$ is \emph{minimal} in the sense that
$$\frac1{|M|}{\KK}\le n K.$$
\end{corollary}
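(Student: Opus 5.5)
The plan is to reduce the corollary to Theorem \ref{thm:smoothcase} plus the pointwise trace inequality. Since $M$ is closed, it is compact. Hence Theorem \ref{thm:smoothcase} applies: assumption \eqref{eq:expgrowth} holds because $m=\vol_g$ is finite, and the density assumption \eqref{density} holds with $\rho\equiv 1$, uniformly in $(x,r)$ by compactness. It gives
$$\KK=\int_M R\,d\vol_g .$$
The statement thus becomes: under $\Ric\ge K$, one has $\Ric=Kg$ if and only if $\int_M R\,d\vol_g\le nK|M|$.

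For the forward direction, assume $\Ric=Kg$. Tracing gives $R\equiv nK$, so $\KK=nK|M|$ and the inequality holds with equality.

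For the converse, $\Ric\ge K$ means the symmetric tensor $T:=\Ric-Kg$ is positive semidefinite at every point, so $\tr_g T=R-nK\ge 0$ pointwise. The hypothesis gives
$$\int_M (R-nK)\,d\vol_g=\KK-nK|M|\le 0 .$$
A nonnegative continuous function with nonpositive integral vanishes identically, since $\vol_g$ charges every open set. Hence $\tr_g T\equiv 0$ everywhere. A positive semidefinite symmetric form with zero trace has all eigenvalues nonnegative and summing to zero, so all of them vanish and $T=0$. Therefore $\Ric=Kg$, i.e. $M$ is Einstein.

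There is no real obstacle here. The only points to check are that the hypotheses of Theorem \ref{thm:smoothcase} (compactness, \eqref{eq:expgrowth}, \eqref{density}) are met for a closed manifold, which was done above, and that continuity of $R$ is what upgrades "vanishing a.e." to vanishing everywhere.
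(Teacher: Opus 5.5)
Your proposal is correct and is essentially the argument the paper intends. The paper gives no separate proof of this corollary: it follows directly from Theorem \ref{thm:smoothcase} ($\KK=\int_M R\,d\vol_g$ for closed $M$), combined with the trace observation from the introduction that a pointwise nonnegative tensor $\Ric-Kg$ whose trace has nonpositive integral must vanish identically.
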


\subsection{The directional Gaussian volume}
A directional version of the Gaussian volume functional also recovers the Ricci curvature (rather than scalar curvature).

Let $(M,g)$ be a complete $n$-dimensional Riemannian manifold without boundary. For $x\in M$, let $\delta_x>0$ be so that $\exp_x:(0,\delta_x)\times S_xM\to M$ is injective.
Given an open subset $Z\subset S_xM$ with smooth boundary, define the set 
$$Z(x):=\Big\{ \exp_x(rz): z\in Z, r\in [0,\delta_x]\Big\},$$
and put
$$\a_s(x, Z):=(12\pi s)^{-n/2}\int_{Z(x)} e^{-\frac{d^2(x,y)}{12s}}m(dy)$$
as well as $$\k(x,Z):=\liminf_{s\to0} \frac1s\Big( \a_0(x, Z)-\a_s(x, Z)\Big).$$

\begin{remark}
    In this case the Gaussian integral is performed on the bounded region $Z(x)$, hence the sub-exponential volume growth assumption \eqref{eq:expgrowth} is not necessary.
\end{remark}

\begin{theorem}\label{k-ric} For  $(M,g)$ complete $n$-dimensional Riemannian manifold without boundary, it holds
$$ \k(x,Z) = \frac 1{\omega_n}\int_{Z}\Ric(z)\,d\sigma(z).$$
\end{theorem}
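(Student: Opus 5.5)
The natural route is to work directly in geodesic polar coordinates at $x$, since $Z(x)$ lies inside the injectivity region $\exp_x\big((0,\delta_x)\times S_xM\big)$. Write the volume form as $d\vol(\exp_x(r\theta)) = J(r,\theta)\,dr\,d\sigma(\theta)$, where $J$ is the Jacobian of $\exp_x$. The classical expansion from \cite{GrayVolume} (or Jacobi fields) gives
\begin{align}
J(r,\theta)= r^{n-1}\Big(1-\frac{\Ric_x(\theta,\theta)}{6}r^2+O(r^3)\Big),
\end{align}
uniformly in $\theta\in S_xM$ for $r\le\delta_x$, by compactness of $S_xM$ and smoothness. Then
\begin{align}
\a_s(x,Z)=(12\pi s)^{-n/2}\int_Z\int_0^{\delta_x} e^{-\frac{r^2}{12s}}J(r,\theta)\,dr\,d\sigma(\theta).
\end{align}

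Next, identify $\a_0(x,Z)$. Extending the $r$-integral of the leading term $r^{n-1}$ to $\infty$ costs only an error of order $e^{-c/s}$. The Gaussian identity \eqref{eq:gaussianint}, with $\alpha=\beta=0$ and after dividing out the factor $n\omega_n$, then gives
\begin{align}
(12\pi s)^{-n/2}\int_0^\infty e^{-r^2/12s}r^{n-1}\,dr=\frac{1}{n\omega_n}.
\end{align}
Hence $\a_0(x,Z)=\sigma(Z)/(n\omega_n)$, the normalized solid-angle density. This plays the role of Lemma \ref{lem:limitsto0}(i) in the directional setting.

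For the first-order term, insert the expansion of $J$ and again use \eqref{eq:gaussianint}, now with $\alpha=2$, $\beta=0$:
\begin{align}
(12\pi s)^{-n/2}\int_0^\infty e^{-r^2/12s}r^{n+1}\,dr=\frac{12 s\,\Gamma(\frac n2+1)}{n\omega_n\Gamma(\frac n2)}=\frac{6s}{\omega_n}.
\end{align}
This yields
\begin{align}
\a_s(x,Z)=\frac{\sigma(Z)}{n\omega_n}-\frac{6s}{6\,\omega_n}\int_Z\Ric_x(\theta,\theta)\,d\sigma(\theta)+O(s^{3/2}).
\end{align}
Here the $O(r^{n+2})$ remainder contributes $O(s^{3/2})$ by the case $\alpha=3$ of \eqref{eq:gaussianint}, and the truncation at $\delta_x$ contributes exponentially small terms. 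Consequently $\frac1s(\a_0-\a_s)\to\frac1{\omega_n}\int_Z\Ric(z)\,d\sigma(z)$, and this is an actual limit, so the $\liminf$ in the definition of $\k(x,Z)$ equals it.

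As a consistency check, taking $Z=S_xM$ gives $\frac{1}{\omega_n}\int\Ric\,d\sigma=R_x$, matching Theorem \ref{thm:smoothcase}. Alternatively, one could mimic Proof II of Theorem \ref{thm:smoothcase} using $\partial_s\a_s(x,Z)$ and Lemma \ref{upp/low}. However, integration by parts would then produce boundary terms on $\partial Z(x)$, which is why the polar-coordinate route is preferable.

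The only genuinely delicate point is the uniformity of the Jacobian remainder $O(r^3)$ over $\theta\in Z$ up to radius $\delta_x$. This is what justifies integrating the pointwise expansion against the Gaussian weight. It follows from smoothness of $J$ on the compact set $[0,\delta_x]\times S_xM$: write $J/r^{n-1}$ as a smooth function of $(r,\theta)$ and apply Taylor's theorem with a uniform bound on the third derivative in $r$. The hypothesis that $Z$ is open with smooth boundary is not really needed beyond measurability.
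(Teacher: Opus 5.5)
Your proof is correct, but it takes a different route from the paper's. The paper deliberately follows Proof II of Theorem \ref{thm:smoothcase}, in four steps:
\begin{enumerate}[(1)]
\item It writes out $-\partial_s\a_s(x,Z)$.
\item It integrates by parts on $Z(x)$ to trade $\frac{d^2}{6s}$ for $\frac12\Delta d^2$.
\item It inserts the expansion $\frac12\Delta_y d^2(x,y)=n-\frac13\Ric_x(\dot\gamma_0,\dot\gamma_0)d^2+O(d^3)$.
\item It passes from $\partial_s\a_s$ to $\k(x,Z)$ via Lemma \ref{upp/low}.
\end{enumerate}
The boundary terms you were wary of are harmless there. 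On the lateral part of $\partial Z(x)$, which is swept out by radial geodesics, one has $\langle\nabla d,\nu\rangle=0$. The cap at radius $\delta_x$ contributes only $O\big(s^{-n/2-1}e^{-\delta_x^2/12s}\big)$.

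Your argument is instead the directional analogue of Proof I. Expanding the Jacobian of $\exp_x$ gives an expansion of $\a_s(x,Z)$ itself. This buys you three things: a genuine limit with $O(\sqrt s)$ error without Lemma \ref{upp/low}, no divergence theorem, and, as you note, only measurability of $Z$ (which is how the time-dependent section later uses it).

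What the paper's route buys is the identity expressing $-\partial_s\a_s(x,Z)$ through $n-\frac12\Delta d^2$. It is reused directly in the implication from the weak Laplace comparison to the perceived lower Ricci bound, and in Theorem \ref{thm:charactSRF}. In both places only a one-sided inequality on $\Delta d^2$ is assumed.

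The two routes are equivalent at the level of calculus, since $\frac12\Delta r^2=1+r\,\partial_r\log J$ and radial integration by parts reproduces the paper's computation. So your method could also handle those inequalities. You would integrate the Laplace comparison in $r$ to get $J\le r^{n-1}e^{-Kr^2/6}$, at the cost of one extra step.
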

\begin{proof}
    The proof is similar to Theorem \ref{thm:smoothcase}. In particular we follow Proof II, as this approach will be useful in the next theorem. Integration by parts yields
    \begin{align}
        \int_{Z(x)} e^{-\frac{d^2(x,y)}{12s}}\frac{\Delta d^2}{2} d\vol(y) = \int_{Z(x)} e^{-\frac{d^2(x,y)}{12s}}
   \frac{d^2}{6s}d\vol(y) + \int_{\partial Z(x)}e^{-\frac{d^2(x,y)}{12s}}\frac{\nabla d^2(x,y)}{2} \cdot \nu(y) d\sigma(y)
    \end{align}
    Then 
    \begin{align}
        -\partial_s \a_s(x)&=
    (12\pi s)^{-n/2}\frac1{2s}\int_{Z(x)} \bigg[n-\frac1{6s}d^2(x,y) \bigg]e^{-\frac{d^2(x,y)}{12s}}d\vol(y)\\
    &=(12\pi s)^{-n/2}\frac1{2s}\int_{Z(x)} \bigg[n-\frac12\Delta d^2(x,y) \bigg]e^{-\frac{d^2(x,y)}{12s}}d\vol(y) \\
    &\qquad + (12\pi s)^{-n/2}\frac1{4s}\int_{\partial Z(x)} e^{-\frac{d^2(x,y)}{12s}} \nabla d^2(x,y)\cdot \nu(y) d\sigma(y) \\
    &= I + II\,.
    \end{align}
    Using \eqref{eq:laplcomp}, with $K_{x,y}=\Ric_x(\dot\gamma_0,\dot\gamma_0)$, $\gamma$ the unit speed geodesic from $x$ to $y$, the first integral gives the claimed quantity
    \begin{align}
        I
    &=(12\pi s)^{-n/2}\frac1{2s}\int_{Z(x)} \bigg[\frac{K_{x,y}}{3}d^2(x,y) + O(d^3(x,y)) \bigg]e^{-\frac{d^2(x,y)}{12s}}d\vol(y) \\
    &= (12\pi s)^{-n/2}\frac1{2s}\int_0^{\delta(x)} \int_{Z} \bigg[\frac{\Ric_x(z)}{3}\rho^2 + O(\rho^3) \bigg]e^{-\frac{\rho^2}{12s}}\left(\rho^{n-1}+O(\rho^n)\right) d\sigma(z) d\rho \\
    &= \left[\int_Z\Ric_x(z)d\sigma(z) \right] (12\pi s)^{-n/2}\frac1{2s}\int_0^\infty \left[\frac{1}{3}\rho^{n+1} + O(\rho^{n+2})\right] e^{-\frac{\rho^2}{12s}} d\rho \\
    &= \frac{1}{\omega_n}\int_Z\Ric_x(z)d\sigma(z) + O(\sqrt s)\,.
    \end{align}
    On the other hand, let $\partial Z(x) = L\sqcup C$, where $L = \{\exp_x(tz), \ t\in[0,\delta_x), \ z\in Z \}$ and $C = \{ \exp_x(\delta_x z) , \ z\in \overline{Z} \}$. Then $\nabla d\cdot \nu = \ind _{C}$ on $\partial Z(x)$ a.e. and
    \begin{align}
        II = (12\pi s)^{-n/2}\frac1{4s}|C| e^{-\frac{\delta_x^2}{12s}} 2\delta_x =O(s)\,.
    \end{align}
\end{proof}

\begin{theorem}[Novel Characterizations of Ricci lower bounds] For $(M,g)$ complete $n$-dimensional Riemannian manifold without boundary and a number $K\in\R$, the following are equivalent:
\begin{enumerate}[(i)]
\item Lower Ricci bound: $\Ric_x\ge K g_x$ for all $x$
\item Bakry-\'Emery condition $\BE(K,n)$
\item Curvature-dimension condition $\CD(K,n)$
\item Measure contraction property $\MCP(K,n)$
\item Sharp Laplace comparison: for all $x$ and a.e. $y\in M$,
$$\frac12\Delta d^2(x,y) \le n\tilde \tau_{K,n}(d(x,y))\,,
$$
where $\tilde \tau_{K,n}$ was defined in \eqref{eq:tau_tilde}.
\item Weak Laplace comparison: for all $x$ and a.e. $y\in M$,
$$\frac12 \Delta_y d^2(x,y)\le  n-\frac13 K d^2(x,y)$$
\item Perceived lower Ricci bound: for all $x$ and $Z$,
$$\k(x,Z)\ge {Kn}\,\frac{|Z|}{|S_x|} = K\frac{|Z|}{\omega_n}.$$
\end{enumerate}

\end{theorem}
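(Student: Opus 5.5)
The equivalences (i)$\Leftrightarrow$(ii)$\Leftrightarrow$(iii)$\Leftrightarrow$(iv) on smooth Riemannian manifolds are classical: Bakry--\'Emery \cite{BakryEmeryDiffHypercontr}, \cite{sturmvonrenesse, SturmActa2, LottVillani} for $\CD$, and Ohta/Sturm for $\MCP$. The only subtle direction, $\MCP(K,n)\Rightarrow \Ric\ge K$, follows from the second-order expansion of the contracted volume along small geodesic balls. I would quote these results. The new content is the chain (i)$\Rightarrow$(v)$\Rightarrow$(vi)$\Rightarrow$(vii)$\Rightarrow$(i).

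(i)$\Rightarrow$(v) is the classical Laplacian comparison. Along a unit speed geodesic from $x$, the Riccati inequality for $\Delta d$ together with $\Ric\ge K$ gives $\Delta d\le (n-1)\sqrt{K/(n-1)}\cot(d\sqrt{K/(n-1)})$ (and the analogous expressions for $K\le 0$) on the complement of the cut locus. Since $\frac12\Delta d^2 = 1 + d\Delta d$, this is exactly $\frac12\Delta d^2\le \tilde\tau_{K,n}(d)$ in the normalization of \eqref{eq:tau_tilde}. Two points need care here. First, the normalization factor in (v) as printed must be reconciled with \eqref{eq:laplcomp}, where $\tilde\tau_{K,0}=n$. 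Second, the inequality holds distributionally across the cut locus, because the singular part of $\Delta d$ there is nonpositive; it holds a.e. in any case.

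(v)$\Rightarrow$(vi) is an elementary inequality for $\tilde\tau$. With $a=\sqrt{K/(n-1)}$ and $u=ar$, one has $u\cot u\le 1-u^2/3$ for $u\in(0,\pi)$, and $u\coth u\le 1+u^2/3$ for all $u>0$. Both follow from the series expansions and the alternating/positive structure of $\cot$ and $\coth$. Hence $\tilde\tau_{K,n}(r)\le n-\frac{K}{3}r^2$ up to the normalization fixed above.

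(vi)$\Rightarrow$(vii) follows the proof of Theorem~\ref{k-ric}. We write $-\partial_s\a_s(x,Z)=I+II$, where
\[
I=(12\pi s)^{-n/2}\frac1{2s}\int_{Z(x)}\Big[n-\tfrac12\Delta d^2\Big]e^{-d^2/12s}\,d\vol,
\]
and $II=O(s)$. Using (vi), the integrand of $I$ is at least $\frac K3 d^2 e^{-d^2/12s}$. Computing in polar coordinates with the density $\rho^{n-1}+O(\rho^n)$ and using \eqref{eq:gaussianint} gives
\[
I\ge K\,\frac{|Z|}{\omega_n}+O(\sqrt s).
\]
Lemma~\ref{upp/low}(ii) then yields (vii). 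One caveat is that the inequality is only a.e. while $\Delta d^2$ may carry a singular part. Inside $Z(x)$, with $\delta_x$ below the injectivity radius, $d^2$ is smooth, so no singular part arises.

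(vii)$\Rightarrow$(i) is the step I expect to be the main obstacle, though here it is mostly a matter of localization. By Theorem~\ref{k-ric}, (vii) reads
\[
\frac1{\omega_n}\int_Z \Ric_x(z)\,d\sigma(z)\ \ge\ K\frac{|Z|}{\omega_n}
\]
for all open $Z\subset S_xM$ with smooth boundary. Take $Z$ to be a small geodesic cap around a unit vector $v$, divide by $|Z|$, and let the cap shrink. Continuity of $z\mapsto\Ric_x(z)$ on the sphere then gives $\Ric_x(v,v)\ge K$ for every unit $v$, which is (i). To make this step rigorous, I would check that the proof of Theorem~\ref{k-ric} did not assume $Z$ large, and that the error term is uniform enough that the identity holds exactly for each fixed $Z$, which it does since it is an equality for each $Z$.
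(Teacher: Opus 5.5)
Your proposal is correct and follows the paper's proof. The paper quotes (i)--(iv), obtains (v)$\Rightarrow$(vi) from the elementary bound on $\tilde\tau_{K,n}$, proves (vi)$\Rightarrow$(vii) by the integration by parts of Theorem~\ref{k-ric}, and gets (vii)$\Rightarrow$(i) by shrinking caps $Z_\eps=B_\eps(\theta)$. The one deviation is that you reach (v) from (i) via the classical Riccati comparison, while the paper reaches it from (iv) via Gigli's nonsmooth Laplace comparison; both are valid here.

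You are right that \eqref{eq:tau_tilde} is misnormalized: the intended bound is $\frac12\Delta d^2\le 1+(n-1)u\cot u$. One small fix: the series argument only justifies $u\coth u\le 1+u^2/3$ for $u<\pi$, since the $\coth$ series diverges beyond that. For all $u>0$, compare the Taylor coefficients of the entire functions $u\cosh u$ and $(1+\frac{u^2}{3})\sinh u$ instead.
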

\begin{proof} Equivalences $(i)-(iv)$ follow from \cite{BakryEmeryDiffHypercontr, sturmvonrenesse, SturmActa2}, $(iv)\implies (v)$ from  \cite{giglionthediffstructure}. 

$(v)\implies (vi).$ It follows by noticing that $n\tilde \tau_{K,n}(d(x,y)) \leq n - \frac{1}{3}Kd^2(x,y)$.

$(vi)\implies (vii).$ From the proof of Theorem \ref{k-ric}, 
\begin{align}
    -\partial_s \a_s(x,Z) &=(12\pi s)^{-n/2}\frac1{2s}\int_{Z(x)} \bigg[n-\frac12\Delta d^2(x,y) \bigg]e^{-\frac{d^2(x,y)}{12s}}d\vol(y) + O(s) \\
    &\geq (12\pi s)^{-n/2}\frac1{2s}\int_{Z(x)} \frac{1}{3}Kd^2(x,y)e^{-\frac{d^2(x,y)}{12s}}d\vol(y) + O(s) \\
    &= K\frac{|Z|}{\omega_n} +O(\sqrt s)\,.
\end{align}

$(vii)\implies (i).$ Consider $\theta\in S_xM$, $Z_\eps := B_\eps(\theta)\subset S_x M$, and $Z_\eps (x)=\{ \exp_x(rz): z\in B_\eps(\theta), r\in [0,\delta_x]\}$. Then by Theorem \ref{k-ric},
\begin{align}
    K \leq \frac{1}{|Z_\eps|}\int_{Z_\eps}\Ric(z)d\sigma(z) \to \Ric(\theta) \qquad \text{as $\eps\to0$.}
\end{align}
\end{proof}
 
\section{Examples of Gaussian volume functional}\label{sec:examples}
 
\subsection{Weighted Riemannian manifolds}\label{subsec:weighted}
Let $(M,g)$ be a smooth, complete $n$-dimensional Riemannian manifold without boundary and put $d=d_g$,  $dm=\rho\,d\vol_g$ for some $\rho=e^f\in C^2(M)$. The triple $(M,g,m)$ is called a {\it weighted Riemannian manifold}. On these spaces the weighted Laplacian $\Delta_{f} u=\Delta u+\nabla f\nabla u$ is formally self-adjoint on $L^2(dm)$. Moreover, a lower bound on the weighted Ricci curvature 
$\Ric^{f}=\Ric- \Hess f$ 
ensures that $\Delta_f$ enjoys similar analytic properties to the classical Laplacian 
$\Delta$ under a lower Ricci bound.

Note that in this setting Equation \eqref{density} is satisfied, in particular
\begin{align}
    \a_0(x)=\rho(x) \qquad \text{and} \qquad {\AA}_0=\int \rho^2d\vol\,.
\end{align}

\begin{theorem}\label{thm:weightedRm}
Let $(M,g,m=\rho d\vol_g)$ be a weighted smooth complete $n$-dimensional Riemannian manifold without boundary satisfying \eqref{eq:expgrowth}, set $\rho=e^f$, then
\begin{align}
{\k}^{(n)}(x)
=R(x)\rho(x)-3\Delta \rho(x) = (R(x)-3|\nabla f(x)|^2-3\Delta f(x))e^{f(x)} 
\end{align}
and if $M$ is closed
\begin{align}
{\KK} =
\int \left[\rho^2 R+  3|\nabla \rho|^2\right]d\vol_g = \int\left[R+3|\nabla f|^2\right]e^{2f}d\vol_g\,.
\end{align}
\end{theorem}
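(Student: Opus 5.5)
We follow Proof I of Theorem \ref{thm:smoothcase}, replacing the Riemannian volume of small spheres by its $\rho$-weighted analogue. Fix $x$ and work in normal coordinates $y=\exp_x(r\theta)$ with $\theta\in S_xM$. Set $v(r):=m(B_r(x))=\int_{B_r(x)}\rho\,d\vol_g$.

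\emph{Step 1: expansion of $v'(r)$.} Taylor expand the weight along radial geodesics:
$$\rho(\exp_x(r\theta))=\rho(x)+r\,\langle\nabla\rho(x),\theta\rangle+\tfrac{r^2}{2}\Hess\rho_x(\theta,\theta)+O(r^3).$$
The Riemannian area element on the geodesic sphere is $r^{n-1}\bigl(1-\tfrac{r^2}{6}\Ric_x(\theta,\theta)+O(r^3)\bigr)\,d\sigma(\theta)$, which after averaging over $\theta$ gives Gray's expansion. Multiply the two expansions and integrate over $S_xM$. The linear term vanishes by the antipodal symmetry $\theta\mapsto-\theta$. The Hessian term averages to $\tfrac{\Delta\rho(x)}{n}$, since $\int_{S_xM}\Hess\rho_x(\theta,\theta)\,d\sigma=\tfrac{\Delta\rho(x)}{n}\,n\omega_n$. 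This yields
$$v'(r)=n\omega_n r^{n-1}\Big[\rho(x)-\tfrac{R(x)\rho(x)}{6n}r^2+\tfrac{\Delta\rho(x)}{2n}r^2+O(r^3)\Big].$$

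\emph{Step 2: plug into $\partial_s\a_s$.} By Lemma \ref{upp/low}(i) and the computation \eqref{eq:vol_and_scal_curv}, the constant term $\rho(x)$ cancels exactly between the $-n$ and $\tfrac{r^2}{6s}$ pieces. A term $c\,r^2$ inside the bracket contributes $-6n\,c$ to $-\lim_{s\searrow0}\partial_s\a_s(x)$: indeed $c=-R/(6n)$ produced $R$ in Theorem \ref{thm:smoothcase}. The $O(r^3)$ remainder contributes $O(\sqrt s)$ by \eqref{eq:gaussianint}. The contribution from outside a small ball is exponentially small in $1/s$, using \eqref{eq:expgrowth} together with local boundedness of $\rho$.

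With $c=\bigl(\Delta\rho(x)-R(x)\rho(x)/3\bigr)/(2n)$ this gives $\lim_{s\searrow0}\partial_s\a_s(x)=-R\rho+3\Delta\rho$. The limit exists, so the Remark after Lemma \ref{upp/low} gives $\k(x)=R\rho-3\Delta\rho$. The identity $\Delta e^f=e^f(\Delta f+|\nabla f|^2)$ turns this into the second form of the formula.

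\emph{Step 3: the integrated statement.} For closed $M$, the $O(r^3)$ constants in Step 1 depend only on bounds for the curvature, for $\rho\in C^2$ (and its modulus of continuity), and for the injectivity radius; all of these are uniform by compactness. Hence $\partial_s\a_s(x)\to-(R\rho-3\Delta\rho)(x)$ uniformly in $x$. Then Lemma \ref{upp/low}(iii) and dominated convergence give
$$\KK=\int_M(R\rho-3\Delta\rho)\,\rho\,d\vol_g=\int_M\bigl[\rho^2R+3|\nabla\rho|^2\bigr]d\vol_g,$$
after one integration by parts. Finally $|\nabla\rho|^2=e^{2f}|\nabla f|^2$.

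\emph{Main obstacle.} The delicate point is the uniform control of the remainder in Step 1 when $\rho$ is only $C^2$. The Hessian term is only continuous, so the remainder is really $o(r^2)$ rather than $O(r^3)$, and the correction is then $o(1)$ instead of $O(\sqrt s)$. I would handle this via the uniform modulus of continuity of $\Hess\rho$ on compact sets, which suffices for both the pointwise limit and the uniform convergence on closed $M$.
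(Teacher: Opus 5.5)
Your proof is correct, but it follows the route of Proof I of Theorem~\ref{thm:smoothcase} (expanding the weighted area $v'(r)$ of small geodesic spheres), whereas the paper follows Proof II. The paper integrates by parts against $e^{f}d\vol_g$ to trade $\frac{d^2}{6s}$ for $\frac12\Delta_f d^2$. It then uses the expansion $\frac12(\Delta_f)_y d^2(x,y)=n+\nabla_{\dot\gamma_0}f(x)\,d-\frac13(\Ric-3\Hess f)(\dot\gamma_0,\dot\gamma_0)\,d^2+O(d^3)$, multiplies by the expansion of $e^{f(y)}$ around $x$, and finally traces the resulting tensor $\Ric-3\Hess f-3\nabla f\otimes\nabla f$.

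Your version is more direct. Since $\a_s(x)$ depends only on $r\mapsto m(B_r(x))$, a single Taylor expansion of $\rho$ along radial geodesics produces $\Hess\rho$, and hence $R\rho-3\Delta\rho$, at once. This needs no integration by parts, and the chain rule $\Hess e^f=e^f(\Hess f+\nabla f\otimes\nabla f)$ only enters at the end. Your $o(r^2)$ remainder also matches the stated hypothesis $\rho\in C^2$, whereas the paper's $O(d^3)$ remainders implicitly assume more smoothness. Your constant bookkeeping checks out: an $r^2$-coefficient $c$ contributes $-6nc$ via \eqref{eq:gaussianint}.

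What the paper's route buys is the untraced, tensorial form of the integrand. That form underlies the subsequent comparison with the Bakry--\'Emery and Perelman weighted scalar curvatures. It is also the mechanism (Laplace comparison plus integration by parts) that the paper reuses to obtain inequalities in the $\RCD$ setting and in the characterization of lower Ricci bounds.
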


\begin{proof}
    Fix $x$, consider $y$ in a neighbourhood of $x$ with $\gamma$ unit speed geodesic from $x$ to $y=\gamma_r$. Combining the classical Laplacian expansion \eqref{eq:laplcomp} with 
    \begin{align}
        \frac{1}{2}\left\langle \nabla f(y),\nabla_yd^2(x,y)\right\rangle = d(x,y) \frac{d}{dr}f(\gamma_r)= \nabla_{\dot\gamma_0}f(x) d(x,y) + \Hess f(x)(\dot\gamma_0,\dot\gamma_0)d^2(x,y) + O(d^3)
    \end{align}
    we obtain
    \begin{align}
    \frac{1}{2}(\Delta_f)_y d^2(x,y)
    &= n + \nabla_{\dot\gamma_0}f(x) d(x,y) - \frac{1}{3}(\Ric -3\Hess f)(\dot\gamma_0,\dot\gamma_0)d^2(x,y) + O(d^3)
\end{align}
Then
\begin{align}
    &-\partial_s \a_s(x) \\
    &=
    (12\pi s)^{-n/2}\frac1{2s}\int_M \bigg[n-\frac1{6s}d^2(x,y) \bigg]e^{-\frac{d^2(x,y)}{12s}}e^{f(y)} d\vol(y)\\
    &=(12\pi s)^{-n/2}\frac1{2s}\int_M \bigg[n-\frac12\Delta_f d^2(x,y) \bigg]e^{-\frac{d^2(x,y)}{12s}} e^{f(y)} d\vol(y) \\
    &=(12\pi s)^{-n/2}\frac1{2s}\int_M \bigg[-\nabla_{\dot\gamma_0}f(x) d(x,y) + \frac{1}{3}(\Ric -3\Hess f)(\dot\gamma_0,\dot\gamma_0)d^2(x,y) + O(d^3(x,y)) \bigg]\\ & \hspace{5cm} \times e^{-\frac{d^2(x,y)}{12s}}\left[e^{f(x)}+e^{f(x)}\nabla_{\dot\gamma_0} f(x)d(x,y) + O(d^2(x,y))\right]d\vol(y) \\
    &= (12\pi s)^{-n/2}\frac1{2s}\int_0^\infty \int_{S^n} \bigg[-\nabla_{\theta}f(x) r + \frac{1}{3}(\Ric -3\Hess f-3\nabla f\otimes\nabla f)(\theta,\theta)r^2 + O(r^3) \bigg]e^{f(x)} \\
    &\hspace{11cm} \times e^{-\frac{r^2}{12s}}e^{f(x)}r^{n-1} d\sigma(\theta) dr \\
    &= (12\pi s)^{-n/2}\frac1{2s}\int_0^\infty \left[\frac{\tr (\Ric -3\Hess f -3\nabla f\otimes \nabla f)(x)}{3n}e^{f(x)}r^{n+1} n \omega_n + O(r^{n+2})\right] e^{-\frac{r^2}{12s}} dr \\
    &= (R(x)-3\Delta f(x) -3|\nabla f|^2)e^{f(x)} + O(\sqrt s)
\end{align}
Compactness ensures uniformity of the error in $x$ and the integral statement follows after integrating by parts.
\end{proof}
 
\begin{remark} 
\begin{enumerate}[(i)]
\item Denote with $R^\rho=R-\Delta \log\rho$ the trace of the Bakry-\'Emery Ricci tensor, then 
\[ {\KK}^{(n)} =\int \left[\rho^2 R^\rho + |\nabla \rho|^2\right]d\vol_g.
\]
\item On the other hand, in \cite{perelman2002entropy} Perelman considered the weighted scalar curvature 
\begin{align}
\widetilde R^{e^f} :=R-2\Delta f-|\nabla f|^2 = \widetilde R^\rho= R + \frac{|\nabla \rho|^2}{\rho^2}- 2\frac{\Delta\rho}{\rho}
\end{align}
which solves the weighted Bianchi identity $\Div_f \Ric^{f} = \frac{1}{2}\nabla\widetilde R^{e^f}$. Then one can see that
\begin{align}
   \int \widetilde R^\rho \rho^2 d\vol_g = \int (R\rho^2 + |\nabla \rho|^2 - 2\rho\Delta\rho)d\vol_g = \int (R\rho^2 + 3|\nabla\rho|^2)d\vol_g = {\KK} 
\end{align}
This notion of weighted scalar curvature was also used in \cite{BaldaufOzuch22,ChuZhu24} in relation to a (weighted) positive mass theorem.

\item Moreover, let 
\begin{align} 
R^{\rho,N} &= \tr (\Ric -\Hess f - \frac{1}{N-n}\nabla f\otimes\nabla f)=R-\Delta f -\frac{1}{N-n}|\nabla f|^2  \\
&= R-\frac{\Delta \rho}{\rho} + \left(1-\frac{1}{N-n}\right)\frac{|\nabla\rho|^2}{\rho^2}
\end{align}
denote the trace of the Bakry-\'Emery $N$-Ricci tensor, then
\begin{align}
    \int R^{\rho,N}\rho^2d\vol_g = \int \left[ R\rho^2 +\left(2-\frac{1}{N-n}\right) |\nabla\rho|^2 \right] d\vol_g \,.
\end{align}
In particular, if we choose $N=n-1$:
\begin{align}
    \KK^{(n)} = \int R^{\rho,n-1}\rho^2 d\vol_g\,.
\end{align}
Note however that the choice $N=n-1$ is neither in the classical regime $\{N\geq n\}$, nor in the negative one $\{N<0\}$ studied by Ohta \cite{OhtaNegative}.
\end{enumerate}
\end{remark}

\begin{corollary} Assume  $(M,g, \rho)$ is a closed weighted $n$-dimensional Riemannian manifold with $\Ric\ge 0$ or with $\Ric^\rho\ge 0$ and that is \emph{minimal} in the sense that
${\KK}^{(n)}\le 0$.
Then $\rho$ is constant and $M$ is \emph{Ricci flat}: $$\Ric_x=0 \quad\text{for all $x$.}$$
\end{corollary}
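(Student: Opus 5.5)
Both cases rest on the integral formula of Theorem \ref{thm:weightedRm} and its reformulations in the preceding Remark. The plan is to write ${\KK}^{(n)}$ as an integral whose integrand is pointwise non-negative under the respective Ricci assumption. Minimality ${\KK}^{(n)}\le0$ then forces every term to vanish.

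\emph{Case $\Ric\ge0$.} Theorem \ref{thm:weightedRm} gives
\begin{align}
{\KK}^{(n)}=\int_M\left[\rho^2R+3|\nabla\rho|^2\right]d\vol_g .
\end{align}
Since $\Ric\ge0$ we have $R\ge0$, so both summands are non-negative. Hence ${\KK}^{(n)}\le0$ yields $\rho^2R\equiv0$ and $|\nabla\rho|\equiv0$, by continuity of $\rho\in C^2$ and $R$. Because $\rho=e^f>0$, this gives $R\equiv0$. Together with $\Ric\ge0$, i.e.\ $\Ric$ is a non-negative symmetric form with vanishing trace, we conclude $\Ric\equiv0$. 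Also $\nabla\rho\equiv0$ on the connected closed manifold $M$, so $\rho$ is constant.

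\emph{Case $\Ric^\rho=\Ric-\Hess f\ge0$.} Here we use item (i) of the Remark:
\begin{align}
{\KK}^{(n)}=\int_M\left[\rho^2R^\rho+|\nabla\rho|^2\right]d\vol_g,\qquad R^\rho=\tr\Ric^\rho=R-\Delta f .
\end{align}
This identity should be checked: expanding $\rho^2\Delta f$ with $f=\log\rho$ and integrating by parts gives $-\int\rho^2\Delta f=\int 2\rho\nabla\rho\cdot\nabla\rho/\rho=2\int|\nabla\rho|^2$, consistent with the $3|\nabla\rho|^2$ in the theorem. Now $\Ric^\rho\ge0$ gives $R^\rho\ge0$, so minimality again forces $\nabla\rho\equiv0$ and $R^\rho\equiv0$. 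Thus $f$ is constant, so $\Hess f=0$ and $\Ric=\Ric^\rho\ge0$ with trace $R=R^\rho=0$. As before this gives $\Ric\equiv0$.

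The only potentially delicate points are the following. First, we need the integration by parts on the closed manifold, which is unproblematic. Second, we need connectedness of $M$ to pass from $\nabla\rho=0$ to $\rho$ constant. If $M$ is not connected, the argument applies componentwise, giving $\rho$ locally constant. The main "obstacle" is merely verifying the Remark's identity (i), which I would redo explicitly as above.
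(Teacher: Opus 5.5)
Your proposal is correct and is the argument the paper intends. The paper gives no separate proof; the corollary follows from the integral formula ${\KK}^{(n)}=\int[\rho^2R+3|\nabla\rho|^2]\,d\vol_g$ of Theorem~\ref{thm:weightedRm} (case $\Ric\ge0$) and its rewriting ${\KK}^{(n)}=\int[\rho^2R^\rho+|\nabla\rho|^2]\,d\vol_g$ from Remark~(i) (case $\Ric^\rho\ge0$), which you verified correctly by integration by parts. In both cases pointwise nonnegativity of the integrand forces $\nabla\rho\equiv0$ and a nonnegative Ricci form with zero trace, hence $\Ric\equiv0$.
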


\begin{example}
    Consider the Gaussian space $(\R^n,d_E,e^{-|x^2|/2}d\mathcal L^n)$, which satisfies $\CD(1,\infty)$. We compute
    \begin{align}
        \k^{(n)} = -3e^{-|x|^2/2}(|x|^2-n)
    \end{align}
    and
    \begin{align}
        \KK^{(n)} = \frac{3n}{2}\pi^{n/2}\,.
    \end{align}
\end{example}

\subsection{RCD Spaces}
We now consider the case when $(X,d,m)$ is an $\RCD(K,n)$ space, \cite{AGS1,AGS2,AGS3,EKS}. Recall the discussion after \eqref{density} for the existence of the density $\rho(x)$.
\begin{theorem} Let $(X,d,m)$ be a noncollapsed $\RCD(K,n)$ space. Then
$$ {\k}^{(n)}(x)
\ge nK\rho(x). $$
If $m=c \mathcal{H}^n$ is finite, then
$$ {\KK}^{(n)}\ge n K \int_X \rho dm =nKc^2\mathcal H^n(X) \,.
$$
\end{theorem}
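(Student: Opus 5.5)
The plan is to work with the expression for $\partial_s\a_s(x)$ from Lemma \ref{upp/low}(i) and to estimate it through Bishop--Gromov volume comparison. I will avoid a Laplacian comparison, since integrating by parts against $\Delta d^2$ on an $\RCD$ space would bring in measure-valued Laplacians. Put $v(r)=m(B_r(x))$ and integrate by parts in the Stieltjes sense, exactly as in Lemma \ref{lem:limitsto0}(iii). Using $\frac{d}{dr}\big[(-n+\tfrac{r^2}{6s})e^{-r^2/12s}\big]=\big(\tfrac{r}{3s}+\tfrac{nr}{6s}-\tfrac{r^3}{72s^2}\big)e^{-r^2/12s}$, this gives
\begin{align}
-\partial_s\a_s(x)=(12\pi s)^{-n/2}\frac1{2s}\int_0^\infty \Big[\frac{(n+2)r}{6s}-\frac{r^3}{72s^2}\Big]e^{-\frac{r^2}{12s}}\,v(r)\,dr .
\end{align}
The boundary terms vanish by \eqref{eq:expgrowth} and $v(0)=0$. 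The kernel $w_s(r)=\frac{(n+2)r}{6s}-\frac{r^3}{72s^2}$ is positive for $r<r_0:=\sqrt{12(n+2)s}$ and negative for $r>r_0$. When $v(r)=\omega_n r^n$ exactly (flat space), the integral is zero.

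Next write $v(r)=\rho(x)\,\omega_n r^n\,\phi(r)$, where
\begin{align}
\phi(r)=\frac{v(r)}{\rho(x)\,|\mathbb B^{K,n}_r(o)|}\cdot\frac{|\mathbb B^{K,n}_r(o)|}{\omega_n r^n}.
\end{align}
By Bishop--Gromov, the first factor is non-increasing and tends to $1$ as $r\to 0$, so it is $\le 1$. Because the kernel changes sign exactly once, a single-crossing argument applies. Compare $\phi$ with the model profile $\psi(r)=|\mathbb B^{K,n}_r(o)|/(\omega_nr^n)$. The ratio $\phi/\psi$ is non-increasing and $\le1$. Hence, normalising at the crossing point $r_0$,
\begin{align}
\int w_s e^{-r^2/12s}r^n\big(\phi-c\,\psi\big)\,dr\ge0,\qquad c=\frac{\phi(r_0)}{\psi(r_0)}\le 1 .
\end{align}
This holds because $\phi-c\psi$ is $\ge0$ where $w_s>0$ and $\le0$ where $w_s<0$. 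What remains is the model integral $c\,\rho(x)$ times the flat-space computation with $\psi$. By Proof I of Theorem \ref{thm:smoothcase}, $\psi(r)=1-\frac{K}{6(n+2)}\cdot n\,r^2\cdot\frac{n-1}{n-1}+O(r^4)$, up to the correct normalisation, and that proof gives a limit of $R_{\text{model}}=nK$. One must check the sign: the model integral need not be nonnegative, but it converges to $nK$. Since $c\to 1$ as $s\to0$ (because $r_0\to0$ and $\phi/\psi\to1$), we conclude $-\partial_s\a_s(x)\ge c\,\rho(x)(nK+O(\sqrt s))$, whose $\liminf$ is $\ge nK\rho(x)$. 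Lemma \ref{upp/low}(ii) then yields $\k(x)\ge nK\rho(x)$. The tail $r\ge$ (injectivity-type cutoff) is exponentially small by \eqref{eq:expgrowth}, and for $K>0$ one truncates at $\pi\sqrt{(n-1)/K}$.

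For the integral statement, $\rho\le 1$ (or constant $c$) on noncollapsed spaces, and the error terms above are uniform in $x$. The quantity $c(x,s)$ enters multiplicatively with $nK$, so when $K<0$ I would use $c\le1$ together with $c\to1$ uniformly. That uniformity is the delicate point, because $\phi/\psi\to1$ is pointwise only. To get around it I would instead integrate the inequality and use Fatou or dominated convergence. When $K\ge0$ one simply has $-\partial_s\a_s\ge c\rho\cdot(\text{model})$, and the model term is bounded below by $-C\sqrt s$. When $K<0$, I would bound $c\,nK\ge nK$ directly since $c\le1$. The conclusion then follows from Lemma \ref{upp/low}(iii) together with $\int\rho\,dm=c^2\mathcal H^n(X)$.

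The main obstacle I expect is the single-crossing comparison: specifically, verifying that the flat-type kernel changes sign exactly once even after it is combined with the $\psi$-weight, and controlling $c$ uniformly.
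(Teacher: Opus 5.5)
Your argument is correct, but it takes a genuinely different route from the paper's. The paper uses the distributional Laplace comparison $\frac12\Delta d^2(x,\cdot)\le n-\frac K3 d^2(x,\cdot)$ on $\RCD(K,n)$ spaces. It integrates by parts in $y$ as in \eqref{eq:IBP}, with cut-offs in the noncompact case, to get $\partial_s\a_s(x)\le-\frac K3(12\pi s)^{-n/2}\frac1{2s}\int d^2e^{-d^2/12s}\,dm$, and then reads off $-K(n\rho(x)+o(1))$ from Lemma \ref{lem:limitsto0}(iii). You integrate by parts only in the radial variable and use nothing beyond Bishop--Gromov monotonicity of $v(r)/|\mathbb B^{K,n}_r(o)|$. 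You transfer the estimate to the model space by a Chebyshev-type single-crossing argument and then apply Theorem \ref{thm:smoothcase} there.

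What your route buys: it avoids measure-valued Laplacians and the justification of integration by parts altogether. So it works verbatim on any $\MCP(K,n)$ space, in particular any $\CD(K,n)$ space, with positive finite density. It also handles the integrated statement without any uniformity of the small-scale expansion of $v$: for $K<0$, $0\le c\le1$ gives $-\partial_s\a_s(x)\ge\rho(x)M_K(s)$ uniformly in $x$, where $M_K(s)\to nK$ is the model quantity. The paper's route is shorter once the $\RCD$ Laplace comparison is available, and it fits the Laplacian-based viewpoint (Proof II, Theorem \ref{thm:charactSRF}) used elsewhere in the paper.

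A few small corrections. First, the derivative of $(-n+\frac{r^2}{6s})e^{-r^2/12s}$ is $\bigl(\frac{(n+2)r}{6s}-\frac{r^3}{36s^2}\bigr)e^{-r^2/12s}$, so the sign change is at $r_0=\sqrt{6(n+2)s}$. Only with the coefficient $\frac1{36}$ does the flat integral vanish and the model integral agree with $-\partial_s\a_s$ on the model space as computed in Theorem \ref{thm:smoothcase}.

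Second, the obstacles you anticipate are not there. Since $\psi>0$ (and $\rho(x)>0$ on noncollapsed spaces), $w_s(\phi-c\psi)=w_s\psi(\phi/\psi-c)\ge0$ needs only the single sign change of $w_s$ itself. No uniformity of $c$ is required either.

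Third, for $K>0$ a lower bound $-C\sqrt s$ on the model term would only yield $\KK\ge0$. Use instead $M_K(s)\ge nK-C\sqrt s>0$, together with dominated convergence $\int c(x,s)\rho(x)\,dm(x)\to\int\rho\,dm$. This is justified since $0\le c\le1$, $c\to1$ pointwise, $\rho\le1$, and $m$ is finite.
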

\begin{proof}
Recall that on these spaces the Laplace comparison in distributional sense holds \cite{giglionthediffstructure}:
\begin{align*}\frac12\Delta d^2(x,y) &\le n\tilde \tau_{K,n}(d(x,y)) \\
&\le
1+(n-1)\Big(1-\frac13 K/(n-1)d^2(x,y)\Big)\\
&=n-\frac13 K d^2(x,y)
\end{align*}
Using integration by parts as in \eqref{eq:IBP},
\begin{align}
     \partial_s \a_s(x)&=
    (12\pi s)^{-n/2}\frac1{2s}\int \bigg[-n+\frac1{6s}d^2(x,y) \bigg]e^{-\frac{d^2(x,y)}{12s}}m(dy)\\
    &=(12\pi s)^{-n/2}\frac1{2s}\int \bigg[-n+\frac12\Delta d^2(x,y) \bigg]e^{-\frac{d^2(x,y)}{12s}}m(dy)\\
    &\le-\frac K3 \, (12\pi s)^{-n/2}\frac1{2s}\int  d^2(x,y) e^{-\frac{d^2(x,y)}{12s}}m(dy) \\
   &=-K \,\left[n \rho(x)+ o(1)\right]. \label{eq:AforRCD}
   \end{align}
In the non compact case the chain rule and the integration by parts is justified by considering cut-off functions on an exhaustion of the space $X$, see \cite[Lem.~6.7]{AMS16}.
The inequality follows from the Laplace comparison and the last line from Lemma \ref{lem:limitsto0} $(iii)$. Using Lemma \ref{upp/low} we obtain the conclusion.

The integrated statement is justified by the uniformity in $x\in X$ of $ o(1)$ in \eqref{eq:AforRCD}, and boundedness of $\rho$. The form $m=c\mathcal{H}^n$ is guaranteed by \cite{BGHZ23}.
\end{proof}

\begin{definition}\label{def:minimalRCD}
    A noncollapsed $\RCD(K,n)$ space is said minimal if $\KK^{(n)} = nK\int_X \rho dm$.
\end{definition}

\subsection{Manifolds with boundary, doubling, tripling etc.}
Consider a smooth Riemannian manifold $(M,g)$ with non empty boundary $\partial M$. One can consider, for $k\in \N_0$ the space $\widehat M^{\cup k}$ by gluing $k$ copies of $M$. This results in a topological space (a differentiable manifold with nonempty boundary iff $k=1$ and a differentiable manifold without boundary iff $k=2$) on which we can define a metric $d$ and a measure $m$.

Note that for any $n$, assumption \eqref{density} is satisfied ($\rho=1$ for interior points, and $\rho=\frac{k}{2}$ for points on $\partial M$), and $\widehat{\AA}_0 =k|M|$.

\subsubsection{Manifolds with boundary}
In the following theorem, the $\sqrt s$ term should be compared with the notion of \textit{mm boundary}, see \cite[Ex.~1.4]{KLP21}.
\begin{theorem}\label{thm:boundary}
Let $M$ be a compact smooth manifold with boundary. Then
 $${\AA}_s= |M| -\sqrt{\frac{3s}\pi} |\partial M|+O(s).$$
In particular, $\KK=+\infty$, but $\int \k d\vol = \int_{M\setminus\partial M}Rd\vol$.
\end{theorem}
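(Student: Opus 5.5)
Write $\AA_s=\int_M\a_s(x)\,dx$ and split $M$ into a collar $U_\delta=\{x: d(x,\partial M)<\delta\}$ with fixed small $\delta$ and the interior $M\setminus U_\delta$. We use Fermi coordinates $x=(z,t)$, with $z\in\partial M$ and $t=d(x,\partial M)$, which are valid on $U_\delta$ by compactness and smoothness of the boundary.

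\emph{Interior points.} For $x\in M\setminus U_\delta$ the Gaussian mass outside $B_\delta(x)$ is $O(e^{-c/s})$. Inside $B_\delta(x)$ the expansion of Proof I of Theorem \ref{thm:smoothcase} applies, uniformly in $x$, and gives $\a_s(x)=1-sR(x)+O(s^{3/2})$. This piece therefore contributes $|M\setminus U_\delta|+O(s)$.

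\emph{Collar points.} For $x=(z,t)$ with $t<\delta$ we compare with the Euclidean half-space. Since the boundary is convex or concave only at second order, the metric, the distance $d(x,y)$ (which is the intrinsic length distance in $M$, agreeing with the ambient one up to higher order near $x$) and the volume in Fermi coordinates centred at $z$ satisfy
\[
d^2=|x-y|_{\mathrm{eucl}}^2\,(1+O(|x-y|+t)),\qquad d\vol=(1+O(\cdot))\,dy .
\]
The leading term of $\a_s(z,t)$ is the Euclidean half-space Gaussian integral. With variance $6s$ per coordinate this equals
\[
\Phi\big(t/\sqrt{6s}\big),\qquad \Phi \text{ the standard normal distribution function.}
\]
The first-order corrections, coming from the second fundamental form and the curvature, are $O(\sqrt s)$ relative to this, uniformly in $t$.

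\emph{Integration over the collar.} Integrating in $t$ with volume element $(1+O(t))\,dt\,d\sigma(z)$ gives
\[
\int_0^\delta \Phi\big(t/\sqrt{6s}\big)\,dt=\delta-\sqrt{6s}\int_0^\infty\big(1-\Phi(u)\big)\,du+O(e^{-c/s}),\qquad \int_0^\infty(1-\Phi(u))\,du=\frac{1}{\sqrt{2\pi}} .
\]
Thus the deficit per unit boundary area is $\sqrt{6s}/\sqrt{2\pi}=\sqrt{3s/\pi}$, which is exactly the claimed coefficient. The error terms are of two kinds. The $O(t)$ Jacobian factor multiplied by $1-\Phi(t/\sqrt{6s})$ integrates to $O(s)$. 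The $O(\sqrt s)$ corrections are supported effectively where $t\lesssim\sqrt s$, or else are $O(s)$ pointwise by the interior expansion, so they also integrate to $O(s)$. Adding the $O(1)$ contribution of $\int_{U_\delta}1$ back in recovers $|M|$. The step requiring the most care, and the main obstacle, is this uniform error control in the collar. Points with $t\sim\sqrt s$ must be matched with the interior expansion. The cleanest route is to expand $\a_s(z,t)=\Phi(t/\sqrt{6s})+\sqrt s\,\psi(t/\sqrt{6s},z)+O(s)$ with $\psi$ decaying in its first argument, so that $\int_0^\delta \sqrt s\,\psi\,dt=O(s)$.

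\emph{Consequences.} Since $\AA_0=\int\rho=|M|$ (the boundary is null), we get $\frac1s(\AA_0-\AA_s)=\sqrt{3/(\pi s)}\,|\partial M|+O(1)\to+\infty$, so $\KK=+\infty$. Pointwise, interior points have $\k(x)=R(x)$ by Theorem \ref{thm:smoothcase}, since the computation is local. On the null set $\partial M$ the value of $\k$ is irrelevant for the integral; there $\a_0=\tfrac12$ and $\a_s=\tfrac12+O(\sqrt s)$, so $\k$ may even be $\pm\infty$. Hence $\int\k\,d\vol=\int_{M\setminus\partial M}R\,d\vol$.
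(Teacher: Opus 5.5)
Your argument is correct and gives the right constant, but it takes a genuinely different route from the paper. The paper never expands $\a_s$ near $\partial M$. It differentiates in $s$ and integrates by parts in $y$ as in \eqref{eq:IBP}. This splits $\int_M\partial_s\a_s\,d\vol$ into a bulk term, handled by the Laplacian expansion and of order $O(1)$, plus a boundary flux $-\int_M(12\pi s)^{-n/2}\frac{1}{2s}\int_{\partial M}d\,\langle\nabla_y d,\nu\rangle\,e^{-d^2/12s}\,d\sigma\,d\vol$. After Fubini and half-ball polar coordinates around $y\in\partial M$, the identity $\int_{HS^{n-1}}\langle\theta,e_n\rangle=-|S^{n-2}|/(n-1)$ gives $\partial_s\AA_s=-\sqrt{3/(4\pi)}\,|\partial M|\,s^{-1/2}+O(1)$, which is then integrated in $s$.

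You instead expand $\a_s$ itself, read off the half-space mass $\Phi(t/\sqrt{6s})$, and get the coefficient from $\int_0^\infty(1-\Phi)=1/\sqrt{2\pi}$. Your route is more elementary. It avoids integrating by parts against $d^2(x,\cdot)$ near a possibly non-convex boundary, which is why the paper has to patch the half-ball exponential map. It also gives Theorem~\ref{thm:multiple-copies} almost for free, since from a point of one copy the total mass is $\Phi+(k-1)(1-\Phi)$. The paper's route, in exchange, exhibits the $s^{-1/2}$ singularity as an explicit flux through $\partial M$, which is the structure reused in Theorem~\ref{thm:C0gluing}.

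The one step you assert rather than prove is that the $\sqrt s$-correction decays in $t/\sqrt s$. Your comparison $d^2=|x-y|^2(1+O(|x-y|+t))$ alone only gives relative errors $O(\sqrt s+t)$, which is not enough for $t\gg\sqrt s$. A clean way to secure it is to extend $M$ across $\partial M$ to a slightly larger smooth $\tilde M$ and write $\a^M_s(x)=\a^{\tilde M}_s(x)-(12\pi s)^{-n/2}\int_{\tilde M\setminus M}e^{-d_{\tilde M}^2(x,y)/12s}\,dy+E_s(x)$. Each piece is then controlled:
\begin{itemize}
\item The first term is $1-sR(x)+O(s^{3/2})$ uniformly, by the smooth case.
\item The second term only involves $y$ with $d_{\tilde M}(x,y)\ge t$, so it equals $1-\Phi(t/\sqrt{6s})+O(\sqrt s\,e^{-ct^2/s})$.
\item $E_s$ collects the pairs with $d_M(x,y)\neq d_{\tilde M}(x,y)$. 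This forces both points to lie within $C\,d(x,y)^2$ of $\partial M$, so $\int_M|E_s|\,d\vol=o(s)$.
\end{itemize}
This is exactly your expansion with $|\psi(u,z)|\le Ce^{-cu^2}$, and it yields the $O(s)$ error.
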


\begin{proof}
    We integrate by parts as in \eqref{eq:IBP}, but taking care of the boundary term
    \begin{align}
    \int_M\partial_s \a_s(x) d\vol(x)&= \int_M
    (12\pi s)^{-n/2}\frac1{2s}\int_M \bigg[-n+\frac1{6s}d^2(x,y) \bigg]e^{-\frac{d^2(x,y)}{12s}}d\vol(y)d\vol(x)\\
    &= \int_M (12\pi s)^{-n/2}\frac1{2s}\int_M \bigg[-n+\frac12\Delta d^2(x,y) \bigg]e^{-\frac{d^2(x,y)}{12s}}d\vol(y)d\vol(x) \\
    &\qquad - \int_M (12\pi s)^{-n/2}\frac1{2s} \int_{\partial M} d(x,y)\langle \nabla_y d(x,y),\nu\rangle e^{-\frac{d^2(x,y)}{12s}} d\sigma(y)d\vol(x) \\
    &= I_1 + I_2
    \end{align}
    with $\nu$ the outer unit normal and $\sigma$ the volume measure on $\partial M$.
    
First, proceeding as in \eqref{eq:GaussintwithLaplcomp}
\begin{align}
    I_1 = -\int_M R(x)d\vol(x)=O(1).
\end{align}

Writing the second integral as (for any $r>0$)
\begin{align}
    I_2 = -\int_{\partial M} (12\pi s)^{-n/2}\frac{1}{2s}\int_{B_r(y)}d(x,y) \langle \nabla_y d(x,y),\nu\rangle e^{-\frac{d^2(x,y)}{12s}} d\vol(x) d\sigma(y) + O(s)
\end{align}
we would like to pass to polar coordinates around a point $y\in \partial M$. However, the exponential map $\exp_y:V\subset HB_r(0)\to M$, with $HB_r(0):=HS^{n-1}\times [0,r)$ the half ball on the tangent space, $HS^{n-1} :=\{\theta\in S^{n-1} : \langle \theta, \nu\rangle \leq 0\}$, $V$ the biggest subset of $HB_r(0)$ where $\exp_y$ is defined, does not in general provide a diffeomorphism to a neighbourhood of $y$ in $M$. Nevertheless, letting $\phi:B_r(y)\to U\subset H\R^n$, $0\in U$, be a chart around $y$, we consider $\Delta := U\setminus \phi(\exp_y(HB_r(0)))$. Because $\phi(\exp_y(V))$ can be seen as the upper graph of a smooth function $f:\R^{n-1}\to \R$ with $\nabla f=0$, $|\Delta\cap \partial B_r(0)|=O(r^n)$. Similarly, an error of the same order is made if one extends $\phi\circ\exp_y$ from $V$ to a smooth diffeomorphism from $HB_r(0)$ to $\tilde U$, $(\phi\circ \exp_y)(V) \subset \tilde U \subset \R^{n}$, and extends smoothly the integrand on this region. In these coordinates $\theta=-\nabla_yd(x,y)$, hence
\begin{align}
    I_2 &= -\int_{\partial M} (12\pi s)^{-n/2}\frac{1}{2s} \int_0^r \int_{HS^{n-1}} \rho \langle -\theta,\nu\rangle e^{-\frac{\rho^2}{12s}} \left[d\sigma_\rho(\theta) + O(\rho^n)\right]d\rho d\sigma(y) + O(s) \\
    &= \int_{\partial M} (12\pi s)^{-n/2}\frac{1}{2s} \int_0^r \int_{HS^{n-1}} \rho \langle \theta,\nu\rangle e^{-\frac{\rho^2}{12s}} \left[\rho^{n-1} d\vol_{S^{n-1}}(\theta) + O(\rho^n)\right]d\rho d\sigma(y) + O(s) \\
    &= \int_{\partial M} (12\pi s)^{-n/2}\frac1{2s} \int_0^\infty \int_{HS^{n-1}} \rho \langle \theta,\nu\rangle e^{-\frac{\rho^2}{12s}} \rho^{n-1}d\vol_{S^{n-1}}(\theta)d\rho +O(1)
\end{align}

We compute, for $n\geq 2$,
\begin{align}
    \int_{HS^{n-1}} \langle \theta, e_n\rangle dg_{S^{n-1}} &= \int_0^{\pi/2} \int_{S^{n-2}}-\cos\varphi(\sin{\varphi})^{n-2} dg_{S^{n-2}}d\varphi
    \\
    &= -\frac{|S^{n-2}|}{n-1} \,.
\end{align}
Then
\begin{align}
    I_2 &= -|\partial M|(12\pi s)^{-n/2}\frac1{2s} \int_0^\infty \frac{|S^{n-2}|}{n-1}\rho e^{-\frac{\rho^2}{12s}} \rho^{n-1}d\rho +O(1) \\
    &= -|\partial M|\frac{|S^{n-2}|}{(n-1)|S^{n-1}|} \frac{n}{4}\sqrt{12}\frac{1}{\sqrt s}\frac{\Gamma(\frac{n+1}{2})}{\Gamma(\frac{n+2}{2})} +O(1)\\
    &= -|\partial M|\sqrt\frac{3}{4\pi} \frac{1}{\sqrt{s}}+O(1).
\end{align}
Integrating in $s$ we get
\begin{align}
    \AA_s = |M|-|\partial M|\sqrt\frac{3s}{\pi} + O(s)\,.
\end{align}
\end{proof}

\subsubsection{Doubling} 
\begin{theorem}\label{thm:doubling}  Let  $(M,g)$ be a compact Riemannian manifold with smooth boundary.  
Assume that $(\hat M, \hat g)$ is obtained by gluing two copies of $(M,g)$ along the boundary $\partial M$.
Then
$$\widehat{\KK}
=2\,\int_M R(x)d\vol(x)+4(n-1)\, \int_{\partial M} H(z)d\sigma(z)$$
where $H(z)$ is the mean curvature of $\partial M$ at $z\in\partial M$ with respect to the ``outward''-pointing unit normal (in our convention  $(n-1)H$ is the trace of the second fundamental form).
 
Instead, $\int_{\hat M}\hat\k(x) dm(x)=2\,\int_M R(x)d\vol(x)$.
\end{theorem}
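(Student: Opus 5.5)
The plan is to split $\widehat{\AA}_s$ according to which copies the two points lie in, and to reduce everything to a one-dimensional computation in Fermi coordinates near $\partial M$. Write $\hat M=M_1\cup M_2$. By the reflection symmetry, for $x,y$ in the same copy one has $\hat d(x,y)=d_M(x,y)$, the intrinsic distance of $M$: any path through the other copy can be reflected back without changing its length. For $x\in M_1$, $y\in M_2$ one has
\[
\hat d(x,y)=\min_{z\in\partial M}\bigl(d(x,z)+d(z,y)\bigr).
\]
Hence
\[
\widehat{\AA}_s=2\AA_s^{M}+2\,\mathcal C_s,
\]
where $\AA^M_s$ is the double integral of Theorem \ref{thm:boundary} and $\mathcal C_s$ is the cross term with the glued distance. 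Theorem \ref{thm:boundary} gives $\AA^M_s=|M|-\sqrt{3s/\pi}\,|\partial M|+O(s)$. The first step is to show that $\mathcal C_s=\sqrt{3s/\pi}\,|\partial M|+O(s)$, so that the $\sqrt s$ terms cancel; this is consistent with $\widehat\AA_0=2|M|$. Then I identify the coefficients of $s$ in both pieces.

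I will localize. Fix $\delta_s=s^{1/2-\eps}$. Points $x$ at distance $\ge\delta_s$ from $\partial M$ see only their own copy up to exponentially small errors. Their contribution to the slope is $\int_{\{t\ge\delta_s\}}R$, by the uniform version of Theorem \ref{thm:smoothcase}, Proof I. This gives $2\int_M R\,d\vol$ in the limit. In the collar I use Fermi coordinates $x=(z,t)$, $z\in\partial M$, $t\in[0,\delta_s)$, in which
\[
g=dt^2+g_t,\qquad g_t=g_0-2t\,\mathrm{II}+O(t^2),\qquad d\vol=\bigl(1-(n-1)H(z)\,t+O(t^2)\bigr)\,dt\,d\sigma(z).
\]
For $x=(z,t)$ I expand $\hat{\a}_s(x)$ to first order in the curvature of $\partial M$. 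Both the same-copy distance and the cross distance are written in these coordinates, as the Euclidean (reflected) distance plus a correction involving $\mathrm{II}(z)$ and $t$. The cross distance to $y=(w,t')$ in the other copy is, to leading order, the flat distance between $(z,t)$ and $(w,-t')$. Its correction is quadratic: roughly $\mathrm{II}$ applied to the tangential displacement, multiplied by $(t+t')$, with the opposite sign to the same-copy correction.

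Then I rescale $t=\sqrt s\,u$ and $y-x=\sqrt s\,\xi$ and compute the resulting Gaussian integrals explicitly, using \eqref{eq:gaussianint} and the hemisphere integral $\int_{HS^{n-1}}\langle\theta,e_n\rangle=-|S^{n-2}|/(n-1)$ from the proof of Theorem \ref{thm:boundary}. The flat (zeroth order) parts of same-copy and cross contributions add up to exactly the Euclidean Gaussian, i.e.\ $\hat\a_s\equiv1$ to leading order; this produces the cancellation of the $\sqrt s$ terms. Each first-order term is $H$ or $\mathrm{tr}\,\mathrm{II}$ times a factor $\sqrt s$ (one power of $t$ or $|\xi|$) coming from the integrand, times another $\sqrt s$ from the collar width. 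Hence each contributes at order $s$, namely $c_n\,s\int_{\partial M}H\,d\sigma$. Summing the constants from the volume element, the same-copy correction and the cross correction, the goal is to obtain the total $-4(n-1)s\int_{\partial M}H$ in $\widehat\AA_s-\widehat\AA_0$, which gives the claimed $\widehat\KK$ via Lemma \ref{upp/low}(iii).

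For the last statement, I note that for fixed $x\notin\partial M$ the space is smooth near $x$, so Theorem \ref{thm:smoothcase} gives $\hat\k(x)=R(x)$. Since $\partial M$ is null, $\int\hat\k=2\int_M R$.

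The main obstacle is the second-order expansion of the cross distance. One has to control the minimizing interface point $z^*$ uniformly for $t,t'\lesssim\delta_s$, and show that its curvature correction is $O(|x-y|^2(t+t'))$ with an explicit coefficient in terms of $\mathrm{II}$, with a remainder negligible after the Gaussian integration. I would first attempt this via the first variation of $z\mapsto d(x,z)+d(z,y)$ in Fermi coordinates. As a cross-check I would compute the constant in the model case of a round ball, doubled to a sphere-like surface.
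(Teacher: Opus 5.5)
\textbf{The route is viable, but the decisive step is missing and its stated ingredient is wrong.}

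Your route is legitimate and genuinely different from the paper's. The paper obtains Theorem~\ref{thm:doubling} as the case $g_1=g_2$ of Theorem~\ref{thm:C0gluing}. It integrates by parts in $y$, computes $\Delta_y d^2(x,y)$ across $\Sigma$ using Jacobi fields that receive an impulse from $\mathrm{II}_1+\mathrm{II}_2$, and cuts out a bad set $E_x$ of tangential or multiple crossings (Lemma~\ref{lem:technical-lemma}). As written, however, your proposal does not prove the formula. The value $4(n-1)$ is only announced as ``the goal'', and the one concrete claim you make about the decisive term is wrong.

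In the doubled Fermi chart $\hat g=d\tau^2+g_0-2|\tau|\,\mathrm{II}+\dots$, the first-order correction to $\hat d^2$ is $\int_0^1 h(\dot c_0,\dot c_0)$ along the straight chord $c_0$. This equals $-2\,\overline{|\tau|}\,\mathrm{II}(\Delta w,\Delta w)$, where $\overline{|\tau|}$ is the mean of $|\tau|$ along the chord.
\begin{itemize}
\item For the same copy it is $-(t+t')\,\mathrm{II}(\Delta w,\Delta w)$.
\item Across the interface it is $-\tfrac{t^2+t'^2}{t+t'}\,\mathrm{II}(\Delta w,\Delta w)$.
\end{itemize}
So the two corrections have the \emph{same} sign and different coefficients. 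A check on the doubled unit disk, with $x,y$ at height $t$ and angular separation $\theta$, gives corrections $-2t\theta^2$ and $-t\theta^2$ respectively. The constant is sensitive to exactly this term. Your heuristic (coefficient $t+t'$, opposite sign) leads to $9(n-1)$. Coefficient $t+t'$ with the correct sign leads to $3(n-1)$.

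\textbf{How to close the gap.} With the correct coefficient the computation closes in a few lines.
\begin{enumerate}
\item In the collar, to first order in $\mathrm{II}$, $\hat\a_s(x)=E\bigl[e^{-\hat d^2(x,Y)/12s}(1-m|\tau_Y|)\bigr]$, with $Y\sim N(x,6s\,\mathrm{Id})$ and $m=(n-1)H$.
\item Averaging $\mathrm{II}(\Delta w,\Delta w)$ over the tangential Gaussian gives $\hat\a_s(x)-1=m\,E\bigl[\overline{|\tau|}-|\tau_Y|\bigr]+o(\sqrt s)$.
\item Rescale $\tau_x=\sqrt{6s}\,a$ and $\tau_Y=\sqrt{6s}\,b$, and let $\phi$ be the standard normal density. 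Then $E\bigl[\overline{|\tau|}-|\tau_Y|\bigr]=\sqrt{6s}\,\bigl(G_1(a)+G_2(a)\bigr)$. Here $G_1(a)=-\phi(a)/2$ comes from $b>0$. The term $G_2$ comes from $b<0$ and satisfies $\int_0^\infty G_2=-\tfrac1{12}$ (substitute $u=a+|b|$).
\item Hence each copy's collar contributes $-\tfrac13\cdot 6s\int_{\partial M}m=-2s\int_{\partial M}m$. This gives $\widehat\KK=2\int_MR+4(n-1)\int_{\partial M}H$, as claimed.
\end{enumerate}

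Two cautions.
\begin{itemize}
\item The same-copy double integral in $(a,b)$ is only conditionally convergent, so integrate in $b$ first. The tempting $a\leftrightarrow b$ antisymmetry gives $0$ instead of $-\tfrac14$.
\item You do not need to track $z^*$. Energy is linear in the metric, so comparing the $\hat g$-minimizer $c$ with $c_0$ gives $\|\dot c-\dot c_0\|_{L^2}^2\lesssim\epsilon|\Delta|^2$, where $\epsilon=\sup|h|$. Hence $\hat d^2=|\Delta|^2+\int_0^1h(\dot c_0,\dot c_0)+O(\sqrt\epsilon\,|\Delta|^3+\epsilon^{3/2}|\Delta|^2)$, using only that $h$ is Lipschitz.
\end{itemize}

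\textbf{What your route buys.} Compared with the paper's argument, it needs no regularity of $\hat d(x,\cdot)$ and no uniqueness or transversality of crossing geodesics. Consequently it needs no analogue of $E_x$ and no Jacobi-field analysis.
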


\begin{proof}
The theorem is a special case of Theorem \ref{thm:C0gluing}, see Appendix \ref{app:C0} for the proof.
\end{proof}

We now show that the value of $\widehat{\KK}$ obtained above is the limit of $\widehat{\KK}_\epsilon$ coming from a suitable smooth sequence of $M_\epsilon$ approximating $M$. We first give two easy examples for simplicity: the case of surfaces, which only uses Gauss-Bonnet formula, and the case of $\mathbb B^n$. We then treat the most general case relying on a construction of Miao \cite{Miao02}, see also \cite{Perelman97}. 

\begin{example} Suppose $(M,g)$ is a compact two-dimensional Riemannian manifold with boundary, and that $(\hat M, \hat g_\epsilon)$, $\epsilon>0$, is a family of Riemannian surfaces obtained from the doubling $(\hat M, \hat g)$ by smoothing the metric in the neighborhood of the rim without changing the topology. By the Gauss-Bonnet Theorem applied to $\widehat M$, denoting with $\chi(M)$ the Euler characteristic, for all $\epsilon$
$$
\widehat{\KK}_\epsilon = \int_{\hat M} R_\eps d\vol_\eps = 4\pi\, \chi(\hat M)\,.
$$
On the other hand, by Theorem \ref{thm:doubling} and then Gauss-Bonnet applied to $M$
$$
\widehat{\KK} = 2 \left( \int_M Rd\vol + 2\int_{\partial M}k_{\partial M}(z) d\sigma \right) = 8\pi\chi(M)
$$
Since $\chi(\hat M) = \chi (M_1 \cup_{\partial M} M_2) = \chi(M_1)+\chi(M_2)-\chi(\partial M) = 2\chi(M)$ (using $\chi(\partial M) = \chi(\bigsqcup_{j = 1}^NS^1) = 0$), we have $\widehat{\KK}_\eps =\widehat{\KK}$.
\end{example}

 \begin{example} Let $(M,g)$ be the closed ball of radius $\rho$ in $\R^n$ and let $(\hat M,\hat g)$ be its doubling which can be represented as warped product $[-\rho,\rho]\times_f {\mathbb S}^{n-1}$
with warping function
$f(r)=\rho-|r|$.
Approximate $(\hat M,\hat g)$ by $C^2$ Riemannian manifolds $(\hat M_\epsilon, \hat g_\epsilon)$ given as warped products $[-\rho,\rho]\times_{f_\epsilon} M$
with 
$$f_\epsilon(r)=\begin{cases}\rho-|r|, &\qquad |r|\in [\epsilon\frac\pi2,\rho]\\
\epsilon\cos\big(r/\epsilon\big)+\rho-\epsilon\frac\pi2, &\qquad |r|\in [0,\epsilon\frac\pi2].
\end{cases}$$
Let $\widehat{\KK}_\epsilon$ be the curvature functional for $(\hat M_\epsilon, \hat g_\epsilon)$. Then
$$R_\epsilon(x)=-2(n-1)\frac{f''_\epsilon}{f_\epsilon}(r)+(n-1)(n-2)\frac{1-|f'_\epsilon|^2}{f_\epsilon^2}(r)$$
and
\begin{align*}
\int_{\hat M_\epsilon}R_\epsilon(x)\,dm_\epsilon(x)&=n\omega_n\,
\int_{-\epsilon\frac\pi2}^{\epsilon\frac\pi2}\bigg[-2(n-1)\frac{f''_\epsilon}{f_\epsilon}(r)+(n-1)(n-2)\frac{1-|f'_\epsilon|^2}{f_\epsilon^2}(r)
\bigg]f_\epsilon^{n-1}(r)dr\\
&=n\omega_n\,
\int_{-\epsilon\frac\pi2}^{\epsilon\frac\pi2}\frac{2(n-1)}{\epsilon\rho}\cos\big(r/\epsilon\big)
\rho^{n-1}dr+O(\epsilon) \\
&=4n(n-1)\omega_n\rho^{n-2}+O(\epsilon) \\
&= \widehat{\KK} + O(\epsilon)
\end{align*}
\end{example}

\begin{proposition}
    Consider $(M,g)$ smooth, compact with nonempty boundary. We can construct a family $(\hat M_\epsilon, \hat g_\epsilon)$, $\epsilon>0$ of smooth Riemannian manifolds obtained from the doubling $(\hat M, \hat g)$ by smoothing the metric in the neighborhood of the rim, so that \begin{itemize}
        \item $(\hat M_\epsilon, \hat g_\epsilon) \to (\hat M, \hat g)$ in $C^0$;
        \item $\widehat{\KK}_\epsilon \to \widehat{\KK}=2\int R d\vol + 4(n-1)\int H d\sigma$.
    \end{itemize}
\end{proposition}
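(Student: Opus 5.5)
We follow Miao's construction \cite{Miao02}. In a collar of the rim of $\hat M$ we use Fermi coordinates $(t,z)\in(-\delta,\delta)\times\partial M$, with $t$ the signed distance to $\partial M$ (positive in the first copy, negative in the second). In these coordinates $\hat g=dt^2+h(t)$, where $h(t)=h(-t)$ is smooth on each side. The first derivative jumps across $t=0$: $h'(0^\pm)=\mp 2A$, with $A$ the second fundamental form of $\partial M$ (outward normal, sign convention fixed so that $\tr_h A=(n-1)H$). Choose a mollifier $\eta$ supported in $[-1,1]$ and set
\[
h_\epsilon(t)=\int h(t-\epsilon\tau)\,\eta(\tau)\,d\tau
\]
near $t=0$, interpolating back to $h$ for $|t|\ge 2\epsilon$ as in \cite{Miao02}. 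Define $\hat g_\epsilon=dt^2+h_\epsilon(t)$. Then $h_\epsilon\to h$ uniformly, which gives $C^0$ convergence of the metrics, and $h_\epsilon'$ stays bounded uniformly in $\epsilon$. By contrast, $h_\epsilon''$ is of order $1/\epsilon$ on the layer $|t|<2\epsilon$.

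Since each $\hat M_\epsilon$ is smooth and closed, Theorem \ref{thm:smoothcase} gives $\widehat{\KK}_\epsilon=\int_{\hat M}R_{\hat g_\epsilon}\,d\vol_{\hat g_\epsilon}$. It therefore suffices to compute the limit of the total scalar curvature. Away from the layer $|t|<2\epsilon$ the metric is unchanged, so that region contributes $2\int_M R\,d\vol+o(1)$.

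Inside the layer we use the Gauss equation for the foliation by level sets $\{t\}\times\partial M$:
\[
R_{\hat g_\epsilon}=R_{h_\epsilon}-\tr_{h_\epsilon}h_\epsilon''+\tfrac34|h_\epsilon'|^2_{h_\epsilon}-\tfrac14(\tr_{h_\epsilon}h_\epsilon')^2 .
\]
All terms except the second are bounded uniformly in $\epsilon$, so they contribute $O(\epsilon)$ once integrated over a layer of width $4\epsilon$. The volume density is $\sqrt{\det h_\epsilon}=\sqrt{\det h(0)}+O(\epsilon)$ there. The only term that survives is therefore
\[
-\int_{\partial M}\int_{-2\epsilon}^{2\epsilon}\tr_{h_\epsilon}h_\epsilon''\,dt\,d\sigma
=-\int_{\partial M}\Big[\tr_{h(0)}h_\epsilon'\Big]_{t=-2\epsilon}^{t=2\epsilon}\,d\sigma+o(1).
\]
To justify the last step, we replace $h_\epsilon^{-1}$ by $h(0)^{-1}$ at cost $O(\epsilon)\cdot O(1/\epsilon)\cdot O(\epsilon)$. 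Using $h_\epsilon'(\pm2\epsilon)\approx h'(0^\pm)=\mp2A$, the bracket equals $-4(n-1)H$. The layer thus contributes $4(n-1)\int_{\partial M}H\,d\sigma$, which together with the exterior contribution gives $\widehat{\KK}_\epsilon\to\widehat{\KK}$ as stated in Theorem \ref{thm:doubling}.

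The main obstacle is the bookkeeping of the singular layer. We must check carefully that products of the $O(1/\epsilon)$ second-derivative term with the $O(\epsilon)$ metric perturbation really vanish in the limit, and that the interpolation region introduces no extra $O(1/\epsilon)$ terms. Doing the convolution in $t$ only, with $t$-independent Fermi coordinates, keeps the tangential curvature $R_{h_\epsilon}$ bounded. We also have to pin down the sign conventions for $A$ versus "outward" $H$. As a consistency check, the round-ball computation in the preceding example must reproduce $4n(n-1)\omega_n\rho^{n-2}$, since there $H=1/\rho$ and $|\partial M|=n\omega_n\rho^{n-1}$.
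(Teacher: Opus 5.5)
Your proposal is correct and follows essentially the paper's route. Both arguments use Miao's mollification of the Fermi-coordinate metric near the rim, then Theorem \ref{thm:smoothcase} to identify $\widehat{\KK}_\epsilon$ with the total scalar curvature, and the observation that in the layer only the $-\tr h_\epsilon''$ term survives and integrates to the jump $\tr h'(0^+)-\tr h'(0^-)=-4(n-1)H$. The only difference is that the paper quotes Miao's pointwise formula (his Eq.~(32)) and integrates it, while you derive the integrated layer contribution directly from the Gauss equation and the fundamental theorem of calculus, which sidesteps the normalization and sign corrections the paper has to flag.
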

\begin{proof}
    See Proposition \ref{prop:gluingapprox}.
\end{proof}

\begin{example}
Let $F$ be the closure of an open connected  set in $\R^n, n\ge2$, with smooth boundary and $\emptyset\not= F\not=\R^n$. Let $d$ be the intrinsic distance induced by the euclidean metric and $m$ the Lebesgue measure on $F$. Let  $(\hat F,\hat d,\hat m)$ denote the doubling of  $(F,d,m)$ along $\partial F$.

(i) The space  $(\hat F,\hat d,\hat m)$ is $\RCD$ if and only if $F$ is convex, in which case it is $\RCD(0,n)$.

(ii) For $F$ convex, the space  $(\hat F,\hat d,\hat m)$ is minimal , in the sense of $\KK=0$, if and only if $F$ is a halfspace or a strip, that is, isometric either to $\R_+\times \R^{n-1}$ or to $[0,L]\times \R^{n-1}$ for some $L>0$.
\end{example}

\begin{proof}
Claim (i) follows from \cite{KapKetSt}. For the second claim, we first clarify that for spaces of infinite mass, minimality corresponds to $\KK(U)=0$ for all bounded $U$, see Remark \ref{rmk:scalmeasure}. Fixing such a $U$ we then have $0\geq \KK(U)=4(n-1)\int_{U \cap\partial F}Hd\sigma$ and by convexity $H\equiv0$. But again by convexity $\partial F$ must have second fundamental form $II\equiv 0$, hence it is a hyperplane. The only possibilities left are a halfspace or a strip.
\end{proof}

\subsubsection{Gluing multiple copies}   
\begin{theorem}\label{thm:multiple-copies}  Let  $M$ be a smooth, compact Riemannian manifold with nonempty boundary.
Assume that $\hat M$ is obtained by gluing $k$ copies of $M$ along $\partial M$.
Then $$\widehat{\AA}_s=k |M|+k(k-2)\sqrt{\frac{3s}\pi} |\partial M|+O(s).$$
Therefore, $\widehat{\KK}=-\infty$ whenever $k>2$.
\end{theorem}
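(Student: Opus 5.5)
We argue as in the proof of Theorem \ref{thm:boundary}, and reduce the computation to a local one near the rim $\partial M\subset\hat M$. For $x,y$ in the interior of the same copy and away from $\partial M$, the contributions to $\widehat{\AA}_s$ are as in the smooth case; this gives $k|M|+O(s)$ from the diagonal blocks. The only $\sqrt s$-order corrections come from pairs $(x,y)$ within distance $O(\sqrt s)$ of $\partial M$, at scale $\sqrt s$. At that scale, and to leading order, the geometry near a rim point $z$ is that of $k$ Euclidean half-spaces $H_1,\dots,H_k\cong\R^{n-1}\times[0,\infty)$ glued along $\R^{n-1}\times\{0\}$. Curvature of $\partial M$ and of $M$ only enters at relative order $\sqrt s$, i.e. at order $O(s)$ in $\widehat{\AA}_s$; I will justify this with the same chart and error estimates used in Theorem \ref{thm:boundary}.

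In the model space, the distance between $x=(x',a)\in H_i$ and $y=(y',b)\in H_j$ is $\sqrt{|x'-y'|^2+(a-b)^2}$ if $i=j$, and $\sqrt{|x'-y'|^2+(a+b)^2}$ if $i\neq j$, where $a,b\ge0$ are the distances to the rim. The geodesic from $x$ to $y$ in different copies passes through the rim, and the unfolded distance is exact in the flat model. Hence the Gaussian kernel factorises into the $(n-1)$-dimensional tangential Gaussian, which integrates to $1$ in $y'$ after normalisation, times a one-dimensional kernel in the normal variables. Per unit boundary area we thus need
\[
(12\pi s)^{-1/2}\Big[k\int_0^\infty\!\!\int_0^\infty e^{-\frac{(a-b)^2}{12s}}\,da\,db+k(k-1)\int_0^\infty\!\!\int_0^\infty e^{-\frac{(a+b)^2}{12s}}\,da\,db\Big],
\]
compared with the reference value $k\int_0^\infty da$, i.e. density $1$. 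This difference must be taken with cut-offs at a fixed depth $r$, with the regularised differences converging.

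The same-copy term is the one-sided defect from Theorem \ref{thm:boundary}. The one-dimensional identity $\int_0^\infty\big(1-\int_0^\infty(12\pi s)^{-1/2}e^{-(a-b)^2/12s}db\big)da=\int_0^\infty\mathbb P(G>a)\,da=E[G^+]=\sqrt{12s}/\sqrt{2\pi}=\sqrt{3s/\pi}\cdot\ldots$, with $G\sim N(0,6s)$, gives a loss of $\sqrt{3s/\pi}$ per copy; this matches $-|\partial M|\sqrt{3s/\pi}$. The cross term equals $(12\pi s)^{-1/2}\int_0^\infty u\,e^{-u^2/12s}\,du=(12\pi s)^{-1/2}\,6s=\sqrt{3s/\pi}$ per ordered pair $i\ne j$. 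Summing gives $|\partial M|\sqrt{3s/\pi}\,\big(k(k-1)-k\big)=k(k-2)\sqrt{3s/\pi}\,|\partial M|$, the claimed coefficient. As a consistency check, $k=1$ reproduces Theorem \ref{thm:boundary} and $k=2$ gives no $\sqrt s$ term, consistent with Theorem \ref{thm:doubling}.

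The main obstacle is controlling the error to be $O(s)$ rather than $o(\sqrt s)$. This needs three things. First, Fermi coordinates $(z,a)$ near $\partial M$ in each copy, with volume form $(1+O(a))\,d\sigma\,da$. Second, the comparison $\hat d^2=|x'-y'|^2+(a\pm b)^2+O(\text{dist}^3)$ for cross-copy pairs; this requires checking that the shortest path crosses the rim once and is approximated by the unfolded Euclidean path. Third, the observation that the $O(a)$ and $O(d^3)$ corrections produce terms of size $s^{-n/2}\int d^{\,n}e^{-d^2/12s}\,dd\cdot\ldots=O(s)$ after integrating over a layer of width $\sqrt s$. Points at distance $\ge r$ from the rim contribute exponentially small cross terms. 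Finally, since $\widehat{\AA}_0=k|M|$ and $k(k-2)>0$ for $k>2$, we get $\frac1s(\widehat{\AA}_0-\widehat{\AA}_s)\sim -k(k-2)|\partial M|\sqrt{3/(\pi s)}\to-\infty$, so $\widehat{\KK}=-\infty$.
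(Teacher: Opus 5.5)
Your proposal is correct, but it takes a different route from the paper.

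\textbf{The paper's route.} The paper never evaluates $\widehat{\AA}_s$ directly. It differentiates in $s$ and integrates by parts in $y$ on each copy, formally treating $d(x,\cdot)$ as $C^1$ across the rim. The bulk terms go into $O(1)$ via the Laplacian expansion, leaving rim fluxes $\int_{\partial M_j} d\,\langle\nabla_y d,\nu_j\rangle e^{-d^2/12s}\,d\sigma$. Since $\langle\nabla d,\nu_j\rangle=-\langle\nabla d,\nu_i\rangle$ for $j\neq i$, the $k$ fluxes seen from $x\in M_i$ add up to $(2-k)$ times the single-boundary flux of Theorem \ref{thm:boundary}. This gives $\partial_s\widehat{\AA}_s=k(k-2)|\partial M|\sqrt{3/(4\pi s)}+O(1)$, which is then integrated in $s$.

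\textbf{Your route.} You compute $\widehat{\AA}_s$ itself in the flat model of $k$ half-spaces, where the kernel factorises. The coefficient then appears as $k(k-1)$ cross-copy gains minus $k$ same-copy losses, each worth $E[G^+]=\sqrt{3s/\pi}$ per unit rim area. There is a small slip: $E[G^+]=\sqrt{6s}/\sqrt{2\pi}$, not $\sqrt{12s}/\sqrt{2\pi}$, though your final value is right.

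\textbf{What each buys.} Your approach avoids integrating by parts across the non-smooth rim, which the paper only justifies in Appendix \ref{app:C0}, and it makes the cancellation at $k=2$ transparent. The paper's flux formulation is what carries over to the next order and produces the mean-curvature term of Theorem \ref{thm:C0gluing}. A leading-order flat model cannot see that term.

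\textbf{Shortcut for the error analysis.} Your block structure lets you drop most of the error analysis. By folding, $\hat d$ restricted to one copy equals $d_M$. For $i\neq j$ it equals $\min_{z\in\partial M}(d_M(x,z)+d_M(z,y))$, independently of $(i,j)$. Hence
$\widehat{\AA}^{(k)}_s=\frac{k(k-1)}{2}\widehat{\AA}^{(2)}_s-k(k-2)\AA_s(M)$.
The claim then follows from Theorem \ref{thm:boundary} together with $\widehat{\AA}^{(2)}_s=2|M|+O(s)$. The latter holds because Appendix \ref{app:C0} shows that $\partial_s\widehat{\AA}^{(2)}_s$ converges.

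This shortcut is also safer than your expansion $\hat d^2=|x'-y'|^2+(a\pm b)^2+O(\mathrm{dist}^3)$. That expansion holds for cross-copy pairs. It fails for same-copy pairs lying deeper in the collar than their separation, where the Fermi-coordinate correction is of order $a|x'-y'|^2$ rather than $O(\mathrm{dist}^3)$.
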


\begin{proof}
    We will denote by $i$ the quantities on the $i$-th copy of $M$. In the next computation we will integrate by parts formally as if $d(x,\cdot)$ were smooth on $\bigsqcup_j M_j$ and $C^1$ across the boundary between any two copies. This issue is treated carefully in Appendix \ref{app:C0}.
    \begin{align}
        &\int_{\hat M}\partial_s \a_s(x) d\vol(x) \\&= \sum_{i,j=1}^k\int_{M_i}
    (12\pi s)^{-n/2}\frac1{2s}\int_{M_j} \bigg[-n+\frac1{6s}d^2(x,y) \bigg]e^{-\frac{d^2(x,y)}{12s}}d\vol_j(y)d\vol_i(x)\\
    &= \sum_{i,j=1}^k\int_{M_i} (12\pi s)^{-n/2}\frac1{2s}\int_{M_j} \bigg[-n+\frac12\Delta d^2(x,y) \bigg]e^{-\frac{d^2(x,y)}{12s}}d\vol_j(y)d\vol_i(x) \\
    &\ \ - \sum_{i,j=1}^k\int_{M_i} (12\pi s)^{-n/2}\frac1{2s} \int_{\partial M_j} d(x,y)\langle \nabla_y d(x,y),\nu_j\rangle e^{-\frac{d^2(x,y)}{12s}} d\sigma_j(y)d\vol_i(x) \\
    &= O(1) - \sum_{i=1}^k\int_{M_i} (12\pi s)^{-n/2}\frac1{2s} \sum_{j=1}^k\int_{\partial M_j} d(x,y)\langle \nabla_y d(x,y),\nu_j\rangle e^{-\frac{d^2(x,y)}{12s}} d\sigma_j(y)d\vol_i(x) \\
    &=O(1) - \sum_{i=1}^k \int_{M_i} (12\pi s)^{-n/2}\frac1{2s} (1-(k-1))\int_{\partial M_i} d(x,y)\langle \nabla_y d(x,y),\nu_i\rangle e^{-\frac{d^2(x,y)}{12s}} d\sigma_i(y)d\vol_i(x) 
    \end{align}
    where in the last equality we used that $\langle \nabla d,\nu_j\rangle = -\langle\nabla d,\nu_i\rangle$ for $j\neq i$. Using the computations from Theorem \ref{thm:boundary}, we conclude:
    \begin{align}
    ...&= O(1) - k(2-k) \int_{M} (12\pi s)^{-n/2}\frac1{2s} \int_{\partial M} d(x,y)\langle \nabla_y d(x,y),\nu\rangle e^{-\frac{d^2(x,y)}{12s}} d\sigma(y)d\vol(x) \\
    &= O(1) - k(2-k)|\partial M|\sqrt\frac{3}{4\pi} \frac{1}{\sqrt{s}}\,.
    \end{align}
\end{proof}

\subsection{Lipschitz gluing of manifolds}
Here we extend the results of the doubling case to the case of Lipschitz gluing along a hypersurface. Equation \eqref{density} is satisfied with $\rho=1$.
\begin{theorem}\label{thm:C0gluing} Let $(M_1,g_1)$, $(M_2,g_2)$ be two smooth, compact Riemannian manifolds with isometric boundary $\partial M_1=\partial M_2=(\Sigma,\bar g)$, and consider the (smooth manifold with Lipschitz Riemannian metric) $(\hat M, \hat g)$, obtained by gluing of $M_1$, $M_2$ along $\Sigma$.
Then the curvature functional of $\hat M$ is
$$\widehat{\KK}
=\int_{\hat M\setminus \Sigma} R(x)d\vol(x)+2(n-1)\int_{\Sigma} (H_1(z) + H_2(z))d\sigma(z)$$
where $H_1$ and $H_2$ are the mean curvatures of $\Sigma$ induced respectively by the immersion in $(M_1,g_1)$, $(M_2,g_2)$ with respect to the ``outward''-pointing unit normal vector (in our convention $(n-1)H$ is the trace of the second fundamental form).
 
Again, $\int_{\hat M}\hat\k(x) dm(x)=\int_{\hat M \setminus \Sigma} R(x)d\vol(x)$.
\end{theorem}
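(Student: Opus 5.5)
We compute $\partial_s\widehat{\AA}_s$, show that it converges as $s\searrow0$, and then apply Lemma \ref{upp/low}(iii). Following Proof II of Theorem \ref{thm:smoothcase} and the proof of Theorem \ref{thm:multiple-copies}, we write
$$\int_{\hat M}\partial_s\a_s(x)\,d\vol(x)=\sum_{i,j=1}^2\int_{M_i}(12\pi s)^{-n/2}\frac1{2s}\int_{M_j}\Big[-n+\frac1{6s}d^2(x,y)\Big]e^{-\frac{d^2(x,y)}{12s}}d\vol_j(y)\,d\vol_i(x).$$
In each inner integral we trade $\frac{d^2}{6s}e^{-d^2/12s}$ for $\frac12\Delta_y d^2\, e^{-d^2/12s}$ by integrating by parts on $M_j$. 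This produces boundary terms on $\Sigma$ involving $d(x,y)\langle\nabla_y d,\nu_j\rangle$. Because $d(x,\cdot)$ is Lipschitz, and $C^1$ across $\Sigma$ away from a small set when $x$ is close to $\Sigma$, the boundary terms from $M_1$ and $M_2$ cancel to leading order, since $\nu_1=-\nu_2$. This cancellation is what separates the doubling case $k=2$ from Theorem \ref{thm:boundary}, where the boundary term gives $s^{-1/2}$. The bulk terms with $x$ at distance $\gg\sqrt s$ from $\Sigma$ give $-\int_{\hat M\setminus\Sigma}R\,d\vol$ by the uniform expansion \eqref{eq:laplcomp}. It remains to show that a tubular layer of width $O(\sqrt s)$ around $\Sigma$ contributes exactly $-2(n-1)\int_\Sigma(H_1+H_2)\,d\sigma$ in the limit.

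To isolate this term we work in Fermi coordinates $(z,t)\in\Sigma\times(-\delta,\delta)$, with $t>0$ in $M_2$ and $t<0$ in $M_1$. In these coordinates $\hat g=dt^2+\bar g_t$, where $\bar g_t=\bar g-2tA_i+O(t^2)$ on the respective side and $A_i$ is the second fundamental form, so that $\tr_{\bar g} A_i=(n-1)H_i$ with the outward convention. We then compare $\hat M$ with the model in which the metric is frozen at first order. Rescaling by $\sqrt s$ ($x=z+\sqrt s\,\xi$, $y=z+\sqrt s\,\eta$) turns $\partial_s\widehat{\AA}_s$ restricted to the layer into $\int_\Sigma$ of a Gaussian integral over $\R^n\times\R^n$ of the rescaled kernel. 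At order $s^{0}$ only two kinds of terms survive. The first is the first-order metric perturbation $t\,A_i$, which enters both the volume density $1-(n-1)H_i t$ and the distance expansion $d^2(x,y)=|\xi-\eta|^2 s+\sqrt s\cdot(\text{term linear in }A_i)+\dots$. The second is the curvature of the smooth parts, which is already accounted for in the bulk. The $s^{-1/2}$ terms vanish by the symmetry $t\mapsto -t$ at zeroth order; this is the same cancellation as above, seen in coordinates.

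The main obstacle is the distance expansion across $\Sigma$. Geodesics between points on opposite sides refract, and $d^2$ is not the single-chart quantity $|\exp^{-1}|^2$. To handle this I would first establish the first-order formula
$$d^2(x,y)=|x-y|_{\bar g_0\oplus dt^2}^2-2\sum_i\int_{\gamma\cap M_i}t\,A_i(\dot\gamma^\top,\dot\gamma^\top)+O(|x-y|^3),$$
where $\gamma$ is the straight segment; this works because first variation makes the refraction correction second order. The layer contribution then becomes an explicit Gaussian integral, linear in $A_1,A_2$. By isotropy in the tangential directions it depends only on the traces $(n-1)H_i$, so it equals $c_n\int_\Sigma(H_1+H_2)\,d\sigma$. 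To find $c_n$ without computing it directly, I would test the formula on the doubled Euclidean ball, where Theorem \ref{thm:doubling} applies and the smoothing example gives $4n(n-1)\omega_n\rho^{n-2}$. Alternatively, I would evaluate the one-dimensional Gaussian integrals using \eqref{eq:gaussianint}, which should give $c_n=2(n-1)$. Uniformity of all errors in $z\in\Sigma$ follows from compactness, so the limit of $\partial_s\widehat{\AA}_s$ exists and equals the claimed value with a minus sign. For the pointwise statement, $\hat\k(x)=R(x)$ off $\Sigma$ because the metric is smooth near $x$ (Theorem \ref{thm:smoothcase}), and $\Sigma$ has measure zero, which gives $\int\hat\k=\int_{\hat M\setminus\Sigma}R$.
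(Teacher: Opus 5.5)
Your route differs from the paper's, and it can be made to work, but three steps need repair as written.

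\textbf{Comparison with the paper.} The paper keeps the integration by parts, so it needs $\Delta_y d^2(x,y)$ for $y$ across $\Sigma$. To get it, the paper does the following.
\begin{enumerate}[(a)]
\item It splits space into $W_x,V_x,E_x$.
\item It proves uniqueness, transversality and $C^{1,1}$-regularity of $d$-geodesics off the thin cone $E_x$ (Lemma \ref{lem:technical-lemma}).
\item It computes the Hessian via Jacobi fields whose derivative jumps by $II_{\partial\Omega_1}+II_{\partial\Omega_2}$ at $\Sigma$.
\item It evaluates the constants after a parabolic change of variables.
\end{enumerate}
You instead expand the raw Gaussian kernel to first order in the metric perturbation. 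This needs only a uniform first-order formula for $d^2$ and no information about geodesics. The paper's route yields more, namely an explicit Laplace comparison across $\Sigma$. Yours reduces the singular term to a one-dimensional Gaussian moment.

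\textbf{First repair: drop the integration by parts.} Remove it from your first paragraph. If you keep it, you need $\Delta_y d^2$ across $\Sigma$, and no expansion of $d^2$ with an $O(\cdot)$ remainder can provide that. Moreover, the asserted cancellation of boundary terms presupposes that $d(x,\cdot)$ is $C^1$ across $\Sigma$ off a negligible set. That is precisely the hard part of Appendix \ref{app:C0}. Expand $\widehat{\AA}_s$ directly instead.

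\textbf{Second repair: the remainder in your distance formula.} It must be $o(|x-y|^3)$, not $O(|x-y|^3)$. In the regime that produces the interface term, $|t|\lesssim|x-y|$, so the correction you keep is itself of size $|x-y|^3$. Also, ``first variation'' is not directly available when geodesics may refract or graze $\Sigma$. A length comparison settles both issues.

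Work in Fermi coordinates centred at the foot point of $x$, with $g_0$ the flat metric. Put $h=\hat g-g_0=-2|t|A_i+O(|x-y|^2)$. Note the $|t|$: with outward second fundamental forms one has $\bar g_t=\bar g-2|t|A_i$ on both sides. The segment gives the upper bound. For the lower bound, a minimizer has $g_0$-length exceeding $\ell=|x-y|$ by $O(\ell^2)$. Hence its unit tangent is $L^2$-close to the segment's direction, and its trace lies within $O(\ell^{3/2})$ of the segment. Since $h$ is Lipschitz, this yields
\[ d^2(x,y)=\ell^2+\int_0^1 h_{x+u(y-x)}(y-x,y-x)\,du+O(\ell^{7/2}). \]

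\textbf{Third repair: the constant $c_n$.} Calibrating $c_n$ on the doubled ball is circular. Theorem \ref{thm:doubling} is proved as a special case of the very statement you are proving. The smoothing example only computes $\lim_\eps\widehat{\KK}_\eps$, and identifying that limit with $\widehat{\KK}$ again uses Theorem \ref{thm:doubling}.

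So you need the direct computation, and it is short. Put $m_i=(n-1)H_i$ and $\phi(t)=m_2t_++m_1t_-$ with $t_\pm=\max(\pm t,0)$. Average out the tangential variables using
\[ (12\pi s)^{-\frac{n-1}{2}}\int_{\R^{n-1}}e^{-|w|^2/12s}A(w,w)\,dw=6s\,\tr A. \]
For $x$ at height $a$, the distance and volume corrections then give
\[ \a_s(x)-1=\mathbb E\Big[\int_0^1\phi(a+uW)\,du-\phi(a+W)\Big]+\dots,\qquad W\sim N(0,6s). \]
Integrate in $a$; the terms linear in $W$ drop out in expectation. Per unit area of $\Sigma$ one gets
\[ \frac{m_1+m_2}{2}\,\mathbb E[W^2]\,\big(\tfrac13-1\big)=-2s(m_1+m_2). \]
Hence
\[ \widehat{\AA}_s=\widehat{\AA}_0-s\Big(\int_{\hat M\setminus\Sigma}R+2\int_\Sigma(m_1+m_2)\Big)+o(s), \]
that is, $c_n=2(n-1)$ as claimed.
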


We give the proof in Appendix \ref{app:C0}.

\begin{remark}
    In the case where $(M_1,g_1)$ is bounded and $(M_2,g_2)$ is unbounded and Euclidean, the term $\frac{1}{8\pi}\int (H_1+H_2) d\sigma$ is called Brown-York mass, see \cite[Eq.~4.8]{BY91}, \cite{ShiTam02}.
\end{remark}

\begin{proposition}\label{prop:gluingapprox}
    Given $(\hat M, \hat g)$ as in Theorem \ref{thm:C0gluing}, we can construct a family $(\hat M_\epsilon, \hat g_\epsilon)$ $\epsilon>0$ of smooth Riemannian manifolds, so that \begin{itemize}
        \item $(\hat M_\epsilon, \hat g_\epsilon) \to (\hat M, \hat g)$ in $C^0$;
        \item $\widehat{\KK}_\epsilon \to \widehat{\KK}=\int_{M_1} R_{g_1}d\vol_{g_1} + \int R_{g_2}d\vol_{g_2}+ 2(n-1)\int_\Sigma (H_1 + H_2) d\sigma$.
    \end{itemize}
\end{proposition}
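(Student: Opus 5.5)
The plan is to use Miao's smoothing construction \cite{Miao02}. Since $\hat M_\epsilon$ is smooth and closed, Theorem~\ref{thm:smoothcase} reduces the claim to
$$\widehat{\KK}_\epsilon=\int_{\hat M}R_{\hat g_\epsilon}\,d\vol_{\hat g_\epsilon},$$
so it suffices to show that the total scalar curvature of a suitable smoothing converges to the right-hand side of Theorem~\ref{thm:C0gluing}.

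\textbf{Step 1: normal form near $\Sigma$.} Fix Gaussian normal coordinates $(t,z)\in(-2\delta,2\delta)\times\Sigma$ around $\Sigma$, with $t<0$ in $M_1$ and $t>0$ in $M_2$. In these coordinates
$$\hat g=dt^2+g_t,$$
where $t\mapsto g_t$ is a family of metrics on $\Sigma$ that is smooth on each side, continuous at $t=0$ with $g_0=\bar g$, and whose one-sided derivatives at $t=0$ are given by the second fundamental forms. With the outward normals of the statement, $\partial_t g_t|_{0^-}=2A_1$ and $\partial_t g_t|_{0^+}=-2A_2$. Thus $\partial_t g_t$ has a jump whose trace is $-2(n-1)(H_1+H_2)$.

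\textbf{Step 2: mollify in $t$.} Following Miao, define
$$\hat g_\epsilon=dt^2+g^\epsilon_t,\qquad g^\epsilon_t:=\int g_{t-\epsilon\tau}\,\phi(\tau)\,d\tau$$
on $|t|<\delta$, with $\phi$ a standard mollifier and a cutoff interpolating to $\hat g$ away from $\Sigma$. Then $g^\epsilon\to g$ uniformly, which gives $C^0$ convergence of the metrics. Moreover $\partial_t g^\epsilon$ stays bounded and converges pointwise off $t=0$, while $\partial_t^2 g^\epsilon$ equals a bounded part plus $\phi_\epsilon(t)$ times the jump of $\partial_t g$.

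\textbf{Step 3: split the scalar curvature.} Use the Riccati/Gauss equation for the metric $dt^2+g_t$:
$$R=R_{g_t}-\tr_{g_t}\partial_t^2 g_t+\tfrac34|\partial_t g_t|^2-\tfrac14(\tr\,\partial_t g_t)^2.$$
All terms except $-\tr\,\partial_t^2 g^\epsilon$ are uniformly bounded on the slab $|t|<C\epsilon$. Since the slab has volume $O(\epsilon)$, their contribution there is $O(\epsilon)$. Outside the slab $\hat g_\epsilon=\hat g$, so the bulk integral converges to $\int_{\hat M\setminus\Sigma}R\,d\vol$.

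\textbf{Step 4: the singular term.} Integrate the remaining term in $t$. Its integral is asymptotically
$$-\int_\Sigma \tr_{\bar g}\big[\partial_t g\big]_{0^-}^{0^+}\,d\sigma=2(n-1)\int_\Sigma(H_1+H_2)\,d\sigma,$$
up to $o(1)$ corrections coming from the $t$-dependence of the volume factor and of $g_t^{-1}$. This gives exactly the boundary term in $\widehat{\KK}$ from Theorem~\ref{thm:C0gluing}.

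I expect Step 4, together with the bookkeeping of conventions, to be the main obstacle. The sign and normalisation must match $(n-1)H=\tr A$ and the outward normals, and the mollified metric must be genuinely smooth and still a metric, which holds for small $\epsilon$ since $g^\epsilon$ is $C^0$-close to $g$. It must also be checked that the mixed terms involving $\partial_t g^\epsilon\cdot\partial_t(\text{volume})$ vanish as $\epsilon\to0$. As a sanity check, the ball-doubling example computed earlier in this section gives the value $4n(n-1)\omega_n\rho^{n-2}$, consistent with this formula.
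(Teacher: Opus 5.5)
Your proposal is correct and follows the paper's route: the paper also uses Miao's mollification of $g_t$ in Gaussian normal coordinates. In both, the singular term concentrates as $\phi_\epsilon(t)$ times the jump $2(n-1)(H_1+H_2)$; the paper just quotes Miao's Eq.~(32) (with corrected normalizations), whereas you rederive it from $R=R_{g_t}-\tr\,\partial_t^2 g_t+\tfrac34|\partial_t g_t|^2-\tfrac14(\tr\,\partial_t g_t)^2$.

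One small fix: with a fixed-width cutoff at scale $\delta$, you do not have $\hat g_\epsilon=\hat g$ on $\epsilon<|t|<\delta$. However, the mollification there samples only one smooth side, so $\hat g_\epsilon\to\hat g$ in $C^2$ uniformly on $\{|t|\ge\epsilon\}$, and this is enough for the bulk term to converge to $\int_{\hat M\setminus\Sigma}R\,d\vol$.
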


\begin{proof}
    We only need to control the behaviour around $\Sigma$.
    In \cite{Miao02} Miao constructs smooth approximating metrics by mollification around the hypersurface $\Sigma$. Integrating his Equation (32) we obtain (note the different normalization convention for $H$, the factor of 2 missing after plugging Equation (30) into (17), and the different normal vector considered), as $\delta\to 0$,
    \begin{align}
        &\int_{\Sigma\times [-\frac{\delta^2}{100},\frac{\delta^2}{100}]}O(1)+ 2(n-1)(H_1+H_2)\frac{100}{\delta^2}\phi\left(\frac{100}{\delta^2} t\right)d\sigma_t(x)dt  \longrightarrow 2\int_\Sigma (n-1)(H_1+H_2)d\sigma(x)
    \end{align}
    since $d\sigma_t=d\sigma_0 + O(1)$ by continuity of the gluing and $\int_{-1}^1\phi=1$.
\end{proof}

\subsection{Cones} 
We now consider the case when $X$ is the metric completion of the cone over a smooth manifold M. In this case \eqref{density} is satisfied with $\rho=\frac{\vol(M)}{\omega_n}$ at the tip and $\rho=1$ otherwise.
\subsubsection{Two-dimensional cones}
We define the function
\begin{align}
    \mathfrak C(\alpha)=
    \begin{cases}
    2\alpha\left(1-\frac{\alpha}{2\pi} \right) \, , & \text{if $\alpha\leq 0$}\\
    3\left(1-\frac\alpha{2\pi}\right)
    \frac{2\alpha-2\sin\alpha}{1-\cos\alpha} \, , &\text{if $\alpha\in[0,\pi]$} \\
    3\left(1-\frac{\alpha}{2\pi}\right)\left[ \frac{3\alpha-\pi-\sin\alpha}{1-\cos\alpha}\right] \, & \text{if $\alpha\in [\pi,2\pi)$} \,.
    \end{cases}
\end{align}

\begin{remark}
    Note that $\mathfrak C$ is $C^2$ and $\mathfrak C(\alpha)\to\infty$ as $\alpha\to 2\pi^-$.
\end{remark}
\begin{figure}[ht]
\centering
\includegraphics[width=10cm]{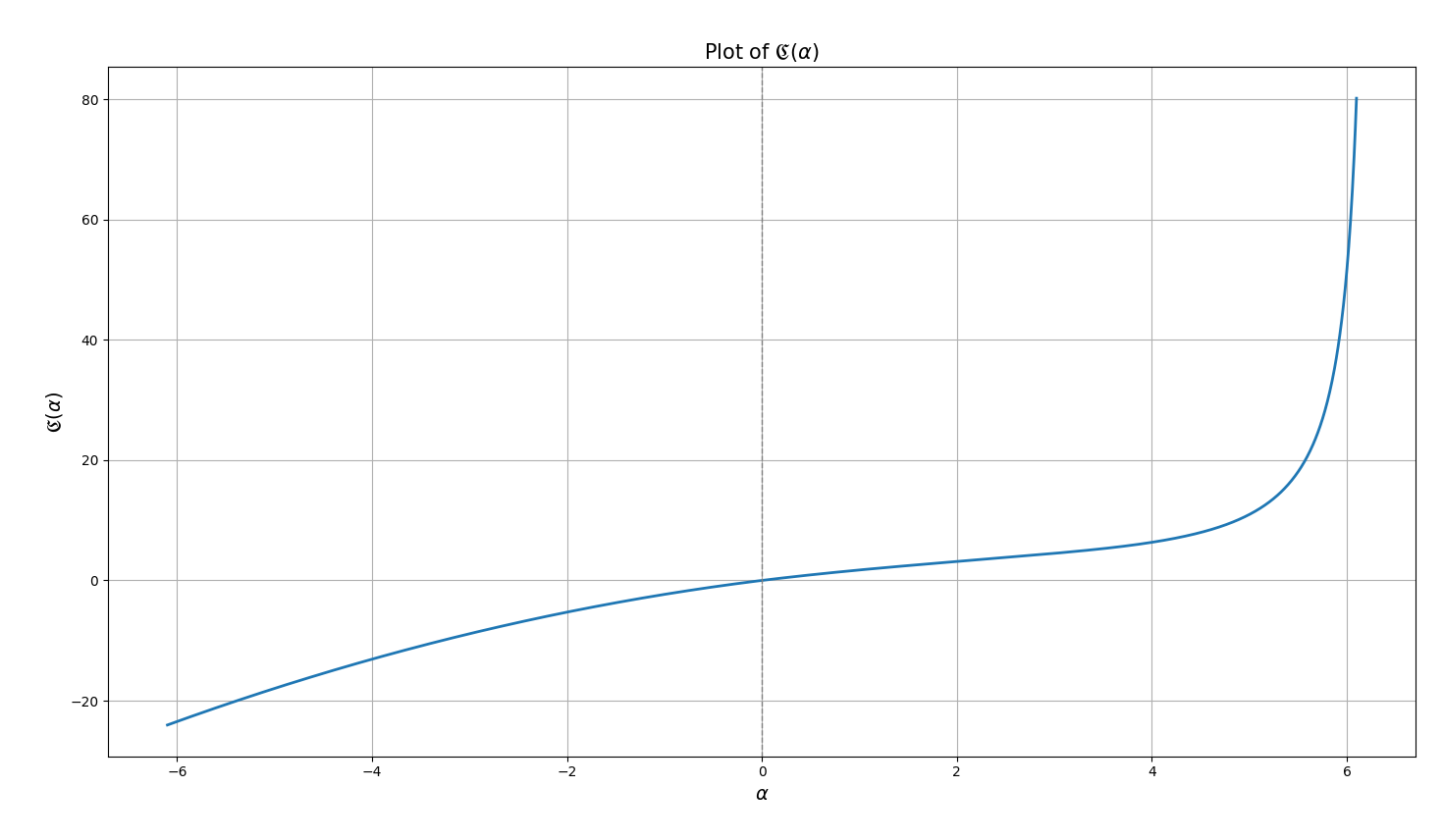}
\caption{Graph of $\mathfrak C$.}
\end{figure}

In the following, we call $x\in X$ a conical singularity if there exists a neighbourhood $B_\eps(x)$ that is isometric to a ball on a cone over $S^1_{2\pi-\alpha}$, $\alpha\in(-\infty,2\pi)$, centred at the vertex.
\begin{theorem}\label{2d-cone}
Assume that $M$ is a closed surface, smooth outside finitely many conical singularities $z_1, \ldots, z_k$ with conical angles $2\pi-\alpha_1,\ldots, 2\pi-\alpha_k$ for some $\alpha_i\in (-\infty,2\pi)$.
Then
   $${\KK}=
   \int_{M\setminus \{z_1,\ldots,z_k\}} R\,d\vol+\sum_i \mathfrak C(\alpha_i)\,.$$
On the other hand,
   ${\KK}\not=\int\k d\vol$
   and $\k(z_i)=0$ for $i=1,\ldots,k$.
   \end{theorem}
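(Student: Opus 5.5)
The plan is to localize the computation of $\KK$ and treat the conical singularities through a scale-invariant model computation on the flat cone.

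\textbf{Localization.} Fix $\eps>0$ so small that each $B_{2\eps}(z_i)$ is isometric to a ball around the vertex of the flat cone $C_\theta$ over $S^1_{\theta}$, with $\theta=2\pi-\alpha_i$. Then split
\[
\AA_0-\AA_s=\int_{M\setminus\bigcup_i B_\eps(z_i)}(\a_0-\a_s)\,d\vol+\sum_i\int_{B_\eps(z_i)}(\a_0-\a_s)\,d\vol .
\]
On the first region the metric is smooth up to distance $\eps$ from every point, and contributions from $y$ at distance $\ge\eps$ are $O(e^{-c/s})$. Proof I of Theorem \ref{thm:smoothcase} therefore applies with an error that is uniform in $x$, except in an annulus $\eps\le d(x,z_i)\le 2\eps$ where the cone is flat and $\a_s(x)=1+O(e^{-c/s})$. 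So the first term is
\[
s\int_{M\setminus\bigcup B_\eps}R\,d\vol+o(s).
\]
Letting $\eps\to0$ at the end recovers $\int_{M\setminus\{z_i\}}R$.

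\textbf{Model cone computation.} Inside $B_\eps(z_i)$ we work on $C_\theta$, where $\a_0=1$ away from the tip. For $x$ at radius $r$, $1-\a_s(x)$ is exponentially small unless $r\lesssim\sqrt s$, so the integral may be extended to all of $C_\theta$ with error $O(e^{-c/s})$. The cone is invariant under the dilation $(r,\varphi)\mapsto(\lambda r,\varphi)$, and area scales by $\lambda^2$. Taking $\lambda=\sqrt s$ gives exactly
\[
\int_{C_\theta}(1-\a_s)\,d\vol=s\int_{C_\theta}(1-\a_1)\,d\vol .
\]
The contribution of $z_i$ to $\KK$ is therefore the constant $\mathfrak C(\alpha_i):=\int_{C_\theta}(1-\a_1(x))\,dx$, and the remaining work is to evaluate it.

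For this I use the distance on $C_\theta$. For $x=(r_1,\varphi_1)$ and $y=(r_2,\varphi_2)$, let $\psi\in[0,\theta/2]$ be the angular separation measured along the shorter arc. Then $d^2=r_1^2+r_2^2-2r_1r_2\cos\psi$ when $\psi\le\pi$, and $d=r_1+r_2$ otherwise. I write
\[
1-\a_1(x)=(12\pi)^{-1}\Big[\int_{\R^2}e^{-|x-y|^2/12}\,dy-\int_{C_\theta}e^{-d^2(x,y)/12}\,dy\Big]
\]
in polar coordinates $(r_2,\psi)$, integrate in $x$ over $r_1\,dr_1\,d\varphi_1$ (the $\varphi_1$-integral only produces a factor $\theta$), and reduce to a one-dimensional integral over $\psi$ of explicit radial Gaussian integrals. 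These radial integrals are $\int\!\!\int r_1r_2e^{-(r_1^2+r_2^2-2r_1r_2\cos\psi)/12}$, whose value is an elementary function of $\psi$ involving $(1-\cos\psi)^{-1}$ and $(\pi-\psi)/\sin\psi$, together with the corresponding expressions for $d=r_1+r_2$.

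\textbf{The three cases.} The case split matches the definition of $\mathfrak C$.
\begin{itemize}
\item $\alpha\le0$ ($\theta\ge2\pi$): pairs with $\psi\in(\pi,\theta/2]$ go through the tip, and the deficit is linear-quadratic in $\alpha$.
\item $\alpha\in[0,\pi]$ ($\theta\in[\pi,2\pi]$): all pairs have $\psi\le\pi$, but the sector of angles is truncated compared with the plane.
\item $\alpha\in[\pi,2\pi)$ ($\theta<\pi$): again $\psi\le\theta/2$, and a different combination appears after integration.
\end{itemize}
The factor $(1-\alpha/2\pi)=\theta/2\pi$ in $\mathfrak C$ is exactly the $\varphi_1$-integration.

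\textbf{Pointwise statements.} The statements about $\k$ are easy. By the same dilation invariance, $\a_s(z_i)$ does not depend on $s$, so $\k(z_i)=0$. At smooth points $\k=R$ by Theorem \ref{thm:smoothcase}, so $\int\k\,d\vol=\int_{M\setminus\{z_i\}}R$, which differs from $\KK$ whenever some $\mathfrak C(\alpha_i)\neq0$.

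\textbf{Main obstacle.} I expect the main difficulty to be the explicit evaluation and case bookkeeping in the model computation, in particular handling the non-Euclidean pairs with $d=r_1+r_2$ correctly. As a first step I would instead compute via Lemma \ref{upp/low}(iii), using $\partial_s\a_s$, since its integrand $[-n+d^2/6s]$ scales in the same way and may give cleaner radial integrals.
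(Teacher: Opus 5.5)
Your reduction is correct, and it takes a different route from the paper's. The paper expands $\int_{C_\rho}\a_s\,d\vol$ through the lengths $|\partial B_R(x)|$ of distance circles centred at $x$. That forces a split at $\alpha=\pi$, because the shape of $\partial B_R(x)$ changes once the removed wedge exceeds a half-plane. Your localization plus the exact scaling $\a^{C_\theta}_s(x)=\a^{C_\theta}_1(x/\sqrt s)$ shows directly that each vertex contributes the constant $\int_{C_\theta}(1-\a_1)\,d\vol$, and it gives $\k(z_i)=0$ for free.

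The gap is that you stop exactly where the content of the theorem lies. You rename this integral $\mathfrak C(\alpha_i)$, but the statement asserts a specific piecewise formula, and you neither evaluate the integral nor compare it with that formula. Your description of the cases is also not what your method produces. In vertex-centred coordinates nothing happens at $\alpha=\pi$: for every $\theta\le2\pi$ all pairs satisfy $\psi\le\theta/2\le\pi$. So for all $\alpha\in[0,2\pi)$
\[
\int_{C_\theta}(1-\a_1)\,d\vol=\frac{\theta}{6\pi}\int_{\theta/2}^{\pi}I(\psi)\,d\psi,\qquad I(\psi):=\int_0^\infty\!\!\int_0^\infty r_1r_2\,e^{-(r_1^2+r_2^2-2r_1r_2\cos\psi)/12}\,dr_1\,dr_2 .
\]
Passing to polar coordinates in $(r_1,r_2)$ gives $I(\psi)=\frac{36}{\sin^2\psi}\big(1+\frac{(\pi-\psi)\cos\psi}{\sin\psi}\big)$. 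This has the antiderivative $-18\big(\frac{\pi-\psi}{\sin^2\psi}+\cot\psi\big)$, which vanishes at $\psi=\pi$. Hence the integral equals $6\big(1-\frac{\alpha}{2\pi}\big)\frac{\alpha-\sin\alpha}{1-\cos\alpha}$ on the whole range $[0,2\pi)$. For $\alpha\le0$, the pairs with $\psi\in(\pi,\theta/2]$ give $\frac{\alpha\theta}{12\pi}\iint r_1r_2e^{-(r_1+r_2)^2/12}\,dr_1dr_2=2\alpha(1-\frac{\alpha}{2\pi})$, as stated.

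So your method, once completed, reproduces the first two branches of $\mathfrak C$ but not the third. For instance, at $\alpha=3\pi/2$ it gives $\frac{9\pi}{4}+\frac32\approx8.57$, while the stated branch gives $\frac{21\pi}{8}+\frac34\approx9.00$. The error is in the paper's Appendix B, not in your approach. In the case $\alpha\in[\pi,2\pi)$, the substitution $r=Rs$ turns $\int_R^{R/\sin(\alpha/2)}2\gamma\,r\,dr$ into $2R^2\int_1^{1/\sin(\alpha/2)}\gamma\,s\,ds$, and the factor $2$ is dropped when $\eta(\alpha)$ is defined. With it restored, the paper's own computation gives $12\big(1-\frac{\alpha}{2\pi}\big)\big[\frac{\alpha-\pi/2}{1-\cos\alpha}-2\eta(\alpha)\big]$ with $\eta(\alpha)=\frac{\alpha-\pi+\sin\alpha}{4(1-\cos\alpha)}$. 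That is again $6\big(1-\frac{\alpha}{2\pi}\big)\frac{\alpha-\sin\alpha}{1-\cos\alpha}$.

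Consequences for your proposal:
\begin{itemize}
\item Your assertion that a different combination appears for $\alpha\in[\pi,2\pi)$ is unfounded.
\item The third branch of the statement cannot be proved as written.
\item Once the integral is evaluated, your argument proves the theorem with $\mathfrak C(\alpha)=6(1-\frac{\alpha}{2\pi})\frac{\alpha-\sin\alpha}{1-\cos\alpha}$ for all $\alpha\in[0,2\pi)$.
\end{itemize}
A smaller point: $\KK\neq\int\k\,d\vol$ needs $\sum_i\mathfrak C(\alpha_i)\neq0$, not just one nonzero term. Since $\mathfrak C$ has the sign of $\alpha$, contributions of opposite sign can cancel.
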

For the proof see Appendix \ref{app:cone}.
\begin{corollary}
    The 2-dimensional cone with $\KK=0$ is $\R^2$.
\end{corollary}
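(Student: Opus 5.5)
The plan is to reduce the statement to the sign of the function $\mathfrak C$, using Theorem \ref{2d-cone}. Let $X$ be the cone over $S^1_{2\pi-\alpha}$ with $\alpha\in(-\infty,2\pi)$. Away from the vertex $X$ is flat, so $R\equiv 0$ there.

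\textbf{Step 1: localize the curvature functional.} Since $X$ has infinite mass, $\KK=0$ has to be read as in Remark \ref{rmk:scalmeasure} and in the example on doublings of convex sets: $\KK(U)=0$ for every bounded Borel $U$. For a bounded $U$ away from the tip, the smooth computation of Theorem \ref{thm:smoothcase} is local, so $\KK(U)=\int_U R\,d\vol=0$. For a bounded $U$ containing a neighbourhood of the tip, the computation in the proof of Theorem \ref{2d-cone} is also local: it only uses a neighbourhood of each singular point, together with Gaussian tails that are $O(e^{-c/s})$. It therefore gives $\KK(U)=\mathfrak C(\alpha)$. Alternatively, one can cap off the cone far from the tip to obtain a closed surface whose only singularity is the tip. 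Then Theorem \ref{2d-cone} applies directly, and the smooth cap contributes its own $\int R$ independently of the tip. I expect this localization step to be the only delicate point, since Theorem \ref{2d-cone} is stated for closed surfaces and one has to check that its error terms do not depend on global compactness.

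\textbf{Step 2: sign analysis of $\mathfrak C$.} It then suffices to show that $\mathfrak C(\alpha)=0$ if and only if $\alpha=0$. I would go case by case.
\begin{itemize}
\item For $\alpha<0$: $\mathfrak C(\alpha)=2\alpha(1-\alpha/2\pi)$ is a negative number times a positive one, hence negative.
\item For $\alpha\in(0,\pi]$: the factors $1-\alpha/2\pi$, $2\alpha-2\sin\alpha$ and $1-\cos\alpha$ are all strictly positive, so $\mathfrak C(\alpha)>0$.
\item For $\alpha\in[\pi,2\pi)$: we have $3\alpha-\pi\ge 2\pi$ and $\sin\alpha\le 0$, so the bracket is at least $2\pi$; moreover $1-\cos\alpha>0$ and $1-\alpha/2\pi>0$. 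Hence again $\mathfrak C(\alpha)>0$.
\item At $\alpha=0$ the value is the removable limit $(2\alpha-2\sin\alpha)/(1-\cos\alpha)\sim (\alpha^3/3)/(\alpha^2/2)\to 0$, which is consistent with $\mathfrak C$ being continuous. So $\mathfrak C(0)=0$.
\end{itemize}

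\textbf{Step 3: conclude.} By Steps 1 and 2, $\KK=0$ forces $\alpha=0$. Then the cone angle is $2\pi$, and the cone over $S^1_{2\pi}$ is isometric to $\R^2$. Conversely, $\R^2$ has $\KK(U)=0$ for every bounded $U$ by Theorem \ref{thm:smoothcase}.
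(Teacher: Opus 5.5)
Your proposal is correct and follows the argument the paper leaves implicit. The corollary is read off from Theorem~\ref{2d-cone}: the cone is flat off the vertex, so $\KK=\mathfrak C(\alpha)$, and $\mathfrak C$ vanishes only at $\alpha=0$ by exactly your case-by-case sign check (for $\alpha\in[\pi,2\pi)$ it is the numerator $3\alpha-\pi-\sin\alpha$, not the whole bracket, that is $\ge 2\pi$, which does not affect positivity).

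Your localization step is also less delicate than you expect. Appendix~\ref{app:cone} computes $\int_{C_\rho}\mathfrak a_s\,d\vol$ directly on the infinite cone, with $y$ ranging over all of $C$ and the tails handled by exponential decay. That is precisely $\KK(C_\rho)$ in the sense of Remark~\ref{rmk:scalmeasure}, so no capping-off is needed.
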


\begin{proposition}\label{prop:smoothing2dcone}
Let $M$ be as in Theorem \ref{2d-cone}. If $(M^\epsilon,g^\epsilon)$ denotes a smoothing that preserves the topology of $(M,g)$ and $R^\epsilon$ its scalar curvature, then in general
   $${\KK}\not=\lim_{\epsilon\to0}{\KK}^\epsilon.$$
    In other words,
    the correction term $\sum_i \mathfrak C(\alpha_i)$ does not coincide with 
   $$\lim_{\epsilon\to 0}\int_{M^\epsilon} R^\epsilon(x)d\vol_\eps(x)-\int_{M\setminus \{z_1,\ldots,z_k\}} R(x)d\vol(x)=\sum_i 2\alpha_i,$$
   as similarly noticed in \cite[Ex.~1.14]{KLP21}.
\end{proposition}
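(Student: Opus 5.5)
The plan is to reduce the claim to a local computation at each singular point, using Gauss--Bonnet for the smoothings. First I compute $\lim_{\epsilon\to0}\KK^\epsilon$. Since each $(M^\epsilon,g^\epsilon)$ is a smooth closed surface, Theorem \ref{thm:smoothcase} gives $\KK^\epsilon=\int_{M^\epsilon}R^\epsilon\,d\vol_\epsilon$. By Gauss--Bonnet (with $R=2K_G$) this equals $4\pi\chi(M^\epsilon)=4\pi\chi(M)$, because the smoothing preserves the topology.

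Next I apply Gauss--Bonnet to the singular surface $M$ itself. I excise small geodesic discs $B_r(z_i)$ around the cone points. Each disc is isometric to a flat sector of angle $2\pi-\alpha_i$, so its boundary circle has constant geodesic curvature $1/r$ and length $(2\pi-\alpha_i)r$. Its total boundary geodesic curvature, taken with respect to the complement, is therefore $-(2\pi-\alpha_i)$. Gauss--Bonnet on $M\setminus\bigcup_i B_r(z_i)$, whose Euler characteristic is $\chi(M)-k$, gives
\begin{align}
\frac12\int_{M\setminus\{z_i\}}R\,d\vol-\sum_i(2\pi-\alpha_i)=2\pi(\chi(M)-k).
\end{align}
Rearranging, $\int_{M\setminus\{z_i\}}R\,d\vol=4\pi\chi(M)-2\sum_i\alpha_i$. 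Combining this with the first step gives $\lim_\epsilon\KK^\epsilon-\int_{M\setminus\{z_i\}}R=\sum_i2\alpha_i$, which is the displayed identity.

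Finally, by Theorem \ref{2d-cone}, $\KK-\int_{M\setminus\{z_i\}}R=\sum_i\mathfrak C(\alpha_i)$. It remains to show that $\sum_i\mathfrak C(\alpha_i)\ne\sum_i2\alpha_i$ in general. A single cone point suffices, for instance a surface with $k=1$ and $\alpha_1=\pi$ (this requires a surface carrying such a conical metric; otherwise take any admissible $\alpha$). There $\mathfrak C(\pi)=3\cdot\tfrac12\cdot\tfrac{2\pi}{2}=\tfrac{3\pi}{2}\neq2\pi$. Alternatively, for $\alpha\in(0,\pi]$ small, Taylor expansion gives $\mathfrak C(\alpha)=3(1-\tfrac{\alpha}{2\pi})\cdot\tfrac{\alpha^3/3+\dots}{\alpha^2/2+\dots}\approx2\alpha-\tfrac{\alpha^2}{\pi}+O(\alpha^3)$, which differs from $2\alpha$ at second order. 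On the branch $\alpha\le0$, $\mathfrak C(\alpha)=2\alpha-\alpha^2/\pi\ne2\alpha$ for $\alpha\ne0$. Hence $\KK\neq\lim_\epsilon\KK^\epsilon$ whenever some $\alpha_i\neq0$, for instance for a single cone point.

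The main obstacle I expect is the orientation and sign bookkeeping in the excised Gauss--Bonnet. The boundary circles are curved toward the excised disc, so their contribution to the complement is $-\int k_g$. A sanity check: for $\alpha=0$ the formula must reduce to smooth Gauss--Bonnet, and the contribution $-2\pi$ per hole indeed matches the change of $-1$ per hole in the Euler characteristic. Everything else is a direct application of Theorems \ref{thm:smoothcase} and \ref{2d-cone}.
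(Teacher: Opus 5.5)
Your proof is correct, and it reaches the key identity by applying Gauss--Bonnet on the other side of the same circles than the paper does. Both arguments start from $\lim_\epsilon\int_{M^\epsilon}R^\epsilon\,d\vol_\epsilon=4\pi\chi(M)$.

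\textbf{Comparison with the paper.} The paper then fixes a particular smoothing that coincides with $M$ outside $\bigcup_j B_\epsilon(z_j)$ and splits the integral into the regular part plus local pieces. It applies Gauss--Bonnet to each smoothed disc $B_{2\epsilon}(z_j)$. Its boundary is a cone circle with $\kappa=1/(2\epsilon)$ and length $(2\pi-\alpha_j)2\epsilon$, which gives $4\pi-2(2\pi-\alpha_j)=2\alpha_j$.

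You instead apply Gauss--Bonnet to the singular surface with the flat discs removed, using the same circle data with the opposite orientation, and then subtract. Your version needs no auxiliary smoothing construction and no convergence argument for the regular part. The paper's version localizes the defect: it shows that $R^\epsilon\,d\vol_\epsilon$ concentrates mass $2\alpha_j$ near each $z_j$, which is slightly more information.

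You also supply the comparison $\mathfrak C(\alpha)\neq 2\alpha$, which the paper's proof leaves implicit. Your values $\mathfrak C(\pi)=3\pi/2$ and $\mathfrak C(\alpha)=2\alpha-\alpha^2/\pi+O(\alpha^3)$ are correct. To remove your existence hedge, take the surface of a regular tetrahedron, or a flat pillowcase. Either has four cone points with $\alpha_i=\pi$ and $R=0$ elsewhere, so $\KK=6\pi$ while $\lim_\epsilon\KK^\epsilon=8\pi$.

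\textbf{One overstatement.} Drop the claim that $\KK\neq\lim_\epsilon\KK^\epsilon$ \emph{whenever} some $\alpha_i\neq0$; it is false.
On the branch $[\pi,2\pi)$ one has $\mathfrak C(\pi)=3\pi/2<2\pi$, while $\mathfrak C(\alpha)\to\infty$ as $\alpha\to2\pi^-$. Hence $\mathfrak C(\alpha^*)=2\alpha^*$ for some $\alpha^*$; numerically $\alpha^*\in(3\pi/2,5\pi/3)$.
A single cone point with that $\alpha^*$ gives equality. With several cone points, the differences $\mathfrak C(\alpha_i)-2\alpha_i$ can have opposite signs and cancel.
This does not affect the proposition, which only asserts inequality in general, and your examples settle that.
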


\begin{proof}[Proof of Proposition \ref{prop:smoothing2dcone}]
Note first that the claim does not depend on the smoothing, as long as the topology is preserved, since by Gauss-Bonnet
\begin{align}
    \lim_{\epsilon\to 0}\int_{M^\epsilon} R^\epsilon(x)d\vol_\eps(x) = 4\pi\chi(M_ \eps)=4\pi\chi(M)\,.
\end{align}
Consider then a smoothing $M_\eps$ such that $M \setminus \bigcup_{j = 1}^n B_\eps(p_j) =M_\eps \setminus \bigcup_{j = 1}^n B_\eps(p_j)$, where $B_\eps(p_j)$ is the ball of radius $\eps$ centered at $p_j$ in the metric of $M$. For a construction see the proof of Proposition \ref{prop:higherdconeapprox}. Write
\[
\lim_{\eps \to 0} \int_{M_\eps} R_\eps(x)\,d\vol_\eps(x) = \lim_{\eps \to 0}\int_{M \setminus \bigcup_{j = 1}^n B_{2\eps}(p_j)} R(x)\,d\vol(x) + \lim_{\eps \to 0}\sum_{j = 1}^n \int_{B_{2\eps}(p_j)} R_\eps(x)\,d\vol_\eps(x)\,.
\]
By the bounded curvature of $M$, the first term of the RHS converges to $\int_{\Sigma \setminus \{p_1,\ldots,p_n\}} R(x)\,d\vol(x)$. The other term can be computed using Gauss-Bonnet:
\begin{align}
\lim_{\eps \to 0} \int_{B_{2\eps}(p_j)} R_\eps(x)\,d\vol_\eps(x) &= 4\pi\chi(B) - 2\lim_{\eps\to0}\int_{\partial B_{2\eps}(p_j)} \kappa(\theta)d\sigma(\theta) \\
&=4\pi - 2(2\pi-\alpha_j) = 2\alpha_j
\end{align}
where we used that by construction $\partial B_{2\eps}(p_j)$ is the distance sphere inside a cone over $S^1_{2\pi-\alpha_j}$, hence $\kappa=1/2\eps$, $\sigma(\partial B_{2\eps})= (2\pi-\alpha_j)2\eps$. 
\end{proof}

The following extends Theorem \ref{2d-cone} and will be used in the time-dependent setting. For the proof see Appendix \ref{app:sphsusp}.
\begin{theorem}\label{2d-sphsusp}
    The same conclusion of Theorem \ref{2d-cone} holds if $z_i$ is allowed to be a spherical suspension singularity of angle $2\pi-\alpha_i$, i.e. if for some $\eps>0$, $B_\eps(z_i)$ is isometric to 
    \[ (([0,\eps] \times S^1_{2\pi-\alpha_i})/\sim\,, \, g(r,\theta) = dr^2 \oplus \sin^2(r)g_{S^1_{2\pi - \alpha_i}})\, . \]
\end{theorem}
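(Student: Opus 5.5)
We reduce Theorem \ref{2d-sphsusp} to Theorem \ref{2d-cone}. The plan is to localize the singular contribution and show that the spherical suspension and the flat cone of the same angle produce the same correction $\mathfrak C(\alpha_i)$ to leading order. The difference between the two then only enters the $O(1)$ part of $\partial_s\AA_s$, and it does so through the smooth curvature of the punctured neighbourhood.

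\emph{Step 1: localization.} Fix $\eps$ small and split $M = \bigcup_i B_{\eps}(z_i) \cup M'$. Contributions to $\AA_s$ from pairs $(x,y)$ with $d(x,y)\ge \eps/2$ are $O(e^{-c/s})$. On the part of $M'$ at positive distance from the $z_i$, the proof of Theorem \ref{thm:smoothcase} applies with uniform errors and yields $\int R\,d\vol$ there. It therefore suffices to show the following. Let $\AA_s^{\mathrm{susp}}(B_\eps)$ denote the localized Gaussian double integral over $B_\eps(z_i)$, with inner integral over $B_{2\eps}$, and let $\AA^{\mathrm{cone}}_s(B_\eps)$ denote the same quantity for the flat cone over $S^1_{2\pi-\alpha_i}$. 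Then
\begin{align}
\frac1s\Big(\AA_0^{\mathrm{susp}}(B_\eps)-\AA_s^{\mathrm{susp}}(B_\eps)\Big) - \frac1s\Big(\AA_0^{\mathrm{cone}}(B_\eps)-\AA_s^{\mathrm{cone}}(B_\eps)\Big) \longrightarrow \int_{B_\eps(z_i)} R\,d\vol = 2\,\mathrm{area}(B_\eps),
\end{align}
since the flat cone has $R=0$ off the tip and the suspension has $R\equiv 2$. Theorem \ref{2d-cone} (Appendix \ref{app:cone}) provides the cone term as $\mathfrak C(\alpha_i)$ plus the boundary terms of the localization. These boundary terms have the same structure in both cases, and they cancel against the neighbouring $M'$ contribution exactly as in the cone proof.

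\emph{Step 2: comparison.} Work in the polar coordinates $(r,\theta)$ shared by both metrics. Both metrics have the same rotational symmetry, so we can write the double integral as in the appendix proof, with the distance between $(r_1,\theta_1)$ and $(r_2,\theta_2)$ given by the respective law of cosines. Geodesics either pass through the tip, with length $r_1+r_2$, or stay in the smooth part when the angular separation is $<\pi$. In the spherical case the relevant formula is $\cos d = \cos r_1\cos r_2 + \sin r_1\sin r_2\cos\phi$ for angular separation $\phi<\pi$, and $d=r_1+r_2$ otherwise. The threshold $\pi$ is the same as in the flat cone, because the unfolded link is still a circle of length $2\pi-\alpha_i$. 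For $r_1,r_2\le\eps$ we have $d_{\mathrm{susp}}^2 = d_{\mathrm{cone}}^2\,(1 + O(r_1r_2))$ and $\sin r = r(1 - r^2/6 + \dots)$. Expanding $e^{-d^2/12s}$ and the area element $\sin r_1\sin r_2\,dr\,d\theta$ to second order, we get a correction of relative size $O(r^2)$. After the Gaussian scaling $r\sim\sqrt s$, this produces an $O(1)$ contribution to $\partial_s\AA_s$.

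\emph{Step 3: identifying the correction.} The remaining task is to show that this $O(1)$ term equals $2\,\mathrm{area}(B_\eps) + o(1)$, i.e.\ that the tip region of size $\sqrt s$ contributes only $O(\sqrt s)$ beyond the cone correction. I would argue as follows. Pairs with $\phi<\pi$ only see smooth constant-curvature geometry, so the expansion of Theorem \ref{thm:smoothcase} applies pointwise and gives $R=2$ times area. For the through-tip pairs the distance is exactly $r_1+r_2$ in both models, so the two models differ only in the area weights $\sin r$ versus $r$. This set of pairs has measure $O(s^{2})$ in the scaled region, and combined with the weight $(12\pi s)^{-1}s^{-1}$ it yields $O(s)$ after factoring. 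Hence the tip does not see the curvature at leading order.

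The main obstacle is Step 3, specifically controlling the mixed region near the cut angle $\phi=\pi$ uniformly. The approach I would try first is to rescale $r=\sqrt s\,\tilde r$, Taylor expand the integrand in $\sqrt s$, and verify that the order-$\sqrt s$ terms vanish by parity while the order-$s$ terms integrate to $2\times$ area plus an $O(\sqrt s)$ remainder. If this fails, the fallback is to verify directly that $\mathfrak C$ is recovered by redoing the appendix computation with the spherical law of cosines.
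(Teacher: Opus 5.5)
\textbf{There is a genuine gap in Step 3.} Your overall strategy is sound: compare the suspension with the flat cone of the same angle and show that the difference of slopes is the smooth term $2\,\mathrm{area}$. Your Step 2 estimate, a relative $O(r^2)$ discrepancy between the two models in distances and area elements, is the right tool. The problem is where Step 3 locates the difference.

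The sentence ``pairs with $\phi<\pi$ only see smooth constant-curvature geometry, so the expansion of Theorem \ref{thm:smoothcase} applies pointwise and gives $R=2$ times area'' is not valid. Theorem \ref{thm:smoothcase} is not a statement about individual pairs. It rests on the expansion of $|\partial B_R(x)|$ for a full smooth geodesic ball around $x$, and for $x$ within $O(\sqrt s)$ of the tip that ball is not smooth.

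For $\alpha\in(0,2\pi)$ every pair is a $\phi<\pi$ pair (there are no through-tip pairs at all), yet $B_R(x)$ with $R>r(x)$ misses a wedge of angle $\beta^{\alpha,r,R}$. This wedge is exactly what produces $\mathfrak C(\alpha)$. If your reasoning were valid, it would equally show that the flat cone has slope $0$, contradicting $\mathfrak C(\alpha)\neq 0$. What you actually need is that the non-negligible tip contributions of the two models agree up to $o(s)$. For $\alpha\ge 0$ Step 3 gives no argument for this, and it is the whole content of the theorem.

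Your through-tip estimate only covers $\alpha<0$. Even there, the smallness comes from the relative $O(s)$ difference of the weights $\sin r$ versus $r$. The measure $O(s^2)$ alone, against the weight $s^{-2}$, gives only $O(1)$. The Taylor-expansion/parity plan does not address this either.

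\textbf{The missing idea.} The paper splits $|\partial B_R(x)|$ into a full round circle minus an angular defect:
\begin{itemize}
\item $2\pi\sin R-\alpha\sin(R-r)\ind_{\{R>r\}}$ for $\alpha\le0$;
\item $(2\pi-\beta_{\mathbb S^2}^{\alpha,r,R}\ind_{\{R>r\}})\sin R$ for $\alpha\in[0,\pi]$, and analogously for $[\pi,2\pi)$.
\end{itemize}
The full-circle part is literally the round-sphere computation. It gives $|S_\rho|-2|S_\rho|s+O(s^2)$, i.e.\ the $\int R\,d\vol$ term for every $x$, including those near the tip. The defect part lives at scale $\sqrt s$. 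After rescaling $R=\sqrt s\,W$, $r=\sqrt s\,w$, it is bounded ($\beta\le 2\pi$) against a Gaussian. It then converges by dominated convergence to the flat-cone defect, because the flat angles depend only on $r/R$ and $\beta_{\mathbb S^2}^{\alpha,\lambda r,\lambda R}\to\beta_{\R^2}^{\alpha,r,R}$ as $\lambda\to0$.

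\textbf{How to close it in your framework.} Split the outer variable by $r(x)\lessgtr K\sqrt s$.
\begin{itemize}
\item For $r(x)\ge K\sqrt s$, the Gaussian window around $x$ is a full disk up to errors $e^{-cK^2}$, so each model gives its smooth value.
\item For $r(x)\le K\sqrt s$, apply your Step 2 comparison to all pairs, not just through-tip ones. It gives $|\a^{\mathrm{susp}}_s(x)-\a^{\mathrm{cone}}_s(x)|=O(K^2 s)$ on a region of area $O(K^2 s)$. That is an $O(K^4 s)$ contribution to the slope difference.
\end{itemize}
Letting $K\to\infty$ slowly as $s\to0$ then completes the argument.
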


\subsubsection{Higher-dimensional cones}
\begin{theorem}\label{thm:higherdcone}
Assume that $X$ is a compact mm-space that is isometric to a smooth Riemannian manifold of dimension $n>2$ outside of finitely many singularities $z_1, \ldots, z_k$, and that  
in the $\delta_i$-neighborhood of $z_i$ it is isometric to the cone over some closed manifold $(M_i,g_i)$. 
Then
$${\KK}_X=\int_{X\setminus \{z_1,\ldots,z_k\}}  R\,d\vol=\int_X\k \,d\vol.$$
\end{theorem}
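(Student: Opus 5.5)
We prove the statement by splitting $\KK_X$ into a contribution from the regular part and contributions from small conical neighbourhoods of the $z_i$, and showing that the latter vanish as $s\searrow0$ when $n>2$. By Lemma \ref{upp/low}(iii) it suffices to compute $\lim_{s\searrow0}\int_X\partial_s\a_s(x)\,dm(x)$. Fix $\eps\ll\min_i\delta_i$ and split the outer integral into $X_\eps:=X\setminus\bigcup_i B_\eps(z_i)$ and the balls $B_\eps(z_i)$.

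\textbf{Regular part.} On $X_\eps$ the space is smooth with bounded geometry, and every $x\in X_\eps$ has injectivity radius bounded below by $c\eps$. Proof I of Theorem \ref{thm:smoothcase} therefore gives $-\partial_s\a_s(x)=R(x)+O(\sqrt s)$ uniformly on $X_{\eps}$; the constant may depend on $\eps$, and contributions from $y$ at distance $\gtrsim\eps$ are $O(e^{-c\eps^2/s})$. Hence
\[
-\int_{X_\eps}\partial_s\a_s\,dm\to\int_{X_\eps}R\,d\vol .
\]
On the cone $C(M_i)$ the scalar curvature is $R=(R_{M_i}-(n-1)(n-2))/r^2$, which is integrable against $r^{n-1}dr$ exactly when $n>2$. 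So $\int_{X_\eps}R\to\int_{X\setminus\{z_i\}}R$ as $\eps\to0$, and $\int_{B_\eps(z_i)}|R|\,d\vol=O(\eps^{n-2})$.

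\textbf{Conical part.} On $B_\eps(z_i)$ we use scaling. In the cone $C(M_i)$ the dilation $x\mapsto\lambda x$ gives $d^2\mapsto\lambda^2d^2$ and $m\mapsto\lambda^nm$. Setting $\lambda=\sqrt s$, the quantity $\partial_s\a_s(x)$ for $x$ at radius $r$ equals $s^{-1}\Phi(r/\sqrt s)$ for a universal profile $\Phi$, computed on the cone at $s=1$. This holds up to exponentially small errors, because the cone structure persists out to radius $\delta_i$. Consequently
\[
\int_{B_\eps(z_i)}\partial_s\a_s\,dm=s^{-1}s^{n/2}\int_{C(M_i)\cap\{r<\eps/\sqrt s\}}\Phi\,dm .
\]
It suffices to show $|\Phi(x)|\le C\min(1,r^{-2})$. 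With that bound the integral is $O(s^{n/2-1}\cdot(\eps/\sqrt s)^{n-2})=O(\eps^{n-2})$, uniformly in $s$, and this vanishes as $\eps\to0$. For points $x$ far from the tip, the Gaussian at scale $1$ only sees the smooth cone, and the smooth expansion gives $-\Phi(x)\approx R(x)=O(r^{-2})$. The correction terms come from geodesics passing near the tip, which lie at distance $\ge r$, so they are of size $e^{-cr^2}$. Bounded $\Phi$ near the tip follows from Lemma \ref{upp/low}(i) together with the density bound \eqref{density}.

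\textbf{Main obstacle and conclusion.} The hardest step is the decay bound $|\Phi|\lesssim r^{-2}$ for large $r$. The difficulty is that the expansion in Proof I must hold with constants controlled by the rescaled geometry, namely curvature $\sim r^{-2}$ and injectivity radius $\gtrsim r$ on the unit Gaussian scale. I would verify this by applying Proof II, the Laplacian expansion \eqref{eq:laplcomp}, on the ball $B_{r/2}(x)$: there all curvature derivatives scale like $r^{-2-k}$, so the error term is $O(r^{-3})$, while the tail outside $B_{r/2}(x)$ is $O(e^{-r^2/48})$.

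Combining the two parts and letting $s\to0$ and then $\eps\to0$ gives $\KK_X=\int_{X\setminus\{z_i\}}R\,d\vol$. The same estimates also show $\k(z_i)=\lim_{s\searrow0}-s^{-1}\Phi(0)$. I would show $\Phi(0)=0$ by computing it directly in polar coordinates: the cone is exactly scale invariant, so $\a_s(z_i)=\rho(z_i)$ for all $s$. Since $\k$ is defined pointwise and $\{z_i\}$ is a null set, this gives $\int_X\k\,d\vol=\int_{X\setminus\{z_i\}}R\,d\vol$.
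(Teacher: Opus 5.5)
Your proof is correct and is essentially the paper's argument. Via Lemma~\ref{lem:n-geq-3-susp} with $f(r)=r$, the paper uses the same cone scaling to prove $|\partial_s\a_s(x)|\le C\,d^{-2}(o,x)$ uniformly in $s>0$, which is your decay bound on $\Phi$ in unscaled form, and then concludes by dominated convergence using integrability of $d^{-2}(o,\cdot)$ for $n>2$; this is the same as your $\eps$-splitting with the $O(\eps^{n-2})$ estimate.

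One small correction: around a point at distance $r$ from the tip, the ball on which $d^2(x,\cdot)$ is smooth has radius only $c(M_i)\,r$, not $r/2$. For example, for cones over small round spheres, cut points appear well before any geodesic reaches the tip. The paper secures this radius via Cheeger's lemma, and using $B_{c(M_i) r}(x)$ instead of $B_{r/2}(x)$ only changes constants in your tail estimate.
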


\begin{proposition}\label{prop:higherdconeapprox}
Consider $X$ a mm-space as in Theorem \ref{thm:higherdcone}, and assume furthermore that every $(M_i,g_i)_{i=1,...z}$ is isometric to a sphere. Then there exist $(M_\eps,g_\eps)$ closed, $C^2$ regular Riemannian manifolds such that
\begin{itemize}
    \item $(M_\eps,g_\eps)\to X$ in Gromov--Hausdorff topology;
    \item ${\KK}(X)=\lim_{\epsilon\to 0}{\KK}(M_\epsilon)=
\lim_{\epsilon\to 0}\int_{M_\epsilon} R_\epsilon\, d\vol_\epsilon.$
\end{itemize}
\end{proposition}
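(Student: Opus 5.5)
We smooth each conical tip by replacing the cone metric near the vertex with a smooth rotationally symmetric cap, and then compute the curvature of the cap directly. Since every $M_i$ is a round sphere, the $\delta_i$-neighbourhood of $z_i$ is a warped product $dr^2+r^2 a_i^2 g_{S^{n-1}}$ with $r\in[0,\delta_i)$, for some $a_i>0$.

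\textbf{Construction.} For each $\eps<\delta_i/4$ we choose a $C^2$ warping function $f_\eps$ on $[0,\delta_i)$ with the following properties:
\begin{itemize}
\item $f_\eps(r)=a_i r$ for $r\ge 2\eps$;
\item $f_\eps(r)=\sin r$ (or $r$) near $0$, so that $f_\eps(0)=0$ and $f_\eps'(0)=1$, which makes the metric smooth at the origin;
\item $f_\eps'$ is monotone between $1$ and $a_i$, and $|f_\eps''|\le C/\eps$.
\end{itemize}
The metric $g_\eps=dr^2+f_\eps(r)^2g_{S^{n-1}}$ then coincides with the metric of $X$ outside $\bigcup_i B_{2\eps}(z_i)$. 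This gives a closed $C^2$ manifold $M_\eps$.

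\textbf{Gromov--Hausdorff convergence.} The identity map outside the small balls, extended radially inside them, is an $O(\eps)$-isometry. Hence $d_{GH}(M_\eps,X)\to0$.

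\textbf{The second equality.} Since $M_\eps$ is a smooth closed manifold, Theorem \ref{thm:smoothcase} gives $\KK(M_\eps)=\int_{M_\eps}R_\eps\,d\vol_\eps$ directly.

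\textbf{The first equality.} By Theorem \ref{thm:higherdcone} it suffices to show
$$\int_{M_\eps}R_\eps\,d\vol_\eps\to\int_{X\setminus\{z_i\}}R\,d\vol.$$
Outside $\bigcup_iB_{2\eps}(z_i)$ the integrands agree. The scalar curvature of $X$ near $z_i$ is $(n-1)(n-2)(1-a_i^2)/(a_i^2r^2)$, which is integrable against $a_i^{n-1}r^{n-1}dr$ since $n>2$. Therefore $\int_{X\setminus\bigcup B_{2\eps}}R\to\int_{X\setminus\{z_i\}}R$.

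It remains to show that the contribution of the cap vanishes. Using the warped product formula from the ball doubling example,
$$R_\eps=-2(n-1)\frac{f_\eps''}{f_\eps}+(n-1)(n-2)\frac{1-(f_\eps')^2}{f_\eps^2},$$
so that
$$\int_{B_{2\eps}}R_\eps\,d\vol_\eps=n\omega_n\int_0^{2\eps}\Big[-2(n-1)f_\eps''f_\eps^{n-2}+(n-1)(n-2)\big(1-(f_\eps')^2\big)f_\eps^{n-3}\Big]dr.$$
We estimate the two terms separately:
\begin{itemize}
\item Since $f_\eps\le C\eps$ and $\int_0^{2\eps}|f_\eps''|\,dr=|a_i-1|$ by monotonicity of $f_\eps'$, the first term is $O(\eps^{n-2})$.
\item The second term is bounded by $C\int_0^{2\eps}r^{n-3}dr=O(\eps^{n-2})$, using $f_\eps\ge cr$.
\end{itemize}
Both bounds tend to $0$ because $n>2$.

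The main obstacle is making sure these cap estimates are uniform. This requires $f_\eps\ge c r$ with $c$ independent of $\eps$, which follows from $f_\eps'\ge\min(1,a_i)$. It also requires the $C^2$ gluing, which is achieved by interpolating $f_\eps'$ with a smooth step function on $[\eps,2\eps]$. The hypothesis $n>2$ is used exactly at this point; in dimension $2$ the cap would carry the nonvanishing mass $2\alpha_i$ of Proposition \ref{prop:smoothing2dcone}.
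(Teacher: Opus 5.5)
Your argument is essentially the paper's. You smooth the warping function near each tip by a rescaled profile, apply the warped-product formula for $R_\eps$, and use that $|R_\eps|\,f_\eps^{n-1}\le C r^{n-3}$, which is integrable for $n\ge3$. The paper phrases this as dominated convergence on the whole neighbourhood $U_i$. You instead keep the metric unchanged outside the cap and bound the cap integral directly by $O(\eps^{n-2})$. The two are equivalent. There are two corrections to make.

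\textbf{The requirements on $f_\eps$ are incompatible when $a_i\neq 1$.} If $f_\eps(0)=0$, $f_\eps'=1$ near $0$, and $f_\eps'$ is monotone between $1$ and $a_i$, then $f_\eps(2\eps)=\int_0^{2\eps}f_\eps'\,dr\neq 2a_i\eps$. So $f_\eps(r)=a_ir$ cannot hold for $r\ge 2\eps$, and your cap does not glue isometrically onto $X\setminus B_{2\eps}(z_i)$. There are two ways to fix this.
\begin{itemize}
\item Keep monotonicity and accept that $f_\eps(r)=a_ir+c_i\eps$ for $r\ge2\eps$, with $c_i$ a constant independent of $\eps$. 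Then observe that $dr^2+(a_ir+c_i\eps)^2g_{S^{n-1}}$ is exactly the cone metric in the shifted radial variable $r'=r+c_i\eps/a_i$. The cap is therefore glued onto $X\setminus B_{2\eps+c_i\eps/a_i}(z_i)$, whose radius $f_\eps(2\eps)/a_i>0$ is still $O(\eps)$. All your estimates go through unchanged.
\item Alternatively, let $f_\eps'$ overshoot so that $\int_0^{2\eps}f_\eps'\,dr=2a_i\eps$. In that case you must replace $\int|f_\eps''|=|a_i-1|$ by a uniform bound on the total variation of a fixed profile rescaled to scale $\eps$.
\end{itemize}
The paper's primitive $\phi_\eps=\int_0^r\psi_\eps$ has exactly the first form, $\rho_i r+O(\eps)$ for $r\ge\eps$. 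The paper only asserts uniform convergence $\phi_\eps\to\phi$, not exact agreement.

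\textbf{You cite the wrong bound in the second cap term.} In $(1-(f_\eps')^2)f_\eps^{n-3}$ the exponent $n-3$ is nonnegative. What you need is the upper bound $f_\eps\le Cr$, which follows from $f_\eps'\le\max(1,a_i)$. The lower bound $f_\eps\ge cr$ you cite is not what is needed. In fact no lower bound on $f_\eps$ enters the cap estimate at all, since the density $R_\eps f_\eps^{n-1}$ involves only nonnegative powers of $f_\eps$ when $n\ge3$. So the main uniformity issue you identify at the end is really the upper bound, together with $f_\eps\le C\eps$ on $[0,2\eps]$.
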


It will be convenient to prove a more general statement than Theorem \ref{thm:higherdcone}, that covers arbitrary suspensions of a smooth cross-section over an interval.

\begin{lemma}\label{lem:n-geq-3-susp}
Given a smooth function $f: [0,R] \to [0,\infty)$ satisfying $R > 0$, $f(r) = 0$ iff $r = 0$, $f'(0) > 0$, and a closed Riemannian manifold $(N^{n - 1},g_N)$ with $n > 2$, consider the space $(X,d,m)$ given by adding a point $o$ to complete 

\begin{equation}
(M, h, d\vol_h)=((0,R]_r\times N, h:= dr^2 + f^2(r)\cdot g_N, f^{n - 1}(r)\cdot d\rho\wedge d\vol_{N})\,.
\end{equation}

Then for any compact subset $K \subseteq \{r < R\}\subseteq X$ 

\begin{equation}\label{eq:susp-scalar-int}
{\KK}(K) = \int_{K\setminus \{o\}} R_h d\vol_h =\int_{K\setminus \{o\}}\frac{R_N\circ \pi_N}{(f\circ r)^2} -\left(2(n - 1)\frac{f''}{f} +(n - 1)(n - 2)\frac{(f')^2}{f^2}\right)\circ r \,d\vol_h \,.
\end{equation}

Moreover, convergence to the integrand of ${\KK}(K)$ is controlled in the sense that given a fixed $(N, g_N)$ and uniform bounds

\begin{align}\label{eq:warping-factor-bounds}
\begin{cases}
0 < C_0^{-1} \leq f'\,,& \forall r \in [0,R/2]\,;\\
0 \leq f',\, r^2\left|\frac{f''}{f}\right|\ \leq C_0\,,& \forall r \in [0,R]\,,
\end{cases}
\end{align}

we have the uniform estimates near the tip

\begin{align}\label{eq:susp-unif-conv}
|\partial_s\a_s(x)| \leq C(N,C_0)\cdot d_X^{-2}(o,x)\,,\quad \forall s > 0\,, x \in B_{1 \wedge R/2}(o)\,.
\end{align}
\end{lemma}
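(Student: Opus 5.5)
The plan is to treat the smooth part and the tip separately, and to deduce $\KK(K)$ from the pointwise estimate \eqref{eq:susp-unif-conv} via dominated convergence. For $x\neq o$, the space is a smooth Riemannian manifold near $x$: a neighbourhood of fixed size $\sim d_X(o,x)$ carries smooth geometry with curvature bounded by $C d_X^{-2}(o,x)$. Proof II of Theorem \ref{thm:smoothcase} then gives $\partial_s\a_s(x)\to -R_h(x)$ pointwise on $K\setminus\{o\}$. The point $o$ has measure zero, and the Gaussian tail outside a ball of radius $\sim d_X(o,x)$ is exponentially small. The formula for $R_h$ in \eqref{eq:susp-scalar-int} is the standard warped-product scalar curvature computation.

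Since $n>2$, the function $d_X^{-2}(o,\cdot)$ is integrable against $f^{n-1}dr\,d\vol_N\sim r^{n-1}dr$. Once \eqref{eq:susp-unif-conv} holds, dominated convergence therefore gives
\[
\int_K \partial_s\a_s\,dm \to -\int_{K\setminus\{o\}} R_h\,d\vol_h .
\]
Lemma \ref{upp/low} (iii), applied to the localized functional of Remark \ref{rmk:scalmeasure}, then yields the claimed value of $\KK(K)$. Away from $o$ the convergence is uniform on compacts, so only a neighbourhood of the tip requires the estimate.

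\textbf{Main step: proving \eqref{eq:susp-unif-conv}.} As in \eqref{eq:IBP}, I rewrite
\[
\partial_s\a_s(x)=(12\pi s)^{-n/2}\frac1{2s}\int\Big[-n+\tfrac12\Delta_y d^2(x,y)\Big]e^{-d^2/12s}\,dm(y).
\]
The integration by parts is legitimate because $o$ has zero capacity when $n\ge 2$. Set $r_0=d_X(o,x)$ and split the $y$-integral into two regions.

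\emph{The region $d(x,y)\le r_0/2$.} Here the metric is smooth with $|\mathrm{Rm}|\le C(N,C_0)r_0^{-2}$, using the bounds \eqref{eq:warping-factor-bounds} on $f'/f\sim 1/r$ and $f''/f$. Laplace comparison in both directions gives $|\tfrac12\Delta d^2-n|\le C d^2 r_0^{-2}$. The Gaussian integral \eqref{eq:gaussianint} then bounds this contribution by $C r_0^{-2}$, uniformly in $s$.

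\emph{The region $d(x,y)>r_0/2$.} Here I would instead bound the original expression $[-n+d^2/6s]e^{-d^2/12s}$ directly. I would use the volume bound $m(B_\rho(x))\le C\rho^n$, which follows from $f'\le C_0$ and compactness of $N$, together with $s^{-1}\le C\,d^2 s^{-2}$ on the region where $d^2\gtrsim s$. Including the prefactor, the contribution is at most $C s^{-1}\int_{r_0/2}^\infty (1+\rho^2/s)e^{-\rho^2/12s}s^{-n/2}\rho^{n-1}\,d\rho$. This quantity is $\le C r_0^{-2}$: for $s\le r_0^2$ the factor $e^{-r_0^2/48s}$ absorbs $s^{-1}$, and for $s\ge r_0^2$ the whole expression is $O(1/s)\le r_0^{-2}$.

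I expect this cut-off step to be the main obstacle. One must ensure that the singular part of $\Delta d^2$ is handled, namely the cut locus and the geodesics passing near $o$. For the directly bounded region this is avoided because no integration by parts is used there. For the near region I would take care that balls of radius $r_0/2$ around $x$ avoid $o$ and lie within the injectivity radius, of order $r_0$ after rescaling by $r_0$. After that rescaling the geometry is uniformly controlled by $(N,C_0)$.
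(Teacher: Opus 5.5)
Your strategy is the paper's. You work at the scale $d_X(o,x)$; the paper does this by rescaling with $\lambda=d_X^{-1}(o,x)$ and using $\partial_s\a^X_s(x)=\lambda^2\partial_\tau\a^{\lambda X}_\tau(\lambda x)|_{\tau=\lambda^2 s}$. You then integrate by parts only on a small ball around $x$, where two-sided Laplace comparison holds, bound the remainder crudely via $m(B_\rho(x))\le C\rho^n$, and conclude by dominated convergence, since $d_X^{-2}(o,\cdot)$ is integrable for $n>2$.

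\textbf{The gap: smoothness of $d^2(x,\cdot)$ on $B_{d_X(o,x)/2}(x)$.} This is the step you defer, it is the substantive one, and as you state it it is false. The curvature bound $C\,d_X^{-2}(o,x)$ controls only conjugate points, not short geodesic loops. Take $n=3$, $f(r)=r$, and $N=\R^2/(\epsilon\Z\times\Z)$ with $\epsilon<1$. Let $y$ be the point reached from $x$ by going halfway around the short circle of the cross-section through $x$. Then $d(x,y)\le \epsilon\, d_X(o,x)/2$, and $x,y$ are joined by two distinct minimizing geodesics. Indeed, a unique one would be fixed pointwise by the reflection of the short circle, but $x$ and $y$ lie in different components of its fixed set. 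So $y$ is a cut point inside $B_{d_X(o,x)/2}(x)$.

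What is true, and what the proof needs, is smoothness on $B_{c\,d_X(o,x)}(x)$ for some $c=c(N,C_0)>0$. This is a uniform lower bound on the injectivity radius at that scale, and it requires a non-collapsing input. It is exactly the paper's central Claim. The rescaled annulus $[1/2,3/2]\times N$ is embedded isometrically into a closed manifold: the double of a modified warped product on $[0,2]\times N$ that is a product near the ends. That manifold has sectional curvature bounded by $K(N,C_0)$ and $\vol(B_{1/2}(\lambda x))\ge v(N,C_0)$, and Cheeger's lemma then gives the injectivity radius bound. With $1/2$ replaced by such a $c$, your near- and far-region estimates go through, with constants depending on $c$.

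\textbf{A second issue: the integration by parts is not set up consistently.} A global integration by parts (justified by ``$o$ has zero capacity'') produces the distributional Laplacian. That includes its singular part on the cut locus of $x$, which you do not control in the far region. If instead, as you do, you bound the far region with the original integrand $[-n+d^2/6s]e^{-d^2/12s}$, then you have integrated by parts only on $B_\rho(x)$ with $\rho=c\,d_X(o,x)$. This produces a boundary term of size $s^{-n/2-1}\rho\,e^{-\rho^2/12s}\,\sigma(\partial B_\rho(x))$. The term is harmless: with $\sigma(\partial B_\rho(x))\le C\rho^{n-1}$ (Bishop comparison within the injectivity radius) it is at most $C\rho^{-2}\sup_{u>0}u^{n/2+1}e^{-u/12}\le C\,d_X^{-2}(o,x)$. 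The paper includes exactly this term, but your argument must account for it.
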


\begin{proof}
For conciseness, we will denote the rescaling operation $\lambda x = (\lambda r,\theta) \in (0,\lambda R) \times N$ for $x = (r,\theta) \in X$ and any scaling factor $\lambda > 0$. Now fix any $x \in B_{1 \wedge R/2}(o)$, choose the specific scaling factor $\lambda := d_X^{-1}(o,x)$, and rescale

\begin{align}
    \partial_s\a_s^{X}(x) &= \frac{1}{(12\pi s)^{n/2}} \frac{1}{2s} \int_{(0,R) \times N} \left[-n + \frac{d^2_X(x,(r,\theta))}{6s}\right]\exp\left({-\frac{d^2_X(x,(r,\theta))}{12s}}\right)\,f^{n - 1}(r)d\vol_{g_N}(\theta)dr \\
    &= \lambda^2\frac{1}{(12\pi \lambda^2 s)^{n/2}} \frac{1}{2\lambda^2s} \int_{(0,\lambda R) \times N} \left[-n + \frac{(\lambda d_X)^2(x,(\lambda^{-1}r,\theta))}{6\lambda^2s}\right]\\
    &\hspace{4.8cm}\times \exp\left(-\frac{(\lambda d_X)^2(x,(\lambda^{-1}r,\theta))}{12 \lambda^2 s}\right)\,\big(\lambda f(\lambda^{-1}r)\big)^{n - 1}d\vol_{g_N}(\theta)dr \\
    &= \lambda^2\cdot\left.\partial_\tau \a_{\tau}^{\lambda X}(\lambda x) \right|_{\tau = \lambda^2 s}\,.\label{eq:scaling-almost-cone}
\end{align}

Here, $\lambda X$ is the metric completion of the Riemannian manifold

\begin{equation}
\Big((0,\lambda R]_\rho \times N, 
d\rho^2 + f^2_\lambda(\rho)\cdot g_N, f_\lambda^{n - 1}(\rho)\cdot d\rho\wedge d\vol_N\Big) \cong ((0,R]\times N, \lambda^2 h, \lambda^{n}d\vol_h)\,.
\end{equation}

where $f_\lambda := \lambda f(\cdot/\lambda):[0,\lambda R] \to [0,\infty)$. Note that the bounds \eqref{eq:warping-factor-bounds} are preserved under such rescalings (though the right endpoints of \eqref{eq:warping-factor-bounds} transform as $R/2 \rightsquigarrow\lambda R/2$, $R\rightsquigarrow\lambda R$). 

In order to continue as in the proof of Theorem \ref{thm:smoothcase}, we need the following technical claim.\\

\textbf{Claim}: For any $x$ and $\lambda$ as above, there is a radius $r_0 = r_0(N,C_0) > 0$ such that $|\sec_{\lambda M}| \leq K(N,C_0)$ and $d_{\lambda X}^2(\lambda x,\cdot) $ is smooth on $B_{r_0}^{\lambda X}(\lambda x)$. 

\textit{Proof.} We will now produce a warped product metric on $[0,2]\times N$ that has sectional curvature $\leq K(N,C_0)$, is a product metric on a neighborhood of each boundary component $\{0,2\}\times N$, and agrees with $d\rho^2 + f_\lambda^2(\rho)\cdot g_N$ on $[1/2,3/2]\times N$. Gluing two copies along their boundary, we obtain a smooth closed $n$-manifold with sectional curvature $\leq K(N,C_0)$ and 

\begin{equation}
\vol(B_{1/2}(\lambda x)) \geq \vol_{\lambda X}\big([3/4,5/4]\times B^N_{(6C_0)^{-1}}(\pi_N( x))\big) \geq \frac{\vol_N(B^N_{(6C_0)^{-1}}(\pi_N( x)))}{2(2C_0)^{n - 1}} \geq v(C_0,N) > 0
\end{equation}

Using Cheeger's Lemma (see \cite{Che67},\cite[Lem.~11.4.9]{Pet16}), there is a lower bound $0 < r_0 = r_0(N,C_0) \leq 1/2$ on the injectivity radius. Since $B_{1/2}^{\lambda X}(\lambda x)\subseteq [1/2,3/2]\times N$ embeds isometrically into this closed manifold, $d_{\lambda X}(\lambda x,\cdot)$ is smooth on $B_{r_0}^{\lambda X}(\lambda x)$.

It remains to define the new warping function $\tilde f_\lambda$ on $[0,2]$. Instead of writing an explicit formula for $\tilde f_\lambda$, we simply specify a list of requirements that can be satisfied. Since $C^{-1}_0 \leq f'_\lambda \leq C_0$, we can choose $\tilde f'_\lambda \geq 0$ smooth such that 

\begin{align}
\tilde f'_\lambda(r)\begin{cases}
= 0,& r \in [0,1/2 - (4C_0^2)^{-1}]\\
\leq C_0,& r \in [1/2 - (4C_0^2)^{-1},1/2]\\
= f_\lambda'(r),& r \in [1/2,3/2]
\end{cases}\,;&\\
&\text{while}\quad  |\tilde f''_\lambda| \leq 6C_0^3\,,\quad r \in [1/2 - (4C_0^2)^{-1},1/2]\,.
\end{align}

Requiring $\tilde f_\lambda(1/2) = f_\lambda(1/2)$, which is $\geq (2C_0)^{-1}$, we can integrate the upper bound $\tilde f_\lambda' \leq C_0$ above on $[1/2 - (4C_0^2)^{-1},1/2]$ to obtain $\tilde f_\lambda(1/2 - (4C_0^2)^{-1}) \geq (4C_0)^{-1}$. Similarly, we can choose $\tilde f_\lambda'$ on $[3/2,2]$ such that $\tilde f_\lambda'(r) = 0$ for $r \in [7/4,2]$ and $|\tilde f''_\lambda| \leq 6C_0$ while preserving $0 \leq \tilde f_\lambda' \leq C_0$. The sectional curvature bound $\leq K(N,C_0)$ holds on $([0,2]\times N, d\rho^2 + \tilde f_\lambda^2(\rho)\cdot g_N)$ by the formula for the sectional curvature of a warped product \cite[Sec.~4.2]{Pet16}. Note in particular that such sectional curvature bounds are guaranteed as long as $\tilde f_\lambda \geq C(C_0)^{-1} > 0$ and $|\tilde f_\lambda'|\,,|\tilde f_\lambda''|\, \leq C(C_0)$, which we have ensured by the above requirements.\qed
\medskip

We now safely compute:

\begin{align}
&\phantom{=} \frac{1}{(12\pi \tau)^{n/2}} \frac{1}{2\tau} \int_{B_{r_0}(\lambda x)} \left[-n + \frac{d^2_{\lambda X}(\lambda x,z)}{6\tau}\right]e^{-\frac{d^2_{\lambda X}(\lambda x,z)}{12\tau}}\,dm_{\lambda X}(z) \\
&=\frac{1}{(12\pi \tau)^{n/2}} \frac{1}{2\tau} \int_{B_{r_0}(\lambda x)} \left[-n + \frac{1}{2}\Delta d^2_{\lambda X}(\lambda x,z)\right]e^{-\frac{d^2_{\lambda X}(\lambda x,z)}{12\tau}}\,dm_{\lambda X}(z) \\
&\phantom{=}-\frac{1}{(12\pi \tau)^{n/2}} \frac{r_0}{2\tau} e^{-\frac{r_0^2}{12\tau}}\sigma_{\lambda X}(\partial B_{r_0}(\lambda x)) = O_{N,C_0}(1) \label{eq:scalar-conv-almost-cone}\,.
\end{align}

Indeed, the second line is bounded (from both sides) by Hessian comparison in the presence of the sectional curvature bound $K(N,C_0)$ to obtain $|\Delta d^2_{\lambda X}(\lambda x,\cdot) - 2n| \leq C(N, C_0)d^2_{\lambda X}(\lambda x,\cdot)$, along with a $(N,C_0)$-uniform bound on the scale-invariant volume of distance spheres $\sigma_{\lambda X}(\partial B_\rho(\lambda x))\leq C(N,C_0)\rho^{n-1}$ for $\rho \leq r_0$. The LHS of the third line is bounded using the previous statement with $\rho = r_0$. \\

The portion of the integral over the complement of $B_{r_0}^{\lambda X}(\lambda x)$ is part of the tail and uniformly bounded, again using the $\leq C(N,C_0) \rho^{n - 1}$ bound on volumes of $\rho$-distance spheres. We have $(C_0,N)$-uniformly bounded the second factor on the RHS of \eqref{eq:scaling-almost-cone}, which proves \eqref{eq:susp-unif-conv}. Now we use dominated convergence to compute
\begin{align}
    \KK(K)&=\lim_{s\searrow0}\int_{K} -\partial_s\a_s(x) d\vol_h(x)  \\
    &= \int_{K\setminus \{o\}} \frac{R_N\circ \pi_N}{(f\circ r)^2}\,d\vol_h -\left(2(n - 1)\frac{f''}{f} +(n - 1)(n - 2)\frac{(f')^2}{f^2}\right)\circ r \,.
\end{align}
To be more precise, the pointwise convergence for points $ x\neq o$ follows from Theorem \ref{thm:smoothcase} and the formula for the scalar curvature of a warped product, while the dominating function is the RHS of \eqref{eq:susp-unif-conv} which is integrable using $\sigma_{X}(\partial B_r(o))\leq C(N,C_0) r^{n - 1}$.
\end{proof}

\begin{proof}[Proof of Theorem \ref{thm:higherdcone}]
    Thanks to the gluing theorem \ref{thm:C0gluing} it is enough to directly work on a ball around the vertex of a cone over a closed manifold $(M,g)$, that we denote with $B_R(o)\subseteq C(M)$. The formula follows from Lemma \ref{lem:n-geq-3-susp} with $f(r) := r$. 
\end{proof}

\begin{proof}[Proof of Proposition \ref{prop:higherdconeapprox}]
Let, for $i=1,...,k$, $U_i\subset X$ be a neighbourhood around the singular point $z_i\in X$. Then we can find $L_i>0$, $\rho_i>0$ s.t. $U_i\cong (([0,L_i]\times S^{n-1})/\sim, dr^2 + (\rho_i r)^2 g_{S^{n-1}})$. Let $\phi(r)=\rho_i r$, dropping the $i$ index for the sake of notation, be the warping function. In order to approximate $\phi'(r)\equiv \rho_i$, let $\psi \in C^1([0,\infty))$ monotone such that $\psi(0)=1$, $\psi(r)=\rho_i$ for every $r\in [1,\infty)$ and $\psi'(0) = 0$. Rescale then $\psi_\eps(r)=\psi(r/\eps)$ and consider the primitives 
\[ \phi_\eps(r) = \int_0^r\psi_\eps(s)ds \,. \]
Consider now $(M_\eps,g_\eps)$ obtained by $X$ by substituting every conical neighbourhood $U_i$ with $U_i^\eps$, where $\widetilde U_i^\eps\cong (([0,L_i]\times S^{n-1})/\sim, dr^2 + \phi_\eps^2(r)g_{S^{n-1}})$, for $\eps < L$. Then, for $\eps$ small enough, $M_\eps$ is $C^2$ regular because $\phi_\eps'(0)=1$, $\phi_\eps''(0) = 0$. Moreover $\phi_\eps\to\phi$ uniformly and so do the distance functions $d_{g_\eps}\to d_g\in C^0(X^2)$.

By smoothness of $M_\eps$, 
\begin{align}
    \KK(M_\eps) &=\int_{M_\eps} R_{g_\eps} d\vol_{g_\eps} \\
    &= \int_{X\setminus\bigcup_i U_i} R_g d\vol_g + \sum_{i=1}^k\int_{U_i}R_{g_\eps} d\vol_{g_\eps}\,.
\end{align}
In order to prove $\KK_\eps\to\KK$, we want to pass the limit $\eps\to 0$ inside the integrals over $U_i$, since $R_{g_\eps} \sqrt{|g_\eps|}$ converges pointwise to the analogue quantity for the cone for every $x\in U_i\setminus\{z_i\}$. This follows by dominated convergence since 
\begin{align}\label{eq:scalcurv_warped}
    R_{g_\eps}\sqrt{|g_\eps|}
= \left[-2 (n-1) \frac{\phi_\eps''}{\phi_\eps}
+ (n-1)(n-2) \frac{1 - (\phi_\eps')^2}{\phi_\eps^2}\right]\phi_\eps^{n-1}
\end{align}
are dominated by $Cr^{n-3}$ for $n\geq3$. Indeed, $|\phi_\eps'(r)|=|\psi(r/\eps)|\leq C$ by construction, hence $|\phi_\eps(r)|\leq Cr$, $|\phi_\eps''(r)| = |\psi'(r/\eps)| \frac{1}{\eps} \ind_{[0,\eps]}(r) \leq \frac{C}{r}\ind_{[0,\eps]}(r)$, and the absolute value of both terms of \eqref{eq:scalcurv_warped} are bounded by $Cr^{n-3}$.
\end{proof}

\begin{corollary}
Let $M$ be an $(n-1)$ Einstein manifold with $\Ric=(n-2)g$, $n\geq3$. Then the metric completion of the cone $C(M)$ over $M$ is Ricci flat in the sense that it is $\RCD(0,n)$ and $\KK^{(n)}\le0$.
\end{corollary}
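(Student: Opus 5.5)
The statement has two parts, and I would prove them separately: first that $C(M)$ is $\RCD(0,n)$, then that $\KK^{(n)}\le 0$.

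\emph{The $\RCD(0,n)$ part.} This is classical. By the cone theorem of Ketterer, the Euclidean cone over a metric measure space $(M,d_g,\vol_g)$ is $\RCD(0,n)$ if and only if $M$ is $\RCD(n-2,n-1)$ with diameter at most $\pi$. Here $M$ is a smooth closed manifold with $\Ric_M=(n-2)g$, so it is $\RCD(n-2,n-1)$ (smooth lower Ricci bounds give $\CD$ by the equivalences in the characterization theorem above). Bonnet--Myers gives $\diam M\le\pi$. For $n=3$, $M$ is a surface with Gauss curvature $1$, and the same argument applies. One can also see the $\RCD(0,n)$ property more directly: away from the tip the cone metric $dr^2+r^2g$ has
\begin{align}
\Ric=\Ric_M-(n-2)g=0,
\end{align}
from the warped product formula with $f(r)=r$, so the regular part is Ricci flat.

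\emph{The $\KK^{(n)}\le 0$ part.} The curvature functional is a local quantity, so it should be understood as in Remark~\ref{rmk:scalmeasure} and in the strip example: for infinite mass we require $\KK(U)\le 0$ for every bounded $U$, or equivalently for compact $K$ as in Lemma~\ref{lem:n-geq-3-susp}. I would apply Lemma~\ref{lem:n-geq-3-susp} with $N=M$, $f(r)=r$, and $R$ arbitrarily large, so that every compact set is covered. This warping function trivially satisfies the hypotheses: $f(0)=0$, $f'=1>0$, $f''=0$, and the uniform bounds \eqref{eq:warping-factor-bounds} hold with $C_0=1$. The lemma, which needs $n>2$, then gives
\begin{align}
\KK(K)=\int_{K\setminus\{o\}}\Big(\frac{R_M}{r^2}-\frac{(n-1)(n-2)}{r^2}\Big)\,d\vol_h .
\end{align}
Since $R_M=(n-1)(n-2)$, the integrand vanishes, so $\KK(K)=0\le 0$.

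\emph{Main obstacle.} The delicate point is that $\KK$ is defined through a global integral, while the cone has infinite volume and the lemma is stated for compact $K\subseteq\{r<R\}$. So I must make sure the notion of minimality used is the localized one. This is exactly what was done for the strip example, and the lemma supplies it. The uniform estimate \eqref{eq:susp-unif-conv} is what rules out any hidden contribution concentrated at the tip.
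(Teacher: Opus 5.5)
Your proposal is correct and follows the paper's argument. The paper's one-line proof combines Ricci flatness of $C(M)$ away from the tip (so $R=0$ there) with the fact that $\KK$ does not see the tip for $n\ge 3$ (Theorem \ref{thm:higherdcone}, via Lemma \ref{lem:n-geq-3-susp} with $f(r)=r$), which is exactly your computation; your local reading of $\KK\le 0$ on compact sets, needed because the cone has infinite mass, and your appeal to Ketterer's cone theorem for the $\RCD(0,n)$ part fill in details the paper leaves implicit.
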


\begin{proof}
    It follows from Ricci flatness outside the tip and the fact that $\KK$ does not see the tip in higher dimensional cones.
\end{proof}

\begin{example}
    Non smooth examples are given by choosing $M={\mathbb S}^{n-1}(1)/\Z_2$, $n\ge 3$. Recall that Eguchi and Hanson \cite{EguchiHanson} constructed a smooth Ricci flat metric on $T^*S^2$ which provides an approximation for the former space with $n=4$, cf.~\cite[pag.~11]{CheegerNotes2001} and \cite[Rmk.~5.4]{sturm2017remarks}.

    Another example is $M={\mathbb S}^2(1/\sqrt 3)\times {\mathbb S}^2(1/\sqrt 3)$.
\end{example}

\subsection{Surfaces of convex polytopes}
We now consider the case when $(X,d,m)$ is the surface of an $(n+1)$-dimensional compact (convex) polytope $\hat X \subset \R^{n+1}$.
\begin{definition}
A {\it closed} (resp. open) {\it polytope} is a set $K\subseteq \R^{n + 1}$ with nonempty interior, and of the form

\begin{equation}
K = \bigcap_{j = 1}^N\{v\in \R^{n + 1}\mid v\cdot \xi_j \leq \lambda_j\} \qquad \text{(resp. $\{\cdots < \lambda_j\}$)}\,,
\end{equation}

for a (finite) collection of real numbers $\lambda_1,\ldots,\lambda_N \in \R{}$ and unit vectors $\xi_j \in S_1^{n}\subseteq\R^{n + 1}$. Moreover, we require the full signed-span condition $\sum_{j = 1}^N \R{}_+\cdot\xi_j = \R^{n + 1}$ (to enforce boundedness).
\end{definition}

\begin{proposition} 
Let $(X,d,m)$ be the boundary of a polytope. Then it is an $\RCD(0,n)$ space, but it is not minimal.
\end{proposition}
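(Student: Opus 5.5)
The proof has two parts: first that $X$ is $\RCD(0,n)$, then that $\KK^{(n)}>0=nK\int_X\rho\,dm$ with $K=0$, so that $X$ fails Definition \ref{def:minimalRCD}.

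\textbf{$\RCD(0,n)$.} By Alexandrov's theorem, the boundary of a convex body in $\R^{n+1}$ with the induced intrinsic metric is an Alexandrov space of curvature $\ge 0$. Petrunin's result then gives that it satisfies $\CD(0,n)$ with respect to $\mathcal H^n$. Alexandrov spaces are infinitesimally Hilbertian, so $X$ is $\RCD(0,n)$, and it is noncollapsed since $m=\mathcal H^n$.

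\textbf{Non-minimality.} Here I would localize with the measures $\KK_s(U)$ of Remark \ref{rmk:scalmeasure} and combine the preceding computations. Away from the $(n-2)$-skeleton, every point lies either in the interior of a facet, where $X$ is flat, or on the interior of an $(n-1)$-face. Near an $(n-1)$-face, $X$ is locally isometric to two flat half-spaces glued along a hyperplane, so it is flat again; indeed $X$ is flat away from codimension-two faces. Consequently the interesting contributions sit on the codimension-two faces. Near the relative interior of such a face, $X$ is locally isometric to $C(S^1_{2\pi-\alpha})\times\R^{n-2}$ with $\alpha\in(0,2\pi)$ the angle defect. In the polytope case the defect is strictly positive, since the dihedral angles sum to less than $2\pi$.

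For such a product, the Gaussian integral factorizes:
\[
\a_s=\a_s^{\mathrm{cone}}\cdot \a_s^{\R^{n-2}},\qquad \a_s^{\R^{n-2}}\equiv 1 .
\]
Integrating over a region $U=B\times W$, with $B$ a disc around the tip and $W\subset\R^{n-2}$, then gives
\[
\KK(U)=\mathfrak C(\alpha)\,|W|+o(1)
\]
by Theorem \ref{2d-cone}. Since $\mathfrak C(\alpha)>0$ for $\alpha\in(0,2\pi)$, which can be read off from the formula, each codimension-two face contributes $\mathfrak C(\alpha_F)\,\mathcal H^{n-2}(F)>0$. The remaining parts of the $(n-2)$-skeleton and the lower-dimensional faces have $\mathcal H^{n-2}$-measure zero. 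I would argue that they contribute nothing in the limit, in the spirit of Lemma \ref{lem:n-geq-3-susp}: there the scaling bound $|\partial_s\a_s(x)|\le C d^{-2}(x,\text{singular set})$ is integrable against $\mathcal H^n$ near a set of codimension at least $3$. Near the codimension-two faces themselves, the rescaling argument is repeated with distance to the face. Summing the contributions yields
\[
\KK=\sum_F \mathfrak C(\alpha_F)\,\mathcal H^{n-2}(F)>0=nK\int_X\rho\,dm .
\]

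\textbf{Main obstacle.} The hardest step is the rigorous localization and the handling of the remainders. One must show that the cross terms between different faces vanish as $s\to0$, which I expect to follow from the Gaussian decay beyond a fixed scale. One must also justify the domination near lower-dimensional strata, where a codimension-two edge meets a vertex. There I would try to bound $|\partial_s\a_s(x)|$ by $C\,d(x,\Sigma_{n-2})^{-1}s^{-1/2}$-type scale-invariant quantities: these are integrable and, at every fixed $s$, have total contribution $O(\mathcal H^{n-2}\text{ of a shrinking neighbourhood})$. For just proving non-minimality, a lower bound suffices. Laplace comparison (the $\RCD$ theorem) gives $-\partial_s\a_s\ge o(1)$ everywhere, so one may simply discard all the remaining regions and keep the single positive contribution from one codimension-two face, with $\mathfrak C(\alpha)>0$. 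This only requires the local computation there together with uniformity of the $o(1)$ term, which avoids the delicate estimates altogether.
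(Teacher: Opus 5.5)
Your proposal is correct and follows essentially the paper's route: Alexandrov/Petrunin for $\RCD(0,n)$, the local product structure $\R^{n-2}\times C(S^1_{2\pi-\alpha})$ near a codimension-two face, the exact Gaussian factorization with $\a_s^{\R^{n-2}}\equiv 1$, and positivity of $\mathfrak C(\alpha)$ from Theorem \ref{2d-cone}. Your final lower-bound step, which uses $\partial_s\a_s\le 0$ (Laplace comparison with $K=0$) to discard the rest of $X$, is enough on its own and makes explicit what the paper's inequality leaves implicit; the exact global formula you sketch first is not needed, and its remainder estimates near the lower strata are not established. Two small corrections: the positive defect comes from the face angles (not the dihedral angles) of the pointed three-dimensional tangent cone summing to less than $2\pi$, and the existence of an $(n-2)$-face, which you take as standard, is exactly what the paper proves in its step 1.
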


\begin{proof}
Because $(X,d,m)$ bounds a convex subset of $\R^{n+1}$, it is an Alexandrov space, see \cite[Sec.~10.2.3]{BBI}, of (Hausdorff) dimension $n$, and in particular it is $\RCD(0,n)$, see \cite{KMS01} and \cite{PetruninLVS}.

We claim that $(X,d)$ has a nonempty codimension $2$ singular stratum. More precisely, there is an open set $U \subseteq (X,d)$ which embeds isometrically $U \hookrightarrow (\R^{n - 2},d\mathbf{x}^2)\times C(S^1_\theta)$ where $\theta \in (0,1)$. 

\medskip

1) We first argue that for every integer $0\leq m \leq n + 1$, there is a point $p \in K$ such that

\begin{equation}
\dim\left(L(p)\right) = n + 1 - m\,,\qquad \text{where }L(p) := \bigcap_{\{j\mid p\cdot\xi_j = \lambda_j\}}\{v\in\R^{n + 1}\mid v\cdot\xi_j = \lambda_j\}
\end{equation}

We will argue by induction on $m$; the base case $m = 0$ follows from the non-empty interior assumption on $K$ in the definition of polytope, and $L(p) = \bigcap_{j\in\emptyset}\{v\mid v\cdot\xi_j = \lambda_j\} = \R^{n + 1}$ for any interior point $p \in K^\circ$. So assume we have found a point $p$ satisfying the inductive hypothesis for $m - 1 < n + 1$. Now consider a half-line $\ell_\omega$ starting at $p$ lying inside the affine subspace $L(p)$ of dimension $n + 1 - (m - 1) > 0$. Here $\ell_\omega$ is parametrized by the point $\omega \in \partial B_\eps(p)\cap L(p) \subseteq K$ (take $0 < \eps \ll 1$ small enough) that the half-line passes through. By the intermediate value theorem and the full signed-span condition, there is a first point $q_\omega\in \ell_\omega \cap K$ and at least one index $k \notin \{j\mid p\cdot\xi_j = \lambda_j\}$ so that $q_\omega \cdot \xi_{k} = \lambda_{k}$. Thus $L(q_\omega) \subseteq L(p)\cap \{v\mid v\cdot \xi_k = \lambda_k\}$, and the latter has dimension $= n + 1 - m$ because it is the intersection of transverse affine subspaces of dimension $n + 1 - (m - 1)$, $n$. 

Thus $\dim(L(q_\omega)) \leq n + 1 - m$, but the inequality may be strict, so we must argue that there is some choice of $\omega$ for which equality holds. If not, then $\dim(L(q_\omega)) \leq n -m$ for each $\omega$, but note that there are at most finitely many choices for $L(q_\omega)$, given by the $2^N$ possible indexing subsets of $\{1,\ldots, N\}$ appearing in the definition of $L(q)$. Thus the radial volume-non-decreasing map $\omega \mapsto q_\omega$ sends the round $(n -m + 1)$-sphere $\partial B_\eps(p)\cap L(p)$ into a finite union of affine subspaces with dimension $\leq n - m$, which is a contradiction. We conclude that there is at least one $q_\omega$ that satisfies $\dim(L(q_\omega)) = n + 1 - m$. 

\medskip

2) We are interested in the case $m = 3$ of the first step, so $p \in K$ with $\dim L(p) = n - 2$ and thus also $p \in \partial K = X$. By picking $U$ small enough, we can write

\begin{equation}
U\cap \partial K  = U\cap \bigcup_{\{j\mid p\cdot\xi_j = \lambda_j\}}\left(\{v\mid v\cdot \xi_j = \lambda_j\}\cap \bigcap_{\{k\mid p\cdot\xi_k = \lambda_k\}}\{v\mid v\cdot \xi_k \leq \lambda_k\}\right)\,.
\end{equation}

Note that all sets appearing in the union on the RHS are $(L(p) - p)$-invariant, since they are affine/half spaces containing $L(p)$. Thus after an affine change of coordinates in $\R^{n + 1}$ that sends $p$ to $0$ and $L(p)$ to $\R^{n - 2}\times\{0^3\}$, the RHS is a neighborhood of $0^{n + 1} \in \R^{n - 2}\times Z\subseteq \R^{n + 1}$ where $Z\subseteq\R^{3}$ is the boundary of an intersection of halfspaces that does not contain a line (else $L(p)$ would contain this line, violating $\dim L(p) = n - 2$). In particular $Z$ is isometric to $C(S^1_\theta)$ for some $\theta \in (0,1)$, which proves the claim. 

3) The claim is proved, but now we need to argue that $X$ is not minimal. This follows from the product splitting of Gaussians, namely

\begin{align}
&\phantom{=}\frac{1}{(4\pi s)^{n/2}}\int_{X\times Y}\int_{X\times Y}e^{-\frac{d^2_{X\times Y}(x,y)}{4s}}\,d(\mu\otimes \nu)(x)d(\mu\otimes \nu)(y) \\
&= \left(\frac{1}{(4\pi s)^{k/2}}\int_{X}\int_Xe^{-\frac{d^2_X(x,y)}{4s}}d\mu(x)d\mu(y)\right)\left(\frac{1}{(4\pi s)^{m/2}}\int_{Y}\int_Ye^{-\frac{d^2_Y(x,y)}{4s}}d\nu(x)d\nu(y)\right)
\end{align}

for any choice of $k + m = n$. In this case we take $k = n - 2$, $m = 2$, and obtain a nonzero contribution

\begin{align}
\lim_{s\to 0}\partial_s \int_{U\cap X}\a_s^X(x)\,d\mu(x) &\leq \lim_{s\to 0}\partial_s \left(\int_{U'}\a_s^{\R^{n - 2}}(y)\,dy^{n - 2}\right)\left(\int_{U''}\a_s^{C(S^1_\theta)}(z)\,d\mu^{C(S^1_\theta)}(z)\right) \\
&= \vol(U')\cdot\lim_{s\to 0}\partial_s \left(\int_{U''}\a_s^{C(S^1_\theta)}(z)\,d\mu^{C(S^1_\theta)}(z)\right) < 0\,.
\end{align}

where the last line follows from the product rule, then Theorems \ref{thm:smoothcase} (with flat $\R^{n - 2}$), \ref{2d-cone}, and Lemma \ref{lem:limitsto0}. Here $U$ is chosen as in step 2 above so that $U \cap X$ compactly contains an open set isometric to $U'\times U''$ for neighborhoods $0^{n - 2}\in U' \subseteq \R^{n -2}$, $o \in U''\subseteq C(S_\theta^1)$.
\end{proof}

\section{Time dependent setting}\label{sec:timedep}
In the sequel, let $I \subseteq \R$ denote a non-empty open interval.
 
   \subsection{The Gaussian volume functional for time dependent spaces}
Consider a time-dependent family $(M,d_t,m_t)_{t\in I}$ of mm-spaces satisfying, at every fixed $t$, the sub-exponential growth of balls assumption \eqref{eq:expgrowth}. We will impose two assumptions: one is about the existence of the density (at time fixed, analogous to the static case), one on the time-regularity of the distance.

\paragraph{Assumption A:} Assume that for all $t\in I$ there exists the $n$-dimensional volume densities \begin{equation*}\label{densities}
\rho_t(x):=\lim_{r\searrow0}\frac{m_t(B^{d_t}_r(x))}{\omega_nr^n}
\end{equation*}
and the ratio is bounded uniformly in $(x,r)$.

\paragraph{Assumption B:} Assume that for all $t,t+s\in I$
\begin{align}\label{eq:dtholder}
    d_{t+s}(x,y) = d_t(x,y) + o_t(\sqrt{s})
\end{align} as $s\to 0$, with $o_t(\sqrt{s})$ uniform in $(x,y)$, but possibly depending on $t$.

Note that the condition scales parabolically. It is for example satisfied in the case that $d_t$ is Lipschitz continuous in time. All the examples we will consider satisfy this assumption.

\begin{definition}
    We define the \emph{time-dependent Gaussian volume functional} by
$$\a_{s}(t,x):=(12\pi s)^{-n/2}\int e^{-\frac{d^2_{t+s}(x,y)}{12s}}m_t(dy)\,,$$
and when $m_t$ is finite
\emph{the time-dependent Gaussian double integral}  by
  $${\AA}_s(t) :=  (12\pi s)^{-n/2}\int\int e^{-\frac{d^2_{t+s}(x,y)}{12s}}\, m_t(dy)\,m_t(dx)\,.
  $$
\end{definition}

As in the static case (Lemma \ref{lem:limitsto0}), we get the following for $s\to0$.
\begin{lemma}
    If $(X,d_t,m_t)$ satisfies assumptions A and B, then
    \begin{align}
    \a_0(t,x) := \lim_{s\searrow0} \a_s(t,x)=\rho_t(x)
    \end{align}
    and 
    \begin{align}
        {\AA}_0(t):=\lim_{s\searrow0}{\AA}_s(t)=\int\rho_t\,dm_t
    \end{align}
\end{lemma}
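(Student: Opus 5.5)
The plan is to reduce to the static statement, Lemma \ref{lem:limitsto0}, by comparing $\a_s(t,x)$ with the frozen-time functional
$$\a^{t}_s(x):=(12\pi s)^{-n/2}\int e^{-\frac{d^2_{t}(x,y)}{12s}}m_t(dy),$$
which is exactly the static Gaussian volume functional of $(M,d_t,m_t)$. By Assumption A together with \eqref{eq:expgrowth} at time $t$, Lemma \ref{lem:limitsto0} gives $\a^t_s(x)\to\rho_t(x)$, and likewise $\int\a^t_s\,dm_t\to\int\rho_t\,dm_t$ when $m_t$ is finite. It therefore suffices to show that
$$|\a_s(t,x)-\a^t_s(x)|\to0,$$
uniformly in $x$, which then also handles the double integral by dominated convergence.

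\textbf{Step 1: reduce to a perturbed Gaussian.} By Assumption B, write $d_{t+s}=d_t+\eps_s$ with $\sup_{x,y}|\eps_s(x,y)|\le \eta(s)\sqrt s$ and $\eta(s)\to0$. Then
$$\frac{d_{t+s}^2}{12s}-\frac{d_t^2}{12s}=\frac{2d_t\eps_s+\eps_s^2}{12s},$$
which in absolute value is at most $\frac{\eta}{6}\frac{d_t}{\sqrt s}+\frac{\eta^2}{12}$.

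\textbf{Step 2: dominate the difference.} Set $u=d_t/\sqrt{s}$. Using $|e^{-a}-e^{-b}|\le|a-b|\,e^{-\min(a,b)}$, the difference of integrands is bounded by
$$\Big(\tfrac{\eta}{6}u+\tfrac{\eta^2}{12}\Big)\exp\Big(-\tfrac{u^2}{12}+\tfrac{\eta}{6}u+\tfrac{\eta^2}{12}\Big)\le C\,\eta\,(1+u)\,e^{-u^2/24}$$
for $s$ small, after absorbing the linear term in the exponent via $\frac{\eta}{6}u\le\frac{u^2}{24}+C\eta^2$. This is $\eta$ times a Gaussian of slightly wider variance with a polynomial weight.

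\textbf{Step 3: bound the Gaussian moment and conclude.} Integrating against $(12\pi s)^{-n/2}m_t(dy)$, Stieltjes integration by parts as in the proof of Lemma \ref{lem:limitsto0}, combined with the uniform bound $m_t(B_r(x))\le C\omega_n r^n$ for $r\le1$ (Assumption A), controls the region $d_t\le1$ by a quantity of order $\int_0^\infty(1+u)e^{-u^2/24}u^{n-1}\,du<\infty$, uniformly in $x$ and $s$. On the region $d_t>1$, \eqref{eq:expgrowth} gives a contribution that is exponentially small in $1/s$. Hence $|\a_s(t,x)-\a^t_s(x)|\le C\eta(s)\to0$ uniformly in $x$, which proves the first claim. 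For the second, $\a_s(t,\cdot)$ is then uniformly bounded, so dominated convergence with finite $m_t$ gives $\AA_0(t)=\int\rho_t\,dm_t$.

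The main obstacle is Step 2. One has to make sure the cross term $d_t\eps_s/s$ does not destroy the Gaussian decay. This is exactly why Assumption B is stated at the parabolic scale $o(\sqrt s)$: a perturbation of size $O(\sqrt s)$ with nonvanishing constant would only give boundedness of the difference, not convergence to zero. A further subtlety on the tail is that \eqref{eq:expgrowth} must be used uniformly in $x$, or the constant there taken to be independent of $x$ at fixed $t$.
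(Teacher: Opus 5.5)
Your proof is correct, but it is organized differently from the paper's.

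\textbf{The paper's route.} It never introduces the frozen-time functional $\a^t_s$. It uses $|d_{t+s}-d_t|\le\eps(s)$ to sandwich the kernel between Gaussians in $d_t\pm\eps(s)$. It then repeats the Stieltjes integration by parts of Lemma \ref{lem:limitsto0}, with the perturbed weight $e^{(2\eps r-\eps^2)/12s}$ integrated against the density expansion $\rho_t(x)\omega_n r^n+o(r^n)$, and notes that this weight tends to $1$ after parabolic rescaling. The lower bound is declared analogous, and the claim for $\AA_0(t)$ is left implicit.

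\textbf{Your route.} You estimate the difference $|\a_s(t,x)-\a^t_s(x)|$ by the mean value theorem and use the static lemma as a black box. This gains several things:
\begin{itemize}
\item The error term only needs the upper bound $m_t(B_r(x))\le C\omega_n r^n$, not the density expansion.
\item One two-sided estimate replaces two one-sided computations.
\item The bound $O(\eta(s))$ is uniform in $x$, so the double-integral statement follows immediately.
\item You avoid a small slip in the paper's upper bound. The inequality $d_{t+s}^2\ge d_t^2-2\eps d_t+\eps^2$ can fail where $d_t<\eps$ (e.g.\ at $y=x$), so the $\eps^2$ has to be dropped there.
\end{itemize}
The paper's sandwich, in turn, is quicker to write because it recycles the static computation almost verbatim.

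\textbf{The tail subtlety you flag.} It resolves itself. For the pointwise claim, \eqref{eq:expgrowth} at the fixed $x$ is enough. The $\AA_0(t)$ claim assumes $m_t$ is finite, so the region $\{d_t>1\}$ contributes at most $m_t(X)(12\pi s)^{-n/2}\sup_{r\ge1}(1+r/\sqrt s)e^{-r^2/24s}$, uniformly in $x$. No uniform version of \eqref{eq:expgrowth} is needed.
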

\begin{proof}
Set $\eps(s)=\lVert d_{t+s}(x,y) - d_t(x,y)\rVert_\infty$, by integrating by parts as in Lemma \ref{lem:limitsto0} we have
\begin{align}
    \a_s(t,x)&\leq(12\pi s)^{-n/2}\int e^{-\frac{d^2_t(x,y)-2\eps(s)d_t(x,y)+\eps^2(s)}{12s}}dm_t(y) \\
    &=  (12\pi s)^{-n/2}\int_0^\infty e^{-\frac{r^2}{12s}}e^{\frac{2\eps(s)r-\eps^2(s)}{12s}}\left(\frac{2r}{12s} - \frac{\eps(s)}{12s}  \right)\left[\rho_t(x)\omega_nr^n+o(r^n)\right] dr \\
    &= \rho_t(x) + o(1)\,.
\end{align}
Note that we used that $e^{\frac{2\eps(s)r-\eps^2(s)}{12s}}\rightsquigarrow (1 + o(1))^{1 + r}$ after parabolic rescaling and $\frac{\eps(s)}{12s} =\frac{1}{\sqrt s}o(1)$ by \eqref{eq:dtholder}. The bound $\a_s(t,x)\geq \rho_t(x) + o(1)$ follows analogously.
\end{proof}

\begin{definition}
Define \emph{the initial negative slope of $\a$} by
    $$ \k(t,x) := \liminf_{s\searrow 0}\frac1s \Big[\a_{0}(t,x)-\a_{s}(t,x)
\Big]$$
and for $m_t$ finite \emph{the initial negative slope of $\AA$} by
    $${\KK}(t):=\liminf_{s\searrow0}\frac1s\left({\AA}_0(t)-{\AA}_s(t)
    \right).$$    
\end{definition}

The following is the analogue of Lemma \ref{upp/low}, and will facilitate the computation of $\k$.
\begin{lemma}
    Assume that, for $(t,x)$ fixed, $\a_s(t,x)$ is differentiable in $s$, for all $s\in(0,s_0)$ for some $s_0>0$ then
    \begin{align}
        -\limsup_{s\searrow0} \partial_s\a_s(t,x)\le \k(t,x)\le -\liminf_{s\searrow0} \partial_s\a_s(t,x)\,.
    \end{align}
    If $m_t$ is finite and $\AA_s(t)$ is differentiable in $s$, for all $s\in(0,s_0)$ for some $s_0>0$ then
    \begin{align}
        -\limsup_{s\searrow0} \partial_s\AA_s(t)\le \KK(t)\le -\liminf_{s\searrow0} \partial_s\AA_s(t)\,. 
    \end{align}
\end{lemma}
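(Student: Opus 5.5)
The argument is the one used for Lemma \ref{upp/low}(ii), carried out with $(t,x)$ fixed so that only the variable $s$ moves. The ingredients are the continuity of $s\mapsto \a_s(t,x)$ at $s=0$, supplied by the preceding lemma ($\a_s(t,x)\to\rho_t(x)=\a_0(t,x)$ under Assumptions A and B), and the fundamental theorem of calculus on $[\delta,s]$, which needs differentiability on $(0,s_0)$. One caveat: differentiability at every point only gives the fundamental theorem in inequality form. For the bounds below this is enough, via the mean value inequality: if $f$ is differentiable on $[\delta,s]$ and $f'\le c$ (resp. $\ge c$) there, then $f(s)-f(\delta)\le c(s-\delta)$ (resp. $\ge$). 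This avoids any integrability assumption on $\partial_s\a_s$.

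\emph{Lower bound.} Put $\k_*:=-\limsup_{s\searrow0}\partial_s\a_s(t,x)$ and assume first that it is finite. Fix $\epsilon>0$. We choose $s\in(0,s_0)$ with two properties:
\begin{itemize}
\item $\frac1s(\a_0(t,x)-\a_s(t,x))\le \k(t,x)+\epsilon$, which is possible by the definition of the $\liminf$ along a sequence $s\searrow0$;
\item $\partial_r\a_r(t,x)\le-\k_*+\epsilon$ for all $r\in(0,s)$.
\end{itemize}
By continuity at $0$, there is then $\delta_0<s$ with $\a_\delta(t,x)-\a_0(t,x)\le\epsilon s$ for all $\delta<\delta_0$. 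The mean value inequality on $[\delta,s]$ gives $\a_\delta-\a_s\ge(s-\delta)(\k_*-\epsilon)$. Combining,
\[
\k(t,x)+\epsilon\ge \tfrac1s(\a_\delta-\a_s)-\epsilon\ge(1-\tfrac\delta s)(\k_*-\epsilon)-\epsilon .
\]
Sending $\delta\to0$ and then $\epsilon\to0$ yields $\k(t,x)\ge\k_*$.

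\emph{Upper bound.} Set $\k^*:=-\liminf_{s\searrow0}\partial_s\a_s(t,x)$. For every small $s$ we have $\partial_r\a_r\ge-\k^*-\epsilon$ on $(0,s)$. Hence $\a_\delta-\a_s\le(s-\delta)(\k^*+\epsilon)$, and together with $|\a_\delta-\a_0|\le\epsilon s$ this gives
\[
\tfrac1s(\a_0-\a_s)\le \k^*+2\epsilon
\]
for all small $s$. Taking the $\liminf$ gives $\k(t,x)\le\k^*$.

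\emph{Infinite cases.} These are handled by replacing $\k_*\pm\epsilon$ with an arbitrary large constant $L$ (respectively $-L$).

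\emph{Double integral.} The statement for $\AA_s(t)$ is identical: one uses $\AA_s(t)\to\AA_0(t)$, also from the preceding lemma, and differentiability of $s\mapsto\AA_s(t)$ on $(0,s_0)$.

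The only genuinely delicate point is the continuity at $s=0$. In the time-dependent setting it is no longer automatic, since the distance $d_{t+s}$ itself varies with $s$. It is, however, exactly what the previous lemma provides under Assumption B, so no further obstacle is expected.
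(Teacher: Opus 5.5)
Your proof is correct and is essentially the paper's argument. The paper gives no separate proof here and refers back to Lemma~\ref{upp/low}(ii), whose proof uses the same two ingredients as yours: continuity at $s=0$ (here supplied by the preceding lemma) and a comparison of $\a_\delta-\a_s$ with the bounds on $\partial_r\a_r$ over $[\delta,s]$. Replacing $\a_\delta-\a_s=-\int_\delta^s\partial_r\a_r\,dr$ with the mean value theorem is a sensible refinement, because the time-dependent statement assumes only pointwise differentiability in $s$, not $C^1$.
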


\begin{theorem}\label{thm:smoothtimedep}
Assume that $(M,g_t)_{t\in I}$ is a smooth complete time-dependent Riemannian manifold without boundary satisfying \eqref{eq:expgrowth} for all $t\in I$ and set $\Sic_t = -\frac{1}{2}\partial _tg_t$, $S_t=\tr \Sic_t$. Assume furthermore that there is $C>0$ such that $\lVert \Sic_{t,y} - \Sic_{t,x} \rVert \leq 1 + C d^2_t(x,y)$ for all $x,y\in M$, $t\in I$. Then
\begin{align}
    \k(t,x) = R_t(x)-S_t(x)\,,
\end{align}
and if $M$ is compact
\begin{align}
    \KK(t) = \int (R_t-S_t)d\vol_t\,.
\end{align}
\end{theorem}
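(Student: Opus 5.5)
Following Proof II of Theorem \ref{thm:smoothcase}, we compute $\partial_s\a_s(t,x)$ and show that $-\partial_s\a_s(t,x)\to R_t(x)-S_t(x)$ as $s\searrow0$; the lemma just stated (the time-dependent analogue of Lemma \ref{upp/low}) then gives the formula for $\k$.

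\textbf{Step 1: differentiate.} Differentiating under the integral, justified by \eqref{eq:expgrowth} and smoothness, gives
\begin{align}
\partial_s\a_s(t,x)=(12\pi s)^{-n/2}\frac{1}{2s}\int\Big[-n+\frac{d^2_{t+s}(x,y)}{6s}-\partial_s d^2_{t+s}(x,y)\Big]e^{-\frac{d^2_{t+s}(x,y)}{12s}}\,d\vol_t(y).
\end{align}
The first two terms are exactly the static expression, except that the distance is $d_{t+s}$ while the measure is $\vol_t$.

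\textbf{Step 2: handle the static part.} We rewrite $\vol_t=\frac{d\vol_t}{d\vol_{t+s}}\,d\vol_{t+s}$. For a smooth flow,
\begin{align}
\frac{d\vol_t}{d\vol_{t+s}}=1+sS_t+O(s^2+s\,d_t),
\end{align}
since $\partial_t\log\sqrt{\det g}=\tfrac12\tr\partial_tg=-S$. We then integrate by parts with respect to $\vol_{t+s}$ as in \eqref{eq:IBP} and apply Proof II at time $t+s$. This produces $-R_{t+s}(x)+O(\sqrt s)=-R_t(x)+o(1)$. The density factor contributes
\begin{align}
(12\pi s)^{-n/2}\frac{1}{2s}\int\Big[-n+\frac{d^2}{6s}\Big]\,sS_t(y)\,e^{-\frac{d^2}{12s}}\,d\vol.
\end{align}
We expand $S_t(y)=S_t(x)+O(d)$ and use the Gaussian identities \eqref{eq:gaussianint}. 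The constant part gives $\tfrac{s}{2}S_t(x)\big(-n\rho+n\rho\big)=0$ at leading order, by Lemma \ref{lem:limitsto0}(iii), so this factor contributes $o(1)$. The growth hypothesis $\|\Sic_{t,y}-\Sic_{t,x}\|\le1+Cd^2_t(x,y)$ ensures that the non-local tails are negligible against the Gaussian.

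\textbf{Step 3: the time-derivative of the distance (main contribution).} Let $\gamma$ be the $g_{t+s}$-geodesic from $x$ to $y$, of length $d$. By first variation,
\begin{align}
\partial_s d^2_{t+s}(x,y)=2d\int_0^1\tfrac12\,\partial_tg(\dot\gamma,\dot\gamma)\,d\tau=-2d^2\,\Sic_x(\theta,\theta)+O(d^3),
\end{align}
where $\theta$ is the initial unit direction and the estimate holds for $y$ near $x$. Inserting this into Step 1 gives
\begin{align}
(12\pi s)^{-n/2}\frac{1}{2s}\int 2d^2\,\Sic_x(\theta,\theta)\,e^{-d^2/12s}\,d\vol.
\end{align}
Averaging over the sphere, $\int_{S}\Sic_x(\theta,\theta)\,d\theta=S_t(x)\,\omega_n$, and with \eqref{eq:gaussianint} this term tends to $6S_t(x)\cdot\tfrac{n}{?}$. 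The constant has to be checked carefully against Lemma \ref{lem:limitsto0}(iii): the scaling $\frac{1}{12s}$ is chosen precisely so that the combined $S$-contribution equals exactly $-S_t(x)$ in $\k$. Far from $x$, the hypothesis $\|\Sic\|\le 1+Cd^2$ gives $|\partial_sd^2|\le C d^2(1+d^2)$. This is integrable against the Gaussian by \eqref{eq:expgrowth}, so the tail is $O(s^\infty)$.

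\textbf{Step 4: integrated statement.} For compact $M$, every error term above is uniform in $x$, so we integrate in $x$ exactly as in Theorem \ref{thm:smoothcase} to obtain the formula for $\KK(t)$.

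The main obstacle is the bookkeeping of the $S$-terms. There are two sources, the change of reference measure in Step 2 and the distance variation in Step 3, and both are of size $O(1)$ against the singular prefactor $1/s$. Getting the exact coefficient $-1$ requires expanding all of them to the correct order, and in particular checking that the cross terms of order $\frac{d^2}{6s}\cdot s$ do not leave a residual multiple of $S$. I would first verify the constant on the explicit example of shrinking round spheres, or on $\R^n$ with $g_t=e^{2ct}g_0$, and only then do the general expansion.
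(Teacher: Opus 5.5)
Your decomposition is the paper's own. You pass from $d\vol_t$ to $d\vol_{t+s}$, kill the density correction by the $-n+n$ cancellation, and apply the static result at time $t+s$ to get $-R_{t+s}(x)$. You then extract the $S$-term from $\partial_s d^2_{t+s}=-2d^2\Sic_x(\theta,\theta)+O(d^3)$. However, the computation that actually decides the theorem, the coefficient of $S_t(x)$, is left as ``$6S_t(x)\cdot\frac{n}{?}$'', and as you set it up it comes out wrong. In Step 1 the $s$-derivative of the exponent is $-\partial_s d^2_{t+s}/(12s)$. So after pulling out $\frac1{2s}$ the bracket must read $-n+\frac{d^2}{6s}-\frac16\,\partial_s d^2_{t+s}$; you dropped the $\frac16$. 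With your bracket, Step 3 gives $\frac{S_t(x)}{n}\cdot 6n=6S_t(x)$, i.e.\ $\k=R-6S$, because \eqref{eq:gaussianint} with $\alpha=2$, $\beta=1$ equals $6n$. With the correct factor, using that the spherical mean of $\Sic(\theta,\theta)$ is $S/n$, one gets
\[
(12\pi s)^{-n/2}\frac{1}{12s}\int 2d^2_{t+s}(x,y)\,\Sic_{t+s,x}(\theta,\theta)\,e^{-\frac{d^2_{t+s}(x,y)}{12s}}\,d\vol_{t+s}(y)=\frac{2}{12}\cdot\frac{S_{t+s}(x)}{n}\cdot 6n+O(\sqrt s)=S_{t+s}(x)+O(\sqrt s).
\]
Hence $\partial_s\a_s(t,x)=-R_t(x)+S_t(x)+o(1)$, and the slope lemma gives $\k=R-S$. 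This is exactly the paper's computation of its term $I$.

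Your closing paragraph also misdiagnoses the bookkeeping. There are not two $O(1)$ sources of $S$ to be balanced. Your own Step 2 shows the density factor contributes only $o(1)$, so all of $S$ comes from the distance variation. (There the prefactor is $\frac1{2s}\cdot s=\frac12$, not $\frac s2$, so the $-n+n$ cancellation is genuinely needed, and you do use it.) Nor is the constant $12$ what fixes the $S$-coefficient. For a kernel $(4\pi cs)^{-n/2}e^{-d^2/4cs}$ the same computation yields exactly $S$ for every $c>0$, because $c$ enters the exponent and its $s$-derivative identically. The curvature term, by contrast, gives $-\frac c3 R$, and $c=3$ is chosen to normalize $R$, not $S$. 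Test examples are unnecessary once Step 1 is corrected, but without that correction and the explicit constant, the proposal does not establish the stated formula.
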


\begin{proof}
Write first
\begin{align}
    \partial_s\a_s(t,x) 
    &=\int \partial_s \left[(12\pi s)^{-n/2} e^{-\frac{d^2_{t+s}(x,y)}{12s}}\right]d\vol_t(y) \\
    &= \int \partial_s \left[(12\pi s)^{-n/2} e^{-\frac{d^2_{t+s}(x,y)}{12s}}\right]d\vol_{t+s}(y) \\
    &\qquad + \int \partial_s \left[(12\pi s)^{-n/2} e^{-\frac{d^2_{t+s}(x,y)}{12s}}\right] (d\vol_t(y)-d\vol_{t+s}(y)) \\
    &=I+II
\end{align}
By standard computations we have that for $d^2_t(x,y)\leq C$,
    \begin{align}\label{eq:derivd2}
        \partial_t d^2_t(x,y) &= -2d_t(x,y)\int_0^{d_t(x,y)}\Sic_t(\dot\gamma_r,\dot\gamma_r)dr \\
        &= -2d^2_t(x,y)\Sic_{t,x}(\dot\gamma_0,\dot\gamma_0) + O(d^3_t(x,y)) 
    \end{align}
    where $\gamma$ is a unit speed geodesic from $x$ to $y$. Moreover, by the assumed bound on $\Sic_t$, $|\partial_t d^2_t(x,y)| \leq Cd^2_t(x,y)(1+d^2_t(x,y))$ globally, which in the sequel allows to expand $\partial_t d^2_t(x,y)$ around $y=x$ in the Gaussian integrals.

Then, computing as in Theorem \ref{thm:smoothcase}, we have
    \begin{align}
        I
        &= \partial_s \left[ (12\pi s)^{-n/2}\int e^{-\frac{d^2_{r}(x,y)}{12s}}d\vol_r(y)  \right]_{r=t+s} - (12\pi s)^{-n/2}\int e^{-\frac{d^2_{t+s}(x,y)}{12s}} \frac{\partial_r d_r^2(x,y)}{12s} |_{r=t+s} d\vol_{t+s}(y) \\
        &= -R_{t+s}(x) + O(s) + (12\pi s)^{-n/2}\int e^{-\frac{d^2_{t+s}(x,y)}{12s}} \frac{1}{12s} \left[2d^2_{t+s}(x,y)\Sic_{(t+s,x)}(\dot\gamma_0^y,\dot\gamma_0^y) + O(d^3_{t+s}(x,y))\right] d\vol_{t+s}(y) \\
        &= -R_{t+s}(x) + O(s) + (12\pi s)^{-n/2} \int_0^\infty \int_{S^{n-1}} e^{-\frac{\rho^2}{12s}} \frac{1}{12s} \left[2\rho^2\Sic_{(t+s,x)}(\theta,\theta) + O(\rho^3) \right] \rho^{n-1}dg_{S^{n-1}}(1+O(\rho^2))d\rho \\
        &= -R_{t+s}(x) + O(s) + \int_{S^{n-1}}\Sic_{(t+s,x)}(\theta,\theta)dg_{S^{n-1}} (12\pi s)^{-n/2} \int_0^\infty e^{-\frac{\rho^2}{12s}} \frac{1}{12s} \left[2\rho^2 + O(\rho^3) \right] \rho^{n-1}d\rho \\
        &= -R_{t+s}(x) + S_{t+s}(x) + O(\sqrt s)\,. 
    \end{align}

Finally, we show that the second integral does not contribute. Using $\frac{d}{dr}d\vol_r = -S_rd\vol_r$ and the bound $|S_r(y)-S_r(x)|\leq \sqrt{n} (1+Cd^2_r(x,y))$
\begin{align}\label{eq:control_tails_timedep}
    II&=\int_M \partial_s \left[(12\pi s)^{-n/2} e^{-\frac{d^2_{t+s}(x,y)}{12s}}\right] \left(e^{\int_t^{t+s}S_r(y)dr} -1 \right) d\vol_{t+s}(y) \\
    &= (12\pi s)^{-n/2}\int_M \left[-\frac{n}{2s}+\frac{d_{t+s}^2(x,y)}{12s^2} -\frac{\partial_s d_{t+s}^2(x,y)}{12s}\right]e^{-\frac{d^2_{t+s}(x,y)}{12s}} \\
    &\hspace{5cm}\times \left(e^{\int_t^{t+s}[S_r(x)+O(1+d_r^2(x,y))]dr} -1 \right) d\vol_{t+s}(y) \,.
\end{align}
After writing $d_r^2(x,y)=(d_{t+s}(x,y)+o(\sqrt{s}))^2$, we notice that in the above integral, as $s\to 0$, the tails do not contribute, hence we can integrate on $B_1(x)$ and expand around $y=x$ using smoothness:
\begin{align}
    II &= o(1)+ (12\pi s)^{-n/2}\int_{B_1(x)} \left[-\frac{n}{2s}+\frac{d_{t+s}^2(x,y)}{12s^2} -\frac{\partial_s d_{t+s}^2(x,y)}{12s}\right]e^{-\frac{d^2_{t+s}(x,y)}{12s}} \\
    &\hspace{5cm}\times \left(\int_t^{t+s}[S_r(x)+O(d_r(x,y))]dr + O(s^2)\right) d\vol_{t+s}(y) \,.
\end{align}
The term $-\frac{n}{2s}+\frac{d_{t+s}^2(x,y)}{12s^2}$ multiplied by the (now $y$-independent) term $\int_t^{t+s}S_r(x)dr$ has a cancellation as in the static case \eqref{eq:vol_and_scal_curv} and is then of order $O(s)$; the other terms are of order $O(\sqrt s)$ as $s\to 0$. 
\end{proof}

\begin{corollary}\label{cor:minimalsrf}
Assume that the smooth, closed time-dependent Riemannian manifold $(M,g_t)_{t\in I}$ evolves as a super-Ricci flow, that is,
$\Ric_{t}+\frac12\partial_t g_t \ge0$ on $I\times M$. Then for every $t\in I$
$${\KK}_t \ge0\,. $$

Moreover, $\Ric_{t}+\frac12\partial_t g_t=0$ on $\{t\}\times M$ if and only if $(M,g_t)$ is minimal at $t$, in the sense that ${\KK}_t \leq 0$.
\end{corollary}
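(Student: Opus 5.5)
The plan is to reduce everything to Theorem \ref{thm:smoothtimedep}. Since $M$ is closed and the family is smooth, the hypotheses of that theorem hold on compact subintervals: the bound on $\Sic$ is automatic by compactness, and \eqref{eq:expgrowth} holds because $m_t$ is finite. The theorem then gives
\begin{align}
\KK_t=\int_M (R_t-S_t)\,d\vol_t,\qquad S_t=\tr\Sic_t=-\tfrac12\tr(\partial_t g_t).
\end{align}
The integrand is the trace with respect to $g_t$ of the symmetric 2-tensor $T_t:=\Ric_t+\frac12\partial_t g_t$, because $R_t-S_t=\tr_{g_t}\Ric_t+\frac12\tr_{g_t}\partial_t g_t=\tr_{g_t}T_t$.

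For the first claim, I will use the super-Ricci flow assumption, which says $T_t\ge 0$ as a quadratic form at every point. The trace of a positive semidefinite form is nonnegative, so $\tr_{g_t}T_t\ge0$ pointwise. Integrating against the positive measure $\vol_t$ then gives $\KK_t\ge0$.

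For the characterization, one direction is immediate: if $T_t=0$ on $\{t\}\times M$, the integrand vanishes and $\KK_t=0\le0$. For the converse, suppose $\KK_t\le0$. Combined with the first part, this gives $\int_M\tr_{g_t}T_t\,d\vol_t=0$. The integrand is continuous and nonnegative, so $\tr_{g_t}T_t\equiv0$ on $M$ (here smoothness and the fact that $\vol_t$ charges every open set are used). A positive semidefinite symmetric form with zero trace has all eigenvalues nonnegative and summing to zero, so all of them vanish and $T_t=0$ at each point. That is exactly $\Ric_t+\frac12\partial_tg_t=0$ on $\{t\}\times M$.

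There is no real analytic obstacle, since all the asymptotic work is contained in Theorem \ref{thm:smoothtimedep}. The only point needing care is confirming that its hypotheses hold at the fixed time $t$, which is immediate from compactness of $M$ and smoothness of $t\mapsto g_t$ on the open interval $I$. One should also check the sign conventions: $S_t$ is defined with $\Sic_t=-\frac12\partial_tg_t$, so $-S_t=+\frac12\tr\partial_tg_t$, which matches the trace of $T_t$ as required.
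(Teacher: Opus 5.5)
Your proof is correct and is the argument the paper intends (it gives no separate proof, treating the corollary as immediate from Theorem \ref{thm:smoothtimedep}): $\KK_t=\int_M \tr_{g_t}(\Ric_t+\tfrac12\partial_t g_t)\,d\vol_t$, the trace of a positive semidefinite form is nonnegative, and a continuous nonnegative integrand with zero integral forces a traceless semidefinite tensor to vanish. Your extra check, that the theorem's hypotheses need only hold on a compact subinterval around $t$ and hold there by compactness of $M$, is care the paper leaves implicit.
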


\begin{remark}
    Note that in the definition of $\a$ we are only varying the distance, and not the measure. If we defined
    \begin{align}
        \tilde \a_{s}(t,x):=(12\pi s)^{-n/2}\int e^{-\frac{d^2_{t+s}(x,y)}{12s}}m_{t+s}(dy)\,,
    \end{align}
    then \emph{for any} smooth $(M,g_t)$ it holds
    \begin{align}
        \lim_{s\searrow0}\partial_s \tilde\a_s(t,x) = -R_t(x)\,,
    \end{align}
    i.e. the variation of $d_{t+s}$ and of $d\vol_{t+s}$ cancel each other. Indeed, proceeding as in Theorem \ref{thm:smoothtimedep}
    \begin{align}
        \partial_s\tilde\a_s(t,x) &= -R_{t+s}(x) +  S_{t+s}(x) + O(\sqrt{s}) +(12\pi s)^{-n/2}\int e^{-\frac{d^2_{t+s}(x,y)}{12s}}\partial_rd\vol_r(y)|_{r=t+s} \\
        &= -R_{t+s}(x) +  S_{t+s}(x) + O(\sqrt{s}) - (12\pi s)^{-n/2} \int e^{-\frac{d^2_{t+s}(x,y)}{12s}} S_{t+s}(y)d\vol_{t+s}(y) \\
        &= -R_{t+s}(x) + O(\sqrt s) \,.
    \end{align}
\end{remark}

\subsection{Directional Gaussian volume for time dependent spaces}
Similarly to the time-independent case, for given $t$ and $x$ we
define the set
 $$Z(t,x):=\Big\{ \exp_{t,x}(rz): z\in Z, r\in [0,\delta_{t,x}]\Big\}$$
 for any measurable subset $Z\subset S_{t,x}M$, and put
 $$\a_s(t,x, Z):=(12\pi s)^{-n/2}\int_{Z(t,x)} e^{-\frac{d_{t+s}^2(x,y)}{12s}}m_t(dy)$$
 and $$\k(t,x,Z):=\liminf_{s\to0} \frac1s\Big( \a_0(t,x, Z)-\a_s(t,x, Z)\Big).$$

\begin{theorem}\label{ktZ}
If $(M,g_t)_{t\in I}$ is a smooth complete time-dependent Riemannian manifold with $\partial_tg_t =-2\Sic_t$, then
$$\k(t,x,Z)=
\frac n{|S_{t,x}|}\int_{Z}\Big[\Ric(z)-\Sic(z)\Big]\,d\sigma_{t,x}(z).$$
\end{theorem}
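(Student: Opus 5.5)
The plan is to combine the proof of Theorem \ref{k-ric} (directional, static) with the proof of Theorem \ref{thm:smoothtimedep} (time-dependent, full volume). First I would establish the analogue of Lemma \ref{upp/low} for the directional functional, so that $\k(t,x,Z) = -\lim_{s\searrow0}\partial_s\a_s(t,x,Z)$ as soon as that limit exists. Here the existence of $\a_0(t,x,Z)=|Z|/|S_{t,x}|$ follows as in Lemma \ref{lem:limitsto0}, using Assumption B, which holds by smoothness. Since $Z(t,x)$ is compact and contained in $B_{\delta_{t,x}}(x)$, neither tail estimates nor the global bound on $\Sic$ are needed. All expansions can be taken uniform on this bounded region, and shrinking $\delta_{t,x}$ keeps us inside the injectivity radius of $g_{t+s}$ for small $s$.

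Next I would differentiate under the integral:
\[
\partial_s\a_s(t,x,Z)=(12\pi s)^{-n/2}\int_{Z(t,x)}\Big[-\frac{n}{2s}+\frac{d^2_{t+s}(x,y)}{12s^2}-\frac{\partial_r d_r^2(x,y)|_{r=t+s}}{12s}\Big]e^{-\frac{d^2_{t+s}(x,y)}{12s}}\,m_t(dy).
\]
I would split this into two pieces, as in Theorem \ref{thm:smoothtimedep}. The first piece consists of the first two terms with $m_t$ replaced by $\vol_{t+s}$. The second piece is the correction coming from $m_t-\vol_{t+s}=(e^{\int_t^{t+s}S_r\,dr}-1)\vol_{t+s}$, and it contributes $O(\sqrt s)$ by the same argument used there for $II$. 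That argument relies on the static-type cancellation of $[-n+d^2/6s]$ against a constant; on a cone region this cancellation persists up to the boundary term handled below.

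For the static-type part I would use exactly the computation of Theorem \ref{k-ric} at time $t+s$. Integration by parts on $Z(t,x)$ gives the interior term $\frac{1}{\omega_n}\int_Z\Ric_{t+s}(z)\,d\sigma(z)+O(\sqrt s)$. The lateral boundary term vanishes because $\nabla d\perp\nu$ on the geodesic cone, and the cap term is $O(e^{-c/s})$. A subtlety is that $Z(t,x)$ is defined through $\exp_{t,x}$ while the distance is $d_{t+s}$, so the lateral boundary is only approximately radial for $g_{t+s}$. I would handle this by comparing $Z(t,x)$ with the $g_{t+s}$-cone over the transported set $Z$. The symmetric difference lies within angular distance $O(s)$ of $\partial Z$, so its Gaussian-weighted contribution to $\partial_s\a_s$ is $O(1)\cdot O(s)/s$ scaled by the Gaussian mass. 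This is the step I expect to be the main obstacle. The first thing I would try is a diffeomorphism $\Phi_s$ with $\Phi_s=\mathrm{id}+O(s)$ mapping one cone onto the other, followed by a change of variables; alternatively I would avoid the issue by working throughout in $g_t$-polar coordinates. In those coordinates the pure Gaussian terms produce $\frac{1}{\omega_n}\int_Z \Ric$ directly, via the volume expansion $\rho^{n-1}(1-\frac{\Ric(\theta)}{6}\rho^2)$ and \eqref{eq:gaussianint}, and this is where the normalisation $\frac{n}{|S_{t,x}|}=\frac1{\omega_n}$ appears.

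Finally, for the term containing $\partial_r d^2_r$ I would insert \eqref{eq:derivd2}, $\partial_rd_r^2=-2d_r^2\Sic_{r,x}(\dot\gamma_0,\dot\gamma_0)+O(d^3)$. In polar coordinates over $Z$ this gives
\[
-(12\pi s)^{-n/2}\int_Z\Sic(\theta)\,d\sigma\int_0^\infty\frac{2\rho^{n+1}}{12s}e^{-\rho^2/12s}\,d\rho+O(\sqrt s)=-\frac1{\omega_n}\int_Z\Sic(z)\,d\sigma(z)+O(\sqrt s),
\]
again by \eqref{eq:gaussianint}. Combining the two pieces and letting $s\to0$, with continuity of $\Ric_{t+s}$ and $\Sic_{t+s}$ in $s$, yields $-\lim_{s\searrow0}\partial_s\a_s=\frac{n}{|S_{t,x}|}\int_Z(\Ric-\Sic)\,d\sigma$.
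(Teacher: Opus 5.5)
Your proposal is correct and is essentially the paper's proof. You replace $m_t$ by $\vol_{t+s}$ at cost $O(\sqrt s)$, run the directional static computation of Theorem \ref{k-ric} at time $t+s$, and extract $\frac1{\omega_n}\int_Z\Sic$ from the $\partial_r d_r^2$ term via \eqref{eq:derivd2} and \eqref{eq:gaussianint}.

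The mismatch you flag, between the $g_t$-cone $Z(t,x)$ and the $d_{t+s}$-radial structure, is passed over in the paper; it is real but harmless. Your bound as written (angular width $O(s)$ against an $O(1/s)$ integrand) only gives $O(1)$, not something that vanishes. What actually saves it is that the $g_{t+s}$-cone over the same initial directions deviates from $Z(t,x)$ by angular width $O(s\rho)$ at radius $\rho$, which contributes $O(\sqrt s)$. In your $g_t$-polar variant, the first two terms produce no spurious $\Sic$ contribution, because $d_{t+s}^2=(1-2s\,\Sic_{t,x}(\theta,\theta))\rho^2+O(s\rho^3+s^2\rho^2)$ and the radial integral of $\big[-\frac n{2s}+\frac{d^2}{12s^2}\big]e^{-d^2/12s}\rho^{n-1}$ still vanishes exactly after rescaling $\rho$ in each direction.
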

\begin{proof}
The proof is analogous to the one of Theorems \ref{k-ric} and \ref{thm:smoothtimedep}:
\begin{align}
    \partial_s\a_s(t,x,Z) 
    &=\int_Z \partial_s \left[(12\pi s)^{-n/2} e^{-\frac{d^2_{t+s}(x,y)}{12s}}\right]d\vol_t(y) \\
    &= \int_Z \partial_s \left[(12\pi s)^{-n/2} e^{-\frac{d^2_{t+s}(x,y)}{12s}}\right]d\vol_{t+s}(y) + O(\sqrt s) \\
    &= - \frac{1}{\omega_n} \int_Z \Ric(z)d\sigma(z) + \frac{1}{\omega_n}\int_Z \Sic_{t+s,x}(z) d\sigma(z) + O(\sqrt s) \\
    &= - \frac{1}{\omega_n} \int_Z \left[\Ric(z)- \Sic_{t+s,x}(z)\right]d\sigma(z) + O(\sqrt s)\,.
\end{align}
\end{proof}

Now we want to give characterization of the SRF inequality in terms of a Laplace comparison (which is similar to \cite[Thm 3.5]{BamlerEntropy}) and of $\k$.

\begin{lemma}\label{lem:newlaplcomp}
    Let $(M,g)$ be a smooth (static) Riemannian manifold, $x,y\in M$ and let $\gamma$ be a unit speed minimizing geodesic with $\gamma_0=x$, $\gamma_r=y$. Then
\begin{align}\frac1{2}\Delta_y d^2(x,y) &\le n-\frac1{r}\int_0^r \Ric(\dot\gamma_q,\dot\gamma_q)\,q^2\,dq\\
&=n-\frac2r\int_0^r \int_q^r \Ric(\dot\gamma_p,\dot\gamma_p) dp\,q\,dq.
\end{align}
in the barrier sense.
\end{lemma}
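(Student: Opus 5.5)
We prove the first inequality by the index form (second variation) estimate for the Hessian of $f:=\frac12 d^2(x,\cdot)$ at $y$, using test Jacobi-type fields that vanish at $x$. Assume first that $y$ is not a cut point of $x$, so $f$ is smooth near $y$. Afterwards we pass to the barrier sense in the usual way.

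Fix an orthonormal frame $e_1=\dot\gamma_r,e_2,\dots,e_n$ of $T_yM$ and extend it by parallel transport along $\gamma$ to $E_i(q)$. For $i\ge 2$, take the variation field $V_i(q)=\frac qr E_i(q)$, which vanishes at $x$ and equals $e_i$ at $y$. Since $\nabla_y f = r\dot\gamma_r$, the standard second-variation comparison for $q\mapsto$ energy gives
\begin{align}
\Hess f(e_i,e_i)\le r\int_0^r\big(|\dot V_i|^2-\langle R(V_i,\dot\gamma)\dot\gamma,V_i\rangle\big)dq
= 1-\frac1r\int_0^r \sec(E_i,\dot\gamma_q)\,q^2\,dq .
\end{align}
Here the energy normalization is chosen so that $\Hess f\le \Id$ holds in the flat case. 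In the radial direction we have $\Hess f(e_1,e_1)=1$ exactly, because $|\nabla d|=1$. Summing over $i$ and using $\sum_{i\ge2}\sec(E_i,\dot\gamma)=\Ric(\dot\gamma,\dot\gamma)$ yields
\begin{align}
\tfrac12\Delta_y d^2(x,y)\le n-\frac1r\int_0^r\Ric(\dot\gamma_q,\dot\gamma_q)q^2dq .
\end{align}
The main care point is the precise constant in the second variation formula for $\frac12 d^2$ versus $d$. Writing $\Hess\frac12 d^2=d\,\Hess d+\nabla d\otimes\nabla d$ and applying the index lemma to $d$ (which has the factor $1$, not $r$, in front of the index form) fixes it.

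At a cut point $y$ we use the same upper barrier. Consider the smooth function $\tilde f(z)=\frac12 L(\sigma_z)^2$, where $\sigma_z$ is the curve obtained from $\gamma$ by perturbing along the fields $V_i$ towards $z$. This $\tilde f$ satisfies $\tilde f\ge f$ near $y$ with equality at $y$, and the Hessian estimate above holds for $\tilde f$ at $y$. This is the standard Calabi barrier argument. Summing the traces gives the inequality in the barrier sense.

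The second expression is pure calculus, via Fubini on the triangle $0\le q\le p\le r$:
\begin{align}
\frac2r\int_0^r\int_q^r \Ric(\dot\gamma_p,\dot\gamma_p)\,dp\,q\,dq=\frac2r\int_0^r\Ric(\dot\gamma_p,\dot\gamma_p)\int_0^p q\,dq\,dp=\frac1r\int_0^r\Ric(\dot\gamma_p,\dot\gamma_p)p^2dp.
\end{align}
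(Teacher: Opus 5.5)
Your proof is correct. Where $d^2(x,\cdot)$ is smooth it is the paper's argument in different bookkeeping. The paper inserts $s\,e_i(s)$ into the second variation of energy and compares via $E(\gamma_u)\ge\frac1{2r}d^2(x,\gamma_u(r))$. You insert $\frac qrE_i$ into the index lemma for $d$ and use $\Hess\frac12d^2=d\,\Hess d+\nabla d\otimes\nabla d$. Both give the same bound on each $\frac12\Hess d^2(e_i,e_i)$.

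The genuine difference is at cut points. The paper uses Calabi's trick: it moves the base point to $\gamma_\eps$ and takes the barrier $(\eps+d(\gamma_\eps,\cdot))^2$. This is smooth near $y$, but it only satisfies the bound up to an error that vanishes as $\eps\to0$.

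Your $\tilde f(z)=\frac12L(\sigma_z)^2$ is not Calabi's trick, despite the label you give it. It is a direct variational support function. It is smooth near $y$ regardless of conjugate points and satisfies the bound exactly at $y$. It also makes the case distinction unnecessary: at non-cut points, $f\le\tilde f$ with equality at $y$ already gives $\Hess f(y)\le\Hess\tilde f(y)$.

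To make ``the Hessian estimate holds for $\tilde f$'' airtight, fix $\sigma_z(q)=\exp_{\gamma_q}\big(\frac qr P_qw\big)$ for $z=\exp_yw$, where $P_q$ is parallel transport from $y$ to $\gamma_q$. Then the transversal curves $u\mapsto\sigma_{\exp_y(ue_i)}(q)$ are geodesics, so the boundary terms in the second variation vanish. The first variation also vanishes for $e_i\perp\dot\gamma_r$. Hence $\Hess\tilde f(e_i,e_i)=r\int_0^r\big(|\dot V_i|^2-\langle R(V_i,\dot\gamma)\dot\gamma,V_i\rangle\big)dq$ holds with equality. In the radial direction, $\tilde f(\exp_y(ue_1))=\frac12(r+u)^2$, so $\Hess\tilde f(e_1,e_1)=1$.
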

\begin{proof}
    We first assume that $x,y$ are not in the cut locus of each other. Given $\gamma(s)$, $s\in[0,r]$, consider $\{\dot\gamma(s),e_2(s),...,e_{n}(s)\}$ an orthonormal frame along $\gamma(s)$, let $Y_i(s)=se_i(s)$. 
    Consider $\gamma_u^i(s)$ a variation generated by $Y_i$, then by the second variation formula
    \begin{align}
        \frac{d^2}{du^2}|_{(u=0)}E(\gamma_u) = \int_0^r \left(1-s^2\Rm(\dot\gamma,e_i,e_i,\dot\gamma)\right) ds \,.
    \end{align}
    Since $\gamma$ is a geodesic, and $d(x,\cdot)$ is smooth around $y$,
    \begin{align}
        \frac{d^2}{du^2}|_{(u=0)}E(\gamma_u^i) &\geq \frac{1}{2r}\frac{d^2}{du^2}|_{(u=0)}d^2(\gamma(0),\gamma_u^i(r)) \\
        &= \frac{r}{2}\Hess_y d^2(x,y) (e_i,e_i)\,.
    \end{align}
Summing on $i=2,...,n$
\begin{align}
    \sum_{i=2}^n \frac{d^2}{du^2}|_{(u=0)}E(\gamma_u^i) \geq \frac{r}{2}\Delta_y d^2(x,y) - \frac{r}{2}\Hess_y d^2(x,y) (\dot\gamma, \dot\gamma)
\end{align}
    and we then have
    \begin{align}
        \Delta_y d^2(x,y) &\leq 2 + \frac{2}{r}\sum_{i=2}^n \frac{d^2}{du^2}|_{(u=0)}E(\gamma_u^i) \\
        &=2 + \frac{2}{r} \int_0^r (n-1) - s^2 \Ric(\dot\gamma_s,\dot\gamma_s)ds \\
        &= 2n - \frac{2}{r}\int_0^rs^2 \Ric(\dot\gamma_s,\dot\gamma_s)ds\,.
    \end{align}

    If $y$ is in the cut locus of $x$ we can construct an upper barrier $b^{\eps}(y)=\eps + d(\gamma_\eps,y)$ as in the classical case \cite[Lem.~7.1.9]{Pet16}.
\end{proof}

\begin{theorem}[Novel Characterizations of SRFs]\label{thm:charactSRF} For every smooth time-dependent Riemannian manifold $(M,g_t)_{t\in I}$, the following are equivalent:
\begin{itemize}
\item[(i)] For every $t$ and $x$, $$\Ric_{t,x}+\frac12\partial_t g_{t,x}\ge0.$$
\item[(ii)]
For every $t$ and every unit speed $d_t$-geodesic $\gamma$, 
$$\frac1{2}\Delta_y d_t^2(\gamma_0,\gamma_r) \le n+\frac2r\int_0^r \partial_{t}d_t(\gamma_q,\gamma_r)\,q\,dq.$$
\item[(iii)]
For every $t$, $x$, and $Z$
$$\k(t,x,Z)\ge0.$$
\end{itemize}
\end{theorem}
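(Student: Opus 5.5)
We prove the cycle $(i)\Rightarrow(ii)\Rightarrow(iii)\Rightarrow(i)$, closely following the static characterization of lower Ricci bounds given earlier.

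\emph{$(i)\Rightarrow(ii)$.} Fix $t$ and a unit speed $d_t$-geodesic $\gamma$ with $x=\gamma_0$, $y=\gamma_r$. Lemma \ref{lem:newlaplcomp}, applied to the static metric $g_t$, gives
\begin{align}
\frac12\Delta_y d_t^2(x,y)\le n-\frac2r\int_0^r\int_q^r \Ric_t(\dot\gamma_p,\dot\gamma_p)\,dp\,q\,dq
\end{align}
in the barrier sense. Next we need the first variation of length in time. For $0\le q\le r$ the segment $\gamma|_{[q,r]}$ is a minimizing $d_t$-geodesic, so (as in \eqref{eq:derivd2}) we aim for
\begin{align}
\partial_t d_t(\gamma_q,\gamma_r)=\frac12\int_q^r \partial_t g_t(\dot\gamma_p,\dot\gamma_p)\,dp
\end{align}
when $\gamma_r$ is not conjugate/cut. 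In general only an inequality holds, since $d_t$ is an infimum over curves; one should check that the one-sided derivative in the relevant direction is $\le$ this quantity. The hypothesis $\Ric_t\ge-\frac12\partial_tg_t$ then gives $-\int_q^r\Ric_t(\dot\gamma_p,\dot\gamma_p)\,dp\le \frac12\int_q^r\partial_tg_t(\dot\gamma_p,\dot\gamma_p)\,dp=\partial_td_t(\gamma_q,\gamma_r)$. Substituting this into the Laplace bound yields (ii).

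\emph{$(ii)\Rightarrow(iii)$.} We mimic the step $(vi)\Rightarrow(vii)$ of the static theorem, using the decomposition from Theorems \ref{k-ric} and \ref{thm:smoothtimedep}. First we replace $m_t$ by $m_{t+s}$ at cost $O(\sqrt s)$. Then we integrate by parts on $Z(t,x)$ with respect to $g_{t+s}$, where the boundary term over the cap is $O(s)$, and insert (ii) at time $t+s$. Expanding the right-hand side of (ii) via $\partial_td_t(\gamma_q,\gamma_r)=-\int_q^r\Sic_t(\dot\gamma,\dot\gamma)\,dp$ gives
\begin{align}
n-\frac13\Sic_{t,x}(\dot\gamma_0,\dot\gamma_0)\,r^2+O(r^3).
\end{align}
This is exactly what is needed to cancel the $\partial_s d^2_{t+s}$ contribution in term $I$ of Theorem \ref{thm:smoothtimedep}, which is $+\frac1{\omega_n}\int_Z\Sic$. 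Hence $-\partial_s\a_s(t,x,Z)\ge O(\sqrt s)$, and the time-dependent analogue of Lemma \ref{upp/low} gives $\k(t,x,Z)\ge0$.

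\emph{$(iii)\Rightarrow(i)$.} By Theorem \ref{ktZ}, $\k(t,x,Z)=\frac1{\omega_n}\int_Z(\Ric-\Sic)\,d\sigma\ge0$. Take $Z_\eps=B_\eps(\theta)\subset S_{t,x}M$, divide by $|Z_\eps|$, and let $\eps\to0$. This gives $\Ric_{t,x}(\theta,\theta)\ge\Sic_{t,x}(\theta,\theta)=-\frac12\partial_tg_t(\theta,\theta)$ for every unit $\theta$, which is (i).

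\emph{Main obstacle.} The delicate step is $(ii)\Rightarrow(iii)$. The pointwise barrier inequality (ii) must be integrated against the Gaussian, which requires the distributional version of the Laplace comparison. The cut locus of $x$ does not matter inside $Z(t,x)$ for $r\le\delta_{t,x}$. What remains is to control the $O(r^3)$ errors so that they are uniform in the direction $\theta$. A secondary obstacle is the time derivative of $d_t$ at pairs where several minimizing geodesics exist. There we would try to state (ii) only along geodesics within the injectivity radius, which is all that (iii) uses.
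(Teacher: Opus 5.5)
Your proposal is correct and follows the paper's proof essentially step for step: Lemma \ref{lem:newlaplcomp} plus the SRF inequality and the first variation of $d_t$ give (i)$\Rightarrow$(ii); inserting (ii) into the directional decomposition, with leading-order cancellation against the $\partial_s d^2_{t+s}$ term, gives (ii)$\Rightarrow$(iii); and Theorem \ref{ktZ} with shrinking $Z_\eps$ gives (iii)$\Rightarrow$(i). For (ii)$\Rightarrow$(iii) the paper merges the two integrands into $\int_0^r\partial_tg_t(\dot\gamma_p,\dot\gamma_p)\frac{3p^2-r^2}{6r}\,dp=O(r^3)$ instead of Taylor-expanding at $x$, which is the same computation. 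One small correction to your side remark on non-smooth points: since $d_{t'}\le L_{t'}(\gamma)$ with equality at $t'=t$, what holds, and what (i)$\Rightarrow$(ii) actually needs, is the lower bound $\partial_t^-d_t(\gamma_q,\gamma_r)\ge\frac12\int_q^r\partial_tg_t(\dot\gamma_p,\dot\gamma_p)\,dp$ on the backward derivative, not an upper bound.
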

\begin{proof}
$(i)\implies (ii)$ By Lemma \ref{lem:newlaplcomp} and the SRF inequality, 
\begin{align}
    \frac1{2}(\Delta_t)_y d^2(\gamma_0,\gamma_r) &\le n-\frac2r\int_0^r \int_q^r \Ric_t(\dot\gamma_p,\dot\gamma_p) dp\,q\,dq \\
    &\leq n+\frac1r\int_0^r \int_q^r \partial_t g_t(\dot\gamma_p,\dot\gamma_p) dp\,q\,dq \\
    &= n+\frac2r\int_0^r \partial_t d_t(\gamma_q,\gamma_r) q\,dq \,.
\end{align}

$(ii) \implies (iii)$ We have
\begin{align}
    &\partial_s\a_s(t,x,Z) \\
    &= \int_Z \partial_s \left[(12\pi s)^{-n/2} e^{-\frac{d^2_{t+s}(x,y)}{12s}}\right]d\vol_{t+s}(y) + O(\sqrt s) \\
    &= (12\pi s)^{-n/2}\frac1{2s}\int_{Z(x)} \bigg[-n+\frac12\Delta d^2(x,y) -\frac{1}{6}\partial_s d^2_{t+s}(x,y) \bigg]e^{-\frac{d^2(x,y)}{12s}}d\vol(y) +O(\sqrt s)\\
    &\leq (12\pi s)^{-n/2}\frac1{2s}\int_{Z(x)} \bigg[\frac{1}{2r}\int_0^r\partial_tg_t(\dot\gamma_p,\dot\gamma_p)p^2dp -\frac{r}{6}\int_0^r\partial_t g_t(\dot\gamma_p,\dot\gamma_p)dp \bigg]\\
    &\hspace{10cm}\times e^{-\frac{d^2(x,y)}{12s}}d\vol(y) +O(\sqrt s) \\
    &=(12\pi s)^{-n/2}\frac1{2s}\int_{Z(x)} \bigg[\int_0^r\partial_tg_t(\dot\gamma_p,\dot\gamma_p)\left(\frac{3p^2-r^2}{6r}\right)dp \bigg]e^{-\frac{d^2(x,y)}{12s}}d\vol(y) +O(\sqrt s) \\
    &=(12\pi s)^{-n/2}\frac1{2s}\int_{Z(x)} \bigg[\lVert\partial_t^2g_t\rVert_{C^0(Z_r(x))} O(r^3) \bigg]e^{-\frac{d^2(x,y)}{12s}}d\vol(y) +O(\sqrt s) \,.
\end{align}

$(iii)\implies (i)$ It follows from Theorem \ref{ktZ} with $Z_\eps$ converging to a point $\theta\in S_xM$.
\end{proof}

\section{Examples in the time dependent setting}\label{sec:time-dep-ex}
Throughout the sequel, $I\subseteq \R$ will denote a non-empty open interval and $(M,d_t,m_t)_{t\in I}$ a time-dependent family of mm-spaces  with $n$-dimensional volume densities $\rho_t$ according to \eqref{densities}. The following first definition is justified by \cite{SturmSRF}, see also \cite{mccanntopp, KopferSturm18, KopferSturm21}.
 
\begin{definition}\label{mSRF} (i) We say that  $(M,d_t,m_t)_{t\in I}$ is a \emph{super-Ricci flow (SRF in short)} if for a.e. $t\in I$, for any $(\mu^a)_{a\in[0,1]}$ $W_t$-geodesic, 
\begin{align}\label{eq:dynamicconv}
\partial_a^+\mathrm{Ent}(\mu^{a}|m_t)\big|_{a=1^-}-\partial_a^- \mathrm{Ent}(\mu^{a}|m_t)\big|_{a=0^+}
\ge- \frac 12\partial_t^- W_{t^-}^2(\mu^0,\mu^1)
\end{align}
for all $\mu,\nu\in \mathcal P(M)$.
 
(ii) We say that $(M,d_t,m_t)_{t\in I}$, with $m_t$ finite measures is a \emph{minimal super-Ricci flow} if in addition for all $t\in I$,
$${\KK}_t\le0.$$
\end{definition}
Recall that in the smooth Riemannian case,  $(M,d_t,m_t)_{t\in I}$ is a minimal super-Ricci flow if and only if it is a Ricci flow.
Let us consider this concept of minimal SRF for a number of singular spaces.

\begin{remark}
    Consider a static, infinitesimally Hilbertian space $(X,d_t,m_t)_{t\in I}$, $d_t \equiv d$, $m_t\equiv m$. Then $(X,d_t,m_t)$ is a SRF iff $(X,d,m)$ is $\RCD(0,\infty)$. It is a minimal SRF iff $(X,d,m)$ is minimal in the sense of Definition \ref{def:minimalRCD}.
\end{remark}

\subsection{Time-dependent weighted Riemannian manifolds}

Consider $(M,g_t,m_t)_{t\in I}$ a smooth family of complete weighted Riemannian manifolds without boundary, $m_t=\rho_t=e^{f_t}d\vol_t$, satisfying \eqref{eq:expgrowth} for all $t\in I$, and set $\partial _tg_t=-2\Sic_t$, $S_t=\tr \Sic_t$.
Then $\a_0(t,x)=\rho_t(x)$, $\AA_0(t)=\int \rho_t^2d\vol_t$. Assume also that $\lVert \Sic_t(y) -\Sic_t(x) \rVert \leq (1+Cd^2_t(x,y))$ and $|\partial_rf_r(y)-\partial_rf_r(x)|\leq (1+Cd^2_t(x,y))$ for all $x,y\in M$, $t\in I$.

\begin{theorem}
    In the given setting,
    \begin{align}
        \k(t,x) = [R_t(x)-S_t(x)]\rho_t(x) - 3\Delta \rho_t(x)
    \end{align}
    and if $M$ is compact
    \begin{align}
        \KK(t) = \int \left[(R_t-S_t)\rho_t^2 -3 (\Delta\rho_t)\rho_t\right] d\vol(t)\,.
\end{align}
\end{theorem}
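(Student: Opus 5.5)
The plan is to combine the proofs of Theorem \ref{thm:weightedRm} and Theorem \ref{thm:smoothtimedep}. First I would split $\partial_s\a_s(t,x)$ by writing $m_t=e^{f_t}d\vol_t$ and inserting the measure at time $t+s$. That is,
\begin{align}
\partial_s\a_s(t,x)=\int \partial_s\Big[(12\pi s)^{-n/2}e^{-\frac{d^2_{t+s}(x,y)}{12s}}\Big]\,e^{f_{t+s}(y)}d\vol_{t+s}(y)+II,
\end{align}
where $II$ collects the factor $e^{f_t(y)}d\vol_t(y)-e^{f_{t+s}(y)}d\vol_{t+s}(y)$. This factor equals $e^{f_{t+s}(y)}\big(e^{\int_t^{t+s}[S_r(y)-\partial_rf_r(y)]dr}-1\big)\,d\vol_{t+s}(y)$.

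For $II$, I would argue exactly as in \eqref{eq:control_tails_timedep}. The assumed bounds on $\Sic$ and on $\partial_rf_r$ show that the tails do not contribute. Locally the bracket is $s(S_t(x)-\partial_tf_t(x))+O(s\,d)+O(s^2)$. The $y$-independent part multiplies $\partial_s[(12\pi s)^{-n/2}e^{-d^2/12s}]$. This is the same cancellation as in the static case, now carried out with the weight $e^{f_{t+s}(y)}=e^{f_{t+s}(x)}(1+O(d))$. Because the $O(d)$ correction integrates against an odd-in-$\theta$ kernel, it yields $O(s)$ or $O(\sqrt s)$. The remaining terms are $O(\sqrt s)$. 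Hence $II=o(1)$.

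For the main term I would proceed as in term $I$ of Theorem \ref{thm:smoothtimedep}. Differentiating the exponent produces two pieces. The first is the static derivative at the frozen time $r=t+s$ for the weighted manifold $(M,g_{t+s},e^{f_{t+s}}\vol_{t+s})$. By Theorem \ref{thm:weightedRm} it equals $-(R_{t+s}\rho_{t+s}-3\Delta\rho_{t+s})(x)+O(\sqrt s)$, with the $O(\sqrt s)$ locally uniform in $t$ by smoothness. The second is the term $-(12\pi s)^{-n/2}\int e^{-d^2/12s}\,\partial_rd^2_r/(12s)\,e^{f}d\vol$. Here I would use \eqref{eq:derivd2} and expand $e^{f_{t+s}(y)}=\rho_{t+s}(x)(1+\nabla_\theta f\,\rho+O(\rho^2))$ in polar coordinates. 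The leading contribution is $\rho_{t+s}(x)\,S_{t+s}(x)$, computed exactly as before. The first-order correction is odd in $\theta$ times $\rho^3$; it vanishes after angular integration, and the rest is $O(\sqrt s)$. Letting $s\to0$ and using continuity in $t$ gives $\k(t,x)=(R_t-S_t)\rho_t-3\Delta\rho_t$.

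For the integrated statement on compact $M$, all the error terms are uniform in $x$, so we may integrate $-\partial_s\a_s(t,x)$ against $m_t=\rho_t\,d\vol_t$. This produces $\int[(R_t-S_t)\rho_t^2-3\rho_t\Delta\rho_t]\,d\vol_t$, and Lemma \ref{upp/low} (in its time-dependent version) turns this into $\KK(t)$. I expect the main obstacle to be the estimate of $II$ in the presence of the weight. The $1/s$ singularity of $\partial_s$ of the Gaussian must be cancelled to sufficient order. That cancellation requires expanding both $e^{f_{t+s}(y)}$ and the time integral of $S_r-\partial_rf_r$ around $y=x$, and checking that the mixed linear terms really are odd in $\theta$.
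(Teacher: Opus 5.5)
Your proposal is correct and follows essentially the paper's argument. The paper likewise splits off the change of measure $e^{f_t}d\vol_t-e^{f_{t+s}}d\vol_{t+s}$ and handles it by the weighted analogue of \eqref{eq:control_tails_timedep}, which is where the bound on $\partial_r f_r$ is used. It obtains the main term from the frozen-time weighted computation of Theorem \ref{thm:weightedRm} together with the $\partial_r d_r^2$ contribution $S_{t+s}\rho_{t+s}$.

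Your signs, $\partial_s\a_s(t,x)=-(R_{t+s}\rho_{t+s}-3\Delta\rho_{t+s})(x)+S_{t+s}(x)\rho_{t+s}(x)+O(\sqrt s)$, are the right ones. The paper's displayed identity has a stray minus sign on its left-hand side.
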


\begin{proof}
We essentially combine the calculations in Theorems \ref{thm:smoothtimedep} and \ref{thm:weightedRm}:
\begin{align}
    -\partial_s\a_s(t,x) = - R_{t+s}(x)\rho_{t+s}(x) + 3\Delta\rho_{t+s}(x) + S_{t+s}(x)\rho_{t+s}(x) + O(s^{1/2}) \,.
\end{align}
Note that the assumed control on $\partial_r f_r$ is used in the weighted analogue of \eqref{eq:control_tails_timedep}.
\end{proof}

\begin{corollary} In the given setting:
\begin{itemize}
    \item[(i)] If
$\Ric_{t}+\frac12\partial_t g_t \, {\ge0}$, then
$$ \k(t,x) \ge-3\Delta {\rho_t}(x)$$
and if $M$ is compact
$${\KK}_t
\ge 3\int|\nabla\rho_t|^2d\vol_t.$$
\item[(ii)] If
$\Ric_{t}-\text{\rm Hess}_t\, \log\rho_t+\frac12\partial_t g_t \, \ge0$, then
$$\k(t,x) \ge-3\Delta {\rho_t}(x) +\rho_t\Delta\log\rho_t= -2\Delta {\rho_t}(x)-\frac1{\rho_t}|\nabla\rho_t|^2
$$
and if $M$ is compact
$${\KK}_t =\int \k(t,\cdot) \rho_t\,d\vol_t
\ge \int|\nabla\rho_t|^2d\vol_t.$$
\end{itemize}
\end{corollary}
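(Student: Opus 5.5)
Both parts rest on the preceding theorem, which gives
\[
\k(t,x)=[R_t(x)-S_t(x)]\rho_t(x)-3\Delta\rho_t(x),\qquad
\KK(t)=\int\big[(R_t-S_t)\rho_t^2-3(\Delta\rho_t)\rho_t\big]\,d\vol_t .
\]
The plan is to insert the relevant super-Ricci-type inequality, traced, into the bracket $R_t-S_t$, and then integrate by parts on compact $M$.

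For (i), I take the trace of $\Ric_t+\frac12\partial_tg_t\ge0$ with respect to $g_t$. Since $\Sic_t=-\frac12\partial_tg_t$, this gives $R_t-S_t\ge0$ pointwise. Multiplying by $\rho_t>0$ yields $\k(t,x)\ge-3\Delta\rho_t(x)$. For the integrated statement, I multiply the pointwise bound by $\rho_t$ and integrate, which is exactly what the formula for $\KK(t)$ does. This gives $\KK_t\ge-3\int\rho_t\Delta\rho_t\,d\vol_t$, and on closed $M$ integration by parts turns this into $3\int|\nabla\rho_t|^2\,d\vol_t$.

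For (ii), I trace $\Ric_t-\Hess_t\log\rho_t+\frac12\partial_tg_t\ge0$ to get $R_t-S_t\ge\Delta\log\rho_t$. Multiplying by $\rho_t$ gives
\[
\k(t,x)\ge\rho_t\Delta\log\rho_t-3\Delta\rho_t .
\]
To rewrite this, I use $\Delta\log\rho=\frac{\Delta\rho}{\rho}-\frac{|\nabla\rho|^2}{\rho^2}$, so that $\rho\Delta\log\rho=\Delta\rho-\frac{|\nabla\rho|^2}{\rho}$. Substituting produces the stated form $-2\Delta\rho_t-\frac1{\rho_t}|\nabla\rho_t|^2$.

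For the integrated statement in (ii), I first observe that $\KK_t=\int\k(t,\cdot)\rho_t\,d\vol_t$ by direct comparison of the two formulas from the preceding theorem. I then multiply the pointwise bound by $\rho_t$ and integrate:
\[
\KK_t\ge\int\big(-2\rho_t\Delta\rho_t-|\nabla\rho_t|^2\big)\,d\vol_t .
\]
Integration by parts gives $-2\int\rho_t\Delta\rho_t\,d\vol_t=2\int|\nabla\rho_t|^2\,d\vol_t$, so the right-hand side equals $\int|\nabla\rho_t|^2\,d\vol_t$.

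I do not expect a genuine obstacle here. The only points needing care are the sign conventions: $S_t=\tr\Sic_t=-\frac12\tr\partial_tg_t$, so the traced super-Ricci-flow inequality really does read $R_t-S_t\ge0$. Positivity of $\rho_t=e^{f_t}$ is needed when multiplying the inequalities, and closedness of $M$ is needed so that integration by parts produces no boundary terms.
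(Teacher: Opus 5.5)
Your proposal is correct and follows essentially the same route as the paper, which states this corollary without a separate proof as a direct consequence of the preceding theorem: trace the (weighted) super-Ricci inequality to get $R_t-S_t\ge0$, resp.\ $R_t-S_t\ge\Delta\log\rho_t$, insert this into $\k(t,x)=[R_t-S_t]\rho_t-3\Delta\rho_t$, and on closed $M$ combine $\KK_t=\int\k(t,\cdot)\rho_t\,d\vol_t$ with $-\int\rho_t\Delta\rho_t\,d\vol_t=\int|\nabla\rho_t|^2\,d\vol_t$. Your sign bookkeeping ($\tfrac12\tr\partial_tg_t=-S_t$ and $\rho_t\Delta\log\rho_t=\Delta\rho_t-|\nabla\rho_t|^2/\rho_t$) and your use of $\rho_t>0$ are correct.
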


\begin{corollary}  Assume that  $M$ is compact and satisfies either $\Ric_{g_t}+\frac12\partial_t g_t\ge0$ or  
 $\Ric_{t}-\text{Hess}_t\, \log\rho_t+\frac12\partial_t g_t \, \ge0$. Then, for every $t\in I$, it is a minimal SRF in the sense of Definition \ref{mSRF}, i.e. ${\KK}_t=0$ if and only if
    $\rho_t$ is constant in $x$ and $\Ric_{g_t}+\frac12\partial_t g_t=0$.
 \end{corollary}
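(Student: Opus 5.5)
The plan is to start from the explicit formula of the preceding theorem for a time-dependent weighted manifold,
\[
\KK(t)=\int \big[(R_t-S_t)\rho_t^2-3(\Delta\rho_t)\rho_t\big]\,d\vol_t .
\]
I will rewrite this integrand so that each piece is visibly nonnegative under the respective hypothesis. The one identity used throughout is
\[
R_t-S_t=\tr\big(\Ric_t+\tfrac12\partial_t g_t\big),
\]
which holds because $S_t=\tr\Sic_t=-\tfrac12\tr\partial_t g_t$.

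\emph{Case $\Ric_t+\frac12\partial_t g_t\ge0$.} Since $M$ is closed, integrating by parts gives $-3\int\rho_t\Delta\rho_t\,d\vol_t=3\int|\nabla\rho_t|^2d\vol_t$. Hence
\[
\KK(t)=\int \tr\big(\Ric_t+\tfrac12\partial_t g_t\big)\rho_t^2\,d\vol_t+3\int|\nabla\rho_t|^2\,d\vol_t .
\]
Both terms are nonnegative, which recovers $\KK_t\ge0$. So minimality $\KK_t\le0$ is equivalent to $\KK_t=0$, which forces both terms to vanish:
\begin{itemize}
\item $\nabla\rho_t\equiv0$, so $\rho_t$ is constant in $x$;
\item since $\rho_t=e^{f_t}>0$, the trace $\tr(\Ric_t+\frac12\partial_tg_t)$ vanishes everywhere.
\end{itemize}
A nonnegative symmetric tensor with zero trace is zero, so $\Ric_t+\frac12\partial_t g_t=0$.

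\emph{Case $\Ric_t-\Hess_t\log\rho_t+\frac12\partial_t g_t\ge0$.} I will add and subtract $\Delta\log\rho_t$ inside the trace. Integration by parts gives
\[
\int\rho_t^2\Delta\log\rho_t\,d\vol_t=-\int 2\rho_t\nabla\rho_t\cdot\frac{\nabla\rho_t}{\rho_t}\,d\vol_t=-2\int|\nabla\rho_t|^2\,d\vol_t .
\]
This yields
\[
\KK(t)=\int \tr\big(\Ric_t-\Hess_t\log\rho_t+\tfrac12\partial_t g_t\big)\rho_t^2\,d\vol_t+\int|\nabla\rho_t|^2\,d\vol_t ,
\]
again a sum of nonnegative terms, matching part (ii) of the preceding corollary. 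If $\KK_t=0$, then $\rho_t$ is constant, so $\Hess_t\log\rho_t=0$. The hypothesis then reduces to $\Ric_t+\frac12\partial_t g_t\ge0$ with vanishing trace, and I conclude as in the first case.

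\emph{Converse.} If $\rho_t$ is constant in $x$ and $\Ric_t+\frac12\partial_t g_t=0$, then $R_t=S_t$ and $\Delta\rho_t=0$. Plugging into the formula for $\KK(t)$ gives $\KK_t=0$.

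The only delicate point is bookkeeping: the sign convention $\partial_tg=-2\Sic$ in the trace identity, and the validity of the integrations by parts. The latter is immediate since $M$ is closed and $\rho_t>0$ is $C^2$. I expect no real obstacle, because the analytic work is contained in the preceding theorem.
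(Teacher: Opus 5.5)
Your proof is correct and follows the route the paper intends. The corollary is read off from the preceding theorem via exactly your two decompositions of $\KK_t$; these are the bounds $\KK_t\ge 3\int|\nabla\rho_t|^2\,d\vol_t$ and $\KK_t\ge\int|\nabla\rho_t|^2\,d\vol_t$ of the preceding corollary, and the equality case then forces $\nabla\rho_t=0$ and the vanishing of a nonnegative symmetric tensor with zero trace, with your explicit converse and the observation $\KK_t\le 0\iff\KK_t=0$ filling in what the paper leaves implicit.
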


\subsection{Suspensions of circles}
Let $N:=\R/{L \Z}\simeq[0,L)$ be a circle of length $L$ and consider the spherical suspension $M:=(0,2\pi) \times_f N$, with $f(r):=\sin r$. In the sequel, with $\KK$ of the suspension $M$ we are referring to $\KK$ computed on the metric completion.
\begin{proposition}

(i) If $L<2\pi$ then $(M,g_t)_{t\in I}$ with $I=(0,\frac1{2\lambda})$ and $g_t=(1-2\lambda t)\,g$ is a SRF for any $\lambda\leq 1$ but it is never minimal (in the case $\lambda\leq0$, $I=\R_+$).

(ii) If $L>2\pi$ then there exists no rescaling $\alpha:(0,\delta)\to\R_+$ s.t. $(M,\alpha_tg)_{t\in (0,\delta)}$ is a SRF.
\end{proposition}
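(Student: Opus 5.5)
The proof proceeds separately for (i) and (ii). In both cases we use that for $g_t=\alpha_t g$ the family is static up to scaling: $d_t=\sqrt{\alpha_t}\,d$ and $m_t=\alpha_t\,\vol_g$, with $n=2$. The smooth part of $M$ has $\Ric_g=g$. The two suspension points are spherical suspension singularities of angle $L=2\pi-\alpha$, with $\alpha=2\pi-L$, so Theorem \ref{2d-sphsusp} applies at each fixed time.

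\emph{(i), SRF property.} For $L<2\pi$ the completion is an Alexandrov space of curvature $\ge1$, hence $\RCD(1,2)$. By scaling, $(M,d_t,m_t)$ is $\RCD(K_t,2)$ with $K_t=1/(1-2\lambda t)$. In \eqref{eq:dynamicconv} the entropy relative to $m_t$ differs from that relative to $\vol_g$ by a constant, so the left-hand side is $\ge K_tW_t^2(\mu^0,\mu^1)=W_g^2(\mu^0,\mu^1)$ by $K_t$-convexity of the entropy along $W_t$-geodesics. On the right-hand side, $W_t^2=(1-2\lambda t)W_g^2$ gives $-\frac12\partial_tW_t^2=\lambda W_g^2$. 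Hence the inequality holds iff $W_g^2\ge\lambda W_g^2$, which is true for $\lambda\le1$. For $\lambda\le0$ the same computation works on $I=\R_+$.

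\emph{(i), non-minimality.} Fix $t$ and set $c(s)^2=\alpha_{t+s}/\alpha_t=1-2\lambda_t s+O(s^2)$, where $\lambda_t=\lambda/(1-2\lambda t)$. Since $d_{t+s}=c(s)d_t$, a change of variables gives
$$\AA_s(t)=c(s)^{-2}\,\AA^{\mathrm{st},t}_{s/c(s)^2},$$
where $\AA^{\mathrm{st},t}$ is the static double integral of $(M,d_t,m_t)$. Expanding with Lemma \ref{upp/low}, and using $\AA_0=|M|_t$ (the density is $1$ off two points), gives
$$\KK(t)=\KK^{\mathrm{st}}(g_t)-2\lambda_t|M|_t .$$
By Theorem \ref{2d-sphsusp}, $\KK^{\mathrm{st}}(g_t)=\int R_t\,d\vol_t+2\mathfrak C(\alpha)$, and $R_t=2K_t$. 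Therefore
$$\KK(t)=2(K_t-\lambda_t)|M|_t+2\mathfrak C(\alpha)=\frac{2(1-\lambda)}{1-2\lambda t}|M|_t+2\mathfrak C(\alpha).$$
For $\alpha\in(0,2\pi)$ we have $\mathfrak C(\alpha)>0$: the factor $1-\alpha/2\pi$ is positive, $2\alpha-2\sin\alpha>0$, and $3\alpha-\pi-\sin\alpha>0$ for $\alpha\ge\pi$. Since $\lambda\le1$ the first term is also $\ge 0$, so $\KK(t)>0$ and the flow is never minimal. One point to verify is that the $O(s^2)$ in $c(s)$ only contributes $o(s)$ uniformly, which follows from the uniform bound on $\a_s$.

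\emph{(ii).} Suppose $(M,\alpha_tg)$ is a SRF. The same scaling shows that \eqref{eq:dynamicconv} at a.e. $t$ says $\mathrm{Ent}(\cdot|\vol_g)$ is $K$-convex along $W_g$-geodesics, with the finite constant $K=-\alpha'_t/(2\alpha_t)$ (after normalizing by $\alpha_t$). In other words, $(M,d,\vol_g)$ would be $\CD(K,\infty)$ for some $K\in\R$. We then derive a contradiction from the tip of angle $L>2\pi$.

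This last step is the main obstacle. I would first try a blow-up: rescaling $\CD(K,\infty)$ around the tip yields $\CD(0,\infty)$ on the tangent cone $C(S^1_L)$. I would then show directly that $C(S^1_L)$ with $L>2\pi$ violates entropy convexity. Take two small uniform measures centred at points at equal distance on "opposite" sides of the vertex (angular separation $>\pi$), so that every geodesic between them passes through the vertex. The midpoint measure is then concentrated near the vertex at a scale much smaller than the supports, so its entropy blows up relative to the endpoint entropies. This contradicts $0$-convexity, and hence no such rescaling $\alpha_t$ exists.
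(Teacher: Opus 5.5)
Your proof is correct. For (i) it takes a different route from the paper; for (ii) it uses the paper's idea, made quantitative.

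\emph{SRF property in (i).} The paper argues locally: the flow is a super-Ricci flow on the smooth part, and by Bacher--Sturm no Wasserstein geodesic passes through the vertex. You use the global fact that the suspension is Alexandrov with curvature $\ge 1$, hence $\RCD(1,2)$, and check \eqref{eq:dynamicconv} by pure scaling: $K_tW_t^2=W_g^2$ against $-\frac12\partial_tW_t^2=\lambda W_g^2$. This is cleaner and makes the role of $\lambda\le 1$ transparent, at the cost of invoking Alexandrov$\Rightarrow\RCD$.

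\emph{Non-minimality in (i).} The paper splits into the cases $\lambda<1$ and $\lambda=1$. It implicitly uses two things: that the tip contribution is nonnegative when $\lambda<1$, and that the static tip term of Theorem \ref{2d-sphsusp} carries over to the time-dependent functional. Your identity $\AA_s(t)=c(s)^{-2}\AA^{\mathrm{st},t}_{s/c(s)^2}$ settles both points at once. It gives the explicit value
$$\KK(t)=\tfrac{2(1-\lambda)}{1-2\lambda t}|M|_t+2\mathfrak C(2\pi-L)>0 .$$
Two small remarks. First, $c(s)^2=1-2\lambda_t s$ holds exactly, so there is no $O(s^2)$ to control. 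Second, the static $\KK$ in dimension $2$ is scale invariant, which is why Theorem \ref{2d-sphsusp} applies to $g_t$ unchanged.

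\emph{Part (ii): state the mechanism correctly.} Radial concentration near the vertex is not what produces the contradiction. The disc of radius $\delta$ about the vertex has area $\frac L2\delta^2>\pi\delta^2$, so mass merely sitting near the tip would not violate convexity. The real effect is angular collapse:
\begin{itemize}
\item Take $\mu^0,\mu^1$ uniform on $\delta$-balls around points at distance $r_0$ from the tip with angular separation $>\pi$. Then every pair in the supports is joined only through the vertex.
\item The midpoint of such a pair $x,y$ lies on the ray through $x$ (or through $y$), at radius $|r(x)-r(y)|/2\le\delta$.
\item Hence $\mu^{1/2}$ lives in two sectors of radius $\delta$ and opening $O(\delta/r_0)$, of total area $O(\delta^3/r_0)$.
\item Jensen then gives $\mathrm{Ent}(\mu^{1/2})-\mathrm{Ent}(\mu^0)\ge\log(c\,r_0/\delta)\to\infty$, while $KW^2$ stays bounded.
\end{itemize}
This also holds for every $W$-geodesic, since each is induced by a plan concentrated on these vertex-crossing geodesics.

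You can run this argument directly on $M$ near the tip. For $r_1+r_2<\pi$ and angular separation $\ge\pi$, the geodesic in the spherical suspension passes through the tip. So the blow-up to $C(S^1_L)$, and the stability of $\CD(K,\infty)$ under pmGH convergence, are unnecessary. What you obtain is an explicit violation of $K$-convexity for every $K$, in place of the paper's brief claim that the entropy along the transport fails to be continuous.
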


\begin{proof}
    (i) It is a SRF because it is so on the smooth part, and as showed in \cite[Thm. 3.3]{BacherSturm14}, no Wasserstein geodesic passes through the vertex. However it is not minimal in the sense that for a bounded neighbourhood $U$ of the vertex, 
    \begin{align}
        \KK(U)=\liminf_{s\searrow0}\frac1s\left({\AA}_0(U)-{\AA}_t(U) \right) > 0.
    \end{align}
    Indeed, if $\lambda < 1$, it is not minimal on the regular part. If $\lambda=1$, on the regular part it solves the Ricci flow equation (so $\k=0$ there), but by Theorem \ref{2d-sphsusp}, the tip gives a positive contribution to $\KK$.

    (ii) In this case, for any fixed $t$, we can find two open sets $U_1$, $U_2$ such that the transport between $\mu_i=\frac{\ind_{U_i}}{|U_i|}$ concentrates at the vertex. Then $s\mapsto \mathrm{Ent}_t(\mu_s)$ is not continuous and cannot satisfy \eqref{eq:dynamicconv} for any $\alpha_t$.
\end{proof}

\subsection{Suspensions in $n\ge3$}

In this Subsection, we exclusively consider spaces with dimensional parameter $n \geq 3$.

\begin{proposition} Let $(M,g)$ be the spherical suspension over the round sphere $N:={\mathbb S}^{n-1}(\rho)$ of dimension $n-1$ and radius $\rho$.

(i) If $\rho<1$ then the linear shrinking space $(M,g_t)_{t\in I}$ with $I=(0,\frac1{2\lambda})$ and   $g_t=(1-2\lambda t)\,g$, $\lambda \leq n-1$, is a SRF but it is not minimal.

(ii)  If $\rho>1$ then there exists no rescaling $\alpha:(0,\delta)\to\R_+$ s.t. $(M,\alpha_tg)_{t\in (0,\delta)}$ is a SRF.
\end{proposition}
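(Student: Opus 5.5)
We mirror the proof of the analogous statement for suspensions of circles, replacing the two-dimensional tip analysis (Theorem \ref{2d-sphsusp}) by the higher-dimensional one.

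\emph{Curvature of the regular part.} On the regular part, $M$ is the warped product $(0,\pi)\times_{\sin r}{\mathbb S}^{n-1}(\rho)$. With $f=\sin r$ and $g_N=\rho^2 g_{S^{n-1}}$, the warped product formulas give:
\begin{align}
\Ric(\partial_r,\partial_r)&=-(n-1)\frac{f''}{f}=n-1,\\
\Ric|_{TN}&=\Big[\frac{(n-2)/\rho^2-(n-2)(f')^2}{f^2}-\frac{f''}{f}\Big]g,
\end{align}
and the tangential coefficient equals
$$(n-1)+(n-2)\Big(\frac1{\rho^2}-1\Big)\frac1{\sin^2 r}.$$
If $\rho<1$ this coefficient is $>n-1$, so $\Ric\ge (n-1)g$ on the regular part, with strict inequality in the tangential directions. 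If $\rho>1$ it tends to $-\infty$ near the tips.

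\emph{Part (i), super-Ricci flow property.} For $g_t=(1-2\lambda t)g$ we have $\frac12\partial_t g_t=-\lambda g$, and $\Ric_{g_t}=\Ric_g$. Hence
$$\Ric_{g_t}+\tfrac12\partial_t g_t\ge (n-1-\lambda)g\ge0$$
on the regular part. To pass to the singular space in the sense of Definition \ref{mSRF}, I would argue as in the circle case. For $\rho<1$ the suspension is an Alexandrov space with curvature $\ge1$, hence $\RCD(n-1,n)$. Wasserstein geodesics between absolutely continuous measures do not charge the two tips, a null set of codimension $n$ (cf.\ \cite{BacherSturm14}). So the dynamic convexity \eqref{eq:dynamicconv} reduces to the smooth computation, i.e.\ to the $\RCD(n-1,n)$ condition rescaled by $1-2\lambda t$. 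The time derivative $-\frac12\partial_t W_t^2=\lambda W_0^2$ is exact for constant-in-space rescalings.

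\emph{Part (i), non-minimality.} By Lemma \ref{lem:n-geq-3-susp} (with $f=\sin$; the bounds \eqref{eq:warping-factor-bounds} hold near each tip) combined with the gluing Theorem \ref{thm:C0gluing}, $\KK$ does not see the tips. Together with Theorem \ref{thm:smoothtimedep} this gives
$$\KK_t=\int_{M}(R_t-S_t)\,d\vol_t .$$
Here $S_t=n\lambda/(1-2\lambda t)$, and $R_t=R_g/(1-2\lambda t)$ with
$$R_g=n(n-1)+(n-1)(n-2)\Big(\frac1{\rho^2}-1\Big)\sin^{-2} r>n(n-1)\ge n\lambda .$$
Since $n\ge3$, the integral is finite and strictly positive, so the flow is not minimal.

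\emph{Part (ii).} I would show that no rescaling yields a SRF by violating \eqref{eq:dynamicconv}. Since $\rho>1$, the space-of-directions at a tip has diameter $\pi\rho>\pi$. Taking small balls $U_1,U_2$ near the tip in directions at angular distance $>\pi$ in $N$, the optimal transport between the normalized uniform measures $\mu_i$ passes through the tip and concentrates there. The entropy along the geodesic then blows up in the interior while remaining finite at the endpoints. This contradicts the convexity-type inequality \eqref{eq:dynamicconv}, whose right-hand side $-\frac12\partial_t^-W^2$ is finite for any $\alpha_t$ (either because $\alpha$ is differentiable, or, for $\partial^-$ with monotonicity, bounded from below). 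The constraint $\Ric\ge -C$ fails near the tips anyway, which already rules out any lower bound uniform in $t$.

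\emph{Main obstacle.} The delicate step is making the concentration rigorous: showing the $W_t$-geodesic genuinely passes through the vertex with positive mass, so that entropy fails to be continuous. I would first try the explicit cone geometry near the tip, where geodesics between points at angle $\ge\pi$ in the link go through the vertex.
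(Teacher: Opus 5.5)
Part (i) is fine and follows the paper's route: curvature of the regular part, \cite{BacherSturm14} at the tips, and Lemma \ref{lem:n-geq-3-susp} to see that the tips do not contribute. Your explicit value $\KK_t=\int_M(R_t-S_t)\,d\vol_t>0$ is correct. For $g_t=c_tg$, passing from the static lemma to the time-dependent functional only needs the scaling identity $\a_s(t,x)=(c_t/c_{t+s})^{n/2}\,\a^{g}_{s/c_{t+s}}(x)$.

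The gap is in part (ii): the mechanism you propose does not work. Take $U_1,U_2$ near a tip with angular separation $>\pi$, and let $r_i$ be the distance to the tip. Every pair then satisfies $d=r_1+r_2$, so the cost depends only on the radii. Hence every optimal plan couples the radii counter-monotonically, $r_2=T(r_1)$ with $T$ nonincreasing. At time $a\in(0,1)$ the transported point lies at signed radius $(1-a)r_1-aT(r_1)$, which is strictly increasing in $r_1$ with slope at least $1-a$. Consequently the intermediate measure has bounded radial density and gives no mass to the vertex. Its $n$-dimensional density near the vertex is of order $r^{1-n}$, so its entropy, comparable to $\int_0 r^{1-n}\log(r^{1-n})\,r^{n-1}\,dr$, is finite. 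There is no blow-up or discontinuity of the entropy to exploit. You would need a genuinely quantitative violation of \eqref{eq:dynamicconv} at the vertex, and the proposal does not provide one.

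In dimension $n\ge3$ no analysis at the tip is needed. Your closing remark about $\Ric\ge -C$ is in fact the whole proof, once it is used pointwise in $t$ rather than as a statement about uniformity. For any rescaling, $\Ric_{g_t}+\tfrac12\partial_tg_t=\Ric_g+\tfrac12(\partial_t\alpha_t)\,g$. By your own formula, the tangential coefficient $(n-1)+(n-2)(\rho^{-2}-1)\sin^{-2}r$ of $\Ric_g$ tends to $-\infty$ as $r\to0$ when $\rho>1$. Since $\tfrac12\partial_t\alpha_t$ is a fixed number at each $t$, the smooth SRF inequality fails on a small ball of the regular part near the tip. Testing \eqref{eq:dynamicconv} with measures supported in that ball, whose $W_t$-geodesics stay in the smooth region, contradicts the smooth characterization of super-Ricci flows. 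This is exactly the paper's argument.

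The vertex-based argument you imported from the circle case is only needed in dimension $2$. There the regular part of the suspension has constant curvature $1$, so any obstruction has to come from the tips.
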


\begin{proof}
    The curvature of $M$ is, for $X\in TN$, 
    \begin{align}
        \Ric(\partial_r+X) = (n-1)|\partial_r|^2 + \frac{n-2+\rho^2-(n-1)\rho^2\cos^2(r)}{\rho^2\sin^2r}g_M(X,X)\,.
    \end{align}
    
    (i) If $\rho<1$, the curvature is lower bounded by $n-1$, hence the regular part is a SRF for any $\lambda\leq n-1$. As in the $\dim = 2$ case, no Wasserstein geodesic passes through the vertex by \cite[Thm.~3.3]{BacherSturm14} so it is a SRF in the sense of Definition \ref{mSRF}. However it is not minimal because it does not satisfy the Ricci flow equation on the regular part, and the tips do not contribute, see Lemma \ref{lem:n-geq-3-susp}.

    (ii) If $\rho>1$ the curvature approaches $-\infty$ close to the tip, hence no rescaling can satisfy the SRF inequality.
\end{proof}

\begin{proposition} Let $(M,g)$ be the spherical suspension over $(N,h)$, a compact $(n-1)$-dimensional Einstein manifold with constant $(n-2)$. Put $I=(0,\frac1{2(n-1)})$ and $g_t:=(1-2(n-1)t)g$. Then $(M,g_t)_{t\in I}$ is a minimal SRF.
\end{proposition}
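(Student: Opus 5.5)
We must show two things: that $(M,g_t)$ is a super-Ricci flow in the sense of Definition \ref{mSRF}, and that $\KK_t\le 0$ for every $t\in I$.

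\textbf{Step 1: the regular part is an exact Ricci flow.} The spherical suspension $M=(0,\pi)\times_{\sin}N$ (completed by two vertices) has metric $g=dr^2+\sin^2 r\, h$. By the warped product curvature formulas,
\[
\Ric(\partial_r,\partial_r)=-(n-1)\frac{f''}{f}=n-1 .
\]
For $X\in TN$,
\[
\Ric(X,X)=\Ric_h(X,X)-\bigl(ff''+(n-2)(f')^2\bigr)h(X,X)=\bigl((n-2)-(n-2)\cos^2 r+\sin^2 r\bigr)h(X,X).
\]
Here we used $\Ric_h=(n-2)h$. The last expression equals $(n-1)\sin^2 r\, h(X,X)=(n-1)g(X,X)$, and the mixed terms vanish. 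Hence $\Ric_g=(n-1)g$ on the regular part.

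Since $g_t=(1-2(n-1)t)g$, the Ricci tensor is scale invariant, so
\[
\Ric_{g_t}=(n-1)g=-\tfrac12\partial_t g_t .
\]
Thus the smooth part evolves by Ricci flow. In particular $R_t-S_t=0$ there, and by Theorem \ref{thm:smoothtimedep} we get $\k(t,x)=0$ for $x$ away from the two vertices.

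\textbf{Step 2: SRF property across the vertices.} On the regular part the SRF inequality holds with equality. To pass to Definition \ref{mSRF}, we argue as in the preceding propositions. Since $\Ric\ge n-1$, each time slice is (after the time-dependent rescaling) an $\RCD(K,n)$ space; spherical suspensions over $\RCD(n-2,n-1)$ spaces are $\RCD(n-1,n)$. By \cite[Thm.~3.3]{BacherSturm14}, $W_t$-geodesics between absolutely continuous measures do not charge the vertices. Therefore the dynamic convexity \eqref{eq:dynamicconv} reduces to the smooth computation, which holds by the characterization of \cite{SturmSRF} for smooth super-Ricci flows.

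\textbf{Step 3: the vertices do not contribute to $\KK_t$.} This is the main point. We apply Lemma \ref{lem:n-geq-3-susp} with $f=\sin$, which requires $n\ge 3$. The lemma covers a neighbourhood $\{r<R\}$ of one vertex for some $R<\pi$. The bounds \eqref{eq:warping-factor-bounds} hold on, say, $[0,\pi/2]$, because $f'=\cos r\ge0$, $f'\ge c>0$ on $[0,\pi/4]$, and $r^2|f''/f|=r^2$ is bounded.

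Cover $M$ by two such tip neighbourhoods and a compact region of the smooth part. On each tip neighbourhood, Lemma \ref{lem:n-geq-3-susp} gives $\KK(K)=\int_{K\setminus\{o\}}R\,d\vol$. On the smooth region, the uniform convergence from Theorem \ref{thm:smoothcase} applies; the gluing is justified as in the proof of Theorem \ref{thm:higherdcone} via the measure interpretation of Remark \ref{rmk:scalmeasure}.

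\textbf{Step 4: passing to the time-dependent functional.} Since $d_{t+s}=\sqrt{(1-2(n-1)(t+s))/(1-2(n-1)t)}\,d_t$ is an explicit constant rescaling, the time-dependent $\a_s(t,x)$ is a static Gaussian functional for $(M,d_t,m_t)$ with a modified parameter. This yields, uniformly and dominated near the tips by \eqref{eq:susp-unif-conv},
\[
\KK_t=\int_{M\setminus\{\text{tips}\}}(R_t-S_t)\,d\vol_t=0 .
\]
The integrability of the dominating function $d^{-2}$ uses $n\ge3$.

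Hence $\KK_t\le0$, and the flow is a minimal SRF. The expected obstacle is Step 4: justifying dominated convergence for the time-dependent $\partial_s\a_s$ near the tips. The scaling in Step 4 reduces it to the static estimate \eqref{eq:susp-unif-conv}, plus the extra term $S_t\,\a$, which is bounded.
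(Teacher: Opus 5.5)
Your proof is correct, and it uses the same ingredients as the paper. These are: Ricci flow on the regular part; Bacher--Sturm to carry the SRF inequality across the vertices; and the uniform bound \eqref{eq:susp-unif-conv} of Lemma~\ref{lem:n-geq-3-susp}, whose $d^{-2}$ singularity is integrable for $n\ge 3$, so the vertices do not contribute.

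Where you differ is in how the time dependence is handled. You use that the flow is a homothety. With $\lambda_t=\sqrt{1-2(n-1)t}$ and $c(s)=\lambda_{t+s}/\lambda_t$, one has exactly $\a_s(t,x)=c(s)^{-n}\,\a^{(t)}_{s/c(s)^2}(x)$, where $\a^{(t)}$ is the static functional of $(M,d_t,m_t)$. Hence $-\partial_s\a_s(t,x)$ splits into two pieces: a uniformly bounded term tending to $-S_t\rho_t(x)$, and a term dominated by $C\,d_t^{-2}(o,x)$ tending to $R_t(x)$ off the tips. This gives $\KK(t)=\KK^{\mathrm{static}}(M,d_t,m_t)-S_t\,m_t(M)=\int(R_t-S_t)\,d\vol_t=0$.

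The paper instead reuses the splitting $\partial_s\a_s(t,x)=I+II$ from the proof of Theorem~\ref{thm:smoothtimedep}. Near the tips, $I$ is bounded by \eqref{eq:susp-unif-conv} plus the explicit term coming from $\partial_r d_r^2=-2(n-1)d_r^2/\lambda_r^2$, so $\eps$-balls around the tips contribute $O(\eps^{n-2})$. The measure-change term $II$ is $O(\sqrt s)$, as in the smooth case.

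Your reduction is exact and shorter, since it needs no separate estimate of the measure change. However, it relies on $d_{t+s}$ being a constant multiple of $d_t$. The paper's splitting is the template that would still work for non-homothetic flows.

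One wording fix in Step~2. What you need from Bacher--Sturm is that the transport geodesics avoid the vertices, so that the smooth computation applies along them. Saying the interpolants do not \emph{charge} the vertices is not enough: that holds automatically for absolutely continuous interpolants.
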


\begin{example}
One can choose for example 
\begin{itemize}
\item $N={\mathbb S}^{n-1}(1)/\Z_2$ for some $n\ge3$,
\item
or $N={\mathbb S}^2(1/\sqrt 3)\times {\mathbb S}^2(1/\sqrt 3)$ and $n=5$.
\end{itemize}
Note that in these cases $X$ is not a topological manifold.
\end{example}

\begin{proof}
    All these spaces satisfy $\Ric=(n-1)g$, hence the defined flow solves the RF equation on the regular part. Since the Wasserstein geodesics do not pass through the tips \cite[Thm. 3.3]{BacherSturm14}, it is a SRF.

    In order to show that it is minimal we show that the tips do not contribute in $\KK(t)$.

    Write the time-dependent scaling factor $\lambda_t := \sqrt{1 - 2(n - 1)t}$, and write $(X,d_t = \lambda_td,m_t = \lambda_t^n m)$ explicitly as the metric completion of the Riemannian manifold
    
    \begin{equation}
    (M,g_t) := \big((0,\pi)_r\times N, \lambda_t^2(dr^2 + \sin^2(r)\cdot g_N)\big)\,.
    \end{equation}
    
    Note that after changing coordinates $\rho := \lambda_t r$ we land in the setting covered by Lemma \ref{lem:n-geq-3-susp}, with $[0,R(t)] = [0,\lambda_t\pi]$ and $f(t,r) := \lambda_t\sin(r/\lambda_t)$. As noted previously, it suffices to show that the tips $o_0 := r^{-1}(0)$, $o_\pi := r^{-1}(\pi)$ do not contribute to $\KK(t)$, so let $x \in B_\eps(o_0)$ for an $\eps > 0$ that we will eventually send to $0$. Following the proof and notation of the smooth case Theorem \ref{thm:smoothtimedep}, we can write 

    \begin{equation}
    \partial_s \a_s(t,x) = I + II\,,
    \end{equation}

    where

    \begin{align}
    I &= \partial_s \left[(12 \pi s)^{-n/2} \int e^{-\frac{d^2_\tau(x,y)}{12 s}} d\vol_r (y)\right]_{r = t + s} \\
    &\phantom{=}- (12\pi s)^{-n/2}\int e^{-\frac{d^2_{s + t}(x,y)}{12 s}}\left.\frac{\partial_r d_r^2(x,y)}{12 s}\right|_{r = t + s}\,d\vol_{t + s}(y) \\
    &= O_{N,t}(d_{s + t}^{-2}(o_0,x)) + 2\frac{(n - 1)}{\lambda_{t + s}^2}\int_X \frac{d^2_{t+ s}(x,y)}{12s} e^{-\frac{d^2_{t+ s}(x,y)}{12s}}\frac{d\vol_{t + s}(y)}{(12\pi s)^{n/2}}\,.
    \end{align}

    Note that to pass to the last line, we used the uniform estimates guaranteed by \eqref{eq:susp-unif-conv}, along with the definition of $d_{r}$. The second term on the RHS above is uniformly bounded $\leq C(N,t)$ jointly in $(s,x) \in (0,\big((2(n - 1))^{-1} - t\big)/2] \times X$. We thus obtain

    \begin{align}
        \int_{B^{d_{s + t}}_\eps(\{o_0,o_\pi\})}|I|\,dm_t(x) &= 2\int_{B_\eps^{d_{s + t}}(o_0)} |I| \,dm(x) \\
        &\leq \int_{B_\eps^{d_{s + t}}(o_0)} \frac{C(N,t)}{d_{t + s}^2(o_0,x)} + C(N,t)\,dm(x) = O_{N,t}(\eps^{n - 2})\,.
    \end{align}

The other term $\int_{B_\eps(\{o_0,o_\pi\})}|II|\,dm_t(x)$ can be estimated by $O_N(\sqrt{s})$ as in the proof of Theorem \ref{thm:smoothtimedep}. Sending $s\to 0$, then $\eps \to 0$ shows that there is no contribution from the tips, i.e. $\KK(t) \equiv 0$.
\end{proof}
\addtocontents{toc}{\protect\setcounter{tocdepth}{1}}
\begin{appendices}
\section{Proof of Theorem \ref{thm:C0gluing}}\label{app:C0}

Let $(\Omega_1^n,g_1)$, $(\Omega_2^n,g_2)$ be smooth compact Riemannian manifolds with boundary $\iota_i: \partial\Omega_i \hookrightarrow \Omega_i$, $i = 1,2$, that are isometric via $\Phi: (\partial\Omega_1,\iota_1^*g_1)\stackrel{\cong}{\to} (\partial \Omega_2,\iota_2^* g_2)$. Write their gluing $(X^n := \Omega_1 \cup_{\Phi} \Omega_2,d,\mu)$: a smooth closed manifold such that $\Omega_1, \Omega_2 \hookrightarrow X$ embed smoothly. The distance and measure are induced by a Riemannian metric $g$ that has $\left.g\right|_{\Omega_i} = g_i$ for $i = 1,2$ (i.e. $g$ is smooth on $\Omega_1$ and $\Omega_2$ separately, and globally Lipschitz). The differential structure of $X$ is given by using $\Phi$ to paste together tubular neighborhoods $B_\eps^{g_i}(\partial \Omega_i)\subseteq \Omega_i$, $i = 1,2$, such that the (signed) distance function to the boundary is smooth across it (see \cite{Kos02, Sch12}). We wish to compute $\left.\partial_s\right|_{s = 0}\int_X \a_s\,d\mu$ in this setting.

The strategy is to compute the Laplacian of the distance function on the other side of the boundary from any sufficiently close basepoint. In practice, this means that one computes how the boundary affects the Jacobi fields of geodesics passing through it; one observes a distributional acceleration (``impulse'') given by the second fundamental form of the boundary. Luckily, only geodesics that pass through the boundary at most once will contribute to the curvature functional.

\paragraph{Geometric preliminaries}

We decompose $X = \Omega_1^\circ \sqcup \Sigma \sqcup \Omega_2^\circ$, where $\Sigma$ is the image of $\partial\Omega_1,\partial\Omega_2$ under the quotient map defining $X$. We let $\eps > 0$ be a fixed parameter, which will be constrained when necessary throughout the proof by requiring $\eps \leq \eps_0(g_1,g_2)$. Fix a point $x\in B_\eps(\Sigma)\cap \Omega_1^\circ$, and denote $r:= d(\cdot, x):X\to [0,\infty)$ the radial distance function. We will use the terminology {\it $d$-geodesics} to refer to (rectifiable) minimizing geodesics for $(X,d)$\footnote{The example given by gluing two copies of $\R^{2}\setminus B_1^2(0^2)$ exhibits branching of $d$-geodesics. However, branching of $d$-geodesics, or more generally $C^1$ curves $\gamma$ that are smooth and solve the geodesic equations on $\gamma \cap\Omega_1^\circ$, $\gamma \cap\Omega_2^\circ$, can only occur at tangential intersections with $\Sigma$. Otherwise, unique solvability of the geodesic equations given initial data would be violated on the smooth Riemannian manifolds $(\Omega_1, g_1)$ or $(\Omega_2, g_2)$.}. Such curves are globally $C^1$ (see \cite{LytYam06}), and smooth on the $C^\infty$ Riemannian manifolds $\Omega_1^\circ, \Omega_2^\circ$ where they solve the $g_1$ resp. $g_2$ geodesic equations. We claim that we can decompose $X$ as follows, and delay the proof until the final step (in \eqref{eq:Ex-def}) 

\begin{claim}\label{claim:geo}

There are sets $E_x \subseteq X$, $W_x := \Omega_1^\circ\setminus E_x$, $V_x := \Omega_2^\circ \setminus E_x$ satisfying the following properties:

\begin{itemize}
\item $W_x$ is open, $ \subseteq B_\eps(x)$, and $\left.r^2\right|_{\overline{W_x}}$ is smooth. For each $z \in W_x$ there is a unique unit-speed $d$-geodesic $\gamma^z$ connecting $x$ to $z$, and it does not intersect $\Sigma$ (i.e. lies entirely in $\Omega^\circ_1$). Moreover, $r^2$ is smooth on a neighborhood of $\gamma^z$.
\item $V_x$ is open, $\subseteq B_\eps(x)$, and $\left.r\right|_{\overline{V_x}}$ is smooth. For each $y \in V_x$, there is a unique unit-speed $d$-geodesic $\gamma^y$ connecting $x$ to $y$. $\gamma^y$ passes through $\Sigma$ transversely at a single point $\omega_y = \gamma^y_{\rho_y}$ in the effective sense

\begin{equation}\label{eq:small-inner-prod-small-vol}
r(y) \leq \eps\La \dot\gamma^y_{\rho_y}, \nu_{\omega_y}\Ra\,,\qquad \forall y \in V_x\,,
\end{equation}

where $\nu\in \Gamma(\left.TX\right|_\Sigma)$ is the normal vector pointing into $\Omega_2^\circ$. There is even a gap property: $\gamma^y$ is unique among $C^1$ unit-speed length $\ell \leq 2r(y)$ curves $\gamma$ connecting $x$ to $y$, that have a single time of intersection $t_0$ with $r(y)\leq 2\eps\La \dot\gamma_{t_0}, \nu_{\gamma_{t_0}}\Ra$, and that are smooth and solve the $g_1$ and $g_2$ geodesic equations on $\gamma\cap \Omega_1^\circ$ resp. $\gamma\cap \Omega_2^\circ$. Moreover, $r^2$ is smooth on a neighborhood of $\left.\gamma^y\right|_{[0,\rho_y]} \subseteq \Omega_1$ and $\left.\gamma^y\right|_{[\rho_y,r(y)]}\subseteq \Omega_2$, and $C^{1,1}$ on a neighborhood of $\gamma^y$. 
\item $E_x$ is the closure of an open set with piecewise smooth boundary. For every $\omega \in \Sigma \setminus E_x$, there is $y \in V_x$ such that $\omega = \omega_y$. There is a monotone function $f = f_\eps:(0,1] \to (0,\infty)$ with $\lim_{r\searrow 0}f(r) = 0$ and such that

\begin{equation}\label{eq:error-volume-decay}
    B_r(x)\cap E_x =\emptyset\,,\quad \forall r \in (0,\eps]\,, x \in \Omega_1^\circ \setminus B_{f(r)r}(\Sigma)\,.
\end{equation}
Finally, $\{(x,y)\mid x \in B_\eps(\Sigma) \cap \Omega_1^\circ\,,\,y\in X\setminus E_x\}\subseteq \Omega_1^\circ\times \Omega_2^\circ$ is open.
\end{itemize}
\end{claim}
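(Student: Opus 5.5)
We will construct the three sets explicitly, in Fermi coordinates near $\Sigma$, and prove the listed properties by perturbation from the flat model. The limiting picture is two half-spaces glued along a hyperplane: for $x$ at distance $h$ from $\Sigma$, every point within distance $\eps$ is reached by a straight or once-refracted segment.

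\textbf{Coordinates and normalisation.} Fix $\eps\le\eps_0(g_1,g_2)$ so that the normal exponential maps of $\Sigma$ in $(\Omega_i,g_i)$ are diffeomorphisms onto the collars $B_{2\eps}(\Sigma)\cap\Omega_i$. Let $t$ be the signed distance to $\Sigma$, which is smooth across $\Sigma$ in the glued differential structure. In these coordinates $g=dt^2+h_t$ with $h_t$ smooth for $t\ge0$ and for $t\le0$ separately, and $h_{0^\pm}=\bar g$. Rescaling by $1/\eps$ and using compactness of $\Sigma$, the rescaled metrics are uniformly $C^2$-close to the flat model on each side. All constants below come from this $C^2$-closeness.

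\textbf{Definition of $W_x$.} Let $W_x$ be the set of $z\in\Omega_1^\circ\cap B_\eps(x)$ that lie strictly inside the $g_1$-convex region where $d(x,z)=d_{g_1}(x,z)$, and that are at distance $\ge\eps\,\theta(r(z))$ from the grazing set. Here the grazing set consists of points whose $g_1$-geodesic from $x$ comes within a small angle of $\Sigma$. Since $\eps$ is below the injectivity and convexity radii of $g_1$, $r^2$ is smooth near these geodesics. A competitor crossing $\Sigma$ has length at least that of the $g_1$-geodesic, by a first-variation argument in the collar. Together these give uniqueness of $\gamma^z$ and the smoothness claims.

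\textbf{Definition of $V_x$.} For each $y\in\Omega_2^\circ$, consider the map $\omega\mapsto d_{g_1}(x,\omega)+d_{g_2}(\omega,y)$ on $\Sigma$. Its critical points are exactly the broken geodesics satisfying Snell's law: equality of the tangential components of the two velocities, which is the $C^1$ condition. In the flat model this function is strictly convex, with Hessian bounded below by a multiple of $\cos^2$ of the crossing angle divided by $r$. Let $V_x$ be the set of $y$ where the unique minimiser $\omega_y$ satisfies the transversality bound \eqref{eq:small-inner-prod-small-vol}. On this set the minimiser depends smoothly on $y$ by the implicit function theorem, which gives $C^{1,1}$ regularity of $r$ along $\gamma^y$ and smoothness on each half. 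For the gap property, apply the same convexity argument on the doubled transversality window $r(y)\le2\eps\langle\dot\gamma,\nu\rangle$: there the Hessian is still nondegenerate, so the critical point is unique. This is the step I expect to be the main obstacle. Lower-order curvature terms in the Hessian must be dominated uniformly down to the transversality threshold, and branching at tangential crossings, as in the footnote, must be excluded.

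\textbf{Definition of $E_x$ and the decay property.} Set $E_x:=X\setminus(W_x\cup V_x)$; this is the definition \eqref{eq:Ex-def}. Its boundary is piecewise smooth, being cut out by the smooth transversality and distance inequalities. For surjectivity onto $\Sigma\setminus E_x$: any $\omega$ whose $g_1$-geodesic from $x$ is transverse can be continued by Snell refraction to a point $y\in V_x$. For \eqref{eq:error-volume-decay}: if $\mathrm{dist}(x,\Sigma)\ge f(r)r$, then every point of $B_r(x)$ reached by a broken geodesic crosses $\Sigma$ at an angle with $\langle\dot\gamma,\nu\rangle\gtrsim f(r)$. This is because the sphere of radius $r$ around $x$ meets $\Sigma$ only where the normal component is at least $\mathrm{dist}(x,\Sigma)/r$. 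Choosing $f(r)\to0$ slowly enough that $f(r)\ge r/\eps$ up to constants forces \eqref{eq:small-inner-prod-small-vol} to hold. Finally, openness of $\{(x,y)\}$ follows from the fact that the defining conditions are strict inequalities of functions continuous in $(x,y)$, by the smooth dependence of $\omega_y$ established above.
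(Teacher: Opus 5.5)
\textbf{Main gap: multiple or tangential crossings are never excluded.} Nothing in your proposal rules out a minimizing $d$-geodesic that touches $\Sigma$ tangentially or crosses it more than once.

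\begin{itemize}
\item Your Fermat function $F(\omega)=d_{g_1}(x,\omega)+d_{g_2}(\omega,y)$ only compares paths with exactly one crossing. Knowing that $F$ has a unique transverse minimiser does not give $d(x,y)=\min F$.
\item For $W_x$, you restrict to the region where $d(x,z)=d_{g_1}(x,z)$. There, any competitor trivially has length $\ge d_{g_1}(x,z)$, so the ``first-variation'' remark adds nothing. Off that region the inequality you claim is false in general: crossing $\Sigma$ can be strictly shorter, e.g.\ when $\Sigma$ is concave as seen from $\Omega_1$.
\item Because you define $W_x,V_x$ as the sets where everything works and put $E_x:=X\setminus(W_x\cup V_x)$, all the content moves into \eqref{eq:error-volume-decay}. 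There your sphere argument only controls the normal velocity at the \emph{first} arrival on $\Sigma$. It does not show that the geodesic then stays on the far side (for $y\in\Omega_2$), or never arrives at all (for $z\in\Omega_1$).
\end{itemize}

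\textbf{The missing ingredient} is what Lemma \ref{lem:technical-lemma} supplies.
\begin{itemize}
\item \emph{No return.} In a chart with $|\partial g|\le\lambda$ and $|\partial^2 g|\le\lambda^2$, one has $\ddot\gamma^1=O(\lambda)$ along broken geodesics. So normal velocity $\ge L\lambda$ at any time forces at most one crossing, and it is transverse, within bounded length.
\item \emph{Confinement.} A minimiser passing through the thin wedge $T_{L\lambda}(x)$ stays inside $T_{(4L+C)\lambda}(x)$. Hence minimisers ending outside the wedge are steep from the start.
\end{itemize}
These estimates must be run at every scale $\rho=\|x-y\|$ with $\lambda\propto\rho$. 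That is what makes the excluded set a parabolic horn $|y^1-x^1|\lesssim\|x-y\|^2/\eps$ and yields a linear $f$. A single rescaling by $1/\eps$ does not by itself give this.

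\textbf{Uniqueness of the crossing point.} A nondegenerate Hessian at each critical point of $F$ does not make the critical point unique. For the gap property you would need strict convexity of $F$ on a convex subset of $\Sigma$ containing every admissible crossing point of the doubled window. This includes non-critical $\omega$, where the two one-sided angles differ. This is exactly the obstacle you flag, and it remains unresolved. The paper takes a different route:
\begin{itemize}
\item it builds a broken exponential map $\exp^g_x$ on the cone $D_x$ of steep initial directions;
\item it shows $\exp^g_x$ is nonsingular via the Jacobi-field computation of Remark \ref{rmk:no-conj-points};
\item it shows injectivity by a minimal-radius argument, using the Gauss lemma and the near-parallelism of intersecting broken geodesics;
\item it then reads off smoothness on each side, $C^{1,1}$ regularity across $\Sigma$, uniqueness and the gap property from $d^2(x,\cdot)=|(\exp^g_x)^{-1}(\cdot)|^2$.
\end{itemize}
Your implicit-function-theorem route to smoothness of $r$ on $V_x$ would be a reasonable alternative \emph{once} single crossing is established.

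\textbf{Regularity of $E_x$.} If $E_x$ is the complement of property-defined sets, there is no reason for $\partial E_x$ to be piecewise smooth. Nor is there a reason for $\{(x,y): y\notin E_x\}$ to be open. The distance to your ``grazing set'' and the boundary of $\{d=d_{g_1}\}$ are not smooth functions. The paper reverses the logic. It fixes $E_x$ explicitly in \eqref{eq:Ex-def}: the complement of a Euclidean ball, together with the horn minus a ball of radius comparable to $|x^1|^{1/2}$. For this set, boundary regularity, openness in $(x,y)$, and \eqref{eq:error-volume-decay} with $f(r)=(40L/\rho_j)r$ are immediate. The good properties are then verified on the complement. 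In particular, your $E_x$ is not the set in \eqref{eq:Ex-def}.
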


The bulk of the proof will be devoted to computing in $V_x$, which is the source of the boundary term in Theorem \ref{thm:C0gluing}. The contribution from $W_x$ is the scalar curvature already familiar from the smooth setting, but integrated over an $\eps$-small set. Finally, $E_x$ will not contribute at all.

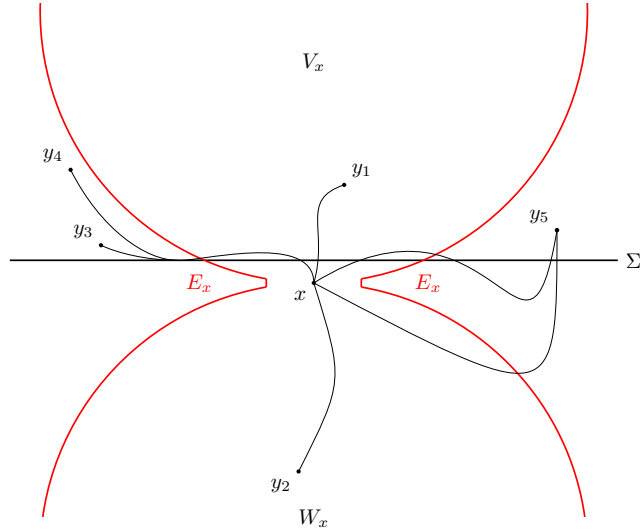
\begin{figure}[h]\label{fig:EVW-regions}
\centering
\scalebox{0.8}{\begin{tikzpicture}[scale=0.5]

\def\h{0.75} 
\def\r{9} 
\def\a{10} 

\coordinate (x) at (0,-\h);

\draw[black, thick] (-10,0) -- (10,0)
    node[below, right] {$\Sigma$}
    node[midway , below=4cm, text=black] {$W_x$}
    node[midway, above=3cm, text=black] {$V_x$};

\begin{scope}
    \clip (-10,-8.5) rectangle (10,8.5);
\draw[red, thick]
    (x) ++({\r*sin(\a)}, {\r*(1 - cos(\a))}) arc (-{90+\a}:{270-\a}:\r);
\draw[red, thick]
    (x) ++({\r*sin(\a)}, {\r*(1 - cos(\a))}) --++(0,-{2*\r*(1 -cos(\a))})
    node[midway, right=0.75cm] {$E_x$};
\draw[red, thick]
    (x) ++({-\r*sin(\a)}, {\r*(cos(\a) - 1)}) arc ({-270+\a}:{90-\a}:\r);
\draw[red, thick]
    (x) ++({-\r*sin(\a)}, {\r*(cos(\a) - 1)}) --++(0,{2*\r*(1 -cos(\a))}) node[midway, left=0.75cm] {$E_x$};
\end{scope}

\node[below left] at (x) {$x$};
\fill (x) circle (2pt);

\coordinate (y1) at (1,2.5);
\draw (x) .. controls (0.5,1) and (-0.5,2) .. (y1);
\node[above right] at (y1) {$y_1$};
\fill (y1) circle (2pt);

\coordinate (y2) at (-0.5,-7);
\draw (x) .. controls (1,-4) .. (y2);
\node[below left] at (y2) {$y_2$};
\fill (y2) circle (2pt);

\coordinate (y3) at (-7,0.5);
\draw (x) .. controls (0,1) and (-4,-0.1) .. (-4.5,0) .. controls (-5.5,0) and (-6.5,0.25) .. (y3);
\node[above left] at (y3) {$y_3$};
\fill (y3) circle (2pt);
\coordinate (y4) at (-8,3);
\draw (-4.5,0) .. controls (-5.5,0) and (-7, 1) .. (y4);
\node[above left] at (y4) {$y_4$};
\fill (y4) circle (2pt);

\coordinate (y5) at (8,1);
\fill (y5) circle (2pt);
\draw (x) .. controls (8,-5) .. (y5);
\draw (x) .. controls (6,3) and (7,-5) .. (y5);
\node[above left] at (y5) {$y_5$};
\fill (y5) circle (2pt);

\end{tikzpicture}}
\caption{Radial $d$-geodesics ending in the regions $W_x$, $V_x$, and $E_x$.}\label{cap:EVW-regions}
\end{figure}

\paragraph{Hessian and Jacobi fields at the boundary} Fix a point $y\in V_x$, and let $\gamma = \gamma^y:[0,\ell] \to X$ be the unit-speed $d$-geodesic with $\gamma_0 = x$, $\omega = \omega_y = \gamma_\rho \in \Sigma$, and $\gamma_{\ell} = y$. We wish to compute the Hessian $({\nabla^2}r^2)_y$ to high enough order in $r$. To do so, we define two collections of ``Jacobi-like'' vector fields along $\gamma$. 

On one hand, let

\begin{equation}\label{eq:fake-Jacobi}
\tilde J_i:\begin{cases}
\tilde J_i '' + \Rm(\tilde J_i,\gamma')\gamma' = 0\,,\\
\tilde J_i(0) = 0\,,\, \tilde J_i(\rho) = v_i\,,\end{cases}
\end{equation}

where $v_1 := \nabla r$, while $v_2,\ldots,v_n$ are chosen to form an ONB of $T_\omega \Sigma$. At $\gamma_\rho$ where there is a jump discontinuity in $\Rm$, \eqref{eq:fake-Jacobi} is interpreted as restarting the ODE w.r.t $g_2$ at time $\rho^+$ with initial data given by the final data of the ODE w.r.t $g_1$ at time $\rho^-$. Note that we needed to assume $\ell \leq\eps \leq \mathrm{conj}(g_1)$ to find $ \tilde J_i'(0)$ so that $\tilde J_i(\rho) = v_i$.

On the other hand, let $J_i$ be vector fields that satisfy $J_i(0) = 0$, $J_i(\rho) = v_i$, given by variations $\left.\partial_\eta\right|_{\eta =0} \gamma^{\eta}$ among constant-speed $d$-geodesics $\gamma^{\eta}$, $\gamma^{0} = \gamma$. More carefully, define $C^1$ curves $\gamma^{\eta}:[0,\ell] \to X$ with the desired $\left.\partial_\eta\right|_{\eta =0} \gamma^{\eta}_\rho = v_i$ and $\gamma^\eta_0 \equiv x$ by integrating the Jacobi field $\tilde J_i$ on $[0,\rho]$ to obtain a $g_1$-geodesic variation $\left.\gamma^\eta\right|_{[0,\rho]}\subseteq \Omega_1$, then define $\gamma_s^\eta := \exp_{\gamma^\eta_{\rho}}^{g_2}((s - \rho)\cdot\dot\gamma_\rho^\eta)$ for $s\in[\rho,\ell]$. These curves are $d$-geodesics for $\eta$ small enough in view of the gap property from Claim \ref{claim:geo}. By smoothness of the metric and $\gamma$ away from $\Sigma$, $\left.J_i\right|_{[0,\rho]}$ and $\left.J_i\right|_{[\rho, \ell]}$ are smooth and solve the Jacobi equation of \eqref{eq:fake-Jacobi}. At time $\rho$, $J_i$ is continuous by construction, but $J_i'$ may not be. Indeed for $i > 1$ there is such a discontinuity, which can be explicitly calculated. Noting that $\left.[r\nabla r,J_i]\right|_{\gamma_{[\rho,\ell]}}=\left.[s\partial_s \gamma_s^\eta,\partial_\eta \gamma_s^\eta]\right|_{\gamma_{[\rho,\ell]}}=0$ for any extension of $J_i$ to $\Omega_2$ (by commutativity of mixed partials of the map $(s,\eta)\mapsto \gamma_s^\eta \in X$),

\begin{align}
\La J_i', J_j\Ra (\rho^+)&= \frac{1}{r}\La \prescript{g_2}{}{\nabla_{r\nabla r}J_i},J_j\Ra(\rho) \\
&= \frac{1}{2r}\La \prescript{g_2}{}{\nabla_{J_i}\nabla r^2}, J_j\Ra(\rho) \\
&= \frac{1}{2r}\big(D_{J_i}D_{J_j}r^2 - \La \prescript{g_2}{}{\nabla_{J_i}J_j}, \nabla r^2\Ra\big) (\rho)\,.
\end{align}
Computing the same quantity at $\rho^-$, because the term $D_{J_i}D_{J_j}r^2$ is in common we get
\begin{align}
\La J_i', J_j\Ra (\rho^+)&= \La J_i', J_j\Ra (\rho^-) + \frac{1}{2r}\Big(\La \prescript{g_1}{}{\nabla_{J_i}J_j}, \nabla r^2\Ra - \La \prescript{g_2}{}{\nabla_{J_i}J_j}, \nabla r^2\Ra\Big)(\rho) \\
&= \La \tilde J_i', J_j\Ra(\rho) - \La \nu_\omega,\nabla r\Ra(II_{\partial\Omega_2}+ II_{\partial \Omega_1})_\omega(v_i,J_j)(\rho)\\
 &\phantom{=}+(II_{\partial \Omega_2} + II_{\partial \Omega_1})_\omega(v_i, \nabla r)\La \nu_\omega, J_j\Ra(\rho)\,,\qquad i = 2,\ldots, n\,;\, j=1,\ldots, n\,.
\end{align}

The second equality follows from writing $J_j$, $\nabla r^2$ as sums of parts $\bot, \top$ to $\Sigma$, then computing that the $\top,\top$ and $\bot,\bot$ terms are intrinsic to $(\left.TX\right|_\Sigma,\left.g_1\right|_{\Sigma} = \left.g_2\right|_{\Sigma})$ and cancel while the mixed terms amount to the boundary defect in the RHS above. Recall that our sign convention when computing the second fundamental form of $\partial\Omega_i\hookrightarrow \Omega_i$ for both $i = 1,2$ is to use the respective outward-pointing normal vector. For notational conciseness, we label the defect terms

\begin{equation}\label{eq:boundary-error-term}
V_i:=  (II_{\partial \Omega_2} + II_{\partial\Omega_1})_\omega(v_i,(\nabla r)_\omega)\nu_\omega-\La \nu_\omega,(\nabla r)_\omega\Ra (II_{\partial\Omega_2}+ II_{\partial \Omega_1})_\omega(v_i,\cdot)^\sharp\,,\qquad i=2,\ldots,n\,.
\end{equation}

The above is in principle only defined at $\omega = \gamma_\rho$, but we extend $V_i$ and $v_i$ along $\gamma$ using parallel transport. We use this computation at $\omega = \gamma_\rho$ along with smoothness of $g_2$ to approximate the $J_i'$ and $J_i$ further along $\gamma$ for $i = 2,\ldots, n$ by

\begin{align}\label{eq:J_i_tilde}
\frac{1}{2r}(\nabla^2r^2)_{\gamma(\rho + t)}(J_i,\cdot)^\sharp = J_i'(\rho + t) &=  \tilde J_i'(\rho + t) +V_i + O(t^2)\,,\\ 
J_i(\rho + t) &= \tilde J_i(\rho + t) + tV_i + O(t^3) \label{eq:fake-real-jac-field}\,.
\end{align}

Note the $O(t^2)$ resp. $O(t^3)$ improvement above, which follows from \cite[Thm~6.5.3]{Jos17}, then \cite[Cor~6.5.1]{Jos17} applied to the Jacobi field with initial data $(J_i - \tilde J_i)(\rho) = 0$, $(J_i - \tilde J_i)'(\rho^+) = V_i$. Here and later, the implicit constant in such asymptotic notation is allowed to depend on $g_1,g_2$, but is otherwise uniform. For the final direction $J_1(t +\rho) = (r/\rho)\nabla r$, we have 
\begin{align}
    J_1' = \frac{1}{2r}(\nabla^2 r^2)((r/\rho)\nabla r,\cdot)^\sharp = \frac{r}{\rho}\nabla_{\nabla r}\nabla r + \frac{1}{\rho}|\nabla r|^2\nabla r = \frac{1}{\rho}\nabla r = \tilde J_1'
\end{align}
because $r$ is a distance function. To avoid treating $i = 1$ as a separate case, it is notationally convenient to define $V_1 := 0$.

\paragraph{The Euclidean and boundary correction terms}

To compute $\Delta r^2$ it suffices to raise the $(0,2)$ Hessian to a $(1,1)$ Hessian, i.e. multiply the matrix representation of $\nabla^2 r^2$ by the inverse of the metric in the chosen basis, then take the trace. Our chosen basis is $J_i$, so we must compute and invert the metric in this basis.

Using bounded curvature of $g_1$ on $[0,\rho]$ then bounded curvature of $g_2$ on $[\rho,\ell]$, the following very rough ``almost-Euclidean'' estimates for the $\tilde J_i$ hold 
\begin{align}\label{eq:bad-Jacobi-ests}
\tilde J_i(\ell) &= \frac{1}{\rho}\big(\ell v_i + O(\ell^3)\big)\,,\qquad \tilde J_i'(\ell) = \frac{1}{\rho}\big(v_i + O(\ell^2)\big)\,,\qquad i = 1,\ldots, n\,. 
\end{align}

(More precisely, by definition $\tilde J_i(0) = 0$ and $\tilde J_i(\rho) = v_i$, so applying \cite[Cor~6.5.1]{Jos17} to the initial data $\tilde J_i(0)$, $ \tilde J_i'(0)$ yields $\rho\tilde J_i'(\rho) = v_i + O(\rho^2)$. Applying now \cite[Thm~6.5.3]{Jos17} on the other side of the boundary with initial data $\rho\tilde J_i(\rho) = \rho v_i$ and $\rho \tilde J_i'(\rho) =v_i + O(\rho^2)$ (or more precisely to the colinear initial data $(\rho v_i, v_i)$ and $(0,O(\rho^2))$ separately), we obtain $\rho \tilde J_i(\ell) = \rho v_i + t(v_i + O(\rho^2)) + (O(t^3) + \rho O(t^2)) = \ell v_i + O(t\ell^2)$. Then, \cite[Cor~6.5.1]{Jos17} (or more precisely a variant of the statement with nonzero zeroth order initial data $J(0) \neq 0$ and the conclusion $|J(t) - P_t(J(0)) - tJ'(t)| \leq C|J(t)|t^2$, which uses the same proof) yields $\rho \tilde J_i'(\ell) = v_i + O(\ell^2)$ as desired.)

\medskip

From now on, we let $[A_{ij}]_{ij}$ be the $n\times n$ matrix with entry $A_{ij}$ in the $i^\text{th}$ row, $j^\text{th}$ column. We name the metric tensor in the respective basis of vector fields as $M := [\La J_i,J_j \Ra]_{ij}$. We fix $t := \ell - \rho = r(y) - r(\omega)$, and are careful when accumulating errors to avoid any implicit dependence on the (uncontrolled) ratios $(\rho/t)^\pm$, $\ell/\rho$, or $\ell/t$, though we may use $t/\ell,\rho /\ell \leq 1$ to simplify the smallness of errors. Similarly, we cannot treat $\La \nu_\omega,(\nabla r )_\omega\Ra^{-1}$ as uniformly bounded, but we may use $\ell \leq \eps \La \nu_\omega,(\nabla r )_\omega \Ra$ from \eqref{eq:small-inner-prod-small-vol} to simplify errors. We now compute, using \eqref{eq:fake-real-jac-field} and \eqref{eq:bad-Jacobi-ests},

\begin{align}
M &= [\La \tilde J_i + tV_i + O(t^3), \tilde J_j + tV_j +O(t^3)\Ra]_{ij} \\
&= [\La (\ell/\rho)v_i + tV_i + O(\ell^3)/\rho+ O(t^3), (\ell/\rho)v_j + tV_j + O(\ell^3)/\rho +O(t^3)\Ra]_{ij} \\
&= (\ell/\rho)^2[\La v_i,v_j\Ra]_{ij} + t(\ell/\rho)[\La v_i,V_j\Ra + \La V_i,v_j \Ra]_{ij} + (\ell/\rho)^2O(\ell^2) \\
&= (\ell/\rho)^2\Big([\La v_i,v_j\Ra]_{ij} - 2t(\rho/\ell)\La \nu_\omega,(\nabla r)_\omega\Ra[\big(0 \oplus (II_{\partial \Omega_1} + II_{\partial \Omega_2})_\omega\big)(v_i,v_j)]_{ij} + O(\ell^2)\Big) \\
&= (\ell/\rho)^2\Big([\La v_i,v_j\Ra]_{ij} + O(\ell \La \nu_\omega,(\nabla r)_\omega\Ra) + O(\ell^2)\Big)
\end{align}

Note that to evaluate $[\La V_i,v_j\Ra]_{ij}$, we used $\La v_1, V_i\Ra = 0$ for $i = 1,\ldots, n$, while $v_j$ is orthogonal to the first term in \eqref{eq:boundary-error-term} for $j = 2,\ldots, n$. We now compute the first term of the RHS above. In the (generic) case when $(\nabla r)_\omega \cancel{\perp} T_\omega\Sigma$, it is convenient to constrain the choice of $v_i$ for $i =2,\ldots, n$ by requiring

\begin{equation}
v_i: \begin{cases}
    = \frac{(\nabla r)_\omega - \La (\nabla r)_\omega,\nu_\omega\Ra\nu_\omega}{\sqrt{1 - \La (\nabla r)_\omega, \nu_\omega\Ra^2}}\,,& i =2\,,\\
    \in T_\omega \Sigma \cap \La(\nabla r)_\omega^\perp \Ra\,,&i=3,\ldots, n\,.
\end{cases}
\end{equation}

With this choice of $v_i$, we have

\begin{align}\label{eq:euclidean-matrix}
[\La v_i,v_j\Ra]_{ij} &= 
\mat{1 & a & 0 &\cdots & 0 \\
a &  &  &  & \\ 0 &  &&&\\
\vdots &  && \mathrm{Id}_{n - 1}& \\0 &} \qquad\text{where}\quad a := \sqrt{1 - \La \nu_\omega,(\nabla r)_\omega\Ra^2}\,.
\end{align}

This matrix is easily invertible (this was the reason for making the particular choices for $v_i$) and has smallest eigenvalue $1 - a \geq \La \nu_\omega, (\nabla r)_\omega\Ra^2/2$. In order to invert $M$, we use the inverse approximation $(A + B)^{-1} = (\Id_n + A^{-1} B)^{-1}A^{-1}=A^{-1} + O(\|A^{-1}\|^2\|B\|))$, which holds as long as $\|A^{-1}\| \|B\| < 1$. For us $A = [\La v_i,v_j\Ra]_{ij}$, while $B = O(\ell \La \nu_\omega, (\nabla r)_\omega\Ra + O(\ell^2))$. From the smallest eigenvalue lower bound we have $\|[\La v_i,v_j\Ra]_{ij}^{-1}\| \leq 2\La \nu_\omega, (\nabla r)_\omega\Ra^{-2}$, so \eqref{eq:small-inner-prod-small-vol} shows that $\|A^{-1}\| \|B\| < 1$ is satisfied as long as $\eps \leq \eps_0(g_1,g_2)$. The inverse approximation then gives 

\begin{equation}
M^{-1} = (\rho/\ell)^2\big([\La v_i,v_j\Ra]_{ij}^{-1} + \eps O(\La \nu_\omega, (\nabla r)_\omega\Ra^{-2})\big)\,.\label{eq:J-metric-inverse-approx}
\end{equation}

\begin{remark}\label{rmk:no-conj-points}
Recall that the matrix $M$ inverted in \eqref{eq:J-metric-inverse-approx} is the metric $g_2$ expressed in the collection of vector fields $J_i$ arising from variations of $\gamma$ among $d$-geodesics. Since $M$ is nonsingular, the $J_i$ form a (full rank) basis for $T_{\gamma}X$. This last fact holds in larger generality than we have assumed here, without changing the argument. We used that $\gamma$ was a $C^1$ curve beginning at $x$ and intersecting $\Sigma$ transversely, solving the geodesic equation on $\gamma \cap \Omega_1^\circ$ and $\gamma \cap \Omega_2^\circ$, that the (smooth on $\gamma \cap \Omega_1$ and $\gamma \cap \Omega_2$, globally Lipschitz) $J_i$ were produced by a variation among such curves, and that there was some (smooth on $\Omega_1$ and $\Omega_2$, globally $C^{1,1}$) function $r$ defined on a neighborhood of $\gamma$ with $( r\circ \gamma)(t) = t$, $\dot\gamma_t = (\nabla r)_{\gamma_t}$, and $[J_i, r\nabla r] = 0$ for any extension of $J_i$ to $ \Omega_1$, $ \Omega_2$. We did not use that $\gamma$ was a (minimizing) $d$-geodesic, nor did we use that $r$ was given by $d(x,\cdot)$. However, $\eps \leq \eps_0(g_1,g_2)$ in \eqref{eq:small-inner-prod-small-vol} was crucial to obtaining the nonsingularity\footnote{This condition is geometrically meaningful. It is ensuring that we do not encounter a conjugate point, since there is no unconditional lower bound on the injectivity radius that holds in this setting. The simplest example is the doubling of $B^2_1(0^2)\subseteq \R^2$, where there are choices of points $x,y$ with distance $\to 0$ that are conjugate, and also have $\La \nu_\omega,(\nabla r)_\omega\Ra \to 0$.} of $M$.
\end{remark}

\paragraph{The Laplacian of $d(x,\cdot)$ to high enough order}

We have a formula for $\nabla^2 r^2$ on $\gamma_{(\rho, \ell]}\subseteq\Omega_2^\circ$, written in terms of a frame $J_i$ for which we also have an approximation. We now multiply it by $M^{-1}$, computed to acceptable error in the previous step, then take the trace.

First, we relate $J_i'$ and $J_i$ using \eqref{eq:bad-Jacobi-ests} and \eqref{eq:fake-real-jac-field}:

\begin{align}
J_i' &= \tilde J_i' + V_i + O(t^2) \\
&= v_i/\rho + V_i + O(\ell^2)/\rho + O(t^2) \\
&= \tilde J_i/\ell + V_i + O(\ell^2)/\rho \\
&= (\tilde J_i + tV_i)/\ell + (\rho/\ell)V_i + O(\ell^2)/\rho = J_i/\ell + (\rho/\ell)V_i + (\ell/\rho)O(\ell) \,.
\end{align}

We can thus compute, using the definition of $M$, \eqref{eq:bad-Jacobi-ests}, and \eqref{eq:J-metric-inverse-approx} (along with the inverse bound $\|[\La v_i,v_j\Ra]_{ij}^{-1}\| \leq 2\La \nu_\omega, (\nabla r)_\omega\Ra^{-2}$),

\begin{align}
M^{-1}\cdot[\La J_i', J_j\Ra]_{ij} &= M^{-1}\cdot[\La J_i/\ell + (\rho/\ell)V_i + (\ell/\rho)O(\ell), J_j\Ra]_{ij} \\
&= \Id_n/\ell + (\rho/\ell)M^{-1}\cdot[\La V_i, (\ell/\rho)v_i + O(\ell^3)/\rho\Ra]_{ij} +(\ell/\rho)^2 O(\ell\|M^{-1}\|) \\
&=\Id_n/\ell + (\rho/\ell)^2([\La v_i,v_j\Ra]_{ij}^{-1} + \eps O(\La \nu_\omega, (\nabla r)_\omega\Ra^{-2}))\cdot[\La V_i, v_j \Ra]_{ij} \\
&\phantom{=}+\eps O(\La \nu_\omega, (\nabla r)_\omega\Ra^{-1})\,. \\
\end{align}

In order to continue, we need to use \eqref{eq:boundary-error-term} as before in the form $[\La V_i, v_j\Ra]_{ij} = -\La \nu_\omega,(\nabla r)_\omega\Ra[\big(0 \oplus (II_{\partial \Omega_1} + II_{\partial \Omega_2})_\omega\big)(v_i,v_j)]_{ij}$. This concludes the $(1,1)$ Hessian calculation up to acceptable error

\begin{align}
M^{-1}\cdot[(1/2r)\nabla^2 r(J_i,J_j)]_{ij} &= M^{-1}\cdot[\La J_i', J_j\Ra]_{ij} \\
&= \Id_n/\ell - (\rho/\ell)^2\La \nu_\omega,(\nabla r)_\omega\Ra[\La v_i,v_j\Ra]_{ij}^{-1}\cdot[\big(0 \oplus (II_{\partial \Omega_1} + II_{\partial \Omega_2})\big)(v_i,v_j)]_{ij} \\
&\phantom{=}+\eps O(\La \nu_\omega, (\nabla r)_\omega\Ra^{-1})\,.
\end{align}

Taking the trace of both sides (leisurely computing the inverse of $[\La v_i,v_j\Ra]_{ij}$ by examining \eqref{eq:euclidean-matrix}) now gives

\begin{align}
\frac{1}{2r(y)}\Delta r^2(y) &= n/\ell - (\rho/\ell)^2\La \nu_\omega,(\nabla r)_\omega\Ra\left(m_{\partial\Omega_1} + m_{\partial\Omega_2} + \left(\frac{1}{\La \nu,\nabla r\Ra^2} - 1\right)(II_{\partial\Omega_1} + II_{\partial\Omega_2})(v_2,v_2)\right)_\omega \\
&\phantom{=}+ \eps O(\La \nu_\omega,(\nabla r)_\omega\Ra^{-1})\,.
\end{align}

Here $m_{\Omega_i}$ denotes the trace of $II_{\partial\Omega_i}$ (in particular, the sum of eigenvalues has not been normalized by the dimension).

\paragraph{The $s \to 0$ asymptotics of $\int_X \a_s(x)\,d\mu(x)$}

Now on to the differentiated integral, restricted to points $x$ within some small but fixed $\eps > 0$ of the boundary: 

\begin{align}
\partial_s \int_{B_\eps(\Sigma)\cap \Omega_1^\circ} \a_s(x)\,d\mu(x) &=\frac{1}{s}\int_{B_\eps(\Sigma)\cap \Omega_1^\circ} \int_{W_x\cup V_x \cup E_x} \left(-\frac{n}{2} + \frac{d^2(x,y)}{12s}\right)\frac{e^{-d^2(x,y)/12s}}{(12\pi s)^{n/2}}\,d\mu(y)d\mu(x) \\
&= \frac{1}{s}\int_{B_\eps(\Sigma)\cap \Omega_1^\circ} \int_{W_x\cup V_x} \left(-\frac{n}{2} + \frac{\Delta_y d^2(x,y)}{4}\right)\frac{e^{-d^2(x,y)/12s}}{(12\pi s)^{n/2}}\,d\mu(y)d\mu(x) \\
&\phantom{=}+ \frac{1}{s}\int_{B_\eps(\Sigma)\cap \Omega_1^\circ} \int_{E_x} \left(-\frac{n}{2} + \frac{d^2(x,y)}{12s}\right)\frac{e^{-d^2(x,y)/12s}}{(12\pi s)^{n/2}}\,d\mu(y)d\mu(x)\label{eq:error-region-II}\\
&\phantom{=}- \frac{1}{s}\int_{B_\eps(\Sigma) \cap \Omega_1^\circ}\int_{\partial E_x}\frac{d(x,y)}{2}\La (\nabla d(x,\cdot))_y, \tilde{\nu}_y \Ra\\
&\hspace{6cm}\times \frac{e^{-d^2(x,y)/12s}}{(12\pi s)^{n/2}}\,d\sigma(y) d\mu(x)\label{eq:error-region-III} \\
&=: I + II + III\,.
\end{align}
Note that we integrated by parts over $y \in W_x$, $V_x$ (as in \eqref{eq:IBP}), but did not obtain any boundary term from $\partial V_x \cap \partial W_x = \Sigma \setminus E_x^\circ$. Recall that every $\omega \in \Sigma\setminus E_x$ is on the interior of the unique $d$-geodesic $\gamma^y$ connecting $x$ to $y$ for some $y \in V_x$, and $d^2(x,\cdot)$ is $C^{1,1}$ in a neighborhood of $\gamma^y$, so the boundary terms along $\partial V_x \cap \partial W_x$ cancel (the unit normals are opposite, as in the proof of Theorem \ref{thm:multiple-copies}). $\tilde\nu$ denotes the inward-pointing unit normal to $\partial E_x$. Using now the approximation for $\Delta r^2(y) = \Delta_y d^2(x,y)$ on $V_x$ from the previous step (all quantities including $r(y)$ depending implicitly on both $x$ and $y$), we can continue with

\begin{align}\label{eq:asympt}
I &= \frac{1}{s}\int_{B_\eps(\Sigma)\cap \Omega_1^\circ} \int_{W_x\cup V_x} \left(-\frac{n}{2} + \frac{
\Delta r^2(y)}{4}\right)\frac{e^{-r^2(y)/12s}}{(12\pi s)^{n/2}}\,d\mu(y) d\mu(x) \\
&= -\frac{1}{2s}\int_{B_\eps(\Sigma)\cap \Omega_1^\circ} \int_{V_x} \frac{\rho^2}{r(y)}\left(m_{\partial\Omega_1} + m_{\partial\Omega_2} + \frac{1 - \La \nu,\nabla r\Ra^2}{\La \nu,\nabla r\Ra^2}(II_{\partial\Omega_1} + II_{\partial\Omega_2})(v_2,v_2)\right)_\omega\\
&\hspace{9cm}\times\La \nu_\omega,(\nabla r)_\omega\Ra\frac{e^{-r^2(y)/12s}}{(12\pi s)^{n/2}}\,d\mu(y) d\mu(x) \\
&\phantom{=}+ \int_{B_\eps(\Sigma)\cap \Omega_1^\circ} \left[ \int_{V_x}\frac{r(y)O(\eps)}{s\La \nu_\omega,(\nabla r)_\omega\Ra}\frac{e^{-r^2(y)/12s}}{(12\pi s)^{n/2}}\,d\mu(y) + \int_{W_x} \frac{O(r^2(y))}{s}\frac{e^{-r^2(y)/12s}}{(12\pi s)^{n/2}}\,d\mu(y)\right]d\mu(x) \,.
\end{align}

The last term contributes $\leq C\cdot \vol(B_\eps(\Sigma)) = O(\eps)$ uniformly in $s$. The second-to-last term is a bit more worrisome, but we notice that its integrand has the same form as the $(II_{\partial \Omega_1} + II_{\Omega_2})(v_2,v_2)$ term in the previous line, whose integral is in fact uniformly bounded in $s$ (we will explicitly compute it in the next step). Hence the entire last line is $O(\eps)$, and will vanish when taking $\eps \to 0$ at the end.

We claim also that $II$, $III$ of \eqref{eq:error-region-II} resp. \eqref{eq:error-region-III} vanish as $s\to 0$ with $\eps > 0$ fixed. This will be verified in the last step of the proof.

\paragraph{Computing the constants}

We have already written $\partial_s \int_{B_\eps(\Sigma)\cap \Omega_1^\circ}\a_s(x)\,d\mu(x)$, up to terms that vanish as $\eps,s \to 0$, as a sum of integrals involving the second fundamental form. It remains to compute the constants.

It is convenient now to change coordinates to a system better suited to parabolic rescaling

\begin{align}
&\Phi(\omega,\rho,\vartheta,t) := \big(\exp^{g_1}_\omega (- \rho \cdot\vartheta), \exp_{\omega}^{g_2}(t\cdot \vartheta)\big) \in \Omega_1^\circ\cup \Omega_2^\circ \,, \\
&\omega \in \Sigma\,; \, \rho,t \in (0,\eps)\,;\, \vartheta \in S_\omega X \cap \{\xi \in T_\omega X\mid \La \xi,\nu_\omega\Ra > 0\}\,.
\end{align}
Note that the variables $t = d(y,\omega)$, $\rho = d(x,\omega)$ agree with their previous usage. For any $\omega \in \Sigma$, write the corresponding domain as

\begin{align}
A_{\omega} &:=  \Big\{(\rho,t,\vartheta)\mid \gamma = \gamma^{\omega,\rho,\vartheta,t}:x\mapsto y \text{ is minimizing };\,x\in B_\eps(\Omega)\cap\Omega_1^\circ\,,\, y \in V_x\Big\}\,,\\
&\text{where} \quad\gamma^{\omega,\rho,\vartheta,t}:[-\rho, t] \to X\,, \quad \eta\mapsto \begin{cases}
\exp^{g_1}_\omega(\eta \cdot\vartheta)\,,& \eta \in [-\rho,0]\,,\\
\exp_\omega^{g_2}(\eta\cdot\vartheta)\,,& \eta \in [0,t]\,.
\end{cases}
\end{align}

To resolve a slight ambiguity of notation, we do not allow parameters causing $\gamma^{\omega,\rho,\vartheta,t}$ to hit $\Sigma$ for $\eta \in[-\rho,t]\setminus\{0\}$. $\Phi$ maps $D:=\bigcup_{\omega \in \Sigma} (\{\omega\}\times A_\omega)$ bijectively onto $\bigcup_{x \in B_\eps(\Sigma)\cap \Omega_1^\circ} (\{x\} \times V_x) = \Phi(D)$ because every pair in the latter are joined by a unique $d$-geodesic that meets $\Sigma$ once, and transversely. However, to perform any sort of calculus we would like $D$ to be open. Since $\Phi(D)$ is open and the exponential maps are continuous in all parameters (where defined), $\Phi$ maps an open neighborhood $U\supseteq D$ into $\Phi(D)$. The gap property from the first step implies that $\gamma^{\omega,\rho,\vartheta,t}$ being a $d$-geodesic is a stable property under perturbations of $\omega,\rho,\vartheta,t$ that satisfy $\Phi(\omega,\rho,\vartheta,t) =(\gamma^{\omega,\rho,\vartheta,t}_{-\rho}, \gamma^{\omega,\rho,\vartheta,t}_{t}) \in \Phi(D)$, so we can assume it holds for all $(\omega,\rho,\vartheta,t)\in U$. Thus $U \subseteq D \subseteq U$ is open, as desired.

We define the parabolically rescaled domain

\begin{align}
s^{-1/2}A_\omega &:= \{(\rho,t,\vartheta)\mid (s^{1/2}\rho,s^{1/2}t,\vartheta) \in A_\omega\} \overset{s\to 0}{\longrightarrow} (0,\infty)^2 \times \big( S_{\omega}X \cap\{\xi\mid \La \xi,\nu_\omega\Ra > 0\}\big)\,.\label{eq:domain-convergence}
\end{align}

The above convergence is meant in the sense of limits of open subsets (pointwise convergence of indicator functions). To show that it holds, we can observe that

\begin{equation}\label{eq:bad-inclusion}
s^{-1/2}A_{\omega} \supseteq (c^{-1}f(Rs^{1/2}) R ,R/2) \times (0,(R/2)) \times (S_\omega X \cap \{\xi\mid \La \xi,\nu_\omega\Ra \geq \delta\})\,,
\end{equation}

whenever $0 < R \leq s^{-1/2}\min(R_0(\delta,g_1,g_2), \eps\delta)$. Here $f$ is the function from \eqref{eq:error-volume-decay}, while $R_0 := \min_{\xi}\max\{r > 0 \mid r\mapsto d(\exp^{g_1}_\omega(-r\xi), \Sigma) \text{ is nondecreasing}\}$ and $c = c(\delta,g_1,g_2):= \min_{\xi}\min_{r \in[0,R_0]} d(\exp^{g_1}_\omega(-r\xi),\Sigma)/r$, with both $\min$s taken over unit vectors $\xi$ with $\La \xi,\nu_\omega\Ra \geq \delta$. Indeed, \eqref{eq:error-volume-decay} and the constraint on $R$ implies that for $(s^{-1/2}\rho,s^{-1/2}t,\vartheta)$ in the RHS of \eqref{eq:bad-inclusion}, we have $x:= \gamma^{\omega,\rho,\vartheta,t}_{-\rho} \in B_\eps(\Sigma)\cap \Omega_1^\circ$ and $\gamma^{\omega,\rho,\vartheta,t} \subseteq X\setminus E_x$ (and in particular $\gamma^{\omega,\rho,\vartheta,t}_t \in V_x$). Moreover, $\gamma^{\omega,\rho,\vartheta,t}$ cannot first fail to be minimizing at $\eta \in [-\rho,0)$ because $\gamma^{\omega,\rho,\vartheta,t}_\eta \in W_x$ is connected to $x$ by a $d$-geodesic lying entirely in $\Omega_1^\circ$ by Claim \ref{claim:geo}, which if distinct would violate $\rho \leq \eps \leq \mathrm{inj}(g_1)$. It also cannot first fail to be minimizing at $\eta \in [0,t]$ in view of the gap property for $V_x$ in Claim \ref{claim:geo}, along with the second constraint on $R$. Thus $\gamma^{\omega,\rho,\vartheta,t}$ is minimizing, and the inclusion \eqref{eq:bad-inclusion} holds. Now take $s\to 0$, then $R \to \infty$ and $\delta \to 0$ to obtain the desired convergence. 

We now consider the Jacobian determinant of $\Phi$, defining its parabolic normalization

\begin{align}\label{eq:prelimit-Jacobian}
&|D\Phi|_{(\omega,s^{1/2}\rho, \vartheta, s^{1/2}t)} \\
&= s^{\frac{n - 1}{2}}\left|\begin{array}{c|c|c|c}
\partial_\omega \exp^{g_1}_\omega(-s^{1/2}\rho\cdot\vartheta) & -(D\exp_\omega^{g_1})_{-s^{1/2}\rho\cdot\vartheta}(\vartheta) & -\rho(D\exp_\omega^{g_1})_{-s^{1/2}\rho\cdot\vartheta}\circ P_{\La \vartheta\Ra^\perp} & 0 \\
\hline
\partial_\omega \exp^{g_2}_\omega(s^{1/2}t\cdot\vartheta) & 0 & t(D\exp_\omega^{g_2})_{s^{1/2}t\cdot\vartheta}\circ P_{\La \vartheta\Ra^\perp} & (D\exp_\omega^{g_2})_{s^{1/2}t\cdot\vartheta}(\vartheta)
\end{array}\right|\\
&=: s^{\frac{n - 1}{2}}|L_s|\,.
\end{align}

Computing $|L_s|$ directly is quite messy. Thus consider a smooth local coordinate system on $X$ near $\omega$ and the induced coordinate system on $\left.SX\right|_{\Sigma}$, requiring $g_{ij}(\omega) = \delta_{ij}$, and that $\nu_\omega$ is identified with $e_n(\omega)$. We then recover the Euclidean case $\omega \in \Sigma\subseteq X,\, \vartheta \in S_\omega X\rightsquigarrow 0^n \in \{0\} \times \R^{n - 1} \subseteq \R^n,\vartheta \in S^{n-1}_1$ in the limit $s\to 0$:

\begin{equation}
L_0 :=\lim_{s\to 0}L_s = \left[\begin{array}{cc|c|ccc|c}
& &-\vartheta^1& &-\rho \cdot(P_{ \vartheta^\perp})^1&&0 \\
&\Id_{n - 1}&\vdots&&\vdots&&\vdots\\
& &-\vartheta^{n - 1}& &-\rho \cdot(P_{ \vartheta^\perp})^{n - 1}&&0\\
\hline  
&0&-\vartheta^n&&-\rho\cdot (P_{\vartheta^\perp})^n&&0 \\
\hline
& &0& &t \cdot (P_{\vartheta^\perp})^{1}&&\vartheta^1 \\
&\Id_{n - 1}&\vdots& & \vdots&& \vdots\\
& &0&&t \cdot (P_{\vartheta^\perp})^{n - 1}&&\vartheta^{n - 1}\\
\hline  
&0&0&&t\cdot (P_{\vartheta^\perp})^n&&\vartheta^n
\end{array}\right]\,.
\end{equation}

By applying elementary row and column operations, we can reduce the Jacobian determinant to that of a block upper-triangular matrix

\begin{align}
    |L_0|=\lim_{s\to 0}|L_s| &= \left|\begin{array}{cc|c|ccc|c}
& &-\vartheta^1& &-\rho \cdot(P_{ \vartheta^\perp})^1&&0 \\
&\Id_{n - 1}&\vdots&&\vdots&&\vdots\\
& &-\vartheta^{n - 1}& &-\rho \cdot(P_{ \vartheta^\perp})^{n - 1}&&0\\
\hline  
&0&-\vartheta^n&&-\rho\cdot (P_{\vartheta^\perp})^n&&0 \\
\hline
& &0& &(\rho + t) \cdot (P_{\vartheta^\perp})^{1}&&\vartheta^1 \\
&\Large{0}&\vdots& & \vdots&& \vdots\\
& &0&&(\rho + t) \cdot (P_{\vartheta^\perp})^{n - 1}&&\vartheta^{n - 1}\\
\hline  
&0&0&&(\rho + t)\cdot (P_{\vartheta^\perp})^n&&\vartheta^n
\end{array}\right| = |\vartheta^n| (\rho + t)^{n - 1}\,.
\end{align}

Writing $\cos \theta := \La (\nabla r)_\omega ,\nu_\omega \Ra = |\La \vartheta, \nu_\omega\Ra| = |\vartheta^n|$, we are finally in a position to compute the terms from the previous step. In particular, applying the change of coordinates $\Phi$ we can rewrite part of the integral on the second line of \eqref{eq:asympt} and parabolically rescale $(\rho, t)\leadsto (s^{1/2}\rho,s^{1/2}t)$ to obtain

\begin{align}
&\phantom{=}\frac{1}{2s}\int_{\{(x,y)\mid B_\eps(\Sigma)\cap \Omega_1^\circ\,,\,y\in V_x\}} \frac{\rho^2}{r(y)}(m_{\partial\Omega_1} + m_{\partial\Omega_2})_\omega\La \nu_\omega,(\nabla r)_\omega\Ra\frac{ e^{-r^2(y)/12s}}{(12\pi s)^{n/2}}\,d\mu(y)d\mu(x) \\
&= \frac{1}{2s}\int_{\Sigma}\int_{A_\omega}\frac{\rho^2}{(t + \rho)}(m_{\partial\Omega_1} + m_{\partial\Omega_2})_\omega\cos\theta\frac{e^{-(t + \rho)^2/12s} }{(12\pi s)^{n/2}}\, |D\Phi|_{(\omega,\rho,\vartheta,t)}dt d\rho d\vartheta d\omega \\
&= \frac{1}{(12\pi)^{n/2}}\int_{\Sigma} \int_{(0,\infty)^2 \times (S_\omega X \cap \{\xi\mid \La \xi, \nu_\omega\Ra > 0\})} \frac{(m_{\partial\Omega_1} + m_{\partial\Omega_2})_\omega}{2}\rho^2(t + \rho)^{n - 2}(\cos\theta)^2e^{-(t + \rho)^2/12}\\
&\hspace{9cm}\times \ind_{s^{-1/2}A_\omega}\cdot \left(\frac{|L_s|}{|\vartheta^n|(\rho + t)^{n-1}}\right)\,dt d\rho d\vartheta d\omega\,.
\end{align}

Dominated convergence can be applied to the above integral as $s\to 0$, which amounts to replacing $\ind_{s^{-1/2}A_\omega}$ and the normalized Jacobian factor on the RHS with $1$. (To be careful with the unbounded denominators, one should note that the eigenvalues of $L_0^{-1}$ are $1,\, (\rho + t)^{-1},\,-\La (\nabla r)_\omega ,\nu_\omega \Ra^{-1}$, while $s^{1/2}(\rho + t) \leq \eps\,,(\rho + t)\,, \eps \La (\nabla r)_\omega ,\nu_\omega \Ra$ on $s^{-1/2}A_\omega$ by \eqref{eq:small-inner-prod-small-vol}, then write $|L_s| = |L_0 + O(s^{1/2}(\rho + t))| = |L_0||\Id_n + O(\|L_0^{-1}\|s^{1/2}(\rho + t))| = |L_0|O(1)$.) The variables are now separated, and all that remains is calculus

\begin{align}
&\phantom{=}\lim_{s\to 0}\frac{1}{2s}\int_{\{(x,y)\mid B_\eps(\Sigma)\cap \Omega_1^\circ\,,\,y\in V_x\}} \frac{\rho^2}{r(y)}(m_{\partial\Omega_1} + m_{\partial\Omega_2})_\omega\La \nu_\omega,(\nabla r)_\omega\Ra\frac{ e^{-r^2(y)/12s}}{(12\pi s)^{n/2}}\,d\mu(y)d\mu(x) \\
&= \frac{1}{(12\pi)^{n/2}}\left(\vol(S^{n-2}_1)\int_0^{\pi/2}\sin^{n - 2}(\theta)\cos^2(\theta)\,d\theta\right) \left(\int_0^{\infty}\rho^2\int_0^\infty (t + \rho)^{n - 2}e^{-(t + \rho)^2/12}\,dtd\rho\right) \\
&\hspace{11cm}\times\int_{\Sigma}\frac{(m_{\partial\Omega_1} + m_{\partial\Omega_2})_\omega}{2}\,d\omega \\
&= 2\pi\frac{\vol(S_1^{n - 2})}{\vol(S_1^{n + 1})}\left(\frac{\vol(S^{n - 1}_1)}{\vol(S^{n - 2}_1)} - \frac{\vol(S^{n + 1})}{\vol(S^{n}_1)}\right)\int_{\Sigma}\frac{(m_{\partial\Omega_1} + m_{\partial\Omega_2})_\omega}{2}\,d\omega\,,
\end{align}

where we used

\begin{align}
\int_0^{\infty}\rho^2\int_0^\infty (t + \rho)^{n - 2}e^{-(t + \rho)^2/12}\,dtd\rho &= \int_0^{\infty}\rho^2\int_\rho^\infty t^{n - 2}e^{-t^2/12}\,dtd\rho \\
&= \int_0^\infty t^{n - 2}e^{-t^2/12}\int_0^t \rho^2\,d\rho dt \\
&= \frac{1}{3}\int_0^\infty t^{n + 1} e^{-t^2/12}\,dt = \frac{4\pi(12\pi)^{n/2}}{\vol(S^{n + 1}_1)}\,,\\
\int_0^{\pi/2}\sin^{n - 2}(\theta)\cos^2(\theta)\,d\theta &= \int_0^{\pi/2} (\sin^{n -2}(\theta)-\sin^n(\theta))\,d\theta \\
&= \frac{\vol(S^{n - 1}_1)}{2\vol(S^{n - 2}_1)} - \frac{\vol(S^{n + 1}_1)}{2\vol(S^{n}_1)}\,.
\end{align}

Similarly, we now compute the second term from the previous step in \eqref{eq:asympt}. The vector $v_2$ and unpleasant-looking factor appearing there can be made sense of here as

\begin{align}
S_\omega X \cap \{\xi\mid \La \xi, \nu_\omega\Ra > 0\} \ni\vartheta &= (\cos\theta) \nu_\omega + (\sin\theta)v_2\,, \qquad v_2 \in S_\omega \Sigma \,; \\
\frac{1-\La \nu_\omega,(\nabla r)_\omega\Ra^2}{\La \nu_\omega,(\nabla r)_\omega\Ra^2}&= \tan^2(\theta)\,.
\end{align}

Applying now the same manipulations as for the first term, we obtain

\begin{align}
&\lim_{s \to 0}\frac{1}{2s}\int_{\{(x,y)\mid B_\eps(\Sigma)\cap \Omega_1^\circ\,,\,y\in V_x\}}\frac{\rho^2}{r(y)} \frac{1 - \La \nu_\omega,(\nabla r)_\omega\Ra^2}{\La \nu_\omega,(\nabla r)_\omega\Ra^2}(II_{\partial\Omega_1} + II_{\partial\Omega_2})_\omega(v_2,v_2)\La \nu_\omega,(\nabla r)_\omega\Ra\\
&\hspace{11cm}\times\frac{e^{-r^2(y)/12s}}{(12\pi s)^{n/2}}\,d\mu(y) d\mu(x) \\
&= \frac{1}{(12\pi)^{n/2}}\left(\int_0^{\pi/2}\sin^{n}(\theta)\,d\theta\right) \left(\int_0^{\infty}\rho^2\int_0^\infty (t + \rho)^{n - 2}e^{-(t + \rho)^2/12}\,dtd\rho\right)\\
&\hspace{10cm} \times\int_{\Sigma}\int_{S_\omega \Sigma}\frac{(II_{\partial\Omega_1} + II_{\partial\Omega_2})_\omega(\sigma,\sigma)}{2}\,d\sigma d\omega \\
&= \frac{2\pi}{\vol(S_1^n)}\frac{\vol(S^{n - 2}_1)}{n - 1}\int_{\Sigma}\frac{(m_{\partial\Omega_1} + m_{\partial\Omega_2})_\omega}{2}\,d\omega\,.
\end{align}

We finally conclude, using $2\pi\vol(S^{n - 2}_1) = (n - 1) \vol(S^{n}_1)$

\begin{align}
\lim_{\eps \to 0}\lim_{s \to 0}\partial_s \int_{B_\eps(\Sigma)\cap \Omega_1^\circ} \a_s(x)\,d\mu(x) &= 2\pi\left(\frac{n - 2}{n - 1}\frac{\vol(S^{n - 2}_1)}{\vol(S_1^{n})} - \frac{\vol(S^{n - 1}_1)}{\vol(S_1^{n + 1})}\right)\int_{\Sigma}\frac{(m_{\partial\Omega_1} + m_{\partial\Omega_2})_{\omega}}{2}\,d\omega \\
&= -\int_{\Sigma}(m_{\partial\Omega_1} + m_{\partial\Omega_2})_\omega\,d\omega\,.
\end{align}

Using that all previous reasoning was symmetric under exchanging $\Omega_1$ and $\Omega_2$, along with the computations in the smooth setting from Theorem \ref{thm:smoothcase}, the integral over the whole space is

\begin{align}
&\lim_{s \to 0}\partial_s \int_{X} \a_s(x)\,d\mu(x) \\
&= \lim_{\eps \to 0}\lim_{s \to 0}\partial_s \int_{B_\eps(\Sigma
) \sqcup (\Omega_1^\circ \setminus B_\eps(\Sigma
))\sqcup (\Omega_2^\circ \setminus B_\eps(\Sigma
))} \a_s(x)\,d\mu(x)\\
&= -2\int_{\Sigma}(m_{\partial\Omega_1} + m_{\partial\Omega_2})_\omega\,d\omega -\lim_{\eps\to 0}\int_{\Omega_1^\circ \setminus B_\eps(\Sigma
)} R_{g_1}\,d\mu - \lim_{\eps\to 0}\int_{\Omega_2^\circ \setminus B_\eps(\Sigma
)}R_{g_2}\,d\mu\\
&= -2\int_{\Sigma}(m_{\partial\Omega_1} + m_{\partial\Omega_2})_\omega\,d\omega - \int_{X\setminus \Sigma} R_g\,d\mu\,.
\end{align}

\paragraph{Isolating bad points}
The following technical lemma is meant to section off the ``bad'' points at one end of a $d_g$-geodesic that may branch, be non-unique, or pass through the boundary multiple times.

\begin{lemma}\label{lem:technical-lemma}
    Pick $0 < \lambda \leq \lambda_0(n)$ and suppose that $g$ is a Lipschitz Riemannian metric on $B_{20}^{g_E}(0^n)\subseteq \R^n$, that is smooth on $B_{20}^{g_E}(0^n)\setminus (\{0\} \times \R^{n -1})$ up to the boundary approached from either side, such that $e_1$ is a unit normal vector to $\{0\} \times \R^{n - 1}$ w.r.t $g$, and such that 
    \begin{equation}\label{eq:metric-reg-bounds}
    \frac{1}{4}g_{E} \leq g \leq 4g_{E}\,; \quad|\partial_k g_{ij}(z)| \leq \lambda\,,\quad |\partial_{k\ell}g_{ij}(z)| \leq \lambda^2\,,\quad \forall z \in B_{20}^{g_E}(0^n)\setminus (\{0\} \times \R^{n})\,,
    \end{equation}

    with $g_E$ the Euclidean metric on $\R^{n}$. Fix any $x\in B_4^{g_E}(0^n) \cap (\R_{< 0} \times \R^{n - 1})$ and define

    \begin{align}
      F_x&:=
    \begin{cases}
    \emptyset& \text{if}\quad |x^1| \geq 5L\lambda\,,\\
    T_{\lambda L}(x)& \text{else}\,,
    \end{cases} \\
    &\text{with}\quad T_{\alpha}(x):= \{y : |y^1 - x^1|/\|y - x\| \leq \alpha\}\,,\quad \forall \alpha \geq 0\,.
    \end{align}
    
    Then there is a constant $L_0(n)$ such that $L \geq L_0(n)$ implies the following statements: 
    \begin{itemize}
    \item For every $y \in B_1^{g_E}(x)\setminus F_x$, there is a unique unit-speed $d_g$-geodesic $\gamma$ joining $x$ to $y$, and it crosses $\{0\} \times \R^{n -1}$ at most once and transversely in the effective sense $\La\dot\gamma_t, e_1\Ra_g = \dot\gamma_t^1 \geq L\lambda/16$ if $\gamma_t^1 = 0$. In fact, there is no other curve $\gamma$ that is: $C^1$, unit-speed, of $g$-length $\ell\leq 4$, connecting $x$ to $y$, smooth and solves the $g$-geodesic equations on $\gamma \cap (\R_{\lessgtr} \times \R^{n - 1})$, and if there is $t\in[0,\ell]$ such that $\gamma^1_t=0$ then such a $t$ is unique and $\La\dot\gamma_t, e_1\Ra_g \geq L\lambda/128$. 
    
    \item $d_g^2(x,\cdot)$ is smooth in a neighborhood of $\gamma_{[0,t]}\subseteq \R_{\leq 0}\times \R^{n - 1}$, $\gamma_{[t,\ell]} \subseteq \R_{\geq 0} \times \R^{n - 1}$ and is $C^{1,1}$ in a neighborhood of $\gamma\subseteq \R^n$. Also, $d_g^2(x,\cdot)$ is smooth on $B_1^{g_E}(x)\setminus \big(F_x\cup (\{0\} \times \R^{n - 1})\big)$ up to the boundary approached from either side. 

    \item For any $y \in F_x \cap B_1^{g_E}(x)$ and $\gamma$ a $g$-length $\ell$ $d_g$-geodesic connecting $x$ to $y$, we have $|\dot\gamma^1_\ell| \leq 4L\lambda$.
    \end{itemize}
\end{lemma}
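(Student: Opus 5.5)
The plan is a perturbative comparison with the Euclidean picture. Since $\tfrac14 g_E\le g\le 4g_E$ and the first and second derivatives of $g$ are $O(\lambda)$ and $O(\lambda^2)$ on each side of the hyperplane $H:=\{0\}\times\R^{n-1}$, every $C^1$ curve that solves the $g$-geodesic equation on each side has geodesic curvature $|\ddot\gamma|_{g_E}=O(\lambda)$. Over lengths $\le 4$ its direction therefore changes by $O(\lambda)$. This is the only place smallness enters. Throughout I will compare $\gamma$ with the straight segment $[x,y]$, and I expect errors of order $\lambda$ to be absorbed by choosing $L\ge L_0(n)$ large. (The Lipschitz jump in $\partial g$ across $H$ is harmless because the bound $|\partial g|\le\lambda$ holds from both sides.)

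\emph{Step 1: transversality outside $F_x$.} Take $y\in B_1^{g_E}(x)\setminus F_x$ and any admissible curve $\gamma$ from $x$ to $y$ of length $\ell\le 4$. The direction $\dot\gamma_t$ stays within angle $O(\lambda)$ of $\dot\gamma_0$. Integrating, $y-x=\ell\,\dot\gamma_0+O(\lambda\ell^2)$, so $\dot\gamma_t^1=(y^1-x^1)/\|y-x\|+O(\lambda)$ up to the factor coming from $g$ versus $g_E$.
\begin{itemize}
\item If $|x^1|\ge 5L\lambda$ and $\gamma$ reaches $H$, the displacement satisfies $|y^1-x^1|\ge 5L\lambda$ over a segment of Euclidean length $\le 1$. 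Hence the slope is $\gtrsim L\lambda$, and subtracting the $O(\lambda)$ error gives $\dot\gamma^1\ge L\lambda/16$.
\item If $|x^1|<5L\lambda$, then $y\notin T_{\lambda L}(x)$ gives the slope bound $\ge L\lambda$ directly, with the same conclusion.
\end{itemize}
Since $\dot\gamma^1$ then has a fixed sign of size $\gtrsim L\lambda$ along the whole curve, $\gamma^1$ is strictly monotone, so $\gamma$ crosses $H$ at most once. For the third bullet I run the same integration in reverse for $y\in F_x$. The slope $|y^1-x^1|/\|y-x\|\le\lambda L$ together with the $O(\lambda)$ direction drift gives $|\dot\gamma^1_\ell|\le 4L\lambda$.

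\emph{Step 2: uniqueness and the gap property.} Consider admissible curves crossing $H$ once, at time $t$ with $\dot\gamma^1_t\ge L\lambda/128$. I parametrize them by the initial unit vector $\xi\in S_x$. On the $x$-side, $g$ is smooth with curvature $O(\lambda^2)$ and the distances involved are $\le 4$, so the exponential map is an $O(\lambda)$-perturbation of the Euclidean one. The crossing point then depends smoothly on $\xi$, because transversality makes the implicit function theorem applicable with derivative $\gtrsim L\lambda$. Restarting the $g$-geodesic flow with the transmitted $C^1$ data gives the endpoint map $\xi\mapsto\gamma^\xi_\ell$. I will estimate its differential as (Euclidean polar map) $+\,O(\lambda)/(L\lambda)$. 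This is the quantitative form of Remark~\ref{rmk:no-conj-points}, namely no conjugate points under effective transversality. For $L\ge L_0$ the map is therefore a diffeomorphism onto its image. A degree and continuity argument on the admissible set then shows there is exactly one admissible curve to each $y$. The minimizing $d_g$-geodesic is $C^1$ by \cite{LytYam06} and admissible by Step 1, so it is this curve. Curves that never cross $H$ are handled by the same argument with the smooth exponential map on one side.

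\emph{Step 3: regularity.} Near each $y$ outside $F_x\cup H$, the distance is given by $d_g(x,\cdot)=\ell\circ(\text{endpoint map})^{-1}$. The endpoint map is a local diffeomorphism, smooth away from the crossing and piecewise smooth up to $H$, so $d_g^2(x,\cdot)$ is smooth on each side up to $H$. Near points on $H$ the gradient $\dot\gamma_\ell$ is continuous across $H$, since the curves are $C^1$. The Hessian is bounded through the Jacobi-field formula, exactly as computed above with the impulse $V_i$, so $d_g^2(x,\cdot)$ is $C^{1,1}$ near $\gamma$.

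The main obstacle is Step 2. The difficulty is to obtain nondegeneracy of the endpoint map and uniqueness among all long admissible curves uniformly with constant $L\lambda/128$. This requires tracking the implicit-function constants, which blow up like $1/(L\lambda)$, against $O(\lambda)$ perturbations. I would first try to prove the bound $\|D(\text{endpoint})^{-1}\|\lesssim 1+1/L$ via the Jacobi-field estimates \eqref{eq:fake-real-jac-field}.
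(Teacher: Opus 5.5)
\textbf{Step 1 has a genuine gap.} You assert that $\dot\gamma_t$ stays within $O(\lambda)$ of $\dot\gamma_0$. But a $d_g$-geodesic is only known to be $C^1$ and to solve the geodesic equation on $\{\gamma^1\neq 0\}$. A priori it may touch $\{0\}\times\R^{n-1}$ tangentially on an infinite (even Cantor-like) set, or run along it, and on that set nothing controls the tangential components of $\dot\gamma$.

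So the estimate is unproved precisely for the curves Step 1 is meant to classify. This makes ``the minimizing $d_g$-geodesic is admissible by Step 1'' circular. It also leaves the third bullet unproved, since geodesics ending in $F_x$ may well hug the interface.

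The paper's step 3 sidesteps this by controlling only the normal component. $\dot\gamma^1$ is $C\lambda$-Lipschitz on all of $[0,\ell]$: off the contact set $|\ddot\gamma^1|\le C\lambda$, and $\dot\gamma^1=0$ at every accumulation point of the contact set, so the piecewise bounds glue. You should also replace $y-x\approx\ell\dot\gamma_0$ by $\ell=d_g(x,y)$ together with $\ell/2\le\|y-x\|\le 2\ell$. Then:
\begin{itemize}
\item $y\notin T_{L\lambda}(x)$ gives $|\dot\gamma^1_0|\ge L\lambda/2-C\lambda$. After this your monotonicity argument applies, and it rules out tangential contacts a posteriori.
\item $y\in F_x$ gives $|\dot\gamma^1_\ell|\le 2L\lambda+C\lambda\le 4L\lambda$.
\end{itemize}

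\textbf{Step 2 is a genuinely different route from the paper's.} It can be made to work, but not quite as stated.

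The paper proves only non-singularity of the broken exponential map, by rescaling and applying Remark~\ref{rmk:no-conj-points} (inverting the ill-conditioned Gram matrix of the $J_i$). It then gets injectivity from a minimal-radius contradiction. That argument uses its auxiliary step 2: two admissible curves through a common point have positively correlated velocities there and initial normal slopes within $O(\lambda)$. This is why the paper works with nested cones $\widetilde D_x\subseteq D_x$.

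Your quantitative estimate is correct and short. In coordinates the variation field jumps by $\dot J(\rho^+)-\dot J(\rho^-)=\frac{J^1(\rho)}{\dot\gamma^1_\rho}\big(\ddot\gamma_{\rho^+}-\ddot\gamma_{\rho^-}\big)$, which has size $\lesssim|J(\rho)|/L$. Hence $\|D\exp^g_x-\mathrm{Id}\|\lesssim\lambda+1/L$ on the cone $\{v^1\ge cL\lambda|v|_{g_x},\ |v|_{g_x}\le 4\}$.

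However, a bound on $\|D^{-1}\|$, which is what you plan to prove first, only gives a local diffeomorphism. So ``therefore a diffeomorphism onto its image'' needs one more input. Either of the following supplies it:
\begin{itemize}
\item Use that this cone is convex. The mean-value inequality with $\|D\exp^g_x-\mathrm{Id}\|<1$ then gives injectivity directly.
\item Carry out your degree argument. Along the straight-line homotopy to $v\mapsto x+v$, all maps have the form $x+v+O(\lambda|v|^2)$. So for $c$ small and $L$ large, the lateral boundary of the cone lands in $T_{L\lambda}(x)$, and $\{|v|_{g_x}=4\}$ lands outside $B^{g_E}_1(x)$. Hence the degree at $y$ is $1$. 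Since $\det D\exp^g_x$ has constant sign, $y$ has exactly one preimage.
\end{itemize}
Either version bypasses the paper's step 2 and its minimal-radius argument. The paper's version, in exchange, needs only non-singularity and recycles the Jacobi-field computation already required for the Hessian asymptotics.

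To get uniqueness among all admissible curves, not only among minimizers, you also need two further points. First, state the partial converse: a crossing with $\dot\gamma^1\ge L\lambda/128$ forces $\dot\gamma^1_0\ge(L/128-C)\lambda$, so for $c$ small the initial vector lies in the cone. Second, treat non-crossing competitors with a smooth extension of the one-sided metric.
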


\begin{proof}

Consider a point $x \in B_4^{g_{E}}(0^n) \cap (\R_{< 0} \times \R^{n - 1})$. Let $\gamma: [0,\ell] \to B_{8}^{g_E}(0^n)$ with $\gamma_0 = x$ be a $g$-length $\ell \leq 2$ unit-speed $d_g$-geodesic, which recall is $C^1$. Let $\alpha: [0,l] \to B_{20}^{g_E}(0^n)$ with $\alpha_0 = x$ be a $C^1$ $g$-length $l \leq 8$ unit-speed curve, that is smooth and solves the $g$-geodesic equations on $\alpha \cap (\R_{\lessgtr}\times \R^{n - 1})$ (for example, but not necessarily, $\alpha = \gamma$).

\medskip

1) We first show that there is a $C_1 = C_1(n)$ independent of $\lambda$ such that for any choice $L_1 \geq 2C_1$, if $|\dot\alpha^1_0| \geq L_1\lambda$ then $\alpha$ intersects $\{0\} \times \R^{n - 1}$ at most once, and any such intersection $\alpha^1_{t_0} = 0$ is transverse with $\dot\alpha^1_{t_0} \geq (L_1 - C_1)\lambda$. As a partial converse, if $\alpha_{t_0}^1 = 0$ and $\dot\alpha^1_{t_0} \geq L_1\lambda \geq 2C_1\lambda$ for some $t_0 \in [0,l]$, then $\dot\alpha^1_0 \geq (L_1 - C_1)\lambda$.

\medskip

Let $t_0 = \inf\{{s \in [0,\ell]}\mid \alpha^1_s = 0\}$; if no such $t_0$ exists we are done. We have the following crude estimate on $\ddot\alpha_t^1$, which we will use repeatedly (using the first two items of \eqref{eq:metric-reg-bounds}): 

\begin{equation}\label{eq:crude-est}
\ddot\alpha^1_t= -\dot\alpha^i_t\dot\alpha^j_t\Gamma_{ij}^1(\alpha_t) = O(\lambda) \implies \dot{\alpha}_t^1 \geq \dot\alpha_0^1 - C(n)\lambda t \geq \dot\alpha_0^1 - 8C\lambda\,,\quad\forall t\in [0,t_0]\subseteq [0,l]\,.
\end{equation}

Thus, if we require $\dot\alpha_0^1 \geq L_1\lambda \geq 16C\lambda$, then we see that $\alpha_{t_0} \in \{0\} \times \R^{n - 1}$ is a transverse intersection with $\dot\alpha_{t_0}^1 \geq (L_1 - 8C)\lambda \geq 8C\lambda$. If $t_1 > t_0$ is the next time that $\alpha$ intersects $\{0\} \times \R^{n - 1}$, then $\dot\alpha_t^1 = 0$ for some $t_0 < t \leq t_1$ by the Mean Value Theorem, while

\begin{equation}
0 =\dot\alpha_t^1 \geq \dot\alpha_{t_0}^1 - C\lambda(t - t_0) \geq 8C\lambda - C\lambda(t - t_0)\,,
\end{equation}

so we conclude that $t \geq t_0 + 8 > 8$. But $\alpha_t$ had $g$-length $l \leq 8$, so we conclude that there is no second intersection point. 

A similar argument, this time with the roles of $t_0$ and $t_1$ in the argument above played by $0$ resp. the first time of intersection with $\{0\} \times \R^{n - 1}$, shows that $\dot{\alpha}^1_0 \leq -L_1\lambda \leq -16C\lambda \implies$ $\alpha \subseteq (-\infty, x^1] \times \R^{n - 1}$ for $t \in [0,l]$. Again a similar argument, this time with the roles of $t_0$ and $t_1$ in the argument above played by the intersection time $t_0$ with $\dot\alpha^1_{t_0} \geq L_1\lambda \geq 16 C\lambda$ resp. a potential previous time of intersection, then the roles of $0$ and $t_0$ played by $t_0$ resp. $0$, shows the partial converse. We conclude with $C_1 := 16C$.

\medskip 

2) Suppose that $\beta,\eta$ are curves satisfying the constraints for $\alpha$, that have only transverse intersections with $\{0\} \times \R^{n - 1}$, and that coincide at some point $\beta_t = \eta_s$. We show that if $\lambda \leq \lambda_0(n)$ is small enough, then $\La \dot\beta_t, \dot \eta_s\Ra_{g} > 0$ and there is $C_2 = C_2(n)> 0$ such that $\left||\dot\beta_0^1| - |\dot\eta_0^1|\right|\leq C_2\lambda$.

\medskip

Write the $g$-unit vectors $v:= \dot\beta_0\,,\,w:= \dot\eta_0 \in \R^n$. Since $\beta$ is $C^1$ on a compact interval and has no tangential intersections with $\{0\} \times \R^{n - 1}$, it can have at most finitely many (transverse) intersections. Reasoning as in  \eqref{eq:crude-est} on each interval between intersections, on each component of $\beta$, we have $\dot\beta_t = v + O(\lambda)$ (the implicit constant depending only on $n$). Integrating, we have $(\beta_t - x) = t(v + O(\lambda))$.
This reasoning also holds for $\eta$, and taking $g_x$ norms of the assumed intersection $\beta_t = \eta_s$ (identifying manifold and tangent space via $T_xB_{20}^{g_E}(0^n) \simeq \R^n$), gives

\begin{equation}
|t - s| = ||tv|_{g_x} - |sv|_{g_x}| \leq |tv - sw|_{g_x} = |(\beta_t - x) - (\eta_s - x) + (t + s)O(\lambda)|_{g_x} = (s + t)O(\lambda)\,.
\end{equation}

With this comparability, we have

\begin{equation}
|v - w|_{g_x} = \frac{|2(tv - sw) + (s - t)(v + w)|_{g_x}}{s + t} \leq \frac{2|tv - sw|_{g_x} + |s - t||v + w|_{g_x}}{s + t}=O(\lambda)\,.
\end{equation}

The first claim now follows from Cauchy-Schwarz, the second item of \eqref{eq:metric-reg-bounds}, and again $\lambda \leq \lambda_0(n)$

\begin{align}
 \La v,w\Ra_{g_x} &= |w|^2_{g_x} + \La v - w,w \Ra_{g_x} \geq 1 - O(\lambda) > 1/2\,, \\
\La \dot \beta_t,\dot\eta_s\Ra_{g_{\beta_t}} &= \La v + O(\lambda), w + O(\lambda)\Ra_{g_{\beta_t}} = \La v, w \Ra_{g_{x}} + O(\lambda) > 0\,,
\end{align}

while for the second claim, $\left||v^1| - |w^1|\right| \leq |v^1 - w^1| \leq  \|v - w\| = O(\lambda)$.
\medskip

3) Let us now show that there is $C_3 = C_3(n)> 0$ independent of $\lambda$ such that for any choice $L_2 > 0$, if there is some $t_0 \in (0,\ell]$ with $\gamma_{t_0} \in T_{L_2\lambda}(x)$, then $\gamma \subseteq T_{(4L_2 + C_3)\lambda}(x)$.

\medskip

Assume there is such a $t_0$. Using the first item of \eqref{eq:metric-reg-bounds} (i.e. that $d_g$ and $d_{g_E}$ are $2$-biLipschitz), along with the fact that $\gamma$ is a $g$-arclength-parameterized $d_g$-geodesic, we have $t \geq \|\gamma_t - \gamma_0\|/2$ for any $t \in [0,\ell]$. Since $\gamma_{t_0} \in T_{L_2 \lambda}(x)$, we therefore have

\begin{equation}
\frac{|\gamma_{t_0}^1 - \gamma_0^1|}{t_0} = \frac{|\gamma_{t_0}^1 - x^1|}{t_0} \leq 2\frac{|\gamma_{t_0}^1 - x^1|}{\|\gamma_{t_0} - x\|} \leq 2L_2\lambda\,.
\end{equation}

Now suppose all intersections with $\{0\} \times \R^{n - 1}$ are transverse, i.e. $\gamma^1_t = 0 \implies \dot\gamma_t^1 \neq 0$ for $t \in [0,\ell]$. By the Mean Value Theorem applied to the above inequality, there is $t_1 \in [0,t_0]$ such that $|\dot\gamma^1_{t_1}| \leq 2L_2\lambda$. Since $\gamma$ is $C^1$ on a compact interval, absence of tangential intersections implies finitely many intersections. Arguing as in 1), and more specifically \eqref{eq:crude-est}, on each interval between intersections, we obtain that $|\dot\gamma_t^1| \leq |\dot\gamma_{t_1}^1| + |\dot\gamma_t^1 -\dot \gamma_{t_1}^1| \leq (2L_2 + 2C)\lambda$ for all $t\in[0,\ell]$. 

If not all intersections are transverse, we can write $[0,\ell] = A\cup[0,b_0) \cup (a_0,\ell] \cup\bigcup_j (a_j,b_j)$ where $t\in A$ are the times of tangential intersections $\gamma^1_t =\dot\gamma^1_t = 0$, while each $[0,b_0),(a_0,\ell], (a_j,b_j)$ contains only transverse intersections. Then simply repeat the above argument on compact exhaustions $[a_j', b_j'] \subseteq (a_i,b_j)$, $[0, b_0'] \subseteq [0,b_0)$, and $[a_0', \ell] \subseteq (a_0,\ell]$ (where $|\dot\gamma^1_{a_j'}|$ or $|\dot\gamma^1_{b_j'}|$ are arbitrarily small as the open intervals are exhausted) to obtain $|\dot\gamma_t^1| \leq 2C\lambda$ on each of these intervals. Incidentally, we have proved the final claim of the Lemma for any choice $L_2 \geq C$.

Finally, we conclude

\begin{equation}
\frac{|\gamma_t^1 - x^1|}{\| \gamma_t-x\|} \leq \frac{2}{t}\int_{0}^t|\dot\gamma^1_s|\,ds \leq 2(2L_2 + 2C)\lambda
\end{equation}

so the result is proved if we let $C_3 := 4C$.

\medskip

4) We finally use 1), 2), and 3) to prove the Lemma, for now requiring $L \geq 384C_1 + 384 C_2 + 8C_3$. 

\medskip

Suppose first that $y \in B_1^{g_E}(x) \setminus T_{L\lambda}(x)$. Since $d_g$ is $2$-biLipschitz to Euclidean distance, any $d_g$-geodesic $\gamma$ connecting $x$ and $y$ has $g$-length $\ell \leq 2$ and lie entirely in $B_8^{g_E}(0^n)$. By 3) applied with $(L - C_3)/4\to L_2$ and the assumption on $y$, no such curve can pass through $T_{(L - C_3)\lambda/4}(x)$. In particular, we have

\begin{equation}
|\dot\gamma_0^1| = \lim_{t\searrow 0} \frac{|\gamma_t^1 - \gamma_0^1|}{t} \geq \frac{1}{2}\liminf_{t\searrow 0} \frac{|\gamma_t^1 - x^1|}{\|\gamma_t - x\|} \geq \frac{L - C_3}{8}\lambda\,.
\end{equation}

This is the assumption of 1) with $(L - C_3)/8\to L_1$, and we conclude that $\gamma$ intersects $\{0\} \times \R^{n - 1}$ at most once, and transversely with $\dot\gamma_t^1 \geq [(L - C_3)/8 - C_1]\lambda \geq (L/16)\lambda$ if $\gamma^1_t = 0$. 

\medskip

If additionally $|x^1| \geq 5L\lambda > (4L + C_3)\lambda$, then $T_{ (4L + C_3)\lambda}(x) \cap (\{0\} \times \R^{n - 1}) \cap B_1^{g_E}(x) = \emptyset$. By applying 3) with $L \to L_2$, we see that if $\gamma$ intersects $\{0\} \times \R^{n - 1}$, then it cannot intersect $T_{L\lambda}(x)$. Thus in this case, all $d_g$-geodesics beginning at $x$ and ending in $T_{L\lambda}(x) \cap B_1^{g_E}(x)$ are entirely contained in $\R_{< 0} \times \R^{n - 1}$.

\medskip

Finally, we address smoothness of $d_g^2(x,\cdot)$ and the uniqueness statement.

For any $\alpha$ a $C^1$ $g$-length $l \leq 8$ curve with $\alpha_0 = x$, we have by \eqref{eq:metric-reg-bounds} that $\alpha$ lies entirely in $ B_{20}^{g_E}(0^n)$. Assume additionally that $\alpha$ is unit-speed, smooth and solves the $g$-geodesic equations on $\alpha \cap (\R_{\lessgtr} \times \R^{n - 1})$, and with $|\dot \alpha_0^1| \geq ((L/128) - C_1 - 2C_2)\lambda$. We can then apply 1) with $((L/128) - C_1 - 2C_2) \to L_1$, obtaining that $\alpha$ intersects $\{0\} \times \R^{n - 1}$ at most once and with $\dot\alpha_{t_0}^1 \geq ((L/128) - 2C_1 - 2C_2)\lambda \geq L \lambda/384$ at the time $t_0$ of intersection. This means that $\alpha$ is uniquely determined by $\alpha_0 = x$ and $\dot\alpha_0$, giving a well-defined exponential map

\begin{align}
\exp^g_x:D_x\to B_4^g(x)\,,&\quad v\mapsto \begin{cases}
\exp_x^{g_-}(v)& |v|_{g_x} \leq t_0\,;\\
\exp_{\exp_x^{g_-}(t_0v/|v|_{g_x})}^{g_+}((|v|_{g_x} - t_0)(D\exp_x^{g_-})_{t_0v/|v|_{g_x}}(\partial_r))& t_0 <|v|_{g_x} \leq 4
\end{cases}\,,\\
\text{where we define } D_x &:= B_4^{g_x}(0) \cap \{v\in\R^{n}\mid |v^1|/|v|_{g_x} \geq ((L/128) - C_1 - 2C_2)\lambda\}\,,\\
\text{and for later use } \widetilde D_x &:= B_4^{g_x}(0) \cap \{v\in\R^{n}\mid |v^1|/|v|_{g_x} \geq (L/128 - C_1)\lambda\}
\,.
\end{align}

In this definition, $t_0 = t_0(v/|v|_{g_x}) > 0$ is the smallest $t$ with $\exp_x^{g_-}(tv/|v|_{g_x}) \in \{0\} \times \R^{n - 1}$ if it exists, else $4$. The notation $g_{\pm}$ indicates the choice of one-sided limit on $\{0\} \times \R^{n - 1}$. By definition, $\exp_x^g$ is smooth on $\{v\mid |v|_{g_x} \leq t_0\}$. Then, $S:= (\exp_x^{g_-})^{-1}(\{0\} \times \R^{n - 1})$ is a smooth separating hypersurface $S\subseteq D_x$ transverse to $\partial_r$ and $V_y:= (D\exp_x^{g_-})_{(\exp_x^{g_-})^{-1}(y)}(\partial_r)$ is a smooth vector field on $y\in\{0\} \times \R^{n - 1}$ pointing into $\R_{> 0}\times \R^{n - 1}$; to invert $\exp_x^{g_-}$ smoothly we are using that the injectivity radius of $(\R_{\leq 0}\times \R^{n - 1},g)$ is bounded below by $8 \geq 4$, which requires $\lambda \leq \lambda_0(n)$. The map $\Psi: \{(r,\omega)\in [0,\infty)\times S\mid r \leq 4 - |\omega|_{g_x}\}\to D_x \setminus \{v\mid |v|_{g_x} < t_0\}\,,\, (r,\omega)\mapsto (1 + r/|\omega|_{g_x})\omega$ is a smooth diffeomorphism, and $\exp_x^g \circ \Psi: (r,\omega)\mapsto \exp_{\exp_x^{g_-}(\omega)}^{g_+}(rV_{\exp_x^{g_-}(\omega)})$ in these coordinates is manifestly smooth. Finally, $\exp_x^g$ is $C^{1,1}$ across $S$, which follows because first partial derivatives in the $\partial_r$ direction and directions tangent to $S$ are smooth up to $S$ when approached from either side, and agree on $S$.

We now argue that $\exp_x^g$ is non-singular on $D_x$. Rescaling $g\rightsquigarrow g_\lambda := g(\lambda^{-1} \cdot)$, the control  \eqref{eq:metric-reg-bounds} holds for $g_\lambda$ but with $1\to\lambda$. The conclusions obtained so far are scaling-invariant, so hold on $B_{\lambda}^{g_E}(\lambda x) \subseteq B_{2\lambda}^{g_\lambda}(\lambda x)$. If $\beta_t := \exp_{\lambda x}^{g_\lambda}(tv)$ with $v \in D_{\lambda x}:= \lambda D_x$ intersects $\{0\} \times \R^{n - 1}$ at time $t_0$, then by definition of this domain we have $\La e_1, \dot\beta_{t_0}\Ra_{g_\lambda} = \dot\beta_{t_0}^1 \geq L\lambda/384$ (recall $e_1$ is a unit normal vector of $\{0\} \times \R^{n - 1}$ w.r.t $g_\lambda$) and $\beta$ has $g_\lambda$-length $l\leq 4\lambda \leq 1536L^{-1} \La e_1, \dot\beta_{t_0}\Ra_{g_\lambda}$. If $L \geq L_0(n)$ is large enough, then \eqref{eq:small-inner-prod-small-vol} is satisfied and Remark \ref{rmk:no-conj-points} applies. Note that to apply the Remark here we locally define $r := |(\left.\exp_{\lambda x}^{g_\lambda}\right|_{U})^{-1}(\cdot)|_{(g_\lambda)_{\lambda x}}$ up to a potential singular time (identifying $\partial_r = \nabla r$ uses the Gauss Lemma, which holds even over the boundary because $\La \partial_r, J_i'(\rho^+) - J_i'(\rho^-)\Ra = 0$, see \eqref{eq:boundary-error-term}), $J_1 = (D\exp_{\lambda x}^{g_\lambda})_{v}(\partial_r)$, and $J_i := (D\exp_{\lambda x}^{g_\lambda})_{v}\big((|v|_{(g_\lambda)_{\lambda x}}/t_0)w_i\big)$ for $i = 2,\ldots, n$, where $w_i := (D\exp_{\lambda x}^{(g_\lambda)_-})_{\beta_{t_0}}^{-1}(v_i)$ and $v_2,\ldots, v_n$ is a $g$-ONB for $\{0\}\times \R^{n - 1}$ at $\beta_{t_0} \in \{0\}\times \R^{n - 1}$. Remark \ref{rmk:no-conj-points} guarantees that the $J_i$ remain a (full rank) basis, which, in view of the previous expression for the $J_i$, gives the non-singularity of $\exp_{\lambda x}^{g_\lambda}$ on $D_{\lambda x}$. Unrescaling, this is the non-singularity of $\exp_x^g$ on $D_x$.

For $0 < \rho \leq 4$ such that $\left.\exp_x^{g}\right|_{B_\rho^{g_x}(0)}$ is non-singular and injective, it is a diffeomorphism (smooth away from $S$, $C^{1,1}$ across $S$). We have seen that it is non-singular on $D_x$, so it remains to check that injectivity is not violated before $\rho = 4$. We now define $0 < \rho \leq 4$ maximal such that $\exp_x^g(v) = \exp_x^g(w) \implies v = w$ for $w \in D_x \cap B_\rho^{g_x}(0)$ and $v \in \widetilde D_x \cap B_\rho^{g_x}(0)$. If $\rho < 4$, then by compactness and because $\exp_x$ is a local diffeomorphism, we in fact have $\exp_x^g(v) = \exp_x^g(w)$ for some $v \neq w$ with $v \in \widetilde D_x\cap \bar{B}_\rho^{g_x}(0)$, $w \in D_x\cap \bar{B}_\rho^{g_x}(0)$, and $\max(|v|_{g_x}, |w|_{g_x}) = \rho$. By the choice of $D_x$, $\widetilde D_x$ and the second claim of 2), we have $w \notin \partial D_x$ (this was the reason for enlarging the domain $D_x\supseteq \tilde D_x$). By the Gauss Lemma and then the first claim of 2)

\begin{equation}\label{eq:pos-inner-prod}
\La (D\left.\exp_x^g\right|_U)_w^{-1}(D\exp_x^g)_v(\partial_r),\partial_r\Ra_{g_x} = \La (D\exp_x^g)_v(\partial_r),(D\exp_x^g)_w(\partial_r)\Ra_{g_{\exp_x^g(v)}} > 0\,,
\end{equation}

where $(D\left.\exp_x^g\right|_U)_w^{-1}$ is coming from a local inverse $\left.\exp_x^g\right|_U$ near $\exp_x^g(w)$ into an open set $w \in U \subseteq D_x^\circ$. Now pick $\delta > 0$ small enough so that with $v_\delta := (1 -\delta/|v|_{g_x})v$ we still have $w_\delta := \left.\exp_x^g\right|_U^{-1}\big(\exp_x^g(v_\delta)\big) \in U$. Recognizing that the LHS of \eqref{eq:pos-inner-prod} is $-\left.\partial_\delta\right|_{\delta = 0} |w_\delta|_{g_x}$, the positivity of the RHS implies $|w_\delta|_{g_x} < |w|_{g_x}$ for small enough $\delta > 0$, and of course $|v_\delta|_{g_x} =|(1 - \delta/|v|_{g_x})v|_{g_x} < |v|_{g_x}$. By definition, $v_\delta \in \widetilde D_x\cap \bar{B}_\rho^{g_x}(0)$, $w_\delta \in D_x\cap \bar{B}_\rho^{g_x}(0)$, and $v_\delta \neq w_\delta$ (again requiring $\delta > 0$ small), but now also $\max(|v_\delta|_{g_x}, |w_\delta|_{g_x}) < \max(|v|_{g_x}, |w|_{g_x}) = \rho$. This witnesses the non-injectivity property on a smaller radius than $\rho$, contradicting how $\rho$ was chosen, and we conclude that $\rho = 4$. Since $\widetilde D_x \subseteq D_x$, this shows in particular that $\left.\exp_x^g\right|_{\widetilde D_x}$ is injective, and thus a diffeomorphism.

Recall from above that for $y \in B_1^{g_E}(x)\setminus T_{L\lambda}(x)$, all unit-speed $d_g$-geodesics $\gamma:[0,\ell] \to X$ from $x$ to $y$ satisfy $|\dot\gamma_0^1| \geq (L - C_3)\lambda/8 \geq (L/128 - C_1)\lambda$, and thus $\dot\gamma_0 \in \widetilde D_x$ and $\gamma_t = \exp_x^g(t\dot\gamma_0)$. In particular, $B_1^{g_E}(x)\setminus T_{L\lambda}(x) \subseteq \exp_x^g(\widetilde D_x)$ and

\begin{equation}
y\mapsto d_g^2(x,y) = \ell^2 = |(\left.\exp_x^{g}\right|_{B_2^{g_x}(0)})^{-1}(\gamma_\ell)|_{g_x}^2 = |(\left.\exp_x^{g}\right|_{B_2^{g_x}(0)})^{-1}(y)|_{g_x}^2\,.
\end{equation}

is smooth on $ B_1^{g_E}(x)\setminus \big(T_{L\lambda}\cup(\{0\}\times \R^{n - 1})\big)$ up to the boundary approached from either side, and $C^{1,1}$ on $B_1^{g_E}(x)\setminus T_{L\lambda}$. Finally, by the partial converse in 1) with $L/128 \to L_1$ and the choice of $\widetilde D_x$, we obtain the uniqueness stated in the Lemma. 
\end{proof}

\paragraph{Definition and smallness of $E_x$}
We are now in a position to define $V_x$ and $W_x$ (or equivalently $E_x$), and to verify the claim that the terms $II$ and $III$ of \eqref{eq:error-region-II} resp. \eqref{eq:error-region-III} vanish as $s\to 0$.

\medskip

First cover $\Sigma$ by finitely many coordinate charts $\Phi_j :U_j \to B_{20\rho_j}(0^n)\subseteq \R^{n - 1}$ in the following way: Take a chart $\Phi_\omega: U_\omega \to \R^n$ around each $\omega \in \Sigma$ that maps $\omega$ to $0^n$, $\Sigma \cap U_\omega$ into $\{0\} \times \R^{n - 1}$, and $\nu$ to  $e_1$. Then perform a linear change of variables to obtain $g(0^n) = g_E$, and pick $\rho_\omega$ small enough so that $(1/4) g_E \leq g \leq 4g_E$, $|\partial_k g_{ij}(y)| \leq \rho_\omega^{-1}$, and $|\partial_{k\ell} g_{ij}(y)| \leq \rho_\omega^{-2}$ holds on $B_{20\rho_\omega}(0^n)\setminus (\{0\} \times \R^{n - 1})$. Since $\Sigma$ is compact, take a finite collection $\omega_j \in \Sigma$, $j = 1,\ldots,N$, so that $\Sigma \subseteq \bigcup_{j = 1}^N\Phi_j^{-1}(B_{10\rho_j}(0^n))$, and require $\eps \leq \eps_0(g_1,g_2)$ small enough so that $B_\eps^g(\Sigma)\subseteq \bigcup_{j = 1}^N \Phi_j^{-1}(B_{20\rho_j}(0^n))$.

For $x \in B_\eps^g(\Sigma) \cap \Omega_1^\circ$, we abuse notation by suppressing the chart $\Phi_j$ and identifying $x \in U_j\cong B_{20\rho_j}^{g_E}(0^n) \subseteq \R^{n}$. Fixing $L:= 32\rho_j/\eps$, we define (see also Figure \ref{cap:EVW-regions})

\begin{equation}\label{eq:Ex-def}
E_x := (X\setminus B_{\lambda_0\rho_j}^{g_E}(x)) \cup \left(\{y \in B_{\lambda_0\rho_j}^{g_E}(x): |y^1 - x^1| \leq 2(L/\rho_j)\|x - y\|^2\}\setminus B_{(|x^1|\rho_j/(5L))^{1/2}/2}^{g_E}(x)\right)\,,
\end{equation}

where we require $\eps \leq 32\rho_j/L_0$ with $L_0(n)$ and $\lambda_0(n) \leq 20$ from Lemma \ref{lem:technical-lemma}. By construction, $E_x$ satisfies \eqref{eq:error-volume-decay}, and even with linear $f(r) := (40  L/\rho_j)r$. By translating along $\{0\} \times \R^{n - 1}$, we assume $x = (x^1,0\ldots,0)$ in the chosen coordinate system.  Notice that for any $|x^1|/4 \leq \rho \leq \lambda_0\rho_j$, the rescaled metric $g_{\rho^{-1}}:= g(\rho\cdot)$ and point $x_{\rho^{-1}}:= \rho^{-1}x\subseteq B_{4}^{g_E}(0^n)$ satisfy the assumptions of Lemma \ref{lem:technical-lemma} with $\lambda := \rho/\rho_j$, which outputs a set $F_{\rho^{-1}x}$. Moreover, denoting by $A_{r,R}(z)$ the annulus $B_R(z)\setminus B_r(z)$,

\begin{align}
T_{L\lambda}(x_{\rho^{-1}})\cap A_{1/2\,,\,1}^{g_E}(x_{\rho^{-1}}) &\subseteq \rho^{-1}E_x \cap  A_{1/2\,,\,1}^{g_E}(x_{\rho^{-1}})\,,\quad \forall \rho \geq(|x^1|\rho_j/(5L))^{1/2}\,, \\
\text{while}\quad\rho \leq (|x^1|\rho_j/(5L))^{1/2} &\implies 5L\lambda = (\rho^25L/\rho_j)\rho^{-1} \leq |x^1|\rho^{-1} = |x_{\rho^{-1}}^1|\,.
\end{align}

Thus, for any $y \in B_{\lambda_0\rho_j}^{g_E}(x)\setminus E_x$ and $\rho := \|x - y\|$, we have $y_{\rho^{-1}} \in B_1^{g_E}(x_{\rho^{-1}})\setminus F_{\rho^{-1}x}$ (in the case that $\|x - y\| < |x_1|/4$ is outside the allowed range for $\rho$, also $y \in B^{g}_{|x_1|/2}(x)\subseteq \R_{<0}\times \R^{n - 1}$ is in a smooth complete precompact Riemannian ball centered at $x$, i.e. satisfies all claimed properties of $W_x$). 

The claim \eqref{eq:small-inner-prod-small-vol} follows because the unique unit-speed $d_{g_{\rho^{-1}}}$-geodesic $\gamma$ from $x_{\rho^{-1}}$ to $y_{\rho^{-1}}$ has at most one point of intersection $\gamma_t \in \{0\} \times \R^{n - 1}$ at which time $\La \dot\gamma_t,e_1\Ra \geq L\lambda /16 = L\|x - y\|/(16\rho_j) \geq \eps^{-1} d_g(x,y)$ (using repeatedly that $d_g$ and $d_{g_E}$ are $2$-biLipschitz). Also by the Lemma, if $y_{\rho^{-1}} =\gamma_\ell \in (\{0\} \times \R^{n-1})\setminus \rho^{-1}E_x$, then $\dot\gamma_{\ell}$ points into $\R_{> 0} \times \R^{n-1}$ and can be extended using the $g_{\rho^{-1}}$ exponential map on $\R_{\geq 0}\times \R^{n-1}$. Such a curve remains a $d_{g_{\rho^{-1}}}$-geodesic by the uniqueness statement of the Lemma, witnessing $y_{\rho^{-1}}$ as an interior point of a $d_{g_{\rho^{-1}}}$-geodesic ending in $(\R_{> 0} \times \R^{n - 1})\setminus \rho^{-1}E_x$. 

Suppose now that $\alpha$ is a $C^1$ unit-speed curve connecting $x_{\rho^{-1}}$ and $y_{\rho^{-1}}$, of $g_{\rho^{-1}}$-length $l \leq 2d_{g_{\rho^{-1}}}(x_{\rho^{-1}},y_{\rho^{-1}}) \leq 4$, smooth and solving the $g_{\rho^{-1}}$ geodesic equations on $\alpha \cap (\R_{\lessgtr 0}\times \R^{n - 1})$, with at most one time of intersection $\alpha_t \in \{0\} \times \R^{n - 1}$ at which time $d_{g}(x,y) \leq 2\eps\La \dot\alpha_t,e_1\Ra$. Unpacking notation, this inequality is $(L\lambda/128) \leq \La \dot\alpha_t,e_1\Ra$, which by the Lemma implies that $\alpha$ was in fact the $d_{g_{\rho^{-1}}}$-geodesic connecting $x_{\rho^{-1}}$ to $y_{\rho^{-1}}$. Unrescaling these observations and the other conclusions from Lemma \ref{lem:technical-lemma}, we obtain all properties in Claim \ref{claim:geo}. 

\bigskip

We now show that $II$ and $III$ (see \eqref{eq:error-region-II} resp. \eqref{eq:error-region-III}) vanish as $s\to 0$ for fixed $\eps > 0$. We begin with $II$:

\begin{align}
|II| &\leq \frac{1}{s}\int_{B_\eps^g(\Sigma)\cap \Omega_1^\circ}\int_{E_x} P(d^2_g(x,y)/s)\frac{e^{-d^2_g(x,y)/12s}}{(12\pi s)^{n/2}}\,d\mu(y)d\mu(x)\,,
\end{align}

where $P$ denotes a polynomial whose particular form is of no consequence. We note that the contribution from $y \in E_x\setminus B_{s^{(1 - \delta)/2}}^g(x)$, for $0 < \delta < 1$ fixed, is $O(e^{-s^{-\delta}})$ and vanishes as $s\to 0$. For $y \in B_{s^{(1-\delta)/2}}^g(x)$, we can assume that we are working in a coordinate chart around $x = (\rho,0^{n - 1}) \in B_{20\rho_j}^{g_E}(0^n)\subseteq \R^n$ as above, and estimate everything in terms of the Euclidean quantities. From now on, $C = C(\eps,g_1,g_2)$ is a constant that may change from line to line.

\begin{align}
&\phantom{\leq}\frac{1}{s}\int_{B_\eps^g(\Sigma)\cap \Omega_1^\circ}\int_{E_x \cap B_{s^{(1 - \delta)/2}}^g(x)} P(d_g^2(x,y)/s)\frac{e^{-d_g^2(x,y)/12s}}{(12\pi s)^{n/2}}\,d\mu(y)d\mu(x) \\
&\leq \frac{C}{s^{1 + n/2}}|\Sigma|\int_0^{Cs^{1-\delta}} \int_{T_{Cs^{(1 - \delta)/2}}(\rho,0^{n - 1})}P(C\|(\rho,0^{n - 1}) - y\|^2/s) e^{-\|((\rho,0^{n - 1})-y\|^2/(12Cs)}dy^nd\rho\,.
\end{align}

Note that only points $x$ with $|x^1| < Cs^{1 - \delta}$ have $E_x \cap B^{g_E}_{2s^{(1 - \delta)/2}}(x)$ nonempty, whence the upper bound of integration in the $\rho$ variable. We also used $B_r^{g_E}(x) \cap E_x \subseteq T_{Cr}(x)$ for all $r \leq C^{-1}$. Now center $y$ to remove $\rho$ from the inner integral and rescale $y\rightsquigarrow s^{(1 - \delta)/2}y$, then switch to $y$-polar coordinates around the origin

\begin{align}
\cdots &\leq Cs^{-(1 + n/2) + (1-\delta)(1 + n/2)}\int_{T_{Cs^{(1 - \delta)/2}}(0^n)} P(C\|y\|^2s^{-\delta})e^{-C^{-1}\|y\|^2s^{-\delta}}\,dy^n \\
&\leq C s^{- \delta(1 + n/2)}|\{\vartheta \in S^{n - 1} \mid \La \vartheta, e_1\Ra \leq Cs^{(1 -\delta)/2}\}|\int_0^\infty P(Cr^2)e^{-C^{-1}r^2}r^{n - 1}\,dr  \leq C s^{1/2 - \delta(3/2 + n/2)}\,.
\end{align}

So we indeed obtain $|II| = O(s^{1/2-\delta'})$ for any $\delta' > 0$ by picking $0 < \delta < 1/(3 + n)$ arbitrarily small above. We omit the estimate of $III$, because it follows the exact same pattern as for $II$. The only difference is that the smallness $|\{\vartheta \in S^{n - 1}\mid \La \vartheta, e_1\Ra \leq Cs^{(1 -\delta)/2}\}| \leq Cs^{(1 -\delta)/2}$ of the sector used to estimate the volume of $E_x$ at scale $s^{(1-\delta)/2}$ no longer holds for its boundary. Instead one uses the final claim of Lemma \ref{lem:technical-lemma} to obtain for all $y\in \partial^\pm E_x\cap B^{g_E}_{s^{(1 - \delta)/2}}(x)$ with $|x^1|\leq C s^{1 - \delta}$,

\begin{equation}
|\La\tilde\nu_y,(\nabla d_g(x,\cdot))_y\Ra_{g_y}| \leq \underbrace{|\La e_1, (\nabla d_g(x,\cdot))_y\Ra_{g_{(0,y^2,\ldots, y^n)}}|}_{|\dot\gamma_{d_g(x,y)}^1|} + 2|\pm e_1 - \tilde\nu_y|_{g_y} + 2|g_{(0,y^2,\ldots, y^n)}- g_y|\leq Cs^{(1 - \delta)/2}\,,
\end{equation}

where the LHS appears as an additional factor in the integrand of $III$.

\section{Proof of Theorem \ref{2d-cone}}\label{app:cone}
We will compute $\KK$ for cones, which by the gluing result is enough. Consider then $C(S^1)$ the cone of angle $2\pi-\alpha$, $x\in C$ a point at distance $r$ from the vertex, the challenge is then computing $|\partial B_R(x)|$.

\subsection{Case $\alpha \leq 0$.}
In this case geodesics are simply the Euclidean ones or they pass through the origin. Therefore:
\begin{itemize}
    \item when $R<r$, $B_R(x)$ is isometric to a Euclidean ball;
    \item when $R>r$, $B_R(x)$ is made of a Euclidean ball of radius $R$, $\partial B_R^{(1)}$, plus a sector of angle $|\alpha|$ and radius $R-r$, $\partial B_R^{(2)}$, see Figure \ref{alphaneg};
\end{itemize} 
and hence $|\partial B_R(x)|=2\pi R -\alpha (R-r) \ind_{{R>r}}$. 

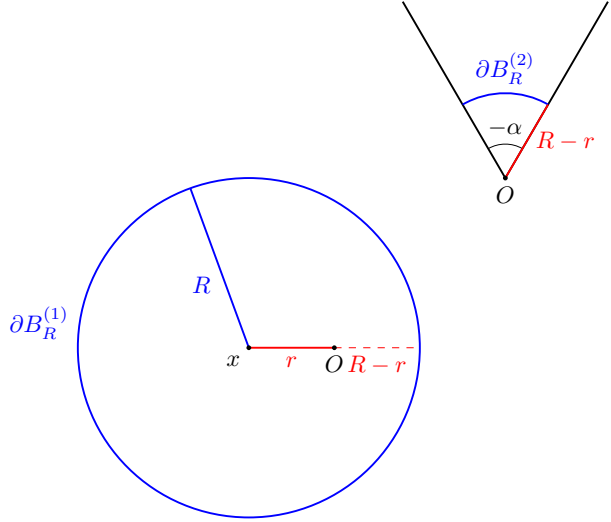
\begin{figure}[h]\label{fig:cone<0}
\centering
\scalebox{0.9}{\begin{tikzpicture}[scale=0.5]

\def\R{5}
\def\r{2.5}
\def\ang{120}
\def\L{6}
\def\l{5}

\coordinate (O) at (0,0);
\coordinate (x) at (-\r,0);
\coordinate (Op) at (5,5);

\draw[black, thick] (Op) -- ++(60:\L);
\draw[black, thick] (Op) -- ++(120:\L);

\draw[blue, thick]
  (x) ++(0:\l)
  arc (0:360:\l)
  node[midway, above left] {$\partial B_R^{(1)}$};
\draw[blue, thick]
  (Op) ++(60:\R-\r)
  arc (60:120:\R-\r)
  node[midway, above] {$\partial B_R^{(2)}$};

\draw[black]
  (Op) ++(60:1)
  arc (60:120:1)
  node[midway, above] {$-\alpha$};

\draw[red, thick] (x) -- (O)
  node[midway, below] {$r$};
\draw[red, dashed] (O) -- ($(O) + (\R-\r,0)$)
    node[midway, below] {$R-r$};
\draw[blue, thick] (x) -- ($(x) +(110:\R)$)
    node[midway, below left] {$R$};
\draw[red, thick] (Op) -- ($(Op) +(60:\R-\r)$)
    node[midway, right] {$R-r$};

\node[below left] at (x) {$x$};
\fill (x) circle (2pt);
\node[below] at (O) {$O$};
\fill (O) circle (2pt);
\node[below] at (Op) {$O$};
\fill (Op) circle (2pt);

\end{tikzpicture}}
\caption{When $\alpha < 0$, $R>r$, $\partial B_R = \partial B_R^{(1)} \sqcup \partial B_R^{(2)}$}\label{alphaneg}
\end{figure}

Let $C_\rho$ be the ball of radius $\rho$ around the vertex, we compute
\begin{align}\label{eq:comp_alpha<0}
&\int_{C_\rho}\a_s(x)d\vol(x) \\
    &= \int_{C_\rho}\frac{1}{12\pi s}\int_C e^{-d^2(x,y)/12s}d\vol(y)d\vol(x) \\
    &= \frac{1}{12\pi s} \int_0^\rho \int_0^\infty e^{-R^2/12s}|\partial B_R(x)| dR \ |\partial B_r(0)| dr \\
    &= \frac{1}{12\pi s} \int_0^\rho\int_0^\infty e^{-R^2/12s}(2\pi R -\alpha (R-r) \ind_{{R>r}})dR \ (2\pi-\alpha)r dr \\
    &= \frac{2\pi- \alpha}{2}\rho^2 - \frac{\alpha(2\pi-\alpha)}{12\pi s} \left[ \int_0^\rho\int_0^R e^{-R^2/12s}(R-r)r \ drdR + \int_\rho^\infty\int_0^\rho e^{-R^2/12s}(R-r) r \ drdR\right] \\
    &= \frac{2\pi-\alpha}{2}\rho^2 - \frac{\alpha(2\pi-\alpha)}{12\pi s} \Bigg[ \int_0^\infty\int_0^R e^{-R^2/12s}(R-r)r \ drdR - \underbrace{\int_\rho^\infty\int_\rho^R e^{-R^2/12s}(R-r) r \ drdR}_{=O(s^2)}\Bigg] \label{eq:intOs2}\\
    &= \frac{2\pi-\alpha}{2}\rho^2 - \frac{\alpha(2\pi-\alpha)}{12\pi s} \int_0^\infty e^{-R^2/12s} \frac{R^3}{6}dR + O(s^2) \\
    &= \frac{2\pi-\alpha}{2}\rho^2 - \frac{\alpha(2\pi-\alpha)}{\pi}s + O(s^2)
\end{align}
hence
\begin{align}
    \KK = \frac{\alpha(2\pi-\alpha)}{\pi} \leq0 \,.
\end{align}

In the previous computations we used at line \eqref{eq:intOs2} that the integral on $[\rho,\infty)$ is of order $O(s^2)$. More in general, for any $f(R)$ with polynomial growth $0 \leq f(R)\leq C(1+R^d)$, for $\rho>0$ fixed,
\begin{align}\label{eq:expdecayoutside0}
    \frac{1}{s}\int_\rho^\infty e^{-R^2/12s}f(R) dR &=\int_0^\infty e^{-(\rho+sv)^2/12s}f(\rho+sv)  dv \\
    &\leq e^{-\rho^2/12s}\int_0^\infty e^{-(2\rho v+sv^2)/12}C(1+(sv)^d) dv \\
    & \leq e^{-\rho^2/12s} \left[\int_0^\infty e^{-2\rho v}C dv + \int_0^\infty e^{-2\rho v}C (sv)^d dv\right] \\
    & = O(e^{-\rho^2/12s})
\end{align}

\subsection{Case $\alpha\in[0,\pi]$.}
In this case $|\partial B_R(x)|=(2\pi-\beta\ind_{{R>r}})R$, for some defect angle $\beta$, see Figure \ref{fig:Cone0pi}. In order to find $\beta$ we note that 
\begin{align}
    \tan(\alpha/2) = \frac{h}{b} = \frac{R\sin(\beta/2)}{R\cos(\beta/2)-r}.
\end{align}
Using the identity $A\sin x+ B\cos x=\sqrt{A^2+B^2}\sin(x+\arcsin(\frac{B}{\sqrt{A^2+B^2}}))$, we get
\begin{align}
    \beta^{\alpha,r,R} = \alpha - 2\arcsin\left(\frac{r}{R}\sin(\alpha/2)\right).
\end{align}

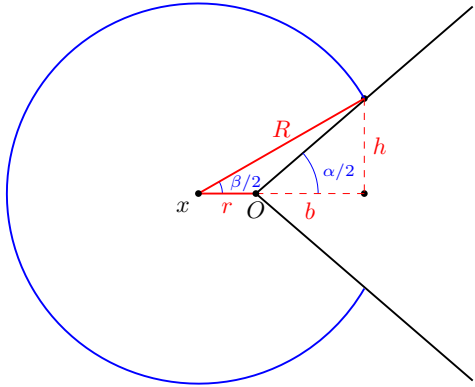
\begin{figure}[ht]
\centering
\scalebox{0.9}{\begin{tikzpicture}[scale=0.7]

\def\R{4} 
\def\r{1.2}
\def\ang{30} 
\def\L{6}
\def\l{4}

\coordinate (O) at (0,0);
\coordinate (x) at (-\r,0);

\coordinate (I) at ({\R*cos(\ang)}, {\R*sin(\ang)});

\coordinate (H) at (\r, {\R*sin(\ang)});

\draw[black, thick] (O) -- ++(\ang+11:\L);
\draw[black, thick] (O) -- ++(-\ang-11:\L);

\coordinate (A) at ($(x)+(\ang:\l)$);
\fill (A) circle (2pt);

\path let \p1 = (A) in
coordinate (Ax) at (\x1,0);
\fill (Ax) circle (2pt);

\draw[blue, thick]
  (x) ++(\ang:\l)
  arc (\ang:{360-\ang}:\l);

\draw[blue]
  (O) ++(\ang+11:1.3)
  arc (\ang+11:0:1.3)
  node[midway, right, font=\scriptsize] {$\alpha/2$};

\draw[blue]
  (x) ++(\ang:0.5)
  arc (\ang:0:0.5)
  node[midway, yshift=2pt, right, font=\scriptsize] {$\beta/2$};

\draw[red, thick] (x) -- (O)
  node[midway, below] {$r$};
\draw[red, thick] (x) -- (A)
  node[midway, above] {$R$};

\draw[red, dashed] (A) -- (Ax)
  node[midway, right] {$h$};
\draw[red, dashed] (Ax) -- (O)
  node[midway, below] {$b$};

\node[below left] at (x) {$x$};
\fill (x) circle (2pt);
\node[below] at (O) {$O$};
\fill (O) circle (2pt);
\end{tikzpicture}}
\caption{$\alpha\in[0,\pi]$, $R>r$}\label{fig:Cone0pi}
\end{figure}

Let $C_\rho$ be the ball around the vertex of radius $\rho$, then 
\begin{align}
    \int_{C_\rho}\a_s(x)d\vol(x) &= \int_{C_\rho}\frac{1}{12\pi s}\int_C e^{-d^2(x,y)/12s}d\vol(y)d\vol(x) \\
    &= \frac{1}{12\pi s} \int_0^\rho\int_0^\infty e^{-R^2/12s}|\partial B_R(x)| dR \ |\partial B_r(0)| dr \\
    &= \frac{1}{12\pi s} \int_0^\rho\int_0^\infty e^{-R^2/12s} R \left( 2\pi-\beta^{\alpha,r,R}\ind_{\{R>r\}} \right) dR \ (2\pi-\alpha) r dr \\
    &= \frac{1}{12\pi s} \int_0^\rho\int_0^\infty e^{-R^2/12s}  2\pi R \  dR \ (2\pi-\alpha) r dr \\
    &\qquad - \frac{1}{12\pi s} \int_0^\rho\int_0^\infty e^{-R^2/12s} \beta^{\alpha,r,R}\ind_{\{R>r\}} R \ dR \ (2\pi-\alpha) r dr \\
    &= \frac{2\pi-\alpha}{2}\rho^2 - \frac{1}{12\pi s} \int_0^\rho\int_0^\infty e^{-R^2/12s} \beta^{\alpha,r,R}\ind_{\{R>r\}} R \ dR \ (2\pi-\alpha) r dr
\end{align}
as the first integral is the same as in $\mathbb{R}^2$.
\begin{align}
    ...&= \frac{2\pi-\alpha}{2}\rho^2 -\frac{2\pi-\alpha}{12\pi s} \int_0^\infty e^{-R^2/12s} R \int_0^\rho \beta^{\alpha,r,R}\ind_{\{R>r\}} r dr \ dR \\
    &= \frac{2\pi-\alpha}{2}\rho^2 - \frac{2\pi-\alpha}{12\pi s} \int_0^\rho e^{-R^2/12s} R \int_0^R \beta^{\alpha,r,R} r dr \ dR -  \frac{2\pi-\alpha}{12\pi s} \int_\rho^{\infty} e^{-R^2/12s} R \int_0^\rho\beta^{\alpha,r,R} r dr \ dR \\
    &\qquad +(1-1)\frac{2\pi-\alpha}{12\pi s} \int_\rho^{\infty} e^{-R^2/12s} R \int_0^R \beta^{\alpha,r,R} r dr \ dR \\
    &= \frac{2\pi-\alpha}{2}\rho^2 - \frac{2\pi-\alpha}{12\pi s} \int_0^\infty e^{-R^2/12s} R \int_0^R \beta^{\alpha,r,R} r dr \ dR + \frac{2\pi-\alpha}{12\pi s} \int_\rho^{\infty} e^{-R^2/12s} R \int_\rho^R\beta^{\alpha,r,R} r dr \ dR
    \end{align}

The last integral is now of order $O(s^2)$ (in fact $O(e^{-1/s})$) by applying \eqref{eq:expdecayoutside0} with $f(R)=R \int_\rho^R\beta^{\alpha,r,R} r dr$, which has cubic growth:
\begin{align}
    \int_\rho^R \left[\alpha - 2\arcsin\left(\frac{r}{R}\sin(\alpha/2)\right) \right]r dr = \frac{\alpha}{2} (R^2-\rho^2) - 2\int_{\rho/R}^1 \arcsin\left(u\sin(\alpha/2)\right)uR^2du = O(R^2).
\end{align}

The first two terms instead give
\begin{align}
    &\frac{2\pi-\alpha}{2}\rho^2 - \frac{2\pi-\alpha}{12\pi s} \int_0^\infty e^{-R^2/12s} R \int_0^R \beta^{\alpha,r,R} r dr \ dR \\
    &\qquad = \frac{2\pi-\alpha}{2}\rho^2 - \frac{(2\pi-\alpha)}{12\pi s}\int_0^\infty e^{-R^2/12s}R\int_0^{R} \left(\alpha - 2\arcsin \left(\frac{r}{R}\sin(\alpha/2)\right)\right)rdrdR \\
    &\qquad = \frac{2\pi-\alpha}{2}\rho^2 - (2\pi-\alpha) \frac{3\alpha}{\pi}s +\frac{(2\pi-\alpha)}{12\pi s}\int_0^\infty e^{-R^2/12s}R \int_0^{R}2\arcsin \left(\frac{r}{R}\sin(\alpha/2)\right)r drdR
\end{align}
Now use the fact that
\begin{align}\label{eq:arcsin_int}
    \int \arcsin(ax)x dx = \arcsin(ax)\frac{x^2}{2}+ \frac{1}{4}x\sqrt{\frac{1}{a^2}-x^2}-\frac{1}{4a^2}\arcsin(ax)
\end{align}
to get that
\begin{align}
    \int_0^{R}2\arcsin \left(\frac{r}{R}\sin(\alpha/2)\right)r dr = \frac{1}{2}\left( \alpha + \frac{\cos(\alpha/2)\sin(\alpha/2)-\alpha/2}{\sin^2(\alpha/2)} \right)R^2 =: \frac{1}{2}\phi(\alpha)R^2
\end{align}
Now we can conclude
\begin{align}
    \int_{C_\rho}\a_s(x)d\vol(x) &= \frac{2\pi-\alpha}{2}\rho^2 - 3(2\pi-\alpha) \frac{\alpha}{\pi}s +\frac{(2\pi-\alpha)}{12\pi s}\int_0^\infty e^{-R^2/12s}\frac{\phi(\alpha)}{2}R^3 dR  + O(s^2)\\
    &= \frac{2\pi-\alpha}{2}\rho^2 + \frac{3(2\pi-\alpha)}{\pi} \left(-\alpha +\phi(\alpha) \right)s  + O(s^2) \\
    &= \frac{2\pi-\alpha}{2}\rho^2 + \frac{3(2\pi-\alpha)}{\pi} \left(\frac{\cos(\alpha/2)\sin(\alpha/2)-\alpha/2}{\sin^2(\alpha/2)} \right)s + O(s^2)
\end{align}

Hence we have that
\begin{align}
    \KK &= -\frac{d}{ds}|_{s=0}\int_{C_\rho} \a_s(x) d\vol(x) \\
    &= -\frac{3(2\pi-\alpha)}{\pi} \left(\frac{\cos(\alpha/2)\sin(\alpha/2)-\alpha/2}{\sin^2(\alpha/2)} \right) \\
    &= 6\left(1-\frac\alpha{2\pi}\right)
\frac{\alpha-\sin\alpha}{1-\cos\alpha}
\end{align}
Note that $\KK (\pi)=3\pi/2$ and $\KK \to 0$ as $\alpha \to 0$.

\subsection{Case $\alpha\in[\pi,2\pi)$.}
In this case the length of $\partial B_R(x)$ can take three different forms, see Figures \ref{fig:2pi1} \ref{fig:2pi2}:
\begin{align}
|\partial B_R(x)|= 
    \begin{cases}
        2\pi R & 0\leq R \leq d; \\
        2\beta R & R\geq r; \\
        2(\beta + \gamma)R & d\leq R \leq r
    \end{cases}
\end{align}
where $d=r\sin(\frac{2\pi-\alpha}{2})$, $\beta = \frac{3\pi}{2}-\frac{\alpha}{2}-\arccos(\frac{d}{R})$ and $\gamma=\arccos(\frac{R^2+r^2-l^2}{2Rr})$, $l= r\cos (\frac{2\pi-\alpha}{2})-\sqrt{R^2-r^2\sin^2(\frac{2\pi-\alpha}{2})}$. The angle $\gamma$ is found using the cosine law twice:
\begin{align}
    &R^2 = r^2 + l^2 - 2rl\cos(\frac{2\pi-\alpha}{2}) \\
    &l^2 = R^2 + r^2 - 2Rr \cos\gamma.
\end{align}

\begin{figure}[ht]
\centering
  \begin{minipage}[b]{0.35\textwidth}
    \scalebox{1}{\begin{tikzpicture}[scale=0.8]

\def\R{4}
\def\r{2.5} 
\def\ang{120}
\def\L{6}
\def\l{4}

\coordinate (O) at (0,0);
\coordinate (x) at (-\r,0);

\coordinate (I) at ({\R*cos(\ang+22.5)}, {\R*sin(\ang+22.5)});

\coordinate (H) at ($(O)!(x)!(I)$);
\fill (H) circle (2pt);
\draw[orange, dashed] (x) -- (H)
    node[midway, above left] {$d$};

\draw[black, thick] (O) -- ++(\ang+22.5:\L);
\draw[black, thick] (O) -- ++(-\ang-22.5:\L);

\coordinate (A) at ($(x)+(\ang:\l)$);
\fill (A) circle (2pt);

\draw[blue, thick]
  (x) ++(\ang:\l)
  arc (\ang:{360-\ang}:\l);

\draw[blue]
  (O) ++(\ang+22:0.5)
  arc (\ang+22:0:0.5)
  node[midway, above, font=\scriptsize] {$\alpha/2$};

\draw[blue]
  (x) ++(\ang:0.5)
  arc (\ang:180:0.5)
  node[midway, yshift=1pt, left, font=\scriptsize] {$\beta$};

\draw[red, thick] (x) -- (O)
  node[midway, below] {$r$};
\draw[red, thick] (x) -- (A)
  node[midway, left] {$R$};

\draw[red, dashed] (2,0) -- (-4.5,0);

\node[below left] at (x) {$x$};
\fill (x) circle (2pt);
\node[below] at (O) {$O$};
\fill (O) circle (2pt);

\end{tikzpicture}}
    \caption{$\alpha\in[\pi,2\pi)$, $R>r$}\label{fig:2pi1}
    \end{minipage}
  \hspace{3cm}
  \begin{minipage}[b]{0.35\textwidth}
    \scalebox{1}{\begin{tikzpicture}[scale=0.8]

\def\R{1}    
\def\r{2.5}   
\def\ang{120}  
\def\L{6}
\def\l{2}

\coordinate (O) at (0,0);    
\coordinate (x) at (-\r,0);

\draw[black, thick] (O) -- ++(\ang+22.5:\L);
\draw[black, thick] (O) -- ++(-\ang-22.5:\L);

\coordinate (A) at ($(x)+(\ang-27:\l)$);
\fill (A) circle (2pt);

\coordinate (B) at ($(x)+(12:\l)$);
\fill (B) circle (2pt);

\draw[blue, thick]
  (x) ++(\ang-27:\l)
  arc (\ang-27:{360-\ang+27}:\l);

\draw[blue, dashed]
  (x) ++(\ang-27:\l)
  arc (\ang-27:13:\l);
  
\draw[blue, dashed]
  (x) ++(-\ang+27:\l)
  arc (-\ang+27:0:\l);
  
\draw[blue, thick]
  (x) ++(-12:\l)
  arc (-12:12:\l);

\draw[blue]
  (O) ++(\ang+22:0.5)
  arc (\ang+22:0:0.5)
  node[midway, above right, font=\scriptsize] {$\alpha/2$};

\draw[blue]
  (x) ++(\ang-27:0.5)
  arc (\ang-27:180:0.5)
  node[midway, yshift=1pt, left, font=\scriptsize] {$\beta$};

\draw[orange]
  (x) ++(0:1.4)
  arc (0:12:1.4)
  node[midway, right, font=\scriptsize] {$\gamma$};

\draw[red, thick] (x) -- (O)
  node[midway, below] {$r$};
\draw[red, thick] (x) -- (A)
  node[midway, left] {$R$};
\draw[orange, dashed] (x) -- (B);
\draw[orange] (O) -- (B)
node[midway, yshift=3pt, right] {$l$};

\draw[red, dashed] (1,0) -- (-3.6,0);

\node[below left] at (x) {$x$};
\fill (x) circle (2pt);
\node[below] at (O) {$O$};
\fill (O) circle (2pt);
\end{tikzpicture}}
    \caption{$\alpha\in[\pi,2\pi)$, $d<R<r$}\label{fig:2pi2}
  \end{minipage}
\end{figure}

We then get
\begin{align}
    \int_{C_\rho} \a_s(x) d\vol(x) 
    &= \int_{C_\rho}\frac{1}{12\pi s}\int_C e^{-d^2(x,y)/12s}d\vol_g(y)d\vol_g(x) \\
    &= \frac{1}{12\pi s} \int_0^\rho \int_0^\infty e^{-R^2/12s}|\partial B_R(x)| dR \ |\partial B_r(0)| dr \\
    &= \frac{1}{12\pi s} \int_0^\rho \Bigg[ \int_0^d e^{-R^2/12s}2\pi R dR + \int_d^r e^{-R^2/12s}2(\beta^{\alpha,r,R}+\gamma^{\alpha,r,R})RdR \\ & \hspace{3cm} +  \int_r^\infty e^{-R^2/12s} 2\beta^{\alpha,r,R}RdR\Bigg](2\pi-\alpha)r dr \\
    & \ \ \ +(1-1)\frac{1}{12\pi s}\int_0^\rho \int_d^\infty e^{-R^2/12s}2\pi R dR (2\pi-\alpha)r dr
\end{align}
so that
\begin{align}
    &= \frac{1}{12\pi s} \int_0^\rho \Bigg[ \int_0^\infty e^{-R^2/12s}2\pi R dR + \int_d^r e^{-R^2/12s} 2(\beta^{\alpha,r,R}+\gamma^{\alpha,r,R}-\pi)RdR \\ & \hspace{3cm} +  \int_r^\infty e^{-R^2/12s} 2(\beta^{\alpha,r,R} - \pi)RdR\Bigg](2\pi-\alpha)r dr \\
    &= \frac{2\pi-\alpha}{2}\rho^2 + \frac{2\pi-\alpha}{12\pi s} \int_0^{\infty} \Bigg [ \int_R^{R/\sin(\frac{2\pi-\alpha}{2})} 2(\beta+\gamma -\pi)rdr + \int_0^R 2(\beta-\pi) rdr \Bigg] Re^{-R^2/12s}dR + O(s^2) \\
    &= \frac{2\pi-\alpha}{2}\rho^2 + \frac{2\pi-\alpha}{12\pi s} \int_0^\infty \Bigg [ \int_0^{R/\sin(\frac{2\pi-\alpha}{2})} 2(\beta-\pi) rdr + \int_R^{R/\sin(\frac{2\pi-\alpha}{2})} 2\gamma rdr \Bigg] Re^{-R^2/12s}dR + O(s^2)
\end{align}
Now we evaluate the two integrals. For the first one, note that 
\begin{align}
    \int x \arccos(ax) dx = \frac{x^{2}}{2}\arccos(ax) + \frac{1}{4a^{2}}\arcsin(ax)- \frac{x}{4a}\sqrt{1 - a^{2}x^{2}}
\end{align}
hence
\begin{align}
    \int_0^{R/\sin(\frac{2\pi-\alpha}{2})} 2(\beta -\pi)rdr &= \frac{(\pi-\alpha)R^2}{2\sin^2(\frac{2\pi-\alpha}{2})} -\frac{R^2}{2\sin^2(\frac{2\pi-\alpha}{2})}\frac{\pi}{2} \\
    &= -\frac{\alpha-\pi/2}{2\sin^2(\frac{2\pi-\alpha}{2})}R^2 \\
    &= -\frac{\alpha-\pi/2}{1-\cos\alpha}R^2
\end{align}
For the second integral, write $$l^2=\left(-r\cos(\alpha/2)-\sqrt{R^2-r^2\sin^2(\alpha/2)}\right)^2$$
and
\begin{align}
    \gamma = \arccos\left( \frac{r}{R}\sin^2(\alpha/2) - \cos(\alpha/2)\sqrt{1- \frac{r^2}{R^2}\sin^2(\alpha/2)} \right) \,,
\end{align}
then
\begin{align}
    &\int_R^{R/\sin(\frac{2\pi-\alpha}{2})}  2 \arccos\left( \frac{r}{R}\sin^2(\alpha/2) - \cos(\alpha/2)\sqrt{1- \frac{r^2}{R^2}\sin^2(\alpha/2)} \right) rdr \\
    &\qquad = R^2 \int_1^{1/\sin(\frac{2\pi-\alpha}{2})} \arccos\left( s\sin^2(\alpha/2) - \cos(\alpha/2)\sqrt{1- s^2 \sin^2(\alpha/2)} \right) sds \\
    &\qquad =: \eta(\alpha)R^2\,,
\end{align}
we will compute $\eta(\alpha)$ at the end of the proof.

Altogether we computed
\begin{align}
    \int_{C_\rho} \a_s(x) d\vol(x) &= \frac{2\pi-\alpha}{2}\rho^2 + \frac{2\pi-\alpha}{12\pi s} \left[ -\frac{\alpha-\pi/2}{1-\cos\alpha} +\eta(\alpha) \right] \int_0^\infty R^3 e^{-R^2/12s}dR + O(s^2) \\
    &= \frac{2\pi-\alpha}{2}\rho^2 + \frac{2\pi-\alpha}{12\pi s} \left[ -\frac{\alpha-\pi/2}{1-\cos\alpha} +\eta(\alpha) \right] 72s^2 + O(s^2) \\
    &= \frac{2\pi-\alpha}{2}\rho^2 + \frac{2\pi-\alpha}{\pi} \left[ -\frac{\alpha-\pi/2}{1-\cos\alpha} +\eta(\alpha) \right] 6s + O(s^2)\\
\end{align}
and hence
\begin{align}
    \KK = 12\left(1-\frac{\alpha}{2\pi}\right)\left[ \frac{\alpha-\pi/2}{1-\cos\alpha} -\eta(\alpha) \right] \,.
\end{align}

In order to compute $\eta(\alpha)$, we use the cosine difference identity and recognise the argument of $\arccos$ in the integrand:
\begin{align}
    \cos\left(\arcsin(s\sin\frac{\alpha}{2}) - (\pi-\frac{\alpha}{2}) \right) &= \cos(\arcsin(s\sin\frac{\alpha}{2})) \cos(\pi-\frac{\alpha}{2}) + \sin\arcsin(s\sin\frac{\alpha}{2})  \sin(\pi-\frac{\alpha}{2})\\
    &= -\cos\frac{\alpha}{2}\sqrt{1-s^2\sin^2\frac{\alpha}{2}} + s\sin^2\frac{\alpha}{2}
\end{align}
so that
\begin{align}
    \eta(\alpha)= \int_1^{1/\sin(\alpha/2)} \left(\arcsin(s\sin\frac{\alpha}{2}) - (\pi-\frac{\alpha}{2})\right)s ds \,.
\end{align}
In order to integrate this we use \eqref{eq:arcsin_int}
\begin{align}
    \eta(\alpha) &= \left[\frac{\pi}{4}\frac{1}{\sin^2\frac{\alpha}{2}} - \frac{\pi}{8}\frac{1}{\sin^2\frac{\alpha}{2}} - \frac{1}{2}(\pi-\frac{\alpha}{2}) - \frac{1}{4}\frac{|\cos\frac{\alpha}{2}|}{\sin\frac{\alpha}{2}} + \frac{1}{4}(\pi-\frac{\alpha}{2}) \frac{1}{\sin^2\frac{\alpha}{2}}\right] - \frac{1}{2}(\pi-\frac{\alpha}{2})\left(\frac{1}{\sin^2\frac{\alpha}{2}} -1 \right)\\
    &= -\frac{\pi}{8}\frac{1}{\sin^2\frac{\alpha}{2}} + \frac{\alpha}{8}\frac{1}{\sin^2\frac{\alpha}{2}} + \frac{\cos\frac{\alpha}{2}}{4\sin\frac{\alpha}{2}} \\
    &=\frac{2}{1-\cos\alpha}\left( \frac{\alpha-\pi}{8} + \frac{\sin\alpha}{8} \right) \,.
\end{align}
Altogether,
\begin{align}
    \KK &= 12\left(1-\frac{\alpha}{2\pi}\right)\left[ \frac{\alpha-\pi/2}{1-\cos\alpha} - \frac{\alpha-\pi+\sin\alpha}{4(1-\cos\alpha)} \right] \\
    &= 3 \left(1-\frac{\alpha}{2\pi}\right) \left[ \frac{3\alpha - \pi - \sin\alpha}{1-\cos\alpha} \right] \,.
\end{align}

\section{Proof of Theorem \ref{2d-sphsusp}}\label{app:sphsusp}
We divide the discussion in three cases as in the cone case. We write the integral $\int \a_s d\vol$ and show that its first derivative at zero has a contribution from the tip singularity identical to the one of the cone. For the sequel, we denote the volume of geodesic circles on a unit sphere by $C(r) = 2\pi\sin r = 2\pi r(1-\frac{r^2}{6}+O(r^4))$.

\subsection{Case $\alpha \leq 0$}
Let $S_\rho$ be the geodesic ball around the tip of the spherical suspension, let $x\in S_\rho$ at distance $r$ from $0$ and $B_R(x)$ the ball around $x$ in the suspension. As in Figure \ref{alphaneg}, $|\partial B_R(x)| = C(R) - C(R-r)\frac{\alpha}{2\pi}\ind_{\{R>r\}}$. Then, just as in the cone case
\begin{align}
    \int_{S_\rho} \a_s(x)d\vol(x) &= \frac{1}{12\pi s} \int_0^\rho \int_0^\infty e^{-R^2/12s}|\partial B_R(x)| dR \ |\partial B_r(0)| dr \\
    &=\frac{1}{12\pi s} \int_0^\rho \int_0^\infty e^{-R^2/12s} C(R) dR \ \frac{2\pi -\alpha}{2\pi} C(r) dr \\
    & \qquad -\frac{1}{12\pi s} \int_0^\rho \int_0^\infty e^{-R^2/12s} C(R-r)\frac{\alpha}{2\pi}\ind_{\{R>r\}} dR \ \frac{2\pi -\alpha}{2\pi} C(r) dr\,.
\end{align}
Using the expansion of $C(R)$ we see that the first integral gives (in the formula below $R$ is the scalar curvature and is equal to $2$) 
\begin{align}
    I = |S_\rho| - 2|S_\rho|s + O(s^2) = |S_\rho| - \left(\int_{S_\rho}Rd\vol \right) s + O(s^2) \,.
\end{align}
Expanding $C(\cdot)$ in the second integral we get, up to higher order terms, the linear contribution of the cone case (see \eqref{eq:comp_alpha<0})
\begin{align}
    II = -\frac{\alpha(2\pi-\alpha)}{12\pi s}\int_0^\infty\int_0^R (R-r) r dr\ e^{-R^2/12s} dR + O(s^2) = -\frac{\alpha(2\pi-\alpha)}{\pi}s + O(s^2)
\end{align}
Therefore the linear contribution is the same as the cone case plus the integral of the scalar curvature on $S_\rho$, just as stated in Theorem \ref{2d-cone}.

\subsection{Case $\alpha\in[0,\pi]$}
Now $|\partial B_R(x)| = (2\pi - \beta \ind_{\{R>r\}}) \frac{C(R)}{2\pi}$, where $\beta= \beta^{\alpha,r,R}$ is the angle is Figure \ref{fig:Cone0pi}, where now the geometry is distorted by the spherical geometry. However, note that in the cone case $\beta_{\R^2}^{\alpha,\lambda r,\lambda R} = \beta_{\R^2}^{\alpha,r,R}$, and by local flatness
\begin{align}\label{eq:beta_conv}
    \beta_{\mathbb S^2}^{\alpha,\lambda r,\lambda R} \to \beta_{\R^2}^{\alpha,r,R} \ \text{as $\lambda \to 0$.}
\end{align}
Now
\begin{align}
    \int_{S_\rho} \a_s(x)d\vol(x) &= \frac{1}{12\pi s} \int_0^\rho \int_0^\infty e^{-R^2/12s} \frac{C(R)}{2\pi} \left( 2\pi- \beta\ind_{\{R>r\}} \right) dR \ \frac{C(r)}{2\pi} (2\pi-\alpha) dr \\
    &= \frac{1}{12\pi s} \int_0^\rho \int_0^\infty e^{-R^2/12s} C(R) dR \ \frac{2\pi-\alpha}{2\pi}C(r) dr \\& \qquad -\frac{1}{12\pi s} \int_0^\rho\int_0^\infty e^{-R^2/12s} \frac{C(R)}{2\pi}\beta \ind_{\{R>r\}}dR \ \frac{C(r)}{2\pi}(2\pi-\alpha)dr
\end{align}
As the $\alpha<0$ case, the first integral gives
\begin{align}
    I = |S_\rho| - \left(\int_{S_\rho}Rd\vol \right) s + O(s^2) \,.
\end{align}
For the second we will use the convergence \eqref{eq:beta_conv} in order to reduce to the computations on the flat cone. Since we expect a linear contribution, we divide the second integral by $s$, use the expansion of $C$, and perform the change of variables $R=\sqrt s W$, $r=\sqrt s w$:
\begin{align}
    \frac{II}{s} &= -\frac{1}{s}\frac{2\pi-\alpha}{12\pi s}\int_0^\infty e^{-R^2/12s} R \int_0^R \beta_{\mathbb S^2}^{\alpha,r,R}rdr dR + O(s)\\
    &= - \frac{2\pi-\alpha}{12\pi} \int_0^\infty e^{-W^2/12\pi} W \int_0^W \beta_{\mathbb S^2}^{\alpha,\sqrt s w, \sqrt s W} w dw dW 
\end{align}
Now, all the terms are bounded ($\beta\leq 2\pi$) and multiplied by the exponential decaying function $e^{-W^2/12\pi}$, so uniformly (in $s$) integrable: by dominated convergence the integral converges, as $s\to 0$, to the respective one on the flat cone.

\subsection{Case $\alpha\in[\pi,2\pi)$}
This case, although more involved, is completely analogous to $\alpha\in[0,\pi]$. Note in particular that the quantities $\beta$, $d/R$, $\gamma$ on the flat cone only depend on $r/R$ (so they are scaling invariant), and the analogous quantities on the spherical suspension for $\lambda r$, $\lambda R$ converge to them as $\lambda\to 0$.
\end{appendices}

\bibliographystyle{abbrv}
\bibliography{Refs}

\begin{tabular}{@{}l@{}}%
    \textbf{E-mail addresses}: \text{\href{mailto:mflaim@uni-bonn.de}{mflaim@uni-bonn.de}, \ \href{mailto:ehupp@uni-bonn.de}{ehupp@uni-bonn.de} , \
    \href{mailto:sturm@uni-bonn.de}{sturm@uni-bonn.de}
    }
  \end{tabular}

\end{document}